\DeclareRobustCommand{\VAN}[3]{#2}
\newtheorem{Thm}[subsubsection]{Theorem}
\newtheorem{Lem}[subsubsection]{Lemma}
\newtheorem{Prop}[subsubsection]{Proposition}
\newtheorem{Cor}[subsubsection]{Corollary}
\newtheorem{MainConj}{Conjecture}
\newtheorem{Conj}[subsubsection]{Conjecture}
\newtheorem*{Thm*}{Theorem}
\newtheorem{mainThm}{Theorem}
\theoremstyle{definition}
\newtheorem{Def}[subsubsection]{Definition}
\newtheorem{Cons}[subsubsection]{Construction}
\newtheorem{Hyp}[subsubsection]{Hypothesis}
\newtheorem{Not}[subsubsection]{Notation}
\newtheorem{Question}[subsubsection]{Question}
\newtheorem{Vb}[subsubsection]{Example}
\newtheorem{Rem}[subsubsection]{Remark}
\numberwithin{equation}{subsection}
\DeclareMathOperator{\Hom}{\mathscr{H}\text{\kern -3pt {\calligra\large om}}\,}
\newcommand{\GL}{\operatorname{GL}}
\newcommand{\zz}{\mathbb{Z}}
\newcommand{\qqq}{\mathbb{Q}}
\newcommand{\gx}{(\mathsf{G}, \mathsf{X})}
\newcommand{\g}{\mathsf{G}}
\newcommand{\gad}{\mathsf{G}^{\mathrm{ad}}}
\newcommand{\gvx}{(\mathsf{G}_{V},\mathsf{H}_{V})}
\newcommand{\afp}{\mathbb{A}_{f}^{p}}
\newcommand{\gafp}{\mathsf{G}(\afp)}
\newcommand{\shg}{\operatorname{Sh}_{\g,U}}
\newcommand{\shginf}{\operatorname{Sh}_{\g,U_p}}
\newcommand{\shgv}{\operatorname{Sh}_{\mathsf{G}_V,\mathcal{U}}}
\newcommand{\shgu}{\operatorname{Sh}_{\g,U}}
\newcommand{\shgb}{\operatorname{Sh}_{\g,U,[b]}}
\newcommand{\ovfp}{\overline{\mathbb{F}}_p}
\newcommand{\fpbar}{\overline{\mathbb{F}}_{p}}
\newcommand{\gsc}{\g^{\mathrm{sc}}}
\newcommand{\gder}{\g^{\mathrm{der}}}
\newcommand{\ql}{\mathbb{Q}_{\ell}}
\newcommand{\zpbreve}{\breve{\mathbb{Z}}_{p}}
\newcommand{\qpbreve}{\breve{\mathbb{Q}}_{p}}
\newcommand{\zpbr}{\breve{\mathbb{Z}}_{p}}
\newcommand{\Fp}{\mathbb{F}_p}
\newcommand{\Zp}{\mathbb{Z}_p}
\newcommand{\Qp}{\mathbb{Q}_{p}}
\newcommand{\qp}{\mathbb{Q}_{p}}
\newcommand{\zp}{\mathbb{Z}_p}
\newcommand{\Gm}{\mathbb{G}_m}
\newcommand{\calO}{\mathcal{O}}
\newcommand{\calL}{\mathcal{L}}
\newcommand{\calM}{\mathcal{M}}
\newcommand{\calN}{\mathcal{N}}
\newcommand{\calT}{\mathcal{T}}
\newcommand{\calV}{\mathcal{V}}
\renewcommand{\hom}{{\mathrm{Hom}}}
\newcommand{\Spec}{\operatorname{Spec}}
\newcommand{\Spf}{\operatorname{Spf}}
\newcommand{\spf}{\operatorname{Spf}}
\newcommand{\perf}{{\mathrm{perf}}}
\newcommand{\sep}{{\mathrm{sep}}}
\newcommand{\tildex}{\tilde{\mathbb{X}}}
\newcommand{\tildeu}{\tilde{\Pi}}
\newcommand{\igCSgsp}{\operatorname{Ig}_{\mathrm{CS},\lambda}}
\newcommand{\iggsp}{\operatorname{Ig}_{\mathrm{M},\lambda}}
\newcommand{\cgsp}{C_{(Y,\lambda)}}
\newcommand{\cgspp}{C_{(Y',\lambda')}}
\newcommand{\igCS}{\operatorname{Ig}_{\mathrm{CS}}}
\newcommand{\ig}{\operatorname{Ig}_{\mathrm{M}}}
\newcommand{\defsus}{\mathbf{Def}_{\mathrm{sus}}}
\newcommand{\algo}[1]{\mathrm{Alg}_{#1}^{\mathrm{op}}}
\newcommand{\cb}{C_{\g,\llbracket b \rrbracket}}
\newcommand{\cbp}{C_{\g,\llbracket b' \rrbracket}}
\newcommand{\mfb}{\mathfrak{b}}
\newcommand{\mfa}{\mathfrak{a}}
\newcommand{\mfbp}{\mfb^{+}}
\newcommand{\mfap}{\mathfrak{a}^{+}}
\newcommand{\mbx}{\mathbb{X}}
\newcommand{\DLz}{Dieudonn\'e--Lie $\zpbreve$-algebra}
\newcommand{\DLzs}{Dieudonn\'e--Lie $\zpbreve$-subalgebra}
\newcommand{\DL}{Dieudonn\'e--Lie $\qpbreve$-algebra}
\DeclareSymbolFontAlphabet{\mathbb}{AMSb}
\DeclareSymbolFontAlphabet{\mathbbl}{bbold}
\newcommand{\prism}{{\mathlarger{\mathbbl{\Delta}}}}
\newcommand{\spec}{\operatorname{Spec}}
\newcommand{\aut}{\mathbf{Aut}}
\author{Marco D'Addezio \and Pol van Hoften}
	\address{Institut de Recherche Mathématique Avancée (IRMA), Université de Strasbourg, 7 rue René-Descartes, 67000 Strasbourg, France}
	\email{daddezio@unistra.fr}
\address{Stanford mathematics department, 450 Jane Stanford, Way Building 380x, Stanford, CA 94305, USA}
\email{p.van.hoften@vu.nl}
\subjclass[2010]{Primary 11G18; Secondary 14G35}
\title{Hecke orbits on Shimura varieties of Hodge type}
\date{\today}
\begin{document}
\begin{abstract}
We prove the Hecke orbit conjecture of Chai--Oort for Shimura varieties of Hodge type at odd primes of good reduction. We use a novel result for the local monodromy groups of $F$-isocrystals “coming from geometry”, which refines Crew's parabolicity conjecture. In the course of the proof, we also introduce a noncommutative generalisation of Serre--Tate coordinates for formal neighbourhoods of central leaves, built upon the previous work of Caraiani–-Scholze and Kim. Using these coordinates, we reinterpret Chai--Oort's notion of strongly Tate-linear subspaces and we establish upper bounds for their monodromy groups. For this step, we employ the notion of Cartier--Witt stacks, as introduced by Drinfeld and Bhatt--Lurie. Another crucial ingredient in the proof is a rigidity result proved by Chai--Oort, which shows that the relevant subspaces are strongly Tate-linear. On the way, we generalise de Jong's full faithfulness theorem for $F$-isocrystals.
\end{abstract}
\maketitle 

\tableofcontents
\section{Introduction}
\subsection{The Hecke orbit conjecture}
Let $p$ be a prime number and $g$ a positive integer. Problem 15 on Oort's 1995 list of open problems in algebraic geometry, \cite{OortQuestionsAG}, is the following conjecture.
\begin{MainConj} \label{Conj:HO}
Let $x=(A_x, \lambda)$ be an $\ovfp$-point of the moduli space $\mathcal{A}_g$ of principally polarised abelian varieties of dimension $g$ over $\ovfp$. The Hecke orbit of $x$, consisting of all points $y \in \mathcal{A}_g(\ovfp)$ corresponding to principally polarised abelian varieties related to $(A_x, \lambda)$ by symplectic isogenies, is Zariski dense in the Newton stratum of $\mathcal{A}_g$ containing $x$. 
\end{MainConj}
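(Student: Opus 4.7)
The plan is to follow the Chai--Oort two-step reduction. Prime-to-$p$ Hecke operators preserve central leaves (loci in the Newton stratum where the isomorphism class of the $p$-divisible group, together with its extra structures, is constant), while $p$-power Hecke operators move between central leaves within an isogeny leaf. By the almost-product structure of Newton strata (central leaf times isogeny leaf, up to finite étale issues), it suffices to prove (i) Zariski density of a prime-to-$p$ Hecke orbit inside the central leaf $C$ through $x$, and (ii) that the union of central leaves $p$-power isogenous to $C$ is dense in the Newton stratum. Part (ii) is essentially formal, coming from the definition of isogeny leaves and known results on their structure. The new content lives in part (i).

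For (i), I would pass to the formal completion $T := C^{\wedge}_x$. Using the new generalisation of Serre--Tate coordinates announced in the abstract, built from Caraiani--Scholze local Shimura varieties and Kim's formal deformation theory, $T$ acquires a rigid geometric structure in which one can single out a class of \emph{strongly Tate-linear} formal subspaces; these specialise to formal subtori when $x$ is ordinary. The first intermediate claim would be that the formal closure in $T$ of a prime-to-$p$ Hecke orbit is strongly Tate-linear. I would obtain this by unwinding the compatibility of the Hecke action with the new coordinates, exploiting that prime-to-$p$ isogenies act by automorphisms of the underlying $p$-divisible group with structure.

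The heart of the argument is then a monodromy squeeze on strongly Tate-linear subspaces. On the upper side, the Cartier--Witt stacks of Bhatt--Lurie should control the crystalline monodromy of the universal $p$-divisible group restricted to a strongly Tate-linear $Z \subset T$ in terms of the tangent directions of $Z$; together with a rigidity statement for such subspaces in the style of Chai--Oort, this pins down $Z$ by its monodromy datum. On the lower side, the new result on local monodromy groups of $F$-isocrystals "coming from geometry" provides a lower bound on the monodromy of the universal abelian variety along the central leaf $C$ itself. If a proper strongly Tate-linear $Z \subsetneq T$ contained the Hecke orbit closure, its monodromy would be too small to match the monodromy of $C$, a contradiction. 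Hence $Z = T$, which combined with (ii) gives global density in the Newton stratum.

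The main obstacle will be the lower bound on the geometric monodromy of $F$-isocrystals attached to the universal family over a central leaf. The expected answer --- roughly, that the monodromy group is as large as the Newton cocharacter permits --- requires substantial new input into the theory of $F$-isocrystals, and is where the mild assumption on $p$ from the abstract should most naturally enter. Secondary difficulties I anticipate are: proving functoriality of the new Serre--Tate coordinates under the Hecke action with enough precision to deduce Tate-linearity of orbit closures; and developing the Chai--Oort-style rigidity theorem in the non-ordinary setting, where the absence of a literal formal torus structure makes the classical argument inapplicable and forces one to work directly with the Cartier--Witt formalism.
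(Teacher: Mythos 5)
Your outline captures the right ingredients — prime-to-$p$ orbits in central leaves, a monodromy squeeze between an upper bound from Cartier--Witt stacks and a lower bound from a new local monodromy theorem, generalised Serre--Tate coordinates — but two steps are mislocated in a way that would block the proof. The most serious is your claim that the Hecke orbit closure in $C^{/x}$ is strongly Tate-linear "by unwinding the compatibility of the Hecke action with the new coordinates." What that compatibility actually gives you is the local stabiliser principle: the closure $Z^{/x}$ is stable under the action of a $p$-adic group $\Gamma \subseteq \mathbf{Aut}_G(Y)(\ovfp)$. Passing from $\Gamma$-stability to Tate-linearity is a genuine rigidity theorem (Theorem \ref{Thm:Rigidity}), whose proof occupies a full section of the paper and relies on Chai--Oort's topological commutative algebra of completed restricted perfections; in the non-ordinary setting the Serre--Tate structure is a noncommutative Dieudonné--Lie algebra rather than a formal torus, so one cannot even reduce to Chai's rigidity for $p$-divisible formal groups. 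You have silently assumed the hardest step.

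The second gap is the lower bound. You place it on "the monodromy of the universal abelian variety along the central leaf $C$ itself," but that yields no contradiction: a proper closed $Z \subsetneq C$ could in principle have much smaller monodromy than $C$. What the argument actually needs is a lower bound on the monodromy of $\calM$ restricted to the Hecke-stable subvariety $Z$. That comes from the parabolicity theorem of D'Addezio applied to $Z^{\mathrm{sm}}$ (using Hecke-stability of $Z$, not of $C$), which gives global monodromy $P_{\nu_b}$; the new local monodromy theorem (Theorem \ref{Thm:MainThmMonodromy}) is not itself a lower bound but a transfer result, converting this into local monodromy $U_{\nu_b}$ over $Z^{/x}$. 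Finally, the squeeze as you describe it only shows $Z^{/x} = C^{/x}$, i.e. that $Z$ is a union of connected components of $C$; the discrete part (the orbit meets every component) must be cited separately, and density in the full Newton stratum rests on the isogeny class meeting every central leaf via Rapoport--Zink/Kisin uniformisation rather than on a literal Mantovan--Oort product decomposition.
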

This article contains a proof of this conjecture.
More generally, we prove that for the special fibre of a Shimura variety of Hodge type at an odd prime of good reduction, the isogeny classes are Zariski dense in the Newton strata containing them.

\subsubsection{} There is a refined version of Conjecture \ref{Conj:HO}, also due to Oort, which considers instead the prime-to-$p$ Hecke orbit of $x$, consisting of all $y \in \mathcal{A}_g(\ovfp)$ related to $x$ by prime-to-$p$ symplectic isogenies. In this case, the quasi-polarised $p$-divisible group $(A_x[p^{\infty}], \lambda)$ is constant on prime-to-$p$ Hecke orbits (not just constant up to isogeny). Therefore, the prime-to-$p$ Hecke orbit of $x$ is contained in the central leaf
\begin{align}
    C(x) = \left\{y\in \mathcal{A}_g(\ovfp)\: | \; A_y[p^{\infty}]\simeq_{\lambda} A_x[p^{\infty}]\right\},
\end{align}
where $\simeq_{\lambda}$ denotes a symplectic isomorphism. Oort proved in \cite{OortFoliation} that $C(x)$ is a smooth closed subvariety of the Newton stratum of $\mathcal{A}_g$ containing $x$. He also conjectured that the prime-to-$p$ Hecke orbit of $x$ is Zariski dense in the central leaf $C(x)$. This conjecture is known as the \emph{Hecke orbit conjecture} (for $\mathcal{A}_g$). Thanks to the Mantovan--Oort product formula, \cites{OortFoliation, MantovanPEL}, the Hecke orbit conjecture implies Conjecture \ref{Conj:HO}.

Central leaves and prime-to-$p$ Hecke orbits can also be defined for the special fibres of Shimura varieties of Hodge type at primes of good reduction, by work of Hamacher and Kim \cites{Hamacher, KimCentralLeaves}. The Hecke orbit conjecture for Shimura varieties of Hodge type then predicts that the prime-to-$p$ Hecke orbits of points are Zariski dense in the central leaves containing them (see \cite[Question 8.2.1]{KretShin} and \cite[Conjecture 3.2]{ChaiConjecture}).

The Hecke orbit conjecture naturally splits up into a discrete part and a continuous part. The discrete part states that the prime-to-$p$ Hecke orbit of $x$ intersects each connected component of $C(x)$, and the continuous part states that the Zariski closure of the prime-to-$p$ Hecke orbit of $x$ is equidimensional of the same dimension as $C(x)$. The discrete part of the conjecture is \cite[Theorem C]{KretShin} (see \cite{vHXiao} for related results). In this paper, we will focus on the continuous part of the conjecture.

\subsection{Main result}
Let $\gx$ be a Shimura datum of Hodge type with reflex field $\mathsf{E}$, and assume for simplicity that $\gad$ is $\mathbb{Q}$-simple throughout this introduction. Let $p>2$ be a prime such that $G=\g \otimes \qp$ is quasi-split and split over an unramified extension, let $U_p \subseteq G(\qp)$ be a hyperspecial subgroup, and let $U^{p} \subseteq \gafp$ be a sufficiently small compact open subgroup. Choose a place $v$ of $\mathsf{E}$ dividing $p$ and set $E=\mathsf{E}_{v}$. Let $\mathbf{Sh}_{U}\gx$ be the canonical model of Shimura variety for $\gx$ of level $U\coloneqq U^pU_p$ over $\mathsf{E}$. Let $\mathscr{S}_{U}\gx$ be the canonical integral model over $\mathcal{O}_{E}$ constructed in \cite{KisinPoints}, and let $\shg$ be its geometric special fibre. Let $C \subseteq \shg$ be a central leaf as constructed in \cite{Hamacher} (cf. \cite{KimCentralLeaves}). 
\begin{mainThm}[Theorem \ref{Thm:MainTheorem2}] \label{Thm:MainThmHO}
If $Z \subseteq C$ is a non-empty reduced closed subvariety that is stable under the prime-to-$p$ Hecke operators, then $Z=C$.
\end{mainThm}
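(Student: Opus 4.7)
The strategy follows the Chai--Oort template: reduce the global statement to a formal-local computation at a closed point of $Z$, show that the formal germ of $Z$ is ``linear'' in suitable Serre--Tate-type coordinates, and then match upper and lower bounds on local monodromy.

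First I would invoke the discrete part of the Hecke orbit conjecture, Theorem C of \cite{KretShin}, which implies that $Z$, being nonempty and prime-to-$p$ Hecke-stable, meets every connected component of $C$. Since $C$ is smooth by Hamacher--Kim, its connected components are irreducible, so it suffices to show that for some closed point $x$ in each component $C_0$, the formal completion $Z^{/x}$ equals $C_0^{/x}$ inside $\shg^{/x}$; closedness and irreducibility then propagate this to $Z \cap C_0 = C_0$, giving $Z = C$.

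Next I would work inside $C_0^{/x}$ using the generalised Serre--Tate coordinates constructed in this paper (via Caraiani--Scholze and Kim), which present $C_0^{/x}$ as a cascade-like object built out of $p$-divisible formal groups. Using the new interpretation of strongly Tate-linear subspaces together with the Chai--Oort-style rigidity result proved in the paper, I would argue that prime-to-$p$ Hecke-invariance of $Z$ forces $Z^{/x}$ to be strongly Tate-linear: the moduli description supplies many formal-local symmetries of $C_0^{/x}$ compatible with the Serre--Tate structure, and rigidity upgrades any closed formal subscheme invariant under enough Hecke translates to a linear one.

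Finally I would compare local monodromy. The lower bound comes from the new result on monodromy groups of $F$-isocrystals ``coming from geometry'', applied to the crystalline $F$-isocrystal on $Z$ attached to the universal abelian variety through a Hodge-type embedding; this forces the local monodromy of $Z^{/x}$ to match that of $C_0^{/x}$. The upper bound comes from the Cartier--Witt stack computation of Bhatt--Lurie, which, under the hypothesis $p \geq h(G)$, constrains the local monodromy of any \emph{proper} strongly Tate-linear subspace of $C_0^{/x}$. Matching the two bounds forces $Z^{/x} = C_0^{/x}$, completing the proof.

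The main obstacle I expect is establishing strong Tate-linearity of $Z^{/x}$ in the Hodge-type setting: the generalised Serre--Tate coordinates and the corresponding notion of linearity are new, and translating a global geometric condition (Hecke-invariance of $Z$) into a local analytic one (linearity of $Z^{/x}$) via rigidity is the most delicate step. The upper bound on local monodromy via Cartier--Witt stacks, the source of the arithmetic assumption $p \geq h(G)$, is the other key technical input, and obtaining large enough monodromy for $F$-isocrystals from geometry to close the sandwich is a further nontrivial ingredient.
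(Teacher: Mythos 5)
Your proposal is correct and follows essentially the same architecture as the paper: reduce to a formal-local statement via the discrete part, use rigidity to show $Z^{/x}$ is strongly Tate-linear, and sandwich the local monodromy between the lower bound from the new $F$-isocrystal monodromy theorem and the upper bound from Cartier--Witt stacks. The one mechanism your sketch glosses over is the \emph{local stabiliser principle} (Proposition~\ref{Prop:LocalStabiliser}), which is what actually converts prime-to-$p$ Hecke-invariance of $Z$ into invariance of $Z^{/x}$ under a strongly non-trivial $p$-adic group action — the precise hypothesis that the rigidity theorem requires — and I would also note that the assumption $p \ge h(G)$ (equivalently $p > n(\operatorname{Lie} U_{\nu_b})$) is not specific to the Cartier--Witt upper bound but underlies the entire Dieudonn\'e--Lie formalism (integrality of the BCH formula in defining $\Pi(\mathfrak{a}^+)$) and the rigidity argument as well.
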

When $\shg$ is a Siegel modular variety, this result (also for $p=2$) is due to Chai--Oort, see their forthcoming book \cite{ChaiOortBook} for the continuous part and \cite{ChaiOort} for the discrete part. Their proofs do not generalise to more general Shimura varieties because they rely on the existence of hypersymmetric points in Newton strata, which is usually false for Shimura varieties of Hodge type, see \cite{Fung}. Moreover, their proof of the continuous part of the conjecture relies on the fact that any point $x \in \shg(\ovfp)$ is contained in a large Hilbert modular variety, and they use work of Yu--Chai--Oort, \cite{YuChaiOort}, on the Hecke orbit conjecture for Hilbert modular varieties at (possibly ramified) primes. There are many other partial results, e.g. for prime-to-$p$ Hecke orbits of hypersymmetric points in the PEL case, \cite{LXiao}, or for prime-to-$p$ Hecke orbits of $\mu$-ordinary points \cites{ChaiOrdinarySiegel,ZhouMotivic, ShankarEtranges, MaulikShankarTang, OrdinaryHO}.

We also prove that isogeny classes are dense in the Newton strata containing them, see Theorem \ref{Thm:IsogenyClassesDenseNewtonStrata}; this proves (a generalisation of) Conjecture \ref{Conj:HO}. Moreover, we prove results about $\ell$-adic Hecke orbits for primes $\ell \not=p$ generalising work of Chai, \cite{ChaiElladicmonodromy}, in the Siegel case, see Theorem \ref{Thm:LadicHeckeOrbits}.
\begin{Rem}
The assumption that $\gad$ is $\mathbb{Q}$-simple, can be relaxed at the expense of introducing more notation, see Theorem \ref{Thm:MainTheorem2} for a precise statement. The assumption that $p>2$ is inherited from the work of Kim \cite{KimCentralLeaves}, and is not necessary for Siegel modular varieties. 
\end{Rem}
\begin{Rem}
Bragg--Yang prove a potentially good reduction criterion for K3 surfaces, see \cite[Theorem 8.10]{BraggYang}, conditional on the Hecke orbit conjecture for certain orthogonal Shimura varieties, see [\textit{ibid.}, Conjecture 8.2]. In Section \ref{Sec:BraggYang} we explain that our results can be used to prove this conjecture for $p>2$.
\end{Rem}
\subsection{Local monodromy of \texorpdfstring{$F$}{F}-isocrystals}

One of the main tools of the proof of Theorem \ref{Thm:MainThmHO} is the theory of \textit{monodromy groups of $F$-isocrystals}, defined in \cite{CrewIsocrystals}. We will prove a new result for the local monodromy groups of $F$-isocrystals “coming from geometry”, which should be of independent interest. This result is used to prove that the local monodromy groups of the crystalline Dieudonné modules of the universal $p$-divisible groups over $Z\subseteq C$ (notation as in Theorem \ref{Thm:MainThmHO}) are “big”. We explain it in a more general setting. 

\subsubsection{}
Let $X$ be a smooth connected variety over a perfect field with a closed point $x\in X$ and let $(\mathcal{M}^{\dagger},\Phi_{\mathcal{M}^{\dagger}})$ be a semi-simple overconvergent $F$-isocrystal over $X$ with constant Newton polygon. Since the Newton polygon is constant, the associated $F$-isocrystal $(\mathcal{M},\Phi_\calM)$ admits the slope filtration. The main result of \cite{AddezioII} tells us that the \textit{monodromy group} of $\mathcal{M}$ (Definition \ref{Def:MonodromyGroups}), denoted by $G(\calM,x)$, is the parabolic subgroup $P\subseteq G(\calM^{\dag},x)$ associated to the slope filtration. We refine that result for the local monodromy group at $x$. Let $X^{/x}$ be the formal completion of $X$ in $x$ and write $G(\calM^{/x},x)$ for the monodromy group of the restriction of $\calM$ to $X^{/x}$ (see Notation \ref{Not:WarningFormalMonodromy}).
\begin{mainThm}[{Theorem \ref{Thm:LocalGlobalMonodromyTheorem}}] \label{Thm:MainThmMonodromy}
The monodromy group $G(\calM^{{/x}},x)$ of the restriction of $\mathcal{M}$ to $X^{/x}$ is the unipotent radical of the monodromy group $G(\calM,x)$.
\end{mainThm}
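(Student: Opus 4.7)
The plan is to establish the two inclusions in $G(\calM_{X^{/x}}, x) = U$, where $U := \mathrm{Rad}_u(P)$ is the unipotent radical of the parabolic $P = G(\calM, x)$ furnished by the main theorem of \cite{AddezioII}. By Tannakian functoriality applied to the canonical map $X^{/x} \to X$ (which preserves the fibre functor at $x$), one obtains a closed immersion of affine group schemes $G(\calM_{X^{/x}}, x) \hookrightarrow P$, so the task reduces to identifying its image with $U$.

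For the inclusion $G(\calM_{X^{/x}}, x) \subseteq U$: Tannakianly, the Levi quotient $P \twoheadrightarrow P/U = L$ corresponds to the full subcategory generated by the semisimplification $\mathrm{gr}\, \calM$ of the slope filtration, so it suffices to show each isoclinic graded piece $\mathrm{gr}_\lambda \calM$ trivialises on $X^{/x}$. Twisting with a constant isoclinic $F$-isocrystal of slope $-\lambda$ (pulled back from the residue field at $x$ via Dieudonné--Manin) reduces to the unit-root case. A unit-root $F$-isocrystal on $X$ corresponds, by the theorem of Crew--Katz, to a continuous $p$-adic representation of $\pi_1^{\mathrm{et}}(X, x)$, and its restriction to $X^{/x}$ factors through the image of $\pi_1^{\mathrm{et}}(X^{/x}, x) \to \pi_1^{\mathrm{et}}(X, x)$. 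This image is trivial because any finite étale cover of $X$ splits completely above the rational point $x$, so its pullback to the local scheme $X^{/x}$ is a disjoint union of copies of $X^{/x}$.

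The opposite inclusion $U \subseteq G(\calM_{X^{/x}}, x)$ is the hard direction, and I would approach it by analysing the restriction map on Yoneda Ext-groups. For each pair of slopes $\mu > \lambda$ occurring in $\calM$, the slope filtration gives a nonzero extension class in $\mathrm{Ext}^1_{F\text{-isoc}(X)}(\mathrm{gr}_\mu \calM, \mathrm{gr}_\lambda \calM)$: if all such classes were zero, then $\calM$ would equal $\mathrm{gr}\,\calM$ as an $F$-isocrystal and $G(\calM, x)$ would equal $L$, contradicting the main theorem of \cite{AddezioII}. The plan is to show that the restriction map to $\mathrm{Ext}^1_{F\text{-isoc}(X^{/x})}$ is injective on these classes. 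Since both graded pieces become trivial on $X^{/x}$ by the previous step, the target is the Frobenius-fixed part of a concrete de Rham cohomology group on $\spf \widehat{\calO}_{X,x}$; injectivity should follow from a weight argument, exploiting the slope gap $\mu - \lambda > 0$ on the relevant Ext-groups, which ensures that a Frobenius-invariant class detected globally cannot become trivial after completion at a rational point.

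The principal obstacle is this second step, namely the precise local--global comparison of Frobenius-invariant extension classes. I expect the cleanest route to combine the uniqueness of the convergent extension of the overconvergent $\calM^\dagger$ (which rigidifies the comparison between $X$ and $X^{/x}$) with the weight-theoretic techniques already developed in \cite{AddezioII}, rather than a direct computation in $\widehat{\calO}_{X,x}$.
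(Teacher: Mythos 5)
Your easy inclusion $G(\calM_{X^{/x}}, x) \subseteq U$ is correct and essentially parallels the paper's, though you invoke the Crew--Katz unit-root correspondence while the paper cites Theorem 6 of \cite{BerthelotMessing}; both routes ultimately come down to $X^{/x}$ being geometrically simply connected, so each isoclinic graded piece trivialises.

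For the hard inclusion $U \subseteq G(\calM_{X^{/x}}, x)$ the proposal has two gaps, one small and one large. The small one: even granting that the consecutive extension classes attached to the slope filtration of $\calM$ itself remain nonzero after restriction to $X^{/x}$, this only controls the image of $G(\calM_{X^{/x}}, x)$ in the abelianisation $U/[U,U]$. You would additionally need that these particular classes span the full $L$-representation $\operatorname{Lie} U/[\operatorname{Lie} U, \operatorname{Lie} U]$ --- not just a proper $L$-subrepresentation --- before invoking nilpotency of $U$ to conclude equality. For $G \subsetneq \operatorname{GL}_n$ this is plausible but not automatic, and it also does not engage with the full Tannakian criterion (Proposition \ref{Prop:Tann-crit}), which requires checking a condition on \emph{every} subquotient of every tensor construction on $\calM$, not only on $\calM$ itself. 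The large gap is the one you flag yourself: the injectivity of the restriction map on Frobenius-equivariant extension classes is the entire content, and the ``weight argument exploiting the slope gap'' does not by itself deliver it. The slope gap $\mu - \lambda > 0$ forces $\hom(\mathrm{gr}_\mu, \mathrm{gr}_\lambda) = 0$, which is precisely what makes the relevant $\mathrm{Ext}^1$ potentially nonzero, but it says nothing about the restriction to $X^{/x}$. The paper does not reason via Ext groups at all. It instead proves three reductions: first, that restricting to the generic point $\eta$ of $X$ does not change the monodromy group (Theorem \ref{Thm:MonodromyGenericPoint}); second, that over the separable closure $\eta^{\sep}$ the monodromy becomes exactly $R_u(G(\calM, x))$ (Proposition \ref{Prop:monodromy-sep-closure}, a pro-\'etale descent argument layered on the parabolicity theorem of \cite{AddezioII}); and third, that the further passage from $k^{\sep}$ to $k_x^{\sep}$ --- which is where $X^{/x}$ actually enters --- introduces no new Frobenius-stable splittings of the slope filtration (Propositions \ref{Prop:non-splitting} and \ref{Prop:glob-sect-comp}). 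This last step hinges on the fact that $k$ and $k_x$ share a common finite $p$-basis (Lemma \ref{Lem:comp-is-sepa}), so that $\spec k_x^{\sep} \to \spec k^{\sep}$ is a quasi-syntomic cover along which isocrystals descend by \cites{DrinfeldCrystals,MatthewDescent}; a slope bound over $k_x^{\sep} \otimes_{k^{\sep}} k_x^{\sep}$ then forces the uniqueness of any Frobenius-stable splitting, giving the descent. This descent argument plays the role you assigned to a ``weight argument,'' and it is where the real work happens.
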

When $\mathcal{M}$ is the crystalline Dieudonn\'e module of an ordinary $p$-divisible group, this result is proved by Chai in \cite{ChaiOrdinary}. Our proof builds instead on the techniques developed in \cite{AddezioII} and uses new descent results for isocrystals from \cites{DrinfeldCrystals,MathewDescent}. Since $X^{/x}$ is geometrically simply connected, each isocrystal underlying an isoclinic $F$-isocrystal over $X^{/x}$ is trivial by \cite[Theorem 2.4.1]{BerthelotMessing}. This already implies that $G(\calM^{/x},x)$ is unipotent. To relate $G(\calM^{/x},x)$ and the unipotent radical of $G(\calM,x)$, we pass through the respective generic points. 

Write $k$ for the function field of $X$ and $k_x$ for the function field of $X^{/x}$. We first prove in Theorem \ref{Thm:MonodromyGenericPoint} that passing from $X$ to $\spec k$ we do not change the monodromy group of $\calM$, as in the étale setting. Then we show that if we extend $k$ to $k^\sep$ the monodromy group of $\calM$ becomes the unipotent radical of $G(\calM,x)$ (Proposition \ref{Prop:monodromy-sep-closure}). This means that the extension of scalars from $k$ to $k^{\sep}$ kills precisely a Levi subgroup of $G(\calM,x)$. Subsequently, using the fact that the field extension $k\subseteq k_x$ is separable, we show that when we extend $F$-isocrystals from $k^{\sep}$ to $k_x^{\sep}$, their slope filtration does not acquire new splittings (Proposition \ref{Prop:non-splitting}). This is enough to prove that the local monodromy group $G(\calM^{{/x}},x)$ is the same as the monodromy group over $k^{\sep}$. By the previous part of the argument we then deduce that $G(\calM^{{/x}},x)$ is precisely the unipotent radical of $G(\calM,x).$

As an additional outcome of our analysis, we prove the following result of independent interest.

\begin{Thm}[Theorem \ref{Thm:FullFaithfulness}]
If $X$ is an irreducible Noetherian Frobenius-smooth (see Definition \ref{Def:FrobeniusSmooth}) scheme over $\Fp$ with generic point $\eta$ and $\calM$ is an isocrystal over $X$ such that $F^*\calM\simeq \calM$, then 
$$H^0_{\mathrm{cris}}(\eta,\calM_\eta)=H^0_{\mathrm{cris}}(X,\calM).$$ 
\end{Thm}

With this theorem we are also able to deduce the following consequence.

\begin{Cor}[Corollary \ref{Cor:ExistenceSlopeFiltration}] Let $X$ be a Noetherian Frobenius-smooth scheme over $\Fp$. If $(\calM,\Phi_\calM)$ is an $F$-isocrystal over $X$ with locally constant Newton polygon, then it admits the slope filtration.
\end{Cor}
\subsection{An overview of the proof of Theorem \ref{Thm:MainThmHO}} The overall structure of the proof of Theorem \ref{Thm:MainThmHO} is similar to the proof of the ordinary Hecke orbit conjecture in \cite{OrdinaryHO} and is based on a strategy implicit in the work of Chai--Oort and sketched to us by Chai in a letter. 

To explain the proof we first need to establish some notation. Let $\shgb \subseteq \shg$ be the Newton stratum containing the central leaf $C$, and let $Z \subseteq C$ be a reduced closed subvariety that is stable under the prime-to-$p$ Hecke operators, as in the statement of Theorem \ref{Thm:MainThmHO}. The Newton stratum $\shgb \subseteq \shg$ corresponds to an element $[b] \in B(G)$ which has an associated Newton (fractional) cocharacter $\nu_b$. Attached to this cocharacter is a parabolic subgroup $P_{\nu_b}$ with unipotent radical $U_{\nu_b}$.

Let $Z^{\mathrm{sm}}$ be the smooth locus of $Z$. Then \cite[Corollary 3.3.3]{OrdinaryHO} tells us that the monodromy group of the crystalline Dieudonné module $\calM$ of the universal $p$-divisible group over $Z^{\mathrm{sm}}$ is isomorphic to $P_{\nu_b}$. Theorem \ref{Thm:MainThmMonodromy} then tells us that for $x \in Z^{\mathrm{sm}}(\ovfp)$, the monodromy of $\calM$ over $Z^{/x}\coloneqq \spf \widehat{\mathcal{O}}_{Z,x}$ is equal to $U_{\nu_b}$. We are going to leverage this fact to show that $Z^{/x}=C^{/x}$, which will allow us to conclude that $Z^{\mathrm{sm}}$ and hence $Z$ is equidimensional of the same dimension as $C$.

For this we will construct generalised Serre--Tate coordinates on the formal completion $C^{/x}\coloneqq \spf \widehat{\mathcal{O}}_{C,x}$. To be precise, we will show that there is a \DLz\footnote{See Definition \ref{Def:DieudonneLie}.} $\mfap$ governing the structure of $C^{/x}$. For example, if $\shg$ is a Siegel modular variety and $C$ is the ordinary locus, then $C^{/x}$ is a $p$-divisible formal group by the classical theory of Serre--Tate coordinates, and $\mfap=\mathbb{D}(C^{/x})$ is its Dieudonn\'e module, equipped with the trivial Lie bracket. 

More generally the perfection of $C^{/x}$ admits the structure of a (functor to) nilpotent Lie $\qp$-algebra(s) whose Dieudonn\'e module is $\mfa\coloneqq \mfap[\tfrac{1}{p}]$. More canonically, the perfection of $C^{/x}$ is a trivial torsor for a unipotent formal group $\tilde{\Pi}(\mfa)$ related to the aforementioned nilpotent Lie algebra by the Baker--Campbell--Hausdorff formula. In the Siegel case, this unipotent formal group is the identity component of the group of self-quasi isogenies (compatible with the polarisation up to a scalar) of the $p$-divisible group $A_x[p^{\infty}]$. These results come from the perspective of Caraiani--Scholze, \cite{CaraianiScholze}, on $C$ and the perspective of Kim \cite{KimCentralLeaves}, on $C^{/x}$.

\begin{Rem}
    We note that our generalisation of Serre--Tate coordinates is different from the notion of a $p$-divisible cascade given by Moonen in \cite{MoonenSerreTate}. Our alternative definition is necessary in our work since in the Hodge type case the deformation spaces we consider generally do not have a cascade structure. Already in the $\mu$-ordinary case, all we can hope for is a \textit{shifted subcascade} in the sense of \cite{ShankarZhou} (cf. \cite{HongHodgeNewton}) and, as far as we can see, there is no way to run our arguments with shifted subcascades. 
\end{Rem}
\begin{Rem}
    We note that the \DL \vphantom{} $\mfa$ is often non-commutative even when $[b]$ is $\mu$-ordinary. For example, for the Picard modular surface associated to an imaginary quadratic field $E$ at a prime $p$ which is inert in $E$, the Lie algebra $\mfa$ is an extension of an abelian \DL \vphantom{} of slope $1$ by an abelian \DL \vphantom{} of slope $1/2$.
\end{Rem}
\begin{Rem}
    Our notion of generalised Serre--Tate coordinates was discovered independently by Chai--Oort, see \cite{ChaiOortRigidity} and \cite{LetterChaiToMoonen}. They use a slightly different perspective and different terminology, see Section \ref{Sec:Rigidity} for a comparison.
\end{Rem}

The Dieudonn\'e--Lie $\qpbreve$-algebra $\mfa$ turns out to be isomorphic to $\operatorname{Lie} U_{\nu_b}$ equipped with a natural $F$-structure coming from $[b]$. If $\mfb \subseteq \mfa=\operatorname{Lie} U_{\nu_b}$ is an $F$-stable Lie subalgebra, we construct a formally smooth closed formal subscheme $Z(\mfbp) \subseteq C^{/x}$, which is strongly Tate-linear in the sense of Chai--Oort, see \cite{ChaiFoliation}. The construction is such that when $\mfb = \mfa$, the formal subscheme $Z(\mfbp)$ is $C^{/x}$ itself. It follows from work of Chai--Oort, see \cite[Theorem 5.1]{ChaiOortRigidity}, that $Z^{/x}$ admits such a description.

\begin{Thm}[Chai--Oort, Theorem \ref{Thm:Rigidity}]\label{Thm:IntroRigidity}
There is an $F$-stable Lie subalgebra $\mfb \subseteq \mfa$ such that $Z^{/x} = Z(\mfbp)$.
\end{Thm}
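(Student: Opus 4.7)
Since $x \in Z^{\mathrm{sm}}$, the formal completion $Z^{/x}$ is formally smooth, and its tangent space $V := T_x Z^{/x}$ injects into $T_x C^{/x} \cong \mathfrak{a}^+ \otimes_{\zpbreve} \ovfp$. The goal is to construct an $F$-stable Lie subalgebra $\mathfrak{b} \subseteq \mathfrak{a}$ canonically associated to $V$, and then show $Z^{/x} = Z(\mathfrak{b}^+)$. The plan follows the Chai--Oort rigidity paradigm, adapted to the generalised Serre--Tate coordinates developed in the paper.

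The first step is to produce $\mathfrak{b}^+$. I would define $\mathfrak{b}^+ \subseteq \mathfrak{a}^+$ as the smallest $F$-stable Dieudonn\'e--Lie subalgebra of $\mathfrak{a}^+$ whose reduction modulo $p$ contains $V$. The fact that $V$ itself is compatible with the Frobenius structure should follow from $Z^{/x}$ being defined over $\ovfp$ together with the compatibility of the crystalline Frobenius with the identification $T_x C^{/x} \cong \mathfrak{a}^+ \otimes \ovfp$ coming from the generalised Serre--Tate coordinates.

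Next, to establish $Z(\mathfrak{b}^+) \subseteq Z^{/x}$, I would exploit the prime-to-$p$ Hecke-stability of $Z$. On the perfection $C^{/x,\perf}$, which is a trivial torsor for the unipotent formal group $\tilde{\Pi}(\mathfrak{a})$, prime-to-$p$ Hecke-stability of $Z$ translates, via the Caraiani--Scholze and Kim descriptions of the central leaf in terms of Igusa varieties, into stability of $Z^{/x,\perf}$ under a formal subgroup of $\tilde{\Pi}(\mathfrak{a})$. Combined with the definition (and minimality) of $\mathfrak{b}^+$, this formal subgroup contains $\tilde{\Pi}(\mathfrak{b})$; and since $Z(\mathfrak{b}^+)$ is, essentially by construction, the $\tilde{\Pi}(\mathfrak{b})$-orbit of $x$ inside $C^{/x,\perf}$, the inclusion follows after descending from the perfection.

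The main obstacle is the reverse inclusion $Z^{/x} \subseteq Z(\mathfrak{b}^+)$, which is the genuine rigidity content. Here one must rule out the possibility that $Z^{/x}$ is strictly larger than $Z(\mathfrak{b}^+)$ in directions outside of $\mathfrak{b}^+$. I would argue that at any closed point $y$ of $Z^{/x}$, the tangent space $T_y Z^{/x}$, transported back to $T_x C^{/x}$ by a formal translation via an element of $\tilde{\Pi}(\mathfrak{a})$ sending $x$ to $y$, reduces modulo $p$ to a subspace controlled by $V$ and its $F$-iterates, hence sits inside $\mathfrak{b}^+/p\mathfrak{b}^+$. Since both $Z^{/x}$ and $Z(\mathfrak{b}^+)$ are formally smooth with matching tangent spaces at every point, and since we already have the inclusion $Z(\mathfrak{b}^+) \subseteq Z^{/x}$, a standard formal-scheme argument forces equality. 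The delicate point is the transport of tangent spaces: verifying that the formal translation by $\tilde{\Pi}(\mathfrak{a})$ preserves the relevant filtrations and that the Hecke-stability is strong enough to constrain all tangent directions, not just those at $x$.
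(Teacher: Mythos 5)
The proposal diverges fundamentally from the paper's argument and contains two serious gaps. First, the identification $T_x C^{/x} \cong \mathfrak{a}^+ \otimes_{\zpbreve} \ovfp$ is false: $C^{/x} \cong Z(\mathfrak{a}^+)$ is formally smooth of dimension $\dim \mathbb{X}(\mathfrak{a}^+) = \langle 2\rho, \nu_b\rangle$, the \emph{dimension} of the connected $p$-divisible group $\mathbb{X}(\mathfrak{a}^+)$, whereas $\mathfrak{a}^+ \otimes \ovfp$ has dimension equal to its \emph{height}, and these differ for any nontrivial connected $p$-divisible group. More importantly, the tangent space $V = T_x Z^{/x}$ cannot determine $\mathfrak{b}^+$ in the way you propose. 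Already in the abelian case, where $Z(\mathfrak{a}^+)$ is a $p$-divisible formal group, the assertion that a $\Gamma$-stable irreducible closed formal subscheme is a $p$-divisible subgroup is exactly Chai's rigidity theorem, and it is not a first-order statement; defining $\mathfrak{b}^+$ from $V$ and then trying to match tangent spaces silently assumes the conclusion.

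Second, you conflate the translation action of $\tilde{\Pi}(\mathfrak{a})$ with the stabiliser action. The local stabiliser principle (Proposition \ref{Prop:LocalStabiliser}) gives an action on $Z^{/x}$ of the $p$-adic Lie group $\Gamma$, the closure of $I_x'(\mathbb{Z}_{(p)})$ inside $\mathbf{Aut}_G(Y)(\ovfp)$, acting by automorphisms that \emph{fix} $x$; prime-to-$p$ Hecke-stability does \emph{not} produce a formal translation subgroup of $\tilde{\Pi}(\mathfrak{a})$ preserving $Z^{/x}$. Producing a nontrivial translation-invariance of $Z$ is the crux of the entire proof (Proposition \ref{Prop:Stable}), and it requires the approximation Lemma \ref{Lem:RestrComplPerf} together with the complete restricted perfections of Chai--Oort recalled in Appendix \ref{Appendix:A}. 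Your reverse-inclusion step, comparing tangent spaces at ``any closed point $y$ of $Z^{/x}$'', is also not well-posed since a formal spectrum of a complete Noetherian local ring has a unique closed point. The paper's proof instead proceeds by induction on $\dim Z(\mathfrak{a}^+)$: project $\tilde{Z}$ onto the smallest-slope central piece $\mathbb{X}(\mathfrak{b}^+)$ via the $F$-equivariant retraction $\rho$, take the scheme-theoretic image, apply Chai's rigidity for $p$-divisible formal groups to identify it with $Z(\mathfrak{b}_1^+)$, prove $\mathfrak{b}_1 \neq 0$ using the Lie bracket structure (Proposition \ref{Prop:SchemeTheoreticImageNon-trivial}), prove $Z$ is stable under translation by $\Pi(\mathfrak{b}_1^+)$ (Proposition \ref{Prop:Stable}), and then quotient and invoke the inductive hypothesis. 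None of this architecture appears in your proposal.
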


To prove Theorem \ref{Thm:MainThmHO}, it then suffices to show that the formal subschemes $Z(\mfbp)$ for $\mfb \subsetneq \mfa$ have small monodromy. In this direction, we prove the following result.

\begin{Thm}[Theorem \ref{Thm:MonodromyBoundedAbove}]\label{Thm:IntroMonodromyBoundedAbove}
There is a natural closed immersion from the Lie algebra of the monodromy group of $\calM$ over $Z(\mfbp)$ to $\mfb$.
\end{Thm}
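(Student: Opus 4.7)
My plan is to bound the local monodromy of $\calM$ over $Z(\mathfrak{b}^+)$ by combining the construction of this subspace from $\mathfrak{b}^+$ with a Cartier--Witt description of the universal $p$-divisible group, and then invoking a Kodaira--Spencer-type bound on monodromy of $F$-isocrystals.

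As a starting point, the monodromy of $\calM$ over $C^{/x} = Z(\mathfrak{a}^+)$ has Lie algebra $\mathfrak{a}$: this combines Corollary 2.2.1 of \cite{OrdinaryHO} (the global monodromy of $\calM$ over $Z^{\mathrm{sm}}$ is $P_{\nu_b}$) with Theorem \ref{Thm:MainThmMonodromy} (the local monodromy at $x$ is the unipotent radical $U_{\nu_b}$). Since $Z(\mathfrak{b}^+) \hookrightarrow C^{/x}$ is a closed formal subscheme, the monodromy of $\calM$ over $Z(\mathfrak{b}^+)$ is automatically a closed subgroup of $U_{\nu_b}$ whose $F$-stable Lie algebra is contained in $\mathfrak{a}$; the task reduces to showing that this Lie subalgebra lies inside $\mathfrak{b}$.

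The heart of the argument is to translate this into a tangent-space bound via Cartier--Witt theory. I would present the universal $p$-divisible group on $C^{/x}$ through a classifying map to the Cartier--Witt stack of Bhatt--Lurie, using the generalised Serre--Tate coordinates built earlier in the paper from $\mathfrak{a}^+$. By construction of $Z(\mathfrak{b}^+)$ from the Dieudonn\'e--Lie subalgebra $\mathfrak{b}^+$ (via the action of the formal subgroup $\tildeu(\mathfrak{b}) \subseteq \tildeu(\mathfrak{a})$ on the perfection of $C^{/x}$), the restriction of this classifying map to $Z(\mathfrak{b}^+)$ should have tangent map at $x$ whose image is contained in $\mathfrak{b} \subseteq \mathfrak{a}$, modulo the Hodge filtration. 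Finally, I would invoke a Katz-type theorem for $F$-isocrystals on formal schemes: the Lie algebra of the monodromy group is generated, as an $F$-stable subspace of $\operatorname{End}(\calM_x)$, by the Kodaira--Spencer image of the crystal. Combining this with the tangent computation above and the $F$-stability of $\mathfrak{b}$ forces the monodromy Lie algebra into $\mathfrak{b}$.

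The main obstacle is the precise tangent-space calculation matching the two descriptions of $Z(\mathfrak{b}^+)$: the one coming from the action of $\tildeu(\mathfrak{b})$ on perfections in the Caraiani--Scholze/Kim perspective, and the one coming from the Cartier--Witt classifying map of the universal $p$-divisible group. Aligning these and extracting the required Kodaira--Spencer bound is the technically delicate step; it relies on a careful analysis of how the Dieudonn\'e--Lie algebra $\mathfrak{a}^+$ controls infinitesimal deformations inside the Cartier--Witt / prismatic framework, together with a suitably formulated Katz-type theorem for overconvergent $F$-isocrystals in this formal setting.
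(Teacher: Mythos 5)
Your high-level framing (use the Cartier--Witt stack to bound the monodromy by $\mathfrak{b}$) matches the paper's intent, but the mechanism you propose is different and has a genuine gap at its core.

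The gap is the ``Katz-type theorem'' you invoke. You want to say that the Lie algebra of the Tannakian monodromy group of an $F$-isocrystal over a formal scheme is generated, as an $F$-stable Lie subalgebra, by the Kodaira--Spencer image of the crystal. No such theorem is available in this generality, and proving one would be a substantial project: the Tannakian monodromy group of an $F$-isocrystal is a global invariant (it sees all tensor constructions and all horizontal sections, not just the first-order variation), and the interaction with the Frobenius structure is exactly the hard part. Even in characteristic zero, the statement ``monodromy Lie algebra is generated by $\nabla_v$'' needs careful hypotheses; for $F$-isocrystals over a formal neighbourhood it is not something you can simply cite. The $p$-curvature/monodromy dictionary that Katz developed is about unit-root crystals and lisse sheaves, not the full Tannakian group of a crystal with nontrivial slope filtration. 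So the proposal currently reduces the theorem to another unproved theorem, and the reduction is not obviously easier.

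The paper's actual proof never does a tangent-space or Kodaira--Spencer computation. Instead it exploits the fact that $Z(\mathfrak{b}^+)$ carries a canonical $\Pi(\mathfrak{b}^+)$-torsor $P_Z \to Z$ over which the universal $p$-divisible group becomes constant (isomorphic to $Y$). Prismatizing this torsor gives a $\Pi(\mathfrak{b}^+)^{\prism}$-torsor $P_Z^{\prism} \to Z^{\prism}$, and the crucial Lemma \ref{Lem:TheImageIsInsideU} shows that the resulting action on $\mathbb{D}(Y)$ is a group homomorphism $\rho:\Pi(\mathfrak{b}^+)^{\prism} \to \operatorname{GL}(\mathbb{D}(Y))$ factoring through the $\zpbreve$-unipotent group $U(\mathfrak{b}^+)$; this uses the BCH/exponential structure and the integrality hypothesis $p>n(\mathfrak{a})$, not any infinitesimal computation. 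The equivariant trivialisation $\sigma:P_Z^{\prism} \to \mathfrak{Q}$ then gives an honest reduction of the $\operatorname{GL}(\mathbb{D}(Y))$-torsor of the prismatic crystal to $U(\mathfrak{b}^+)$, hence an exact tensor functor $\operatorname{Rep}_{\zpbreve}(U(\mathfrak{b}^+)) \to \operatorname{Vect}(Z^{\prism})$, and Tannaka duality (via Lemma \ref{Lem:RepLattice} and \cite{Deligne1982}) produces the closed immersion $G(\calM_Z) \hookrightarrow U(\mathfrak{b})$. In short, the paper proves the bound on monodromy by exhibiting a structure group reduction for the whole crystal, which is strictly stronger and sidesteps the question of whether Kodaira--Spencer images generate the monodromy Lie algebra. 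If you want to repair your sketch, you would need to either prove the Katz-type theorem you need (a significant independent result) or replace the Kodaira--Spencer step with a descent/torsor argument of the kind the paper uses.
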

By the previous discussion, we know that the Lie algebra of the monodromy group of $\calM$ over $Z^{/x}$ is equal to $\mfa$. Therefore, if $Z^{/x}=Z(\mfbp)$ for some $\mfb$, then $\mfa \subseteq \mfb\subseteq \mfa$, so that $Z^{/x}=Z(\mfbp)= Z(\mfap)=C^{/x}$. 

The proof of Theorem \ref{Thm:IntroMonodromyBoundedAbove} uses the Cartier--Witt stacks of Drinfeld \cite{DrinfeldPrismatization} and Bhatt--Lurie \cite{BhattLurieII}. We use these stacks to “geometrize” the monodromy groups of isocrystals, which makes it possible to give a geometric proof that they are bounded above. The argument with Cartier--Witt stacks happens in Section \ref{Sec:MonodromyTateLinear}.

Theorem \ref{Thm:IntroRigidity} is related to rigidity results for $p$-divisible formal groups of Chai \cite{ChaiRigidity}, and to rigidity results for biextensions of $p$-divisible formal groups of Chai--Oort \cite{ChaiOortRigidityBiextentions}. We would also like to mention unpublished work of Tao Song which proves rigidity results in the case of $p$-divisible $4$-cascades. As mentioned above, we deduce Theorem \ref{Thm:IntroRigidity} from \cite[Theorem 5.1]{ChaiOortRigidity}.
\begin{Rem}
The unipotent formal group $\tildeu(\mfa)$ for $\mfa=\operatorname{Lie} U_{\nu_b}$ is closely related to the unipotent group diamond $\tilde{G}_b^{>0}$ introduced in Chapter III.5 of Fargues--Scholze \cite{FarguesScholze}, see \cite[proof of Lemma 11.25]{ZhangThesis} for a result in this direction. 
\end{Rem}
\begin{Rem}
The first online version of this article contained a proof of Theorem \ref{Thm:Rigidity}, inspired by \cite{ChaiOortRigidityBiextentions}; unfortunately, we discovered in 2024 that this proof had a gap. Chai and Oort independently proved the rigidity theorem in \cite{ChaiOortRigidity}, which appeared about two months after the first version of our article. We have decided to cite their work instead of trying to fix the gap in our original proof.
\end{Rem}
\subsection{Structure of the article}
In Section \ref{Sec:Background} we cover some background theory on $p$-divisible groups, isocrystals, and prove some technical results about formal schemes. We prove Theorem \ref{Thm:MainThmMonodromy} in Section \ref{Sec:MonodromyTheorem}. In Section \ref{Sec:AutoEndo} we discuss internal-Hom $p$-divisible groups and groups of (quasi-)\linebreak automorphisms of $p$-divisible groups, and introduce Dieudonn\'e--Lie algebras. In Section \ref{Sec:CentralLeaves} we discuss central leaves for Shimura varieties of Hodge type and describe their formal neighborhoods in terms of Dieudonn\'e--Lie algebras using work of Kim \cite{KimCentralLeaves}. We prove Theorem \ref{Thm:IntroMonodromyBoundedAbove} in Section \ref{Sec:MonodromyTateLinear} and in Section \ref{Sec:Rigidity} we explain how Theorem \ref{Thm:Rigidity} follows from the work of Chai--Oort. In Section \ref{Sec:MainTheorems} we state our main Theorem \ref{Thm:MainTheorem2}, a variant for isogeny classes (Theorem \ref{Thm:IsogenyClassesDenseNewtonStrata}) and a variant for $\ell$-adic Hecke orbits (Theorem \ref{Thm:LadicHeckeOrbits}). We also discuss some future directions and applications.

\subsection{Acknowledgements}
We are indebted to Ching-Li Chai and Frans Oort for the ideas they developed and we used for our work on the Hecke orbit conjecture. The first named author would also like to thank Ching-Li Chai for his letter on the relation between the parabolicity conjecture and the Hecke orbit conjecture. We are grateful to Brian Conrad, Sean Cotner, Andrew Graham, and Richard Taylor for many helpful discussions about this work. We also thank Hélène Esnault, Kentaro Inoue, and Matteo Tamiozzo for their comments on an earlier version of this article. Finally, we are grateful to the anonymous referee for their insightful comments, which have contributed to improving the presentation of this work.

The first named author was funded by the Deutsche Forschungsgemeinschaft (DA-2534/1-1, project ID: 461915680).
\newcommand{\nilp}{\operatorname{Nilp}_{\Zp}}%
\newcommand{\nilpo}{\operatorname{Nilp}_{\Zp}^\mathrm{op}}%

\section{Preliminaries} \label{Sec:Background} In this section we will introduce some notation and recall some definitions about Dieudonn\'e modules and isocrystals. First, we prove some preliminary technical result about formal schemes. 

\subsection{Formal schemes}

In this article we use the notion of \textit{formal schemes} as defined in \cite{EGA}. Note that in the affine case, they correspond to \textit{classical affine formal algebraic spaces} in the sense of \cite[Tag 0AID]{stacks-project}. The admissible topological rings we will mainly consider will be {adic rings} with a finitely generated ideal of definition.

If $R$ is an adic ring over a field $\kappa$ (not asked to be perfect) with ideal of definition $I$, we write $h_{\Spf R}:\algo{\kappa}\to \mathrm{Sets}$ for the associated functor of points, where $\algo{\kappa}$ is the opposite of the category of $\kappa$-algebras endowed with the fpqc topology. This functor of points is defined as
\begin{align}
    \varinjlim_n h_{\spec R/I^n};
\end{align}
it is an fpqc sheaf by \cite[Tag 0AI2]{stacks-project}. If $S$ is another adic ring, then by \cite[Lemma 0AN0]{stacks-project} a homomorphism of functors $f:h_{\spf S} \to h_{\spf R}$ comes uniquely from a continuous homomorphism $f:R \to S$. In the text, we implicitly identify $\spf R$ and $h_{\spf R}$.


\begin{Lem} \label{Lem:ClosedImmersion}
Let $(A, \mathfrak{m})$ and $(B, \mathfrak{n})$ be complete Noetherian local rings over $\kappa$ with residue field $\kappa$. Let $f:\Spf B \to \Spf A$ be a monomorphism of affine formal schemes over $\kappa$, then it is a closed immersion.
\end{Lem}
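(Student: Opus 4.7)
The plan is to exploit the monomorphism hypothesis by testing against the Artinian object $T := \Spec \kappa[\epsilon]/(\epsilon^2)$, thereby reducing to a surjectivity statement on Zariski cotangent spaces, and then to bootstrap this to surjectivity of the underlying ring map $A \to B$ by a successive-approximation argument using completeness of $A$. Because closed immersions of affine formal schemes $\Spf B \hookrightarrow \Spf A$ are precisely those corresponding to surjective continuous maps of adic$^*$ rings, this will suffice.

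First I would identify $T$-valued points of $\Spf A$ and $\Spf B$ with the corresponding Zariski tangent spaces. For $R \in \{A,B\}$, a continuous $\kappa$-algebra map $R \to \kappa[\epsilon]/(\epsilon^2)$ is automatically local (since the residue fields match $\kappa$ via the given $\kappa$-algebra structure), factors through $R/\mathfrak{m}_R^2 \cong \kappa \oplus \mathfrak{m}_R/\mathfrak{m}_R^2$, and is in canonical bijection with elements of $\Hom_\kappa(\mathfrak{m}_R/\mathfrak{m}_R^2, \kappa)$. The monomorphism hypothesis then forces the induced map of tangent spaces to be injective, so by finite-dimensionality over $\kappa$ (using Noetherianity of $A$ and $B$) the dual cotangent map $\mathfrak{m}_A/\mathfrak{m}_A^2 \to \mathfrak{m}_B/\mathfrak{m}_B^2$ is surjective.

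Next I would promote this to surjectivity of $A \to B$. Choose $a_1, \dots, a_n \in \mathfrak{m}_A$ whose images generate $\mathfrak{m}_B/\mathfrak{m}_B^2$; by Nakayama on the Noetherian local ring $B$, the elements $f(a_1), \dots, f(a_n)$ generate $\mathfrak{m}_B$ as an ideal, hence $\mathfrak{m}_B^k = \mathfrak{m}_A^k \cdot B$ for all $k$ and $\mathfrak{m}_B^k/\mathfrak{m}_B^{k+1}$ is spanned over $\kappa$ by degree-$k$ monomials in the $f(a_i)$. Given $b \in B$, after subtracting its residue in $\kappa \subseteq A$ I may assume $b \in \mathfrak{m}_B$. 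I would then construct inductively $\alpha_k \in \mathfrak{m}_A$ with $\alpha_{k+1} - \alpha_k \in \mathfrak{m}_A^{k+1}$ and $b - f(\alpha_k) \in \mathfrak{m}_B^{k+1}$, at each step adding to $\alpha_k$ a $\kappa$-linear combination of degree-$(k+1)$ monomials in the $a_i$ chosen to cancel $b - f(\alpha_k)$ modulo $\mathfrak{m}_B^{k+2}$. The sequence $(\alpha_k)$ is Cauchy in the complete ring $A$, and continuity of $f$ forces its limit $\alpha$ to satisfy $f(\alpha) = b$. Since the quotient topology on $B$ induced from $A$ coincides with the $\mathfrak{m}_B$-adic one (again because $\mathfrak{m}_B^k = \mathfrak{m}_A^k B$), the map $f$ corresponds to a continuous surjection of adic$^*$ rings, so $\Spf B \to \Spf A$ is a closed immersion.

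The main point requiring care is the first step: one must justify carefully that testing the monomorphism property against the single object $T = \Spec \kappa[\epsilon]/(\epsilon^2)$ yields tangent-space injectivity in the intended sense. This relies on the hypothesis that the residue fields of both $A$ and $B$ coincide with $\kappa$, so that $R/\mathfrak{m}_R^2$ splits compatibly as $\kappa \oplus \mathfrak{m}_R/\mathfrak{m}_R^2$, together with the identification of morphisms of formal schemes with continuous ring maps recalled after the definition of pre-adic$^*$ rings. Once this is secured, the Nakayama reduction and the approximation argument are routine.
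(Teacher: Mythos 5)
Your proof is correct, but it is a genuinely different argument from the paper's. The paper tests the monomorphism property against the family of Artinian quotients $B/(\mathfrak{m}B + \mathfrak{n}^j)$, where the two natural maps $B \to B/(\mathfrak{m}B+\mathfrak{n}^j)$ (quotient vs.\ factoring through the residue field) agree after precomposition with $A$ but differ unless $\mathfrak{m}B+\mathfrak{n}^j = \mathfrak{n}$; this yields $\mathfrak{m}B+\mathfrak{n}^j=\mathfrak{n}$ for all $j$, hence tautness and the $\mathfrak{m}$-adic topology on $B$ via Stacks Lemmas 0AMS and 0APU, and then representability plus Stacks Lemma 0GHZ gives the closed-immersion conclusion. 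You instead extract only the consequence for the single test object $\kappa[\epsilon]/(\epsilon^2)$, i.e.\ injectivity of tangent spaces and hence surjectivity of cotangent spaces, and then bootstrap this modest piece of information by hand: Nakayama upgrades cotangent surjectivity to $\mathfrak{m}_A B = \mathfrak{n}$, and a successive-approximation argument (really the standard proof that a local map of complete Noetherian local $\kappa$-algebras inducing a surjection on cotangent spaces is itself surjective) gives surjectivity of $A \to B$, after which the closed-immersion conclusion is immediate since the $\mathfrak{n}$-adic and $\mathfrak{m}_A B$-adic topologies agree. The trade-off is clear: your route uses much less of the monomorphism hypothesis up front but pays for it with a hands-on algebraic bootstrap, whereas the paper's choice of tailored test objects front-loads more information and then leans on Stacks-project machinery for formal algebraic spaces. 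One small point of precision: when you say a continuous $\kappa$-algebra map $R \to \kappa[\epsilon]/(\epsilon^2)$ is automatically local ``since the residue fields match $\kappa$,'' the real mechanism is the unit argument (for $x \in \mathfrak{m}_R$, if $\phi(x)$ had nonzero residue $a$ then $a - x$ would be a unit mapping to a nonunit), which moreover shows continuity is automatic here rather than something to impose; your argument is fine, but worth stating cleanly.
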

\begin{proof}
We have a local homomorphism of $\kappa$-algebras $f:(A, \mathfrak{m}) \to (B,\mathfrak{n})$ that induces an isomorphism on residue fields and such that for every $\kappa$-algebra $R$ the induced map $\hom(B,R) \to \hom(A,R)$ is injective.

For $j\geq 0$, let $R=B/(\mathfrak{m}B+\mathfrak{n}^j)$ and assume for the sake of contradiction that $\mathfrak{m}B + \mathfrak{n}^j$ is strictly contained in $\mathfrak{n}$. Then the natural map $B \to R$ does not factor through $B/\mathfrak{n}$. Note that there is another natural map $B \to R$ given by $$B \to B/\mathfrak{n} = A/\mathfrak{m} \to B/\mathfrak{m}B \to R,$$ and these two natural maps are different by our assumption. But they agree after precomposition with $A$, so this contradicts the injectivity of $\hom(B, R) \to \hom(A, R)$, and thus $\mathfrak{m}_B + \mathfrak{n}^j = \mathfrak{n}$.

By \cite[Tag 0AMS]{stacks-project}, the closure of the ideal $\mathfrak{m}B$ is equal to $\mathfrak{n}$ and it follows that the closure of the ideal $\mathfrak{m}^j B$ is equal to $\mathfrak{n}^j$. Therefore the map $f$ is taut and thus by \cite[Tag 0APU]{stacks-project} the topology on $B$ is given by the $\mathfrak{m}$-adic topology. Thus $B/\mathfrak{m}^j_B$ has the discrete topology and it follows that the following diagram is cartesian
\begin{equation*}
    \begin{tikzcd}
    \spec B/\mathfrak{m}^j_B \arrow{r} \arrow{d} & \spf B \arrow{d} \\
    \spec A/\mathfrak{m}^j \arrow{r} & \spf A.
    \end{tikzcd}
\end{equation*}
This tells us that $f:\spf B \to \spf A$ is representable, and \cite[Tag 0GHZ]{stacks-project} allows us to conclude that $f$ is a closed immersion.
\end{proof}

\subsection{Frobenius-smooth schemes and the Tannakian category of isocrystals}
In the study of crystals, there is a special class of schemes with a particularly good behaviour.

\begin{Def}\label{Def:FrobeniusSmooth}We say that a scheme $X$ over $\mathbb{F}_p$ is \textit{Frobenius-smooth} if the absolute Frobenius $F:X \to X$ is syntomic. 
\end{Def}
The condition of being Frobenius-smooth is equivalent to $X$ being Zariski locally of the form $\operatorname{Spec} B$ where $B$ has a \textit{finite (absolute) $p$-basis}, \cite[Lemma 2.1.1]{DrinfeldCrystals}, which means that there exist elements $x_1, \cdots, x_n$ such that every element $b \in B$ can be uniquely written as
\begin{align}
    b=\sum b_{\alpha}^p x_{1}^{\alpha_1} \cdots  x_n^{\alpha_n},
\end{align}
where $0 \le \alpha_i \le p-1$ and $b_{\alpha} \in B$. 

The main examples of Frobenius-smooth schemes that we will encounter are smooth schemes over perfect fields and power series rings over perfect fields. If $X$ is a Noetherian Frobenius-smooth scheme, then it is regular by a result of Kunz (see \cite[Tag 0EC0]{stacks-project}).

\begin{Def}
If $X$ is a scheme over $\Fp$, we denote by $\operatorname{Isoc}(X)$ the category of crystals in \textit{coherent} $\mathcal{O}$-modules on the absolute crystalline site of $X$ with $\hom$-sets tensored by $\qqq$. When $X$ is irreducible, Noetherian, and Frobenius-smooth we define \begin{align}
    \kappa\coloneqq \bigcap_{i=1}^{\infty} \Gamma(X, \mathcal{O}_X)^{p^n}
\end{align}
to be the \textit{field of constants} of $X$. Note that $\kappa$ is a field thanks to \cite[Section 3.1.2]{DrinfeldCrystals} and when $X$ is in addition a geometrically connected scheme of finite type over a perfect field, $\kappa$ coincides with the base field. We also write $K$ for $W(\kappa)[\tfrac{1}{p}]$. 
\end{Def}
\begin{Prop}[{\cite[Corollary 3.3.3]{DrinfeldCrystals}}]\label{Prop:DrinfeldTannakianCategory}

If $X$ is an irreducible Noetherian Frobenius-smooth scheme, then $\operatorname{Isoc}(X)$ is a $K$-linear Tannakian category.
\end{Prop}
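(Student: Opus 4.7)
The plan is to verify that $\operatorname{Isoc}(X)$ satisfies the axioms in Deligne's recognition criterion for Tannakian categories: an abelian $K$-linear rigid symmetric monoidal category with $\operatorname{End}(\mathbf{1}) = K$ that admits a fiber functor to vector spaces over some extension of $K$. My strategy is to verify these axioms first locally, using the $p$-basis to give an explicit description of crystals, and then to globalize using the irreducibility and Noetherian hypotheses.

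First I would reduce to the local case. By Frobenius-smoothness, $X$ is Zariski-locally of the form $\operatorname{Spec} B$ with $B$ admitting a finite absolute $p$-basis. A standard construction produces a $p$-adically complete flat $\mathbb{Z}_p$-lift $\widetilde{B}$ of $B$, and on affines of this form the classical crystalline dictionary identifies crystals of finitely generated quasi-coherent $\mathcal{O}$-modules on $\operatorname{Spec} B$ with finitely generated $\widetilde{B}$-modules $M$ endowed with a quasi-nilpotent integrable connection $\nabla$. Kernels, cokernels, direct sums, tensor products and internal homs of crystals correspond to the evident operations on such pairs $(M,\nabla)$, so the category of crystals is abelian and symmetric monoidal; passing to the isogeny category by inverting $p$ in the Hom-groups preserves both properties.

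Next I would establish rigidity and compute $\operatorname{End}(\mathbf{1})$. The connection preserves the $p$-power torsion submodule of $M$, so $M[\tfrac{1}{p}]$ is a finitely generated torsion-free module with connection over the regular Noetherian ring $\widetilde{B}[\tfrac{1}{p}]$, hence locally projective; its dual with the induced connection then provides the categorical dual of the corresponding isocrystal, giving rigidity. For the unit object, $\operatorname{End}_{\operatorname{Isoc}(\operatorname{Spec} B)}(\mathbf{1})$ is identified with the horizontal sections of $\widetilde{B}[\tfrac{1}{p}]$, which equals $W(B_0)[\tfrac{1}{p}]$ for the local ring of constants $B_0 = \bigcap_n B^{p^n}$. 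Using irreducibility and the Noetherian hypothesis to glue the $B_0$'s across an affine cover, $\operatorname{End}(\mathbf{1})$ is globally identified with $W(\kappa)[\tfrac{1}{p}] = K$.

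Finally, for the fiber functor I would pick any geometric point $x: \operatorname{Spec} \overline{k} \to X$; restriction of isocrystals along $x$ produces a $K$-linear exact symmetric monoidal functor to finite-dimensional $W(\overline{k})[\tfrac{1}{p}]$-vector spaces. Exactness follows from the local freeness established in the rigidity step, and faithfulness from the fact that a horizontal section of an isocrystal whose image vanishes at $x$ must vanish on a neighborhood (by the connection) and hence on all of $X$ (by connectedness). Deligne's criterion then lets me conclude. The principal obstacle is the rigidity step: one must genuinely use Frobenius-smoothness to obtain a sufficiently flexible local lift and the ensuing local projectivity of isocrystals, and one needs the descent results summarised in \cite{DrinfeldCrystals} to ensure that these local constructions are independent of the chosen $p$-basis and glue consistently along an affine cover of $X$.
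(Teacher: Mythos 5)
The paper offers no proof of this statement: it cites it directly as Corollary 3.3.3 of \cite{DrinfeldCrystals}, so the comparison is really against Drinfeld's argument rather than anything in this article. Your overall plan (Deligne's recognition criterion, local analysis via a $p$-basis and a flat $\Zp$-lift, then descent/gluing) is the right general shape and is in the spirit of Drinfeld's approach, so the strategy is not the problem.

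The concrete gap is in your rigidity step. You write that $M[\tfrac{1}{p}]$ is a finitely generated torsion-free module with connection over the regular Noetherian ring $\widetilde{B}[\tfrac{1}{p}]$, ``hence locally projective.'' That implication is false: over a regular Noetherian ring of dimension $\ge 2$, torsion-free does not imply locally projective (already the maximal ideal $(x,y)\subset k[x,y]$ is torsion-free and not locally free). Local projectivity of isocrystals is a genuine theorem here, and the proof has to use the integrable connection, not just torsion-freeness. The classical mechanism (as in Katz's work on connections) is that a finitely generated module with an integrable quasi-nilpotent connection is free whenever one has a commuting system of derivations ``transversal'' to the maximal ideal; producing such derivations from the lifted $p$-basis and checking that they suffice at every prime of $\widetilde{B}[\tfrac{1}{p}]$ is precisely the work you are eliding. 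Since you also explicitly defer the independence-of-$p$-basis and gluing arguments to the descent results of \cite{DrinfeldCrystals}, the proposal as written is an outline of the strategy rather than a proof; the step you flag as the ``principal obstacle'' is indeed the missing content, but the argument you give for it does not work.
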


This allows us to define the monodromy groups of isocrystals in this situation.

\begin{Def}\label{Def:MonodromyGroups} Let $X$ be an irreducible Noetherian Frobenius-smooth scheme and let $\calM$ be an isocrystal over $X$. We define $\langle \calM \rangle$ to be the Tannakian subcategory of $\operatorname{Isoc}(X)$ generated by $\calM$. If $\xi$ is an $\Omega$-point of $X$ for some perfect field $\Omega$, we define $G(\calM,\xi)$ to be the Tannaka group of $\langle\calM \rangle$ with respect to the fibre functor induced by $\xi$. We call it the \textit{monodromy group of $\calM$} with respect to $\xi$.
\end{Def}

\begin{Not}\label{Not:WarningFormalMonodromy}
In Theorem \ref{Thm:MainThmMonodromy}, Section \ref{Sec:LocalMonodromyTheorem}, and Section \ref{Sec:MonodromyTateLinear} we also need to consider the monodromy group of the restriction of an isocrystal $\calM$, defined on a smooth variety $X$ over a perfect field of characteristic $p$, to $X^{/x}=\spf \widehat{\mathcal{O}}_{X,x}$ where $x\in X$ is a closed point. For simplicity, in this situation, we will rather consider the restriction of $\calM$ via the morphism $\spec \widehat{\mathcal{O}}_{X,x}\to X$, denoted by $\calM^{/x}$. Therefore in these sections we will treat $X^{/x}$ as a scheme rather than an affine formal scheme. If $\xi$ is a perfect point of $\spec \widehat{\mathcal{O}}_{X,x}$ we then define $G(\calM^{/x},\xi)$ as the monodromy group of $\calM^{/x}$ over $\spec \widehat{\mathcal{O}}_{X,x}$ (note that $\spec \widehat{\mathcal{O}}_{X,x}$ is Frobenius-smooth, so that Definition \ref{Def:MonodromyGroups} applies). Arguing as in \cite[Proposition 2.4.8]{DeJongDieudonne}, one can show that this monodromy group is the same as the one constructed considering the category of $F$-isocrystals over the formal scheme $\spf \widehat{\mathcal{O}}_{X,x}$.
\end{Not}

\subsection{\texorpdfstring{$p$}{p}-divisible groups and Dieudonn\'e theory} \label{Sec:PrelimBT}

We recall here the notions of $p$-divisible group and Dieudonné module. For simplicity, for the latter, we restrict ourselves to some special classes of \textit{quasi-syntomic} rings.
\begin{Def}
	For an $\Fp$-algebra $R$, a \textit{$p$-divisible group} over $R$ is an fpqc sheaf of abelian groups $Y$ over $\algo{R}$ such that the following conditions are satisfied:
	
	\begin{itemize}
		\item[{(1)}]  $Y(A)=\bigcup_{n=1}^\infty Y(A)[p^n]$ for every $A\in \algo{R}$.
		\item[(2)] $[p]:Y\to Y$ is surjective.
		\item[(3)] $Y[p]$ is representable by a finite locally-free group scheme.
	\end{itemize}
If $Y$ is a $p$-divisible group, we write $Y^\vee$ for its \textit{Cartier dual}, $T_p Y$ for the \textit{Tate module} of $Y$, defined as the fpqc sheaf 
    $\varprojlim_n Y[p^n]$, and $\tilde{Y}$ for the \textit{universal cover} of $Y$, defined as
\begin{align}
    \tilde{Y} \coloneqq  \varprojlim_{[p]} Y,
\end{align}
where the transition maps are given by multiplication by $p$. If $R$ is adic with ideal of definition $I$, a \textit{$p$-divisible group over $\spf R$} is a compatible family of $p$-divisible groups $Y_n$ over $R/I^n$. Thanks to \cite[Lemma 2.4.4]{DeJongDieudonne},  a $p$-divisible group over $\spf R$ defines a unique $p$-divisible group over $R.$

\end{Def}

\begin{Def} \label{Def:Quassyntomic}
We say that a scheme $S$ is \textit{quasi-syntomic} if the relative cotangent complex $L \Omega^1_{S/\mathbb{F}_p}$ has Tor-amplitude in $[-1,0]$. We also say that $T\to S$ is a \textit{quasi-syntomic cover } if the relative cotangent complex $L \Omega^1_{T/S}$ has Tor-amplitude in $[-1,0]$ and $T \to S$ is an fpqc cover. We call an $\mathbb{F}_p$-scheme \emph{quasi-regular semiperfect (qrsp)} if it is quasi-syntomic and semiperfect. For a perfect field $\kappa$ of characteristic $p$ we denote by $\mathrm{QRSP}_\kappa\subseteq \mathrm{Alg}_\kappa$ the full subcategory of \textit{qrsp $\kappa$-algebras}, namely those algebras with qrsp spectrum.
\end{Def}

For $R\in \mathrm{QRSP}_{\Fp}$, let $A_{\mathrm{cris}}(R)$ be Fontaine's ring of crystalline periods (see \cite[Proposition 4.1.3]{ScholzeWeinstein}) with $\varphi:A_{\mathrm{cris}}(R) \to A_{\mathrm{cris}}(R)$ induced by the absolute Frobenius on $R$, and let $B^+_{\mathrm{cris}}(R)$ be  $A_{\mathrm{cris}}(R)[\tfrac{1}{p}]$.
\begin{Def} \label{Def:DieudonneI}
A \emph{Dieudonn\'e module} over $R$ is a pair $(M^+, \varphi_{M^+})$, where $M^+$ is a finite locally free $A_{\mathrm{cris}}(R)$-module and 
\begin{align}
    \varphi_{M^+}:  M^+[\tfrac 1p] \to M^+[\tfrac 1p]
\end{align}
is a semilinear bijection such that  \begin{align}
    M^+ \subseteq \varphi_{M^+} (M^+) \subseteq \tfrac{1}{p} M^+.
\end{align}
A \emph{rational Dieudonn\'e module} $(M, \varphi_M)$ over $R$ is a $\varphi$-module over $B^+_{\mathrm{cris}}(R)$ which is isomorphic to $(M^+, \varphi_{M^+})[\tfrac{1}{p}]$ for some  {Dieudonn\'e module} $(M^+, \varphi_{M^+})$. \end{Def}

\begin{Thm}[Ansch\"utz--Le Bras]\label{Thm:DieudonneFullyFaithful}
	If $R$ is a qrsp $\Fp$-algebra, there exists a fully faithful functor $$\mathbb{D}\colon \{ \textrm{$p$-divisible groups over $R$}  \} \to \{ \textrm{Dieudonné modules over $R$}  \},$$ which is functorial in $R$. 
\end{Thm}
\begin{proof}
In {\cite[Definition 4.35]{AnschutzLeBras}} the authors attach to a $p$-divisible group over a quasi-syntomic ring (without assumptions on the characteristic) a \textit{prismatic Dieudonné crystal}. In their case, the induced functor is a contravariant functor. In [\textit{ibid.}, Theorem 4.44] they prove that for a qrsp $\Fp$-algebra $R$ their construction agrees with the standard contravariant Dieudonné module functor.

In particular, when $R$ is qrsp, the prismatic Dieudonné crystal of a $p$-divisible group $Y$ over $R$ is a finite locally free $A_{\mathrm{cris}}(R)$-module $M_\prism(Y)$ endowed with a semilinear bijection
\begin{align}
    \varphi_{\prism}:  M_\prism(Y)[\tfrac 1p] \to M_\prism(Y)[\tfrac 1p]
\end{align}
such that
\begin{align}
    pM_\prism(Y) \subseteq \varphi_{\prism}M_\prism(Y) \subseteq  M_\prism(Y).
\end{align}

We define $\mathbb{D}(Y)$ to be the module $M_\prism(Y^\vee)$ endowed with $\varphi_\prism/p.$ Thanks to [\textit{ibid.}, Theorem 4.82], the (covariant) functor $\mathbb{D}$ is fully faithful.
\end{proof}

\begin{Rem}
 Note that our choice of Frobenius structure on $\mathbb{D}(Y)$ differs from the standard definition of the covariant Dieudonné module functor. Rather than using the usual assignment $$Y\mapsto(M_\prism(Y^\vee),\varphi_\prism),$$
we adopt conventions that align with those in \cite{CaraianiScholze}. With our normalisation, $\mathbb{D}(\qp/\mathbb{Z}_p)$ is $A_{\mathrm{cris}}(R)$ equipped with the standard Frobenius and $\mathbb{D}(\mu_{p^{\infty}})$ is $A_{\mathrm{cris}}(R)$ equipped with the Frobenius divided by $p$. This normalisation also ensures that the contravariant Dieudonn\'e module of $Y$, viewed as an $F$-isocrystal, is isomorphic to the dual of $\mathbb{D}(Y)$. Finally, if $Y^{(1)}$ is the Frobenius twist of $Y$, the morphism $\varphi_\prism/p$ induces a ${B}_{\mathrm{cris}}^+(R)$-linear isomorphism $\mathbb{D}(Y^{(1)})[\tfrac{1}{p}]\to \mathbb{D}(Y)[\tfrac{1}{p}]$. The geometric Frobenius  $F_Y\colon Y\to Y^{(1)}$ is sent by $\mathbb{D}$ to the inverse of this linearisation.
\end{Rem}
\begin{Rem} \label{Rem:SlopeReversal}
    If $R=\ovfp$, we have that $A_{\mathrm{cris}}(R)=W(R)=\zpbreve$. A rational Dieudonn\'e module $(M, \varphi_M)$ over $\ovfp$ gives rise to a $\varphi$-isocrystal over $\qpbreve=\zpbreve[\tfrac{1}{p}]$, whose slopes are contained in $[-1,0]$. For a $p$-divisible group $Y$ over $\ovfp$, the slopes are contained in $[0,1]$ and they are of the form $-\lambda$, where $\lambda$ is a slope of $\mathbb{D}(Y)[\tfrac{1}{p}]$. 
\end{Rem}

We will also need the following variant of Theorem \ref{Thm:DieudonneFullyFaithful}.

\begin{Thm}[Berthelot--Messing]\label{Thm:BMDieudonne}
Let $R$ be a normal Frobenius-smooth $\Fp$-algebra. There exists a fully faithful functor $$\mathbb{D}(-)[\tfrac{1}{p}]\colon \{ \textrm{$p$-divisible groups over $R$}  \}_{\mathbb{Q}} \to F\textrm{-}\operatorname{Isoc}(\spec(R)),$$ where the source is the category of $p$-divisible groups up to isogeny.
\end{Thm}
\begin{proof}
    This is a special case of \cite[Théorème 7]{BerthelotMessing}. We construct $\mathbb{D}$ from Berthelot--Messing's contravariant functor using the same normalisation as in Theorem \ref{Thm:DieudonneFullyFaithful}.
\end{proof}
Let us recall now some well-known results on the representability of $p$-divisible groups.
\begin{Def}\label{Def:FormalLieVariety}
    We say that an affine formal scheme over $\ovfp$ is a \textit{formal Lie variety} if it is isomorphic to $\Spf \ovfp [[V]]$ for some finite dimensional $\ovfp$-vector space $V$. We say that $\dim(V)$ is the dimension of such a formal scheme.
\end{Def}

\subsubsection{Representability}\label{Sec:Representability}
Let $Y$ be a connected $p$-divisible group over $\ovfp$. Thanks to \cite[Chapter II, Theorem 2.1.8]{Messing}, $Y$ is representable by a formal Lie variety. By \cite[Proposition 4.1.2.(4)]{CaraianiScholze}, its Tate module $T_p Y$ is representable by an affine scheme isomorphic to the spectrum of 
\begin{align} \label{eq:qrsp}
    \ovfp[X_1^{1/p^{\infty}}, \cdots, X_m^{1/p^{\infty}}]/(X_1, \cdots, X_m)
\end{align}
for some $m\geq 0$. This is the classical example of a quasi-regular semiperfect scheme. We define
\begin{align}
    \ovfp[[X_1^{1/p^{\infty}}, \cdots, X_m^{1/p^{\infty}}]]
\end{align}
to be the $(X_1, \cdots, X_m)$-adic completion of $\ovfp[X_1^{1/p^{\infty}}, \cdots, X_m^{1/p^{\infty}}]$. Thanks to \cite[Proposition 3.1.3.(iii)]{ScholzeWeinstein}, the universal cover $\tilde{Y}$ is (non-canonically) representable by $$\spf \ovfp[[X_1^{1/p^{\infty}}, \cdots, X_m^{1/p^{\infty}}]].$$ From this discussion, it follows that both $\tilde{Y}$ and $T_p Y$ are determined by their restriction to $\mathrm{QRSP}_{\ovfp}^{\mathrm{op}}\subseteq\algo{\ovfp}$. Note that we can describe them explicitly as sheaves over $\mathrm{QRSP}_{\ovfp}^{\mathrm{op}}$ thanks to the following lemma.
\begin{Lem} \label{Lem:IntegralPHodgeTheoryTateModuleUniversalCover}
There is a commutative diagram of fpqc sheaves over $\mathrm{QRSP}_{\ovfp}^{\mathrm{op}}$, which evaluated at $R\in \mathrm{QRSP}_{\ovfp}^{\mathrm{op}}$ gives
\begin{equation} \label{eq:IntegralDieudonneDiagram}
    \begin{tikzcd}
    T_p Y(R) \arrow[d, hook] \arrow[r, "\simeq"] & \left(A_{\mathrm{cris}}(R) \otimes_{\zpbreve} \mathbb{D}(Y)\right)^{\varphi=1} \arrow{d} \\
    \tilde{Y}(R) \arrow[r, "\simeq"] & \left(B_{\mathrm{cris}}^+(R) \otimes_{\qpbreve} \mathbb{D}(Y)[\tfrac{1}{p}] \right)^{\varphi=1},
    \end{tikzcd}
\end{equation}
where $\varphi$ is given by the diagonal Frobenius.
\end{Lem}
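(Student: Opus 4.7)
The plan is to extract both horizontal isomorphisms from the Anschütz--Le Bras equivalence (Theorem 4.8.5 of \cite{AnschutzLeBras}, cited in the excerpt), according to which
\begin{align}
    \hom_R(X, X') \xrightarrow{\sim} \hom_\varphi(\mathbb{D}(X), \mathbb{D}(X'))
\end{align}
for any two $p$-divisible groups $X, X'$ over a qrsp $\ovfp$-algebra $R$. Once both identifications are in place, commutativity of the square will be automatic from functoriality: it records the compatibility of the natural inclusion $T_p Y(R) \hookrightarrow \tilde{Y}(R)$ with the inclusion $A_{\mathrm{cris}}(R) \hookrightarrow B_{\mathrm{cris}}^+(R) = A_{\mathrm{cris}}(R)[\tfrac{1}{p}]$, tensored with the identity on $\mathbb{D}(Y)$.

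For the top row I would start with the tautology $T_p Y(R) = \hom_R((\mathbb{Q}_p/\mathbb{Z}_p)_R, Y_R)$, where $(-)_R$ denotes base change along $\ovfp \to R$. Theorem 4.8.5 converts this into $\hom_\varphi(\mathbb{D}((\mathbb{Q}_p/\mathbb{Z}_p)_R), \mathbb{D}(Y_R))$. The remark after Definition 2.4.1 recalls that $\mathbb{D}((\mathbb{Q}_p/\mathbb{Z}_p)_R) = A_{\mathrm{cris}}(R)$ with trivial Frobenius, while base change compatibility of the crystalline Dieudonn\'e module gives $\mathbb{D}(Y_R) = A_{\mathrm{cris}}(R) \otimes_{\zpbreve} \mathbb{D}(Y)$. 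A $\varphi$-equivariant $A_{\mathrm{cris}}(R)$-linear map out of the rank-one module $A_{\mathrm{cris}}(R)$ is determined by the image of $1$, which is forced to lie in the $\varphi$-fixed subspace; this yields the top isomorphism.

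For the bottom row I would argue in the isogeny category of $p$-divisible groups over $R$, where $\tilde{Y}(R) = \varprojlim_{\times p} Y(R)$ is the natural rationalisation of the Tate module. Rationalising the Anschütz--Le Bras equivalence and using $B_{\mathrm{cris}}^+(R) = A_{\mathrm{cris}}(R)[\tfrac{1}{p}]$, the same argument as above produces the bottom isomorphism. As a cross-check, one may appeal directly to Proposition 3.1.3(iii) of \cite{ScholzeWeinstein} (cited in the paragraph immediately preceding the lemma), which describes $\tilde{Y}$ on semiperfect algebras as the formal spectrum of the completed perfection of a power series ring, and then combine this with the crystalline fundamental exact sequence for $B_{\mathrm{cris}}^+$.

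The delicate point will be the bottom row, because inverting $p$ interacts nontrivially with taking $\varphi$-invariants: in general $\tilde{Y}(R)$ is strictly larger than $T_p Y(R) \otimes_{\mathbb{Z}_p} \mathbb{Q}_p$, reflecting that $B_{\mathrm{cris}}^+(R)^{\varphi=1}$ is strictly larger than $A_{\mathrm{cris}}(R)^{\varphi=1}[\tfrac{1}{p}]$. Making the rational Anschütz--Le Bras equivalence genuinely compute $\tilde{Y}(R)$, rather than merely $T_p Y(R) \otimes \mathbb{Q}_p$, is the standard integral $p$-adic Hodge-theoretic input for qrsp rings and is available in both \cite{AnschutzLeBras} and \cite{ScholzeWeinstein}.
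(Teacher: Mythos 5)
Your plan for the top row is exactly the paper's: identify $T_p Y(R)$ with $\hom_R((\mathbb{Q}_p/\mathbb{Z}_p)_R, Y_R)$, apply the Anschütz--Le Bras equivalence, recall $\mathbb{D}(\mathbb{Q}_p/\mathbb{Z}_p) = A_{\mathrm{cris}}(R)$ with trivial Frobenius, and use evaluation at $1$. So far this is fine and matches the paper.

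The final paragraph of your proposal, however, misdiagnoses the bottom row. You assert that ``in general $\tilde{Y}(R)$ is strictly larger than $T_p Y(R)\otimes_{\mathbb{Z}_p}\mathbb{Q}_p$, reflecting that $B_{\mathrm{cris}}^+(R)^{\varphi=1}$ is strictly larger than $A_{\mathrm{cris}}(R)^{\varphi=1}[\tfrac 1p]$''. Both assertions are false in the setting of the Lemma, and indeed you already contradicted them earlier by writing that $\tilde{Y}(R)$ ``is the natural rationalisation of the Tate module''. First, over an $\ovfp$-algebra every $R$-point of the $p$-divisible group $Y$ is $p$-power torsion, so $\tilde{Y}(R)=\varprojlim_{\times p} Y(R)$ \emph{is} $T_p Y(R)[\tfrac 1p]$; this is exactly the content of Lemma 4.1.8 of \cite{CaraianiScholze}, restated in the paper as equation \eqref{eq:UniversalCoverHomPdiv}, specialised to $Y=\mathbb{Q}_p/\mathbb{Z}_p$. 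Second, since $B^+_{\mathrm{cris}}(R)=A_{\mathrm{cris}}(R)[\tfrac1p]$ and localisation at $p$ is exact, taking $\varphi$-invariants (a kernel) commutes with $[\tfrac1p]$; there is no discrepancy to worry about. The phenomenon you seem to have in mind, where $B_{\mathrm{cris}}^{\varphi=1}\supsetneq \mathbb{Q}_p$, comes from the fundamental exact sequence and concerns $B_{\mathrm{cris}}=B^+_{\mathrm{cris}}[\tfrac1t]$ over a $p$-adic ring, not $B^+_{\mathrm{cris}}$ over a characteristic $p$ qrsp ring; invoking that here, as your proposed Scholze--Weinstein cross-check does, is misplaced. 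Consequently there is no ``standard integral $p$-adic Hodge-theoretic input'' left to supply: the bottom isomorphism follows from the top one simply by inverting $p$ on both sides, which is precisely the one-line argument given in the paper.
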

\begin{proof}
For $R\in \mathrm{QRSP}_{\ovfp}^{\mathrm{op}}$, by Theorem \ref{Thm:DieudonneFullyFaithful} there exists a natural isomorphism
\begin{align}
    T_p Y(R) &= \operatorname{Hom}_R((\qp/\mathbb{Z}_p)_R,Y_R)\\
    &{\xrightarrow{\sim}} \operatorname{Hom}_{A_{\mathrm{cris}},\varphi}(A_{\mathrm{cris}}(R), A_{\mathrm{cris}}(R) \otimes_{\zpbreve} \mathbb{D}(Y)) \\
    &= \left(A_{\mathrm{cris}}(R) \otimes_{\zpbreve} \mathbb{D}(Y)\right)^{\varphi=1},
\end{align}
where the latter identification is induced by evaluation at $1$. Similarly, after inverting $p$, we get a natural isomorphism
\begin{align}
    \tilde{Y}(R) &= \operatorname{Hom}_R((\qp/\mathbb{Z}_p)_R,Y_R)[\tfrac{1}{p}]\\
    &{\xrightarrow{\sim}} \left(B_{\mathrm{cris}}^{+}(R) \otimes_{\qpbreve} \mathbb{D}(Y)[\tfrac{1}{p}] \right)^{\varphi=1}.
\end{align}
The induced diagram commutes by construction. 
\end{proof}

\subsubsection{Complete slope divisibility} We conclude this section with some recollections on the notion of complete slope divisibility.

\begin{Def}\label{Def:CompletelySlopeDivisible}
For a perfect ring $R$ we say that an isoclinic Dieudonné module $(M^+,\varphi_{M^+})$ over $R$ is \textit{completely slope divisible} if there exist integers $s$ and $a$ with $s\neq 0$ such that $\varphi_{M^+}^sM^+=p^aM^+$. We also say that a Dieudonné module $(M^+,\varphi_{M^+})$ over $R$ is \textit{completely slope divisible} if it is the direct sum of isoclinic completely slope divisible Dieudonné modules, and we say that a $p$-divisible group is \textit{completely slope divisible} if the associated Dieudonné module is so.
\end{Def}

\begin{Rem} \label{Rem:OortZink}
   Since we assumed $R$ perfect, the definition we gave is equivalent to the usual one thanks to \cite[Proposition 1.3]{OortZink}. Note also that by [\textit{ibid.}, Corollary 1.5], if $R$ is an algebraically closed field, an isoclinic Dieudonné module is completely slope divisible if and only if it is defined over a finite field.
\end{Rem}

\begin{Lem}\label{Lem:CSDDieudonneOvfp}
    A Dieudonné module over $\ovfp$ is completely slope divisible if and only if it is a direct sum of isoclinic Dieudonné modules.
\end{Lem}
\begin{proof}
    By \cite[Corollary 1.5]{OortZink} it is enough to prove that every isoclinic Dieudonné module is defined over a finite field. By Dieudonné theory, this follows from the fact that a $p$-divisible group which is geometrically isogenous to a $p$-divisible group defined over a finite field, is itself defined over a finite field. 
\end{proof}
\begin{Lem} \label{Lem:SaturedSubCSD}
    If $(M^+,\varphi_{M^+})$ is a completely slope divisible Dieudonné module over $\ovfp$ and $(N^+,\varphi_{N^+})$ is a Dieudonné submodule of $(M^+,\varphi_{M^+})$ such that $M^+/N^+$ is torsion-free, then $(N^+,\varphi_{N^+})$ is completely slope divisible.
\end{Lem}
\begin{proof}By Lemma \ref{Lem:CSDDieudonneOvfp}, we have to prove that $(N^+,\varphi_{N^+})$ is a direct sum of isoclinic Dieudonné modules. By the Dieudonné--Manin classification, there exists a Dieudonné submodule $( \bar N^+,\varphi_{\bar N^+})\subseteq (N^+,\varphi_{N^+})$ of finite index which decomposes into a direct sum $\oplus_{\lambda\in\mathbb{Q}} \bar N_\lambda^+$ of isoclinic Dieudonné modules. For an element $x\in N^+$, there exist by assumption $x_\lambda\in M_\lambda^+$ for $\lambda \in \mathbb{Q}$ almost all $0$ such that $x=\sum_{\lambda\in \mathbb{Q}}x_\lambda$. Since $\bar N^+\subseteq N^+$ is of finite index, there exists $n$ big enough such that $p^nx\in \bar N^+$, so that $p^nx_\lambda\in \bar N^+$ for every $\lambda$. This implies that $p^nx_\lambda\in N^+$ for every $\lambda$ and by the assumption that $M^+/N^+$ is torsion-free, we deduce that each $x_\lambda$ lies in $N^+$. This yields the desired result.
\end{proof} 
\section{Local monodromy of \texorpdfstring{$F$}{F}-isocrystals}
\label{Sec:MonodromyTheorem}
The main goal of this section is to prove Theorem \ref{Thm:MainThmMonodromy}. As it often happens, to show the relation between the two Tannaka groups in the statement we first find an equivalent categorical condition. We do this in Section \ref{Sec:TannakianCriterion}, where we prove a quite general Tannakian criterion to check that a unipotent subgroup of an algebraic group is the entire unipotent radical.

After that, in Section \ref{Sec:GenericPoint}, we prove that the global monodromy group of an $F$-isocrystal with constant Newton polygon is the same as its “generic” monodromy group (Theorem \ref{Thm:MonodromyGenericPoint}). This will be essential to reduce the entire problem to a problem of $F$-isocrystals defined over (imperfect) fields with finite $p$-basis. In Section \ref{Sec:Descent} we prove then some descent results, notably the descent of splittings of the slope filtration for separable field extensions with finite $p$-basis (Proposition \ref{Prop:non-splitting}), and in Section \ref{Sec:LocalMonodromyTheorem} we put all the ingredients together and we prove Theorem \ref{Thm:MainThmMonodromy}.

\subsection{Two Tannakian criterions}\label{Sec:TannakianCriterion} Let $K$ be a field and let $V$ be a finite-dimensional $K$-vector space. We recall the following well-known lemma.
\begin{Lem}\label{Lem:prep-Tann-crit}
If $U, U'$ are unipotent subgroups of $\operatorname{GL}(V)$, then the following two properties are equivalent.
\begin{enumerate}
\item[\normalfont{(i)}]$U'\subseteq U$.
\item[\normalfont{(ii)}]$W^U\subseteq W^{U'}$ for every algebraic representation $W$ of $\operatorname{GL}(V)$. 

\end{enumerate}
\end{Lem}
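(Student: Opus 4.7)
The direction $(\text{i}) \Rightarrow (\text{ii})$ is immediate from the definitions: any vector fixed by $U$ is fixed by every subgroup of $U$, so $W^U \subseteq W^{U'}$.

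For the converse, my plan is to invoke Chevalley's theorem for the closed subgroup $U \subseteq \operatorname{GL}(V)$. This produces a finite-dimensional algebraic representation $W$ of $\operatorname{GL}(V)$ together with a one-dimensional subspace $L \subseteq W$ such that $U$ is the scheme-theoretic stabilizer of $L$ in $\operatorname{GL}(V)$, i.e., for every $K$-algebra $R$, $U(R) = \{g \in \operatorname{GL}(V)(R) : g(L \otimes R) = L \otimes R\}$. I would cite the version valid for arbitrary closed subgroup schemes of a smooth affine group (e.g. Theorem 16.1 of Waterhouse's \textit{Introduction to affine group schemes}), since I do not want to assume that the unipotent groups $U, U'$ are smooth — this is relevant later when they arise as Tannaka groups in positive characteristic.

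The key observation is that, since $U$ is unipotent, the action of $U$ on the one-dimensional space $L$ factors through a character $U \to \mathbb{G}_m$, which must be trivial because unipotent groups admit no nontrivial characters. Hence $U$ acts trivially on $L$, and $L \subseteq W^U$. Applying hypothesis (ii) then gives $L \subseteq W^U \subseteq W^{U'}$, so $U'$ fixes a nonzero vector of $L$; in particular $U'$ stabilizes $L$ as a subspace, and the defining property of $L$ yields $U' \subseteq \operatorname{Stab}_{\operatorname{GL}(V)}(L) = U$.

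I do not foresee any serious obstacle: the only subtlety is making sure to cite the form of Chevalley's theorem that applies without a smoothness hypothesis on $U$, after which the argument is a two-line consequence of the vanishing of characters on unipotent groups.
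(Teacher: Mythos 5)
Your proof is correct and follows essentially the same route as the paper's: Chevalley's theorem produces a line $L$ stabilized exactly by $U$, the absence of nontrivial characters on a unipotent group gives $L\subseteq W^U$, and then (ii) forces $U'$ to stabilize $L$, hence $U'\subseteq U$. Your added care about citing a version of Chevalley's theorem that does not assume smoothness of $U$ is a reasonable refinement but not a divergence in method.
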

\begin{proof}
	The implication (i) $\Rightarrow$ (ii) is obvious. To prove (ii) $\Rightarrow$ (i) we just note that by Chevalley's theorem there is a representation $W$ of $\operatorname{GL}(V)$ such that $U$ is the stabiliser of a line $L\subseteq W$. Since $U$ does not admit non-trivial characters, we deduce that $L\subseteq W^U$. By (ii) this implies that $U'$ fixes $L$ and this yields the desired result.
\end{proof}

Thanks to this lemma, we can deduce two Tannakian criterions that we will use later on. For an algebraic group $G$ over a perfect field we will write $R_u(G)$ for the unipotent radical of $G$. For a cocharacter $\nu:\Gm\to \operatorname{GL}(V)$ we write $P_\nu$ for the parabolic subgroup attached to $\nu$ and $U_{\nu}$ for $R_u(P_{\nu})$.

\begin{Prop}\label{Prop:Tann-crit} Suppose $K$ is of characteristic $0$ and let $\nu:\Gm\to \operatorname{GL}(V)$ be a cocharacter and $U \subseteq G \subseteq P_\nu$ a chain of subgroups of the parabolic subgroup $P_\nu$. If $U$ and $R_u(G)$ are contained in $U_\nu$, then the following two properties are equivalent.
\begin{enumerate}
	\item[\normalfont{(i)}] $U=R_u(G)$.
	\item[\normalfont{(ii)}] For every representation $W$ of $\operatorname{GL}(V)$, the group $G$ stabilises $W^U$ and the induced representation factors through $G/R_u(G)$. 
\end{enumerate}
\end{Prop}
\begin{proof}
The implication (i) $\Rightarrow$ (ii) follows from the observation that $R_u(G)$ is normal in $G$. For (ii) $\Rightarrow$ (i) first note that by the assumptions we have that $U\subseteq R_u(G)$ since $R_u(G)=G\cap U_\nu$\footnote{Note that unipotent groups $U$ and $G\cap U_\nu$ are connected since we are working in characteristic $0$.}. For the other inclusion, thanks to (ii) we deduce that for every representation $W$ we have that $W^U\subseteq W^{R_u(G)}$. Thus by Lemma \ref{Lem:prep-Tann-crit}, we conclude that $R_u(G)\subseteq U$.
\end{proof}

\begin{Prop}\label{Prop:SecondTannakianCriterion}
    Let $U\subseteq U'$ be an inclusion of unipotent subgroups of $\operatorname{GL}(V)$. The following facts are equivalent.
        \begin{enumerate}
\item[\normalfont{(i)}]$U=U'$.
\item[\normalfont{(ii)}]$\mathrm{Ext}^1_{U'}(\mathbbm 1,W')\to \mathrm{Ext}^1_{U}(\mathbbm 1,W'|_{U}) $ is injective for every algebraic representation $W'$ of $U'$. 

\end{enumerate}
\end{Prop}
\begin{proof}
    It is clear that (i)$\Rightarrow$ (ii). For the other implication we want to use Lemma \ref{Lem:prep-Tann-crit}. For an algebraic representation $W$ of $\GL(V)$ we consider the \textit{socle filtration} $W_\bullet$ relative to $U'$, namely we define inductively $W_{0}=W^{U'}$ and for $i\geq 1$ we define $W_i$ as the preimage of $(W/W_{i-1})^{U'}$ via the quotient $W\to W/W_{i-1}$. The spaces $W_i$ are stabilised by $U'$. Suppose by contradiction that $W^U$ is not contained in $W^{U'}$. Then there exists a morphism $f\colon \mathbbm 1\to W$ that is $U$-equivariant, with image contained in some $W_i$ for some $i\geq 1$, and such that the induced morphism $\bar f\colon \mathbbm 1\to W_i/W_{i-1}$ is non-zero. This defines a non-trivial extension $$0\to W_{i-1}\to E\to \mathbbm 1\to 0$$ of $U'$-representations which becomes trivial after restriction to $U$. The existence of such an extension contradicts (ii). This yields the desired result.
\end{proof}

\subsection{Passing to the generic point}\label{Sec:GenericPoint}

Let $X$ be an irreducible Noetherian Frobenius-smooth scheme over $\Fp$ with generic point $\eta$ and let $\calM$ be an isocrystal over $X$ such that $F^*\calM\simeq \calM$. In this section we want to prove two results that compare $\calM$ with its restriction to the generic fibre $\calM_\eta$. For this purpose, we want to use the following theorem by de Jong.
\begin{Thm}[{\cite[Theorem 1.1]{DeJongHomomorphisms}}]\label{Thm:deJong}If $X=\spec(A)$ is affine with $A$ a DVR and $(\calM^+,\Phi_{\calM^+})$ is a free $F$-crystal over $X$ of finite rank, then for every $m,n\in \zz\times \zz_{>0}$ we have 
$$H^0_{\mathrm{cris}}(\eta,\calM_\eta^+)^{F^n=p^m}=H^0_{\mathrm{cris}}(X,\calM^+)^{F^n=p^m}.$$
\end{Thm}

\begin{Cor}\label{Cor:deJong}
If $X$ is as in Theorem \ref{Thm:deJong} and $\calM$ is an isocrystal over $X$ such that $F^*\calM\simeq \calM$, then
$$H^0_{\mathrm{cris}}(\eta,\calM_\eta)=H^0_{\mathrm{cris}}(X,\calM).$$
\end{Cor}
\begin{proof}
    Let $(\calM^+,\Phi_{\calM^+})$ be an $F$-crystal such that $\calM^+[\tfrac{1}{p}]=\calM$. Since $X$ is of dimension $1$, by \cite[Lemma 0B3N]{stacks-project}, after possibly replacing $\calM^+$ with its double dual we can assume that $\calM^+$ is free. We may further assume that the field of constants of $A$ is algebraically closed thanks to \cite[§3]{DeJongHomomorphisms}. By the Dieudonné--Manin classification both $H^0_{\mathrm{cris}}(\eta,\calM_\eta)$ and $H^0_{\mathrm{cris}}(X,\calM)$ are generated by vectors $v$ such that $\Phi_\calM^n(v)=p^mv$ for some $m,n\in \zz\times \zz_{>0}$. The result then follows from Theorem \ref{Thm:deJong}.
\end{proof}

We first want to extend de Jong's theorem to more general irreducible Noetherian Frobenius-smooth schemes by using a Hartogs' argument. Thanks to \cite[Section 1.1]{BerthelotMessing}, a ring $B$ with a $p$-basis admits a $p$-adic lift $\tilde{B}\twoheadrightarrow B$. By [\textit{ibid.}, Proposition 1.3.3], the datum of a crystal in quasi-coherent $\calO$-modules over $\spec(B)$ is then equivalent to the datum of a completed $\tilde{B}$-module $M^+$ and topologically $p$-nilpotent derivations of $M^+$ associated to some choice of a lift of a $p$-basis of $B$ to $\tilde{B}$. We need the following lemma.

\begin{Lem}\label{Lem:ACartesianSquare} Let $f\colon A\to A'$ be an injective morphism of Noetherian Frobenius-smooth rings which sends $p$-bases to $p$-bases, and write $\tilde{f}\colon\tilde{A}\to \tilde{A'}$ for a $p$-adic lift of $f$. For an isocrystal $\calM$ over $\spec(A)$ write $\calM'$ for the pullback to $\spec(A')$ and $M,M'$ for the associated modules over $\tilde{A}[\tfrac{1}{p}]$ and $\tilde{A}'[\tfrac{1}{p}]$. The following diagram is cartesian
    \begin{equation}
    \begin{tikzcd}
    H^0_{\mathrm{cris}}(\spec(A),\calM)\arrow{d} \arrow{r}& M\arrow{d} \\
    H^0_{\mathrm{cris}}(\spec(A'),\calM')\arrow{r}& M'.
    \end{tikzcd}
\end{equation}
\end{Lem}

\begin{proof}By \cite[Proposition 3.5.2]{DrinfeldCrystals}, the module $M$ is projective, which implies that $M\to M'$ is injective. We choose a lift $\{\tilde{x}_1,\dots,\tilde{x}_n\}\subseteq \tilde{A}$ of a $p$-basis of $A$. By the assumption, this is sent by $\tilde{f}$ to a lift of a $p$-basis of $A'$. This choice then defines differential operators $\partial_1,\dots,\partial_n$ of $M'$ that stabilise $M\subseteq M'$. By \cite[Proposition 1.3.3]{BerthelotMessing}, this implies that $H^0_{\mathrm{cris}}(\spec(A'),\calM')\subseteq M'$ (resp. $H^0_{\mathrm{cris}}(\spec(A),\calM)\subseteq M$) is the subspace of elements killed by $\partial_1,\dots,\partial_n$. This ends the proof.\end{proof}

\begin{Thm}\label{Thm:FullFaithfulness}
If $X$ is an irreducible Noetherian Frobenius-smooth scheme over $\Fp$ and $\calN$ is a subquotient of an isocrystal $\calM$ over $X$ such that $F^*\calM\simeq \calM$, then 
$$H^0_{\mathrm{cris}}(\eta,\calN_\eta)=H^0_{\mathrm{cris}}(X,\calN).$$ 
\end{Thm}
   
\begin{proof} By \cite[Theorem 5.10]{UniversalExtensions}, we know that $\calN$ is a subobject of some isocrystal $\calM'$ such that $F^*\calM'\simeq \calM'$. Thanks to [\textit{ibid.}, Lemma 5.6], it is then enough to prove the result for an isocrystal $\calM$ such that $F^*\calM\simeq \calM$. By Zariski descent, we may further assume that $X=\spec(A)$ is affine.

Let $\tilde A$ a $p$-adic lift of $A$ and let $\tilde{A}_\eta$ be a $p$-adic lift of $\mathrm{Frac}(A)$ equipped with a morphism $\tilde A\hookrightarrow \tilde{A}_\eta$ lifting the inclusion $A\subseteq \mathrm{Frac}(A)$. We write $S$ for the set of prime ideals $\mathfrak p$ of $\tilde A$ of codimension $1$ containing $p$ and for $\mathfrak p\in S$ we write $\tilde{A}_\mathfrak{p}\subseteq \tilde{A}_\eta$ for the $p$-adic completion of the localisation of $\tilde A$ at $\mathfrak p$. By construction, we have that $\tilde{A}_{\mathfrak p}/p=A_{\mathfrak p}$. This implies that $\bigcup_{\mathfrak p \in S}\tilde{A}_\mathfrak{p}\subseteq \tilde{A}_\eta$ is dense with respect to the $p$-adic topology since $\bigcup_{\mathfrak p \in S}{A}_\mathfrak{p}=\mathrm{Frac}(A).$ 

We first want to prove that the ring $$\tilde{B}\coloneqq \bigcap_{\mathfrak{p}\in S} \tilde{A}_\mathfrak{p}$$ is equal to $\tilde A$. To do this, we first note that $\tilde B$ is $p$-adically complete and $p$-torsion-free since each $\tilde{A}_\mathfrak{p}$ is so. In addition, by the $p$-torsion-freeness, we have that $p\tilde{B}=\bigcap_{\mathfrak{p}\in S} p\tilde{A}_\mathfrak{p}$, which implies that the morphism $$\tilde B/p\to \bigcap_{\mathfrak{p}\in S} \tilde{A}_\mathfrak{p}/p\subseteq \tilde{A}_\eta/p$$ is injective. On the other hand, thanks to the algebraic Hartogs' lemma, we have that $\tilde A/p= \bigcap_{\mathfrak{p}\in S} \tilde{A}_\mathfrak{p}/p.$ This implies that $\tilde A/p= \tilde B/p$ and in turn this shows that $\tilde A=\tilde B$.

Now, write $M$ for the module over $\tilde A [\tfrac{1}{p}]$ associated to $\calM$ and for every $\mathfrak p \in S$ write $M_{\mathfrak p}$ for the extension of scalars to $\tilde{A}_\mathfrak{p}[\tfrac{1}{p}]$. By \cite[Proposition 3.5.2]{DrinfeldCrystals}, we have that $M$ is a direct summand of $\tilde{A}[\tfrac{1}{p}]^{\oplus n}$ for some $n>0$. Combining this with the fact that $\tilde A [\tfrac{1}{p}]=\bigcap_{\mathfrak{p}\in S} \tilde{A}_\mathfrak{p}[\tfrac{1}{p}]$, we deduce that 
\begin{align}\label{Eq:Hartogs}
   M = \bigcap_{\mathfrak{p}\in S} M_\mathfrak{p}. 
\end{align}

By Lemma \ref{Lem:ACartesianSquare} applied to the inclusion $A\subseteq \mathrm{Frac}(A)$, if we denote by $M_\eta$ the $\tilde{A}_\eta[\tfrac{1}{p}]$-module associated to $\calM_\eta$, we get the cartesian square
    \begin{equation}
    \begin{tikzcd}
    H^0_{\mathrm{cris}}(\spec(A),\calM)\arrow{d} \arrow{r} \drar[phantom, "\square"]& M\arrow{d} \\
   H^0_{\mathrm{cris}}(\eta,\calM_\eta)\arrow{r}& M_\eta.
    \end{tikzcd}
\end{equation}

It remains to prove that every section $v\in  H^0_{\mathrm{cris}}(\eta,\calM_\eta)$ is also contained in $M$. By \eqref{Eq:Hartogs}, this is equivalent to showing that $v$ is in ${M_{\mathfrak p}}$ for every $\mathfrak p\in S$, which follows from Corollary \ref{Cor:deJong}.
\end{proof}

\begin{Rem}
Theorem \ref{Thm:FullFaithfulness} improves \cite[Theorem 2.2.3]{KedlayaDrinfeld}. As far as we can see, even if $X$ is a connected smooth variety over a perfect field, the result in [\textit{ibid.}] is not enough to deduce the full faithfulness of the restriction functor to the generic point.
\end{Rem}
Theorem \ref{Thm:FullFaithfulness} has as a corollary the following result of independent interest.
\begin{Cor}\label{Cor:ExistenceSlopeFiltration}Let $X$ be a Noetherian Frobenius-smooth scheme over $\Fp$. If $(\calM,\Phi_\calM)$ is an $F$-isocrystal over $X$ with locally constant Newton polygon, then it admits the slope filtration.
\end{Cor}

\begin{proof} First note that we may assume that $X$ is irreducible. As in \cite{KatzSlopeFiltrations}, by taking exterior powers, it is enough to prove that if $(\calM,\Phi_\calM)$ has minimal slope of multiplicity $1$, then there exists a rank $1$ sub-$F$-isocrystal of $(\calM,\Phi_\calM)$ of minimal slope. Note also that the result is known on the generic point $\eta$ of $X$ (see  [\textit{ibid.}] and \cite[Claim 2.8]{deJongOortPurity}). If $S_1(\calM_\eta)\subseteq \calM_\eta$ is the subobject of minimal slope, up to taking a power of the Frobenius structure for some $s>0$ and a Tate twist, we may assume that it corresponds to a lisse $\mathbb{Q}_{q^s}$-sheaf $\mathcal{F}_\eta$ over $\eta$. This lisse sheaf admits models over every codimension $1$ point by \cite[Proposition 2.10]{deJongOortPurity}, thus it admits an extension to a lisse $\mathbb{Q}_{q^s}$-sheaf $\mathcal{F}$ over $X$ by Zariski--Nagata purity theorem. The lisse sheaf $\mathcal{F}$ corresponds then to an $F^s$-isocrystal $(\calN,\Phi_\calN)$ over $X$ which, by Theorem \ref{Thm:FullFaithfulness}, embeds in $(\calM,\Phi_\calM^s)$ providing a model of the inclusion $S_1(\calM_\eta)\subseteq \calM_\eta$. This yields the desired result.
\end{proof}

\begin{Rem}
    Note that in \cite{KedlayaIsoshtukas} Kedlaya proves the analogue of Corollary \ref{Cor:ExistenceSlopeFiltration} for perfect schemes using arc-descent.
\end{Rem}

We also prove a stronger form of Theorem \ref{Thm:FullFaithfulness} under the additional assumption that $\calM$ upgrades to an $F$-isocrystal with slope filtration.
\begin{Thm}\label{Thm:MonodromyGenericPoint} Let $X$ be an irreducible Noetherian Frobenius-smooth scheme over $\Fp$.
If $(\calM,\Phi_\calM)$ is an $F$-isocrystal over $X$ with slope filtration, then $$G(\calM,\eta^\perf)=G(\calM_\eta,\eta^\perf).$$
\end{Thm}
\begin{proof} Let $K$ be the fraction field of $W(\kappa)$ with $\kappa$ the field of constants of $X$ and let $K'$ be the fraction field of the ring of Witt vectors of $\eta^\perf$. Thanks to \cite[Proposition 3.1.8]{ExtensionOfScalars} applied with $F=K, F'=K,$ and $F''=K'$, it is enough to prove that $\langle \calM \rangle\to \langle \calM_\eta \rangle$ is fully faithful and sends semi-simple objects to semi-simple objects. The first part is proven in Theorem \ref{Thm:FullFaithfulness} and does not need the assumption on the slope filtration. For the second part, for an irreducible object $\calN\in\langle \calM \rangle$, we want to prove that $\calN_\eta$ is semi-simple. 

Since $\calN$ is irreducible, it is a subquotient of $\calM^{\otimes m}\otimes (\calM^\vee)^{\otimes n}$ for some $m,n\geq 0$ and by the assumption $\calM^{\otimes m}\otimes (\calM^\vee)^{\otimes n}$ can be endowed with a Frobenius structure with slope filtration. After taking the $s$th-power of the Frobenius structure for some $s>0$ and making a Tate twist, we may further assume that $\calN$ appears in the unit-root part of an $F^s$-isocrystal. Therefore, taking a Jordan--H\"older filtration, we may assume that $\calN$ is a subquotient of an isocrystal $\calM'$ which admits a unit-root $F^s$-structure $\Phi_{\calM'}$ such that $(\calM',\Phi_{\calM'})$ is semi-simple. By \cite[Theorem 2.4.1]{BerthelotMessing}, the $F^s$-isocrystal $(\calM',\Phi_{\calM'})$ corresponds to a semi-simple lisse $\qqq_{p^s}$-sheaf over $X$. By the regularity of $X$, the lisse sheaf remains semi-simple when restricted to the generic point. This implies that $(\calM'_\eta,\Phi_{\calM'_\eta})$ is semi-simple. To conclude we have to prove that $\calM'_\eta$ is semi-simple as well. Let $\calN'_\eta\subseteq \calM'_\eta$ be the \textit{socle} of $\calM'_\eta$, namely the sum of all the irreducible subobjects of $\calM'_\eta$. By maximality, $\calN'_\eta$ is stabilised by the $F^s$-structure, thus it upgrades to a subobject $(\calN'_\eta,\Phi_{\calN'_\eta})\subseteq(\calM'_\eta,\Phi_{\calM'_\eta})$. By semi-simplicity, the inclusion admits a retraction, which induces in particular a retraction of $\calN'_\eta\subseteq \calM'_\eta$. This implies that $\calN'_\eta=\calM'_\eta$, as we wanted.
\end{proof}

\subsection{Descent for isocrystals}\label{Sec:Descent}

We prove now various descent results that we will need in the next section for ($F$-)isocrystals. Let $f:Y\to X$ be a pro-étale $\Pi$-cover of Noetherian Frobenius-smooth schemes over $\Fp$ where $\Pi$ is a profinite group and let $y\in Y(\Omega)$ be an $\Omega$-point of $Y$ with $\Omega$ a perfect field.
\begin{Lem}\label{Lem:desc-glob-sect}For every isocrystal $\calM$ over $X$, the maximal trivial subobject of $f^*\calM$ descends to a subobject $\calN\subseteq \calM$. Moreover, if $\calM$ is endowed with a Frobenius structure $\Phi_\calM$, the inclusion $\calN\subseteq \calM$ upgrades to an inclusion $(\calN,\Phi_\calN)\subseteq (\calM,\Phi_\calM)$ of $F$-isocrystals and $(\calN,\Phi_\calN)$ is a direct sum of isoclinic $F$-isocrystals.
\end{Lem}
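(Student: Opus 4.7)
My approach is fourfold: construct the maximal trivial subobject of $f^*\calM$ on $Y$, descend it along $\Pi$ to produce $\calN \subseteq \calM$ on $X$, transport the Frobenius structure by the same canonicity argument, and finally decompose $(\calN,\Phi_\calN)$ into isoclinic pieces.

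For the first step, I would use that $\operatorname{Isoc}(Y)$ is Tannakian by Proposition \ref{Prop:DrinfeldTannakianCategory} (applied componentwise if $Y$ is not connected), so the trivial isocrystals form a Tannakian subcategory of $\operatorname{Isoc}(Y)$. The maximal trivial subobject $\mathcal{T} \subseteq f^*\calM$ is then realized as the saturated $\mathcal{O}_Y$-span of the global sections $H^0(Y, f^*\calM)$, which carries a canonical structure of trivial isocrystal; any trivial subobject of $f^*\calM$ is determined by its global sections and is therefore contained in $\mathcal{T}$.

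For the descent, the pro-\'etale $\Pi$-cover $f\colon Y \to X$ carries a canonical $\Pi$-action on $Y$ over $X$, inducing a $\Pi$-action on $f^*\calM$. Since $\mathcal{T}$ is defined canonically in terms of $f^*\calM$, it is automatically $\Pi$-stable, and I would then apply pro-\'etale descent for isocrystals from \cite{DrinfeldCrystals, MatthewDescent} to descend $\mathcal{T}$ to a subobject $\calN \subseteq \calM$ on $X$. For the Frobenius: if $\Phi_\calM$ is given, its pullback $f^*\Phi_\calM$ preserves every canonical subobject of $f^*\calM$ and in particular restricts to a Frobenius on $\mathcal{T}$; by descent along $\Pi$ this produces a Frobenius $\Phi_\calN$ on $\calN$ restricting $\Phi_\calM$, giving the desired inclusion $(\calN,\Phi_\calN) \hookrightarrow (\calM,\Phi_\calM)$ of $F$-isocrystals.

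The main obstacle is the decomposition into isoclinic pieces. Here I would exploit that $f^*\calN = \mathcal{T}$ is a trivial isocrystal, so $f^*(\calN,\Phi_\calN)$ is described by its global sections as a finite-dimensional vector space over $W(\kappa_Y)[1/p]$ equipped with a $\sigma$-semilinear automorphism, where $\kappa_Y$ denotes the field of constants of $Y$; equivalently, it is an $F$-isocrystal over the perfect field $\kappa_Y$. Over any perfect field the slope filtration of an $F$-isocrystal splits canonically: the potential obstruction classes $\operatorname{Ext}^1(\calN_\mu, \calN_\lambda)$ for $\lambda < \mu$ are computed as $\operatorname{coker}(\varphi - 1)$ acting on the internal $\operatorname{Hom}(\calN_\mu, \calN_\lambda)$, which is a negative-slope $F$-isocrystal, and $\varphi - 1$ is surjective there by iterative $p$-adic inversion. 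Thus $f^*(\calN,\Phi_\calN)$ decomposes canonically into its isoclinic summands $f^*\calN = \bigoplus_\lambda f^*\calN_\lambda$; canonicity yields $\Pi$-equivariance, and applying the same pro-\'etale descent as above produces the desired isoclinic decomposition of $(\calN,\Phi_\calN)$ on $X$.
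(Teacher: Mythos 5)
Your proof is correct and follows essentially the same route as the paper's: both apply the Drinfeld--Mathew quasi-syntomic descent for isocrystals, observe that the descent datum (equivalently, the $\Pi$-equivariant structure) preserves the maximal trivial subobject because pullback functors preserve trivial objects, note that the Frobenius isomorphism likewise preserves $\mathcal{T}$, and conclude the isoclinic decomposition by observing that the global sections of $\mathcal{T}$ form an $F$-isocrystal over the perfect field of constants, whose slope decomposition is canonical and therefore descends. The only cosmetic differences are that the paper works directly with the descent datum over the pro-finite \v{C}ech nerve $Y \times_X Y \simeq \varprojlim_{U} \coprod_{[\gamma]\in\Pi/U} Y$ rather than phrasing things via $\Pi$-equivariance (these are equivalent for a pro-\'etale $\Pi$-cover), and that you spell out the splitting of the slope filtration over a perfect field via vanishing of the negative-slope $\operatorname{Ext}^1$ groups, whereas the paper asserts this without comment.
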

\begin{proof}
Since the cover $Y\to X$ is a quasi-syntomic cover, it satisfies descent for isocrystals thanks to \cite[Proposition 3.5.4]{DrinfeldCrystals} (see also \cite{MathewDescent} or \cite[Section 2]{BhattScholzePrismaticCrystals}). By the assumption, $$Y\times_X Y\simeq \varprojlim_{U\subseteq \Pi} (Y\times_X Y)^U$$ where the limit runs over all the open normal subgroups of $\Pi$ and $(Y\times_X Y)^U\coloneqq \coprod_{[\gamma]\in \Pi/U} Y_{[\gamma]}$ is a disjoint union of copies of $Y$. The group $\Pi$ acts on $Y\times_X Y$ in the obvious way. Since $f^*\calM$ comes from $X$, it is endowed with a descent datum with respect to the cover $Y\to X$. This datum consists of isomorphisms $\gamma^*\calM_{(Y\times_X Y)^U}\simeq \calM_{(Y\times_X Y)^U}$ for each $U\subseteq \Pi$ and $\gamma\in \Pi$. The functor $\gamma^*$ sends trivial objects to trivial objects, which implies that the descent datum restricts to a descent datum of $\calT$, the maximal trivial subobject of $f^*\calM$. Therefore, $\calT$ descends to a subobject $\calN\subseteq \calM$, as we wanted. If $\calM$ is endowed with a Frobenius structure, then it induces a Frobenius structure on each isocrystal $\calM_{(Y\times_X Y)^U}$ and this structure preserves each maximal trivial subobject of given slope. This implies that the descended object $\calN\subseteq \calM$ is stabilised as well by the Frobenius and the induced Frobenius structure satisfies the desired property.
\end{proof}

\begin{Prop}\label{Prop:monodromy-after-pro-étale-cover}Let $(\calM,\Phi_\calM)$ be an $F$-isocrystal with the slope filtration and write $\nu$ for the associated Newton cocharacter. If $R_u(G(\calM,f(y)))\subseteq U_\nu$ and $\mathrm{Gr}_{S_\bullet}(f^*\calM)$ is trivial, then $G(f^*\calM,y)=R_u(G(\calM,f(y)))$.
\end{Prop}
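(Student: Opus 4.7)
My plan is to verify the hypotheses of the Tannakian criterion given by Proposition \ref{Prop:Tann-crit}, applied to the pair $U := G(f^*\calM,y) \hookrightarrow G := G(\calM,f(y))$ inside $\operatorname{GL}(V)$, where $V$ is the fibre of $\calM$ at $f(y)$. Since the slope filtration of $\calM$ is preserved by monodromy, $G \subseteq P_\nu$; combined with the assumption $R_u(G) \subseteq U_\nu$, the subgroup $G \cap U_\nu$ is normal unipotent in $G$ and must therefore equal $R_u(G)$. The hypothesis that $\mathrm{Gr}_{S_\bullet}(f^*\calM)$ is trivial means that $U$ preserves the (pulled back) slope filtration and acts trivially on its graded pieces, so $U \subseteq U_\nu$, and hence $U \subseteq G \cap U_\nu = R_u(G)$. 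This yields one of the two inclusions and verifies the cocharacter hypothesis of Proposition \ref{Prop:Tann-crit}.

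For the reverse inclusion I need to check that, for every representation $W$ of $\operatorname{GL}(V)$, the subspace $W^U$ is stable under $G$ and the induced $G$-action factors through $G/R_u(G)$. The stability follows from Lemma \ref{Lem:desc-glob-sect}: applied to the isocrystal $\calM_W \in \langle\calM\rangle$ corresponding to $W$, it produces a subobject $\calN_W \subseteq \calM_W$ descending the maximal trivial subobject of $f^*\calM_W$, whose fibre at $f(y)$ is exactly $W^U$.

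The main obstacle is to prove that $R_u(G)$ acts trivially on $W^U$. Here the \textquotedblleft moreover\textquotedblright\ part of Lemma \ref{Lem:desc-glob-sect} is crucial: $(\calN_W,\Phi_{\calN_W})$ is a direct sum $\bigoplus_i \calN_W^i$ of isoclinic $F$-isocrystals, and the projections and inclusions of this decomposition are in particular morphisms of isocrystals, so the decomposition is preserved by the monodromy group $G_{\calN_W}$, and thus by $R_u(G)$ acting on $W^U$ via $G \twoheadrightarrow G_{\calN_W}$. Writing $W^U = \bigoplus_i W^{U,i}$ accordingly, each summand $W^{U,i}$ is stable under $R_u(G)$. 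Since $\calN_W^i$ is an isoclinic sub-$F$-isocrystal of $\calM_W$ of slope $i$, I have $W^{U,i} \subseteq F^{\leq i} W$ and $W^{U,i} \cap F^{\leq i-1} W = 0$. Combining this with the fact that $R_u(G) \subseteq U_\nu$ acts trivially on the slope-graded of $W$, for any $v \in W^{U,i}$ and $g \in R_u(G)$ we get $g\cdot v - v \in F^{\leq i-1} W \cap W^{U,i} = 0$. Therefore $R_u(G)$ acts trivially on each $W^{U,i}$ and hence on $W^U$, and Proposition \ref{Prop:Tann-crit} then gives $U = R_u(G)$.
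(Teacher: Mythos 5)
Your proof is correct and follows essentially the same route as the paper's: verify the hypotheses of Proposition~\ref{Prop:Tann-crit} with $U=G(f^*\calM,y)$ and $G=G(\calM,f(y))$, obtain $U\subseteq U_\nu$ from the triviality of $\mathrm{Gr}_{S_\bullet}(f^*\calM)$, and use Lemma~\ref{Lem:desc-glob-sect} to verify condition~(ii). The only difference is one of exposition: you unpack in detail why the ``moreover'' clause of Lemma~\ref{Lem:desc-glob-sect} (the descended object being a direct sum of isoclinic $F$-isocrystals, each therefore living in a single slope-graded piece on which $U_\nu\supseteq R_u(G)$ acts trivially) forces the $G$-action on $W^U$ to factor through $G/R_u(G)$, whereas the paper compresses this to the single sentence ``Since $R_u(G(\calM,f(y)))$ is contained in $U_\nu$, we deduce that $\calN$ is semi-simple.''
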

\begin{proof}
Since $\mathrm{Gr}_{S_\bullet}(f^*\calM)$ is trivial, the group $G(f^*\calM,y)$ is a unipotent subgroup of $G(\calM,f(y))$ sitting inside $U_\nu$. Therefore, we are in the situation of Proposition \ref{Prop:Tann-crit} and we have to prove that (ii) is satisfied. This amounts to show that for every $m,n\geq 0$, the maximal trivial subobject $\calT\subseteq f^*(\calM^{\otimes m} \otimes (\calM^\vee)^{\otimes n})$ descends to a semi-simple isocrystal over $X$. By Lemma \ref{Lem:desc-glob-sect}, we know that $\calT$ descends to an isocrystal $\calN$ which is the direct sum of isocrystals which can be endowed with an isoclinic Frobenius structure. Since $R_u(G(\calM,f(y)))$ is contained in $U_\nu$, we deduce that $\calN$ is semi-simple, as we wanted.
\end{proof}

\begin{Lem}\label{Lem:tensor-finite-p-basis}
If $k'/k$ is a separable field extension and $k'$ admits a finite $p$-basis, then $k'\otimes_k k'$ admits a finite $p$-basis as well.
\end{Lem}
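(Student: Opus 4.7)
The plan is to extend a $p$-basis from $k$ to $k'$ using separability, and then produce an explicit $p$-basis of $S := k' \otimes_k k'$ by presenting $S$ as a free module over its subring of $p$-th powers.

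First I would invoke the standard fact (Matsumura, \emph{Commutative Ring Theory}, \S26) that for a separable extension $k'/k$ in characteristic $p$, every $p$-basis of $k$ is $p$-independent in $k'$ and can be completed to a $p$-basis of $k'$. Since $k'$ has a $p$-basis of size $n$, the same holds for $k$ with some size $m \le n$, and one can choose a $p$-basis $\{y_1, \dots, y_m\}$ of $k$ together with $\{z_1, \dots, z_r\} \subseteq k'$ (with $r = n - m$) such that $\{y_1, \dots, y_m, z_1, \dots, z_r\}$ is a $p$-basis of $k'$. The claim is that
\[
\{y_i : 1 \le i \le m\} \cup \{z_j \otimes 1 : 1 \le j \le r\} \cup \{1 \otimes z_j : 1 \le j \le r\}
\]
is a $p$-basis of $S$, noting that $y_i \otimes 1 = 1 \otimes y_i$ in $S$ since $y_i \in k$, so only one copy of each $y_i$ appears.

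To verify this, I would first identify $S^p \subseteq S$: since $(x+y)^p = x^p + y^p$ in any commutative $\mathbb{F}_p$-algebra, $S^p$ is the subring generated by the elements $a \otimes b$ with $a, b \in (k')^p$, i.e.\ the image of the natural map $(k')^p \otimes_{k^p} (k')^p \to S$. Next, writing $R := (k')^p \cdot k \subseteq k'$ and using the $p$-basis to present $k'$ as $R[Z_1, \dots, Z_r]/(Z_j^p - z_j^p)$ (with $Z_j \mapsto z_j$), separability identifies $R$ with $(k')^p \otimes_{k^p} k$ because the monomials $\{y^{\vec\gamma}\}_{|\vec\gamma|_\infty < p}$ are $(k')^p$-linearly independent in $k'$, being part of a $p$-basis. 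Tensoring this presentation against itself over $k$ and computing $R \otimes_k R \cong (k')^p \otimes_{k^p} (k')^p \otimes_{k^p} k$ by associativity of tensor products yields
\[
S \;\cong\; \bigl((k')^p \otimes_{k^p} (k')^p \otimes_{k^p} k\bigr)[Z_1, \dots, Z_r, Z_1', \dots, Z_r'] \big/ \bigl(Z_j^p - z_j^p \otimes 1,\; Z_j'^p - 1 \otimes z_j^p\bigr),
\]
which presents $S$ as a free module of rank $p^{2r}$ over $(k')^p \otimes_{k^p} (k')^p \otimes_{k^p} k$ with basis the monomials $(z \otimes 1)^{\vec\alpha} (1 \otimes z)^{\vec\beta}$. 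The latter ring is in turn free of rank $p^m$ over $(k')^p \otimes_{k^p} (k')^p \cong S^p$ with basis $\{y^{\vec\gamma}\}$, via $k = \bigoplus_{\vec\gamma} k^p y^{\vec\gamma}$. Combining the two free decompositions presents $S$ as a free $S^p$-module with basis the monomials $y^{\vec\gamma} (z \otimes 1)^{\vec\alpha} (1 \otimes z)^{\vec\beta}$ of degree less than $p$ in each variable, which is exactly the claim.

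The main technical point is the injectivity of the various tensor product identifications, chiefly $(k')^p \otimes_{k^p} k \to k'$ and $(k')^p \otimes_{k^p} (k')^p \to S$; both reduce to the linear independence of monomials in the chosen $p$-bases (the defining property of a $p$-basis), combined with the exactness of tensor products over the fields $k^p$ and $k$. Once compatible $p$-bases of $k$ and $k'$ have been fixed, the rest is a bookkeeping verification that the tensor product structure respects the expected direct sum decompositions.
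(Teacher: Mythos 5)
Your proof is correct and takes a genuinely different route from the paper's. Both proofs start the same way (Matsumura, \S 26: extend a $p$-basis $\{y_i\}$ of $k$ to a $p$-basis $\{y_i\} \cup \{z_j\}$ of $k'$ using separability) and propose the same candidate set $\Gamma = \{y_i \otimes 1\} \cup \{z_j \otimes 1\} \cup \{1 \otimes z_j\}$. They diverge at the verification. The paper shows generation quickly and then establishes $p$-independence via K\"ahler differentials: it uses the exact sequence
\[
0\to \Omega^1_{k/\mathbb{F}_p}\otimes_k(k'\otimes_k k')\to \Omega^1_{k'\otimes_kk'/\mathbb{F}_p}\to (\Omega^1_{k'/k} \otimes_k k') \oplus (k'\otimes_k\Omega^1_{k'/k}) \to 0
\]
(left-exactness coming from separability) to show that $\{d\gamma : \gamma \in \Gamma\}$ is a basis of $\Omega^1_{k'\otimes_k k'/\mathbb{F}_p}$, and then converts this to $p$-independence by the argument in \cite[Lemma 07P2]{stacks-project}. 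You instead avoid differentials entirely: you identify $S^p$ with the image of $(k')^p\otimes_{k^p}(k')^p$, present $k'$ as $R[Z_j]/(Z_j^p - z_j^p)$ over the intermediate field $R = (k')^p\cdot k \cong (k')^p\otimes_{k^p} k$, tensor this presentation against itself over $k$, and read off an explicit free $S^p$-module basis of $S$ consisting precisely of the monomials of multidegree $<p$ in $\Gamma$. Your argument is more constructive and handles generation and $p$-independence in one stroke; it does rely on several tensor-product injectivity checks (you flag these, and they do hold for the reason you give — the $p$-bases provide free-module splittings over $k^p$ and $k$). The paper's route is shorter once one is willing to invoke the differential-form characterization of $p$-bases, and the exact sequence cleanly localizes the role of separability; yours is closer to first principles and makes the $p^{d+2e}$-dimensional free module structure over $S^p$ completely explicit.
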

\begin{proof}

Thanks to \cite[Theorem 26.6]{Matsumura}, the field $k$ admits a finite $p$-basis $t_1,\cdots,t_d$ which extends to a finite $p$-basis $t_1,\cdots,t_d,u_1,\cdots u_e$ of $k'$. We claim that $\Gamma\coloneqq \{t_i\otimes 1\}_i\cup \{u_i\otimes 1\}_i\cup \{1\otimes u_i\}_i$ is a finite $p$-basis of $k'\otimes_k k'$. It is clear from the construction that the elements of $\Gamma$ generate $k'\otimes_k k'$ over $(k'\otimes_k k')^p$. On the other hand, the exact sequence

$$0\to \Omega^1_{k/\Fp}\otimes_k(k'\otimes_k k')\to \Omega^1_{k'\otimes_kk'/\Fp}\to (\Omega^1_{k'/k} \otimes_k k') \oplus (k'\otimes_k\Omega^1_{k'/k}) \to 0$$ 

shows that the elements $d\gamma$ with $\gamma\in \Gamma$ form a basis of the free module $\Omega^1_{k'\otimes_kk'/\Fp}$. We deduce the $p$-independence of the elements of $\Gamma$ by arguing as in \cite[Lemma 07P2]{stacks-project}.
\end{proof}
\begin{Lem}\label{Lem:no-global-sections}
Let $X$ be a Frobenius-smooth scheme over $\Fp$ and  $(\calM,\Phi_\calM)$ an $F$-isocrystal over $X$ with a locally free lattice and constant Newton polygon. If all the slopes of $(\calM,\Phi_\calM)$ are non-zero, the vector space $ H^0_{\mathrm{cris}}(X,\calM)^{F=1}$ vanishes.
\end{Lem}
\begin{proof}
Since $X$ is Frobenius-smooth, by \cite[§1.3.5.ii]{BerthelotMessing} the global sections of any isocrystal over $X$ embed into the global sections of the base change to $X^{\mathrm{perf}}$. Over $X^{\mathrm{perf}}$ we argue as in the proof of \cite[Theorem 2.4.2]{KatzSlopeFiltrations}, namely we assume that $X^{\mathrm{perf}}=\spec(A)$ is affine and we embed $A$ into a product of perfect fields. This reduces the problem to the case of perfect fields, where the result is well-known.
\end{proof}

\begin{Prop}\label{Prop:non-splitting}
Let $k\subseteq k'$ be a separable extension of characteristic $p$ fields with finite $p$-basis and let $(\calM,\Phi_{\calM})$ be a free $F$-isocrystal over $k$ with slope filtration $S_\bullet$ of length $n$. If $\calM_{k'}$ admits a Frobenius-stable splitting of the form $\calN_{k'}\oplus S_{n-1}(\calM_{k'})$ with $\calN_{k'}$ some subobject of $\calM_{k'}$, the same is true for $\calM$.
\end{Prop}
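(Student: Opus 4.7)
The plan is to descend the splitting from $k'$ to $k$ by faithfully flat descent along $\spec k' \to \spec k$, exploiting the fact that by Lemma \ref{Lem:tensor-finite-p-basis} the ring $k' \otimes_k k'$ admits a finite $p$-basis, so $\spec(k' \otimes_k k')$ is itself Frobenius-smooth. Set $\mathcal{E} := \underline{\hom}\bigl(\calM/S_{n-1}(\calM),\, S_{n-1}(\calM)\bigr)$, viewed as an $F$-isocrystal over $k$. Because successive graded pieces of the slope filtration are isoclinic of pairwise distinct slopes, every slope of $\mathcal{E}$ is nonzero.

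The hypothesised Frobenius-stable decomposition $\calM_{k'} = \calN_{k'} \oplus S_{n-1}(\calM_{k'})$ is encoded by an $F$-equivariant section $s : \calM_{k'}/S_{n-1}(\calM_{k'}) \to \calM_{k'}$ of the projection, with image $\calN_{k'}$. Pulling $s$ back along the two coprojections $p_1, p_2 : \spec(k' \otimes_k k') \to \spec k'$ yields two $F$-equivariant sections of the pullback projection; their difference $d := p_1^* s - p_2^* s$ factors through $S_{n-1}(\calM_{k' \otimes_k k'})$ and thereby defines an element of $H^0\bigl(\spec(k' \otimes_k k'),\, \mathcal{E}_{k' \otimes_k k'}\bigr)^{\Phi = 1}$.

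The key step is to prove $d = 0$. Since $\mathcal{E}_{k' \otimes_k k'}$ is a locally free $F$-isocrystal with constant Newton polygon and no slope $0$, and since $\spec(k' \otimes_k k')$ is Frobenius-smooth, Lemma \ref{Lem:no-global-sections} applies and forces $d = 0$. Hence $p_1^* s = p_2^* s$, and faithfully flat descent for morphisms of isocrystals along $\spec k' \to \spec k$ — an instance of the descent result cited in the proof of Lemma \ref{Lem:desc-glob-sect} — produces an $F$-equivariant morphism $\tilde s : \calM/S_{n-1}(\calM) \to \calM$ over $k$ whose base change to $k'$ recovers $s$. That $\tilde s$ is a section of the projection is a property that descends through faithfully flat base change, so $\tilde s$ yields the sought Frobenius-stable splitting $\calM = \tilde s\bigl(\calM/S_{n-1}(\calM)\bigr) \oplus S_{n-1}(\calM)$ over $k$.

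The main obstacle I foresee is bookkeeping rather than conceptual: ensuring that the descent machinery for isocrystals really applies to the cover $\spec k' \to \spec k$. Since $k'/k$ is separable, the cotangent complex is concentrated in degree $0$ and flat, which combined with faithful flatness makes this morphism a quasi-syntomic cover in the sense required by the descent results of \cite{DrinfeldCrystals} and \cite{MatthewDescent}.
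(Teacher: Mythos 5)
Your proposal is correct and is essentially the same argument as the paper's, just phrased in terms of sections rather than subobjects: you descend the splitting section $s$ along the quasi-syntomic cover $\spec k' \to \spec k$ by showing $p_1^*s - p_2^*s$ vanishes, using that $\underline{\hom}(\calM/S_{n-1}(\calM), S_{n-1}(\calM))$ has only strictly negative slopes and applying Lemma~\ref{Lem:no-global-sections} over the Frobenius-smooth scheme $\spec(k'\otimes_k k')$; the paper instead descends the subobject $\calN_{k'}$ after showing the Frobenius-stable complement of $S_{n-1}(\calM_{k'\otimes_k k'})$ is unique, which comes down to exactly the same vanishing statement since any such complement is isomorphic to $\calM/S_{n-1}(\calM)$.
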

\begin{proof}
Since $\Spec k'\to \Spec k$ is a quasi-syntomic cover, it satisfies descent for isocrystals thanks to the descent results of Drinfeld and Mathew in \cites{DrinfeldCrystals,MathewDescent} (see \cite[Theorem 2.2]{BhattScholzePrismaticCrystals}). Therefore, in order to descend $\calN_{k'}$ to $k$ it is enough to show that the splitting $\calN_{k'\otimes_k k'}\oplus S_{n-1}(\calM_{k'\otimes_k k'})$ is unique. Suppose that $\calN'_{k'\otimes_k k'}\oplus S_{n-1}(\calM_{k'\otimes_k k'})$ was a different splitting. Then there would exist a non-trivial Frobenius-equivariant morphism $\calN'_{k'\otimes_k k'}\to S_{n-1}(\calM_{k'\otimes_k k'})$. In other words, the $F$-isocrystal $\underline{\hom}(\calN'_{k'\otimes_k k'},S_{n-1}(\calM_{k'\otimes_k k'}))$ would have a non-trivial Frobenius-invariant global section. Since the slopes of $\underline{\hom}(\calN'_{k'\otimes_k k'},S_{n-1}(\calM_{k'\otimes_k k'}))$ are all negative by definition and $k'\otimes_k k'$ admits a finite $p$-basis by Lemma \ref{Lem:tensor-finite-p-basis}, this would contradict Lemma \ref{Lem:no-global-sections}. \end{proof}

\begin{Lem}\label{Lem:TraceFiniteFlat}
    If $f\colon Y\to X$ is a finite flat surjective morphism of Noetherian Frobenius-smooth schemes and $\calM$ is an isocrystal over $X$, then the natural morphism $H^1_\mathrm{cris}(X,\calM)\to H^1_\mathrm{cris}(Y,f^*\calM)$
    is injective for every $i$.
    \end{Lem}
\begin{proof}
    Thanks to the Leray spectral sequence, we have a natural injective morphism $$H^1_{\mathrm{cris}}(X,f_*f^*\calM)\hookrightarrow H^1_{\mathrm{cris}}(Y,f^*\calM).$$ Since a composition of injective morphisms is injective, it remains to prove that the natural morphism $H^1_{\mathrm{cris}}(X,\calM)\to H^1_{\mathrm{cris}}(X,f_*f^*\calM)$ given by the adjunction is injective. For this we note that thanks to \cite[Proposition 3.5.2]{DrinfeldCrystals} locally on $X$ the isocrystal $\calM$ is a vector bundle over a $p$-adic lift of $X$ endowed with a $p$-adic connection. Thus by the projection formula, we have that $f_*f^*\calM=\calM\otimes_\calO f_*{\calO}.$ We deduce that there is a canonical trace morphism $f_*f^*\calM\to \calM$ such that the composition $$\alpha\colon \calM\to f_*f^*\calM\to \calM$$ is the endomorphism of $\calM$ that, locally on $X$, is given by the multiplication with respect to the degree of $f$. In particular, $\alpha$ is an automorphism of $\calM$, so that $H^1_{\mathrm{cris}}(X,\calM)\to H^1_{\mathrm{cris}}(X,f_*f^*\calM)$ admits a retraction.
\end{proof}

\begin{Prop} \label{Prop:FiniteFlatMonodromy}
    If $f\colon Y\to X$ is a finite flat surjective morphism of irreducible Noetherian Frobenius-smooth schemes and $\calM$ is an isocrystal over $X$ with unipotent monodromy group, then $$G(f^*\calM,y)=G(\calM,f(y)).$$ 
\end{Prop}
\begin{proof}
   We want to use the criterion of Proposition \ref{Prop:SecondTannakianCriterion} applied to the inclusion $G(f^*\calM,y)\subseteq G(\calM,f(y))$. Thanks to Lemma \ref{Lem:TraceFiniteFlat}, we have that $\mathrm{Ext}^1_{\mathrm{Isoc}(X)}(\calO_X,\calN)\to \mathrm{Ext}^1_{\mathrm{Isoc}(Y)}(\calO_Y,f^*\calN) $ is injective for every isocrystal $\calN$ over $X$. It remains to note that if $\kappa$ is the field of constants of $Y$, then the extension of scalars from $W(\kappa)[\tfrac{1}{p}]\subseteq W(\Omega)[\tfrac{1}{p}]$ commutes with the operation of taking ext-groups in the category of isocrystals. This yields the desired result.
\end{proof}

\subsection{The local monodromy theorem}\label{Sec:LocalMonodromyTheorem}

We are ready to put all the previous results together and prove Theorem \ref{Thm:MainThmMonodromy}. Let $X$ be a smooth irreducible variety over a perfect field and let $x$ be a closed point of $X$. In this section we view $X^{/x}$ as the scheme $\spec \widehat{\calO}_{X,x}$ (conventions of Notation \ref{Not:WarningFormalMonodromy} are in force). We denote by $k$ the function field of $X$ and by $k_x$ the function field of $X^{/x}$. We also write $\eta^{\mathrm{sep}}$ (resp. $\bar{\eta}$) for the points over the generic point of $X$ associated to a separable (resp. algebraic) closure of $k$. 
\begin{Lem}\label{Lem:comp-is-sepa} The fields $k$ and $k_x$ have a common finite $p$-basis. In particular, $k\subseteq k_x$ is a separable field extension.
\end{Lem}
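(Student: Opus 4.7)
The plan is to exhibit an explicit finite subset of $\mathcal{O}_{X,x}$ that serves as a $p$-basis of both $k$ and $k_x$, from which separability will follow formally. Let $F$ denote the perfect base field, set $d = \dim X$, and write $F' := \kappa(x)$, which is finite over $F$ and therefore itself perfect. Using smoothness of $X$ at $x$, I choose a regular system of parameters $t_1, \ldots, t_d \in \mathcal{O}_{X,x}$; shrinking $X$ to an affine neighbourhood of $x$ on which the $t_i$ are defined, the differentials $dt_1, \ldots, dt_d$ trivialise $\Omega^1_{X/F}$, so the resulting morphism $X \to \mathbb{A}^d_F$ is étale. Cohen's structure theorem, applied to the regular complete Noetherian local ring $\widehat{\mathcal{O}}_{X,x}$ with perfect residue field $F'$, then produces an $F'$-algebra isomorphism $\widehat{\mathcal{O}}_{X,x} \cong F'[[t_1, \ldots, t_d]]$.

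The étale morphism $X \to \mathbb{A}^d_F$ realises $k$ as a finite separable extension of $F(t_1, \ldots, t_d)$, so $t_1, \ldots, t_d$ is a separating transcendence basis of $k/F$. Since $F$ is perfect, $\Omega^1_{k/\mathbb{F}_p} = \Omega^1_{k/F}$ is a $d$-dimensional $k$-vector space with basis $dt_1, \ldots, dt_d$, which by the differential criterion for $p$-bases (\emph{cf.}\ Matsumura, \emph{Commutative Ring Theory}, Theorem 26.5) is equivalent to $t_1, \ldots, t_d$ being a finite $p$-basis of $k$.

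The heart of the argument is to show that $t_1, \ldots, t_d$ is also a $p$-basis of $k_x$. I first treat $R = F'[[t_1, \ldots, t_d]]$: given $f = \sum_\alpha a_\alpha t^\alpha \in R$ with $a_\alpha \in F'$, I decompose each multi-index uniquely as $\alpha = p\beta + \gamma$ with $0 \le \gamma_i < p$ and regroup to obtain
\[ f = \sum_{\gamma\,:\,0 \le \gamma_i < p} \Bigl(\sum_\beta a_{p\beta+\gamma}^{1/p}\, t^\beta\Bigr)^{\!p} t^\gamma, \]
using perfectness of $F'$ to extract $p$-th roots and the fact that the Frobenius on $R$ is additive and continuous for the $\mathfrak{m}$-adic topology. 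Uniqueness of this decomposition reduces to the linear independence of the distinct monomials $t^{p\beta+\gamma}$ in $R$. Hence $R$ is free over $R^p$ with basis $\{t^\gamma : 0 \le \gamma_i < p\}$, i.e., $t_1, \ldots, t_d$ is a $p$-basis of $R$. To pass to the fraction field, any $f/g \in k_x$ with $f, g \in R$ equals $(fg^{p-1})/g^p$: since $g^p \in R^p \subseteq k_x^p$ and $fg^{p-1} \in R$ decomposes as $\sum_\gamma h_\gamma^p t^\gamma$, one gets $f/g = \sum_\gamma (h_\gamma/g)^p\, t^\gamma$, while uniqueness in $k_x$ reduces to uniqueness in $R$ by clearing a common denominator. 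I expect this passage from $R$ to $k_x$ to be the only mildly delicate point of the proof, since it requires the trick of absorbing the denominator into a $p$-th power.

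Finally, separability of the extension $k \subseteq k_x$ follows formally from the existence of a common $p$-basis. Indeed, $k^{1/p} \cong k[u_1, \ldots, u_d]/(u_i^p - t_i)$ since $t_1, \ldots, t_d$ is a $p$-basis of $k$, whence
\[ k^{1/p} \otimes_k k_x \cong k_x[u_1, \ldots, u_d]/(u_i^p - t_i), \]
which is reduced (in fact a field of degree $p^d$ over $k_x$) because $t_1, \ldots, t_d$ remains $p$-independent in $k_x$. Reducedness of $k^{1/p} \otimes_k k_x$ is equivalent to $k \subseteq k_x$ being separable, which completes the proof.
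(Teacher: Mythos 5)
Your proof is correct, and it takes a noticeably more hands-on route than the paper's. The paper invokes Matsumura's Theorem~26.7, which reduces the common-$p$-basis claim to showing that the natural map $\Omega^1_{k/\mathbb{F}_p} \otimes_k k_x \to \Omega^1_{k_x/\mathbb{F}_p}$ is an isomorphism, and then gets this from the compatibility of K\"ahler differentials with completions of regular local rings (Matsumura~30.5 and~30.9, applied to $A = \mathcal{O}_{X,x}$), followed by a localisation. You instead exhibit the shared $p$-basis explicitly: a regular system of parameters $t_1, \ldots, t_d$ at $x$, shown to be a $p$-basis of $k$ via the induced \'etale map to $\mathbb{A}^d_F$ and the differential criterion for $p$-bases, and shown to be a $p$-basis of $k_x$ via Cohen's structure theorem and a direct multi-index computation in $F'[[t_1,\ldots,t_d]]$; separability then drops out from reducedness of $k^{1/p} \otimes_k k_x$, i.e.\ MacLane's criterion. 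Both arguments are valid. The paper's version is shorter and delegates the work to black-box statements about differentials of regular local rings, leaving the $p$-basis itself implicit (which is enough for the downstream use in Proposition~\ref{Prop:non-splitting}); yours is longer but essentially self-contained and pins down the common $p$-basis concretely, so the only external inputs are Cohen's structure theorem and the two standard differential criteria. One small point worth noting if you were to polish this into a final writeup: the isomorphism $\widehat{\mathcal{O}}_{X,x} \cong F'[[t_1,\ldots,t_d]]$ should be phrased so that the chosen coefficient field $F'$ and the chosen regular parameters $t_i$ are matched up (Cohen only produces \emph{some} isomorphism with a power-series ring), but this is routine once one fixes a coefficient field first and then sends $T_i \mapsto t_i$.
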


\begin{proof} By \cite[Theorem 26.7]{Matsumura}, it is enough to show that $\Omega^1_{k/\Fp}\otimes_k k_x=\Omega^1_{k_x/\Fp}$. Write $A$ for the local ring of $X$ at $x$ and $A_x^\wedge$ for the completion with respect to the maximal ideal $\mathfrak{m}_x$. Since $A$ is regular, we can use \cite[Theorem 30.5 and Theorem 30.9]{Matsumura} to deduce that $\Omega^1_{A/\Fp}\otimes_A A_x^\wedge=\Omega^1_{A_x^\wedge/\Fp}$. We get the desired result after inverting $\mathfrak{m}_x-\{0\}$. 
\end{proof}

\begin{Prop}\label{Prop:monodromy-sep-closure}
If $(\calM, \Phi_\calM)$ is an $F$-isocrystal over $X$ such that $R_u(G(\calM,\bar{\eta}))\subseteq U_\nu$, then $G(\calM_{\eta^{\mathrm{sep}}},\bar{\eta})=R_u(G(\calM,\bar{\eta}))$.
\end{Prop}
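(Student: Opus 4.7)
The strategy is to reduce to Proposition \ref{Prop:monodromy-after-pro-étale-cover} applied to the pro-étale Galois cover $f\colon\spec k^{\sep}\to \spec k$ with Galois group $\Pi=\gal(k^{\sep}/k)$, regarded as a cover of the generic point of $X$.

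First I would apply Theorem \ref{Thm:MonodromyGenericPoint} to the $F$-isocrystal $(\calM,\Phi_\calM)$ (which has constant Newton polygon since it admits a slope filtration over $X$): this gives $G(\calM,\bar\eta)=G(\calM_\eta,\bar\eta)$, so it suffices to prove $G(\calM_{\eta^{\sep}},\bar\eta)=R_u(G(\calM_\eta,\bar\eta))$. Note that $\bar\eta$ factors through $\eta^{\sep}$ via a fixed embedding $k^{\sep}\hookrightarrow\bar k$, so $\bar\eta$ is a legitimate $\bar k$-point of $Y=\spec k^{\sep}$ with image $f(\bar\eta)=\bar\eta$ on $\spec k$.

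To invoke Proposition \ref{Prop:monodromy-after-pro-étale-cover} I need to check its hypotheses. Noetherianity is trivial, and Frobenius-smoothness of $\spec k^{\sep}$ follows from Lemma \ref{Lem:comp-is-sepa}-style reasoning: by \cite[Theorem 26.7]{Matsumura}, the separable extension $k^{\sep}/k$ preserves the property of admitting a finite $p$-basis. The hypothesis $R_u(G(\calM_\eta,\bar\eta))\subseteq U_\nu$ is the assumption of the proposition together with the equivalence from Theorem \ref{Thm:MonodromyGenericPoint}. The remaining condition is triviality of $\mathrm{Gr}_{S_\bullet}(f^*\calM)$, which I would verify as follows: each graded piece is an isoclinic $F$-isocrystal over $\spec k^{\sep}$, so after twisting by an appropriate rank one object and replacing $\Phi_\calM$ by a power it becomes a unit-root $F^n$-isocrystal; by Theorem 6 of \cite{BerthelotMessing} such an object corresponds to a continuous representation of the étale fundamental group of $\spec k^{\sep}$, which is trivial because $k^{\sep}$ is separably closed. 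Hence the isocrystal is trivial, and so are the original graded pieces.

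Having checked the hypotheses, Proposition \ref{Prop:monodromy-after-pro-étale-cover} yields $G(\calM_{\eta^{\sep}},\bar\eta)=R_u(G(\calM_\eta,\bar\eta))=R_u(G(\calM,\bar\eta))$, which is the desired equality. The only subtlety worth double-checking is the trivialisation of isoclinic $F$-isocrystals on $\spec k^{\sep}$ (rather than a geometric point), but this is exactly the content of the Berthelot--Messing correspondence combined with the triviality of $\pi_1^{\text{ét}}(\spec k^{\sep})$; all remaining steps are direct applications of results already established in the excerpt.
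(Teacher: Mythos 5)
Your proof takes essentially the same route as the paper: both reduce via Theorem \ref{Thm:MonodromyGenericPoint} to the generic point, then apply Proposition \ref{Prop:monodromy-after-pro-étale-cover} to the pro-étale $\gal(k^{\sep}/k)$-cover $\eta^{\sep}\to\eta$, noting that the graded pieces of the slope filtration trivialise because $\eta^{\sep}$ is simply connected. The extra detail you supply (Frobenius-smoothness of $k^{\sep}$, spelling out the Berthelot--Messing argument) is exactly the routine verification the paper compresses into a single sentence.
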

\begin{proof}
By Theorem \ref{Thm:MonodromyGenericPoint} we have that $G(\calM,\bar{\eta})=G(\calM_\eta,\bar{\eta})$, so that we are reduced to prove the statement for $G(\calM_\eta,\bar{\eta})$. Note that the cover $f:\eta^{\mathrm{sep}}\to \eta$ is a pro-étale $\mathrm{Gal}(k^{\mathrm{sep}}/k)$-cover and $\mathrm{Gr}_{S_\bullet}(f^*\calM_\eta)$ is trivial because $\eta^{\mathrm{sep}}$ is simply connected. This shows that we can apply Proposition \ref{Prop:monodromy-after-pro-étale-cover} and deduce the desired result.
\end{proof}

\begin{Prop}\label{Prop:glob-sect-comp}If $(\calM, \Phi_\calM)$ is an $F$-isocrystal over $X$ coming from an irreducible overconvergent $F$-isocrystal with constant Newton polygon, then $$H_{\mathrm{cris}}^0(X^{/x},(S_1(\calM))^{/x})=H^0_{\mathrm{cris}}(X^{/x},\calM^{/x}).$$
\end{Prop}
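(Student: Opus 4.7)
The plan is to show the equivalent vanishing of the natural map $H^0(X^{/x}, \calM^{/x}) \to H^0(X^{/x}, (\calM/S_1(\calM))^{/x})$. Decomposing the finite-dimensional $F$-isocrystal $H^0(X^{/x}, \calM^{/x})$ over the perfect field of constants $\kappa$ into its isoclinic parts via Dieudonn\'e--Manin, and noting that the slope-$\alpha_j$ component must lie inside $H^0(X^{/x}, S_j(\calM)^{/x})$ (since the higher quotients $\calM/S_i(\calM)$ with $i \ge j$ contain no slope $\alpha_j$), it is enough to show, for each slope $\alpha_j$ of $\calM$ with $j \ge 2$, that the image of this slope-$\alpha_j$ component in $V_j := H^0(X^{/x}, (S_j(\calM)/S_{j-1}(\calM))^{/x})$ vanishes. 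The isoclinic piece $(S_j(\calM)/S_{j-1}(\calM))^{/x}$ is trivialised to $V_j \otimes_K \mathcal{O}_{X^{/x}}$ by Theorem 6 of \cite{BerthelotMessing}, since $X^{/x}$ is geometrically simply connected.

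Suppose for contradiction that this image contains a non-zero Frobenius-stable subspace $W \subseteq V_j$. Applying the Dieudonn\'e--Manin slope decomposition of the $F$-isocrystal $H^0(X^{/x}, S_j(\calM)^{/x})$ over $\kappa$ and choosing a Frobenius-equivariant section of the surjection of isoclinic slope-$\alpha_j$ $F$-isocrystals from the slope-$\alpha_j$ component onto $W$, one obtains a Frobenius-equivariant lift $\widetilde W \hookrightarrow H^0(X^{/x}, S_j(\calM)^{/x})$ of $W$. Evaluating sections yields an $F$-equivariant embedding $\widetilde W \otimes_K \mathcal{O}_{X^{/x}} \hookrightarrow S_j(\calM)^{/x}$ whose image combines with $S_{j-1}(\calM)^{/x}$ to an internal direct sum inside $S_j(\calM)^{/x}$ --- an $F$-equivariant partial splitting of $0 \to S_{j-1}(\calM) \to S_j(\calM) \to S_j(\calM)/S_{j-1}(\calM) \to 0$ over $X^{/x}$.

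Restricting to the generic point $\eta_x = \spec k_x$ and applying a partial-splitting analogue of Proposition \ref{Prop:non-splitting}, valid thanks to the common finite $p$-basis for $k$ and $k_x$ provided by Lemma \ref{Lem:comp-is-sepa}, descends this partial splitting to $\calM_k$. The output is a non-zero Frobenius-stable subobject $\calN_0 \oplus S_{j-1}(\calM_k) \hookrightarrow S_j(\calM_k)$ with $\calN_0$ isoclinic of slope $\alpha_j$. By the Tannakian equivalence $\langle \calM \rangle \simeq \langle \calM_k \rangle$ of Theorem \ref{Thm:MonodromyGenericPoint}, this lifts to an $F$-stable subobject of $\calM$ over $X$ that is not a step of the slope filtration, contradicting the parabolicity theorem of \cite{AddezioII}: the monodromy of $\calM$ over $X$ equals the parabolic $P_\nu \subseteq G(\calM^{\dagger},x)$, whose Tannakian-stable subspaces of $\calM_x$ are precisely the steps $S_i(\calM)_x$ of the slope filtration.

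The main technical obstacle is the partial-splitting analogue of Proposition \ref{Prop:non-splitting}. The original proof used the uniqueness of the full splitting, showing that any two candidates differ by a Frobenius-invariant section of the negative-slope Hom isocrystal $\underline{\hom}(\calN',S_{n-1})$, which vanishes by Lemma \ref{Lem:no-global-sections}. The same strategy, now applied to $\underline{\hom}(\widetilde W,S_{j-1}(\calM))$ over $k_x\otimes_k k_x$ (which has a finite $p$-basis by Lemma \ref{Lem:tensor-finite-p-basis}), should yield the desired descent in the partial-splitting setting.
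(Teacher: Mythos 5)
Your overall strategy — pick an isoclinic piece of $H^0(X^{/x},\calM^{/x})$ of slope larger than $\alpha_1$, build a Frobenius-stable complement to $S_{j-1}$, descend it away from $X^{/x}$, and contradict parabolicity — is the right shape, but two key steps do not go through as written, and both are points the paper's proof is designed to sidestep.

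The first gap is the descent from $k_x$ to $k$. Your $W$ lives in $V_j = H^0(X^{/x},(S_j/S_{j-1})^{/x}) = (S_j/S_{j-1})_x$, which makes sense because the graded piece trivialises over the simply connected $X^{/x}$. But over $\eta = \spec k$ (or over $X$) the graded piece is a nontrivial unit-root $F$-isocrystal, and there is no reason that the Frobenius-stable subspace $W$ of its fibre should be stable under $\pi_1$. Consequently neither $W$ nor its preimage $\calE$ in $S_j(\calM)$ is defined over $k$, so there is no $F$-isocrystal over $k$ to which a ``partial-splitting analogue'' of Proposition \ref{Prop:non-splitting} could descend your splitting; the uniqueness argument underlying that proposition relies on the target being intrinsically characterised (it is the complement of $S_{n-1}$), which fails for an arbitrary $W$. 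The paper avoids this entirely by working over $\eta^{\mathrm{sep}}$ (where the graded pieces \emph{are} trivial, so the analogue of $W$ — a rank-one Frobenius-stable line — does exist as a sub-$F$-isocrystal) and by deriving the contradiction directly over $k^{\mathrm{sep}}$ and $k_x^{\mathrm{sep}}$, never attempting to lift anything back to $X$.

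The second gap is the final contradiction. The claim that the $G(\calM,x)=P_\nu$-stable subspaces of $\calM_x$ ``are precisely the steps $S_i(\calM)_x$'' is false in general. For instance, if $G(\calM^\dagger) = \mathrm{GL}_2\times\mathrm{GL}_2$ acting on $V_1\otimes V_2$ with $P_\nu = B_1\times B_2$, the subspace $e_1\otimes V_2$ is $P_\nu$-stable and Frobenius-stable but is not a slope-filtration step. What \emph{is} true — and is what you actually need — is the sharper fact that a $P_\nu$-stable subspace of $\calM_x$ that is isoclinic of slope $> \alpha_1$ must be fixed by $U_\nu$, and for irreducible $\calM^\dagger$ the fixed space $V^{U_\nu}$ is an irreducible $M_\nu$-representation and therefore isoclinic of slope $\alpha_1$; hence such a subspace vanishes. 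You do not prove this, and the phrasing ``contradicting the parabolicity theorem'' conceals the missing step. The paper does not need this representation-theoretic argument because it reduces to a rank-one extension $0\to S_{r-1}(\tilde\calM)\to\tilde\calN\to\tilde\calL\to 0$ over $\eta^{\mathrm{sep}}$ (with $s_r$ the \emph{greatest} slope of $H^0(X^{/x},\calM^{/x})$) whose non-splitting is precisely Theorem 4.1.3 of \cite{AddezioII}, after which Proposition \ref{Prop:non-splitting} applied over $k^{\mathrm{sep}}\subseteq k_x^{\mathrm{sep}}$ delivers the contradiction with the section $v$.

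In short, your proposal would need (a) to stay over $\eta^{\mathrm{sep}}$ and $\eta_x^{\mathrm{sep}}$ rather than trying to descend to $k$ and $X$, and (b) to replace the incorrect statement about $P_\nu$-stable subspaces either by a genuine representation-theoretic argument or by invoking \cite[Theorem 4.1.3]{AddezioII} as the paper does. Passing to the greatest slope and a single Frobenius eigenvector, as the paper does, makes both points automatic.
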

\begin{proof}
By Galois descent we may assume that the ring of constants of $X$ is an algebraically closed field. The inclusion $$H^0_{\mathrm{cris}}(X^{/x},(S_1(\calM))^{/x})\subseteq H^0_{\mathrm{cris}}(X^{/x},\calM^{/x})$$ is an inclusion of $F$-isocrystals over $\kappa$. We suppose by contradiction that this is not an equality. Let $s_r>s_1$ be the greatest slope appearing in $H^0_{\mathrm{cris}}(X^{/x},\calM^{/x})$ and let $v$ be a non-zero vector such that $\Phi_{\calM^{/x}}^n(v)=p^{s_rn}v$ for $n\gg0$. Write $(\tilde{\calM},\Phi_{\tilde{\calM}})$ for the base change of $(\calM,\Phi_\calM)$ to $\eta^{\mathrm{sep}}.$

By the parabolicity conjecture, \cite{AddezioII}, $R_u(G(\calM,\bar{\eta}))$ is contained in $U_\nu$ because $(\calM,\Phi_\calM)$ comes from an irreducible overconvergent $F$-isocrystal. Proposition \ref{Prop:monodromy-sep-closure} then implies that the monodromy group $G(\tilde{\calM},\bar{\eta})$ is equal to $G(\calM,\bar{\eta})\cap U_\nu$. Therefore, the line spanned by $v$ determines a rank $1$ subobject $\tilde{\calL}\subseteq S_r(\tilde{\calM})/S_{r-1}(\tilde{\calM})$ stabilised by the Frobenius. The preimage of this isocrystal in $S_r(\tilde{\calM})$, denoted by $\tilde{\calN}$, is kept invariant by the Frobenius and sits in an exact sequence $$0\to S_{r-1}(\tilde{\calM}) \to \tilde{\calN}\to  \tilde{\calL}\to 0.$$ Since $(\calM,\Phi_{\calM})$ comes from an irreducible overconvergent $F$-isocrystal, the sequence does not admit a Frobenius-equivariant splitting by \cite[Theorem 4.1.3]{AddezioII}. By Proposition \ref{Prop:non-splitting}, the base change of this extension to $k_x^{\mathrm{sep}}$ does not split as well. This leads to a contradiction since $v$ is a vector in $H^0(X^{/x},\calM^{/x})$ which produces a non-trivial global section of $\tilde{\calN}\subseteq \tilde{\calM}.$
\end{proof}

We write $\eta_x$ for the generic point of $X^{/x}$ and $G(\calM^{/x},\eta^{\perf}_x)$ for the monodromy group of $\calM^{/x}$ (notation as in §\ref{Not:WarningFormalMonodromy}) with respect to the perfection of $\eta_x$.

\begin{Thm} \label{Thm:LocalGlobalMonodromyTheorem}
If $(\calM,\Phi_\calM)$ comes from a semi-simple overconvergent $F$-isocrystal with constant Newton polygon, then $$G(\calM^{/x},\eta^{\perf}_x)=R_u(G(\calM,\eta^{\perf}_x)).$$
\end{Thm}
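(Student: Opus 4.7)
My plan is to apply the Tannakian criterion of Proposition~\ref{Prop:Tann-crit} to the pair $U := G(\calM^{/x}, \eta_x^{\perf}) \subseteq G := G(\calM, \eta_x^{\perf})$, both realised as subgroups of $\operatorname{GL}(V)$ with $V$ the fibre of $\calM$ at $\eta_x^{\perf}$. The inclusion $U \subseteq G$ comes from restricting $\calM$ along $X^{/x} \hookrightarrow X$: the space of Frobenius-fixed global sections can only grow upon pullback, so the Tannaka group shrinks.

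Before invoking the criterion I would first verify its hypothesis that both $U$ and $R_u(G)$ lie in the unipotent radical $U_\nu$ of the parabolic attached to the Newton cocharacter. For $R_u(G) \subseteq U_\nu$ this is the parabolicity conjecture proved in \cite{AddezioII}, available because $\calM$ comes from an overconvergent $F$-isocrystal. For $U \subseteq U_\nu$, I would pull back the slope filtration of $\calM$ to $\calM^{/x}$ (using the constant Newton polygon): its graded pieces are isoclinic $F$-isocrystals on the formally simply-connected scheme $X^{/x}$ and hence are trivial as isocrystals by \cite[Theorem 6]{BerthelotMessing}, applied as in the proof of Proposition~\ref{Prop:isoclinic-sub-extendable}. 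This forces $U$ to preserve the filtration and act trivially on the graded, so $U \subseteq U_\nu$. Combining with $U \subseteq G$ already yields one direction, namely $U \subseteq G \cap U_\nu = R_u(G)$.

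For the reverse inclusion I would verify condition (ii) of Proposition~\ref{Prop:Tann-crit}: for every algebraic representation $W$ of $\operatorname{GL}(V)$, the subspace $W^U \subseteq W$ must be $G$-stable and $R_u(G)$ must act trivially on it. Under Tannakian duality, $W^U$ is the fibre at $\eta_x^{\perf}$ of the maximal trivial subobject of $W(\calM)^{/x}$, which is generated by the Frobenius-fixed global sections $H^0(X^{/x}, W(\calM)^{/x})$. The key input is Proposition~\ref{Prop:glob-sect-comp}, applied summand-wise to the semi-simple $F$-isocrystal $W(\calM)$: it confines these sections to the bottom slope piece, so $W^U \subseteq S_1(W)$. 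Combined with the opposite inclusion $W^U \supseteq W^{U_\nu} = S_1(W)$ (which follows from $U \subseteq U_\nu$ together with the standard description of $U_\nu$-invariants of a representation of $\operatorname{GL}(V)$), we obtain $W^U = S_1(W)$. This subspace is $G$-stable because the slope filtration is canonical, and $R_u(G) \subseteq U_\nu$ acts trivially on it, so both parts of (ii) hold.

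The main obstacle I anticipate is extending Proposition~\ref{Prop:glob-sect-comp}, which is stated for a single irreducible overconvergent $F$-isocrystal, to every $W(\calM) \in \langle \calM \rangle$. This requires knowing that each such $W(\calM)$ decomposes as a direct sum of irreducible overconvergent $F$-isocrystals with constant Newton polygon, which in turn relies on semi-simplicity being preserved under tensor operations in the category of overconvergent $F$-isocrystals (an Abe--Caro type input). A secondary issue is a careful check of conventions to ensure that $U_\nu$-invariants of an algebraic representation of $\operatorname{GL}(V)$ are indeed identified with the bottom step $S_1$ of the slope filtration.
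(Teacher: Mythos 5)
Your proposal follows the paper's proof quite closely: the same Tannakian criterion (Proposition~\ref{Prop:Tann-crit}), the same verification that $U:=G(\calM^{/x},\eta_x^{\perf})\subseteq U_\nu$ via triviality of $\mathrm{Gr}_{S_\bullet}(\calM)^{/x}$, and the same key use of Proposition~\ref{Prop:glob-sect-comp} applied irreducible summand by summand. Your two flagged worries also point to genuine issues, but they resolve in slightly different ways than you expect.

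The secondary issue you flag is in fact a real error: the identity $W^{U_\nu}=S_1(W)$ is \emph{not} true for a general representation $W$ of $\operatorname{GL}(V)$. For instance, take $W=\operatorname{End}(V)=V\otimes V^\vee$ with $V$ non-isoclinic. The identity endomorphism is fixed by the conjugation action of $U_\nu$ and lives in slope $0$, whereas $S_1(W)$ lies entirely in strictly negative slope; so $W^{U_\nu}\supsetneq S_1(W)$. More generally, once the irreducible $F$-isocrystal summands $W_i$ of $W(\calM)$ have different minimal slopes, the space $\bigoplus_i S_1(W_i)$ is strictly larger than $S_1\bigl(\bigoplus_i W_i\bigr)$, and Proposition~\ref{Prop:glob-sect-comp} applied summand-wise gives you the former, not the latter. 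Fortunately, the opposite-inclusion step where you invoke $W^{U_\nu}=S_1(W)$ is unnecessary, and dropping it fixes everything. The paper's formulation of what must be shown is: $W^U$ equals the fibre at $x$ of \emph{some direct sum of isoclinic sub-$F$-isocrystals} of $W(\calM)$ (this is condition (ii) in disguise, since such a subobject is $G$-stable and killed by $R_u(G)\subseteq U_\nu$). After decomposing $W(\calM)=\bigoplus_i W_i$ into irreducibles and applying Proposition~\ref{Prop:glob-sect-comp} to each $W_i$, you get $W^U\subseteq\bigoplus_i (S_1(W_i))_x$; the reverse inclusion is immediate because each $S_1(W_i)$ is isoclinic and therefore trivial on the simply-connected $X^{/x}$. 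No comparison with $U_\nu$-invariants of $W$ is needed.

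Your main worry about semi-simplicity is handled precisely by the hypothesis that $\calM$ comes from a \emph{semi-simple} overconvergent $F$-isocrystal: the Tannakian subcategory $\langle\calM^\dagger\rangle$ is then semi-simple, so every $W(\calM^\dagger)$ decomposes into irreducibles, and that decomposition is inherited by the underlying convergent $F$-isocrystal $W(\calM)$; constancy of the Newton polygon is automatic in $\langle\calM\rangle$. So the paper's quick reduction ``we may assume $\calN$ can be endowed with an irreducible Frobenius structure'' is exactly this, and no additional Abe--Caro type input is needed beyond what the semi-simplicity hypothesis already gives you.
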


\begin{proof}
Write $G$ for the group $G(\calM,\eta)$ and $V$ for the induced $G$-representation. By \cite{AddezioII} we have that $R_u(G)$ is contained in $U_\nu$ where $\nu$ is the Newton cocharacter. Since $X^{/x}$ is geometrically simply connected we deduce that $\mathrm{Gr}_{S_\bullet}(\calM)^{/x}$ is trivial (see \cite[Proposition 3.3.4]{AddezioII}). This implies that $G(\calM^{/x},\eta_x^\perf)\subseteq R_u(G)\subseteq U_\nu.$ Therefore, in order to apply the criterion of Proposition \ref{Prop:Tann-crit} it is enough to show that for every $\calN\in \langle \calM \rangle$, the space of global sections of $\calN^{/x}$ is the same as the fibre at $x$ of some direct sum of isoclinic subobjects of $\calN$. To prove this, we may assume that $\calN$ can be endowed with a Frobenius structure $\Phi_\calN$ and $(\calN,\Phi_\calN)$ is irreducible. Thanks to Proposition \ref{Prop:glob-sect-comp}, we deduce that the fibre of $S_1(\calN)$ at $x$ is the same as $H^0_{\mathrm{cris}}(X^{/x},\calN^{/x})$. This yields the desired result.
\end{proof}

\section{Automorphism groups of \texorpdfstring{$p$}{p}-divisible groups and Dieudonn\'e--Lie algebras} \label{Sec:AutoEndo}

The goal of this section is to define various groups of tensor-preserving automorphisms and endomorphisms of $p$-divisible groups with $G$-structure, correcting two definitions from \cite{KimCentralLeaves}. To do this, we introduce the notion of \textit{\DLz} and we prove various properties using this point of view. We end the section by studying actions of algebraic groups on nilpotent \DLz s and their associated unipotent groups.
\subsection{Hom groups of \texorpdfstring{$p$}{p}-divisible groups}
\label{Sec:EndomorphismsAutomorphisms}
We mainly follow \cite[Section 3]{ChaiFoliation}, \cite[Section 4]{CaraianiScholze}, and \cite{KimCentralLeaves}. For $p$-divisible groups $Y$ and $Z$ over a perfect field $\kappa$, Chai and Oort construct finite group schemes
\begin{align}
   \mathbf{Hom}^{\mathrm{st}}(Y[p^n], Z[p^n]) \subseteq\mathbf{Hom}(Y[p^n],Z[p^n]),
\end{align}
where $\mathbf{Hom}(Y[p^n],Z[p^n])$ is the sheaf of homomorphisms from $Y[p^n]$ to $Z[p^n]$. They also construct maps
\begin{align}
    \pi_{n}:\mathbf{Hom}^{\mathrm{st}}(Y[p^n], Z[p^n]) \to\mathbf{Hom}^{\mathrm{st}}(Y[p^{n+1}], Z[p^{n+1}])
\end{align}
such that (the additive group underlying)
\begin{align}
    \varinjlim_{n}\mathbf{Hom}^{\mathrm{st}}(Y[p^n], Z[p^n])
\end{align}
is a $p$-divisible group $\mathcal{H}_{Y,Z}$, called the \emph{internal-Hom $p$-divisible group}. Its scheme-theoretic $p$-adic Tate module $\varprojlim_n  \mathbf{Hom}^{\mathrm{st}}(Y[p^n], Z[p^n])$ is isomorphic to the sheaf $\mathbf{Hom}(Y,Z)$ of homomorphisms from $Y$ to $Z$, see \cite[Lemma 4.1.7]{CaraianiScholze}. In particular, this means that the subscheme $$\mathbf{Hom}^{\mathrm{st}}(Y[p^n], Z[p^n]) \subseteq\mathbf{Hom}(Y[p^n],Z[p^n])$$ can be characterised on $R$-points as the subgroup of those homomorphisms $Y[p^n] \to Z[p^n]$ that lift, fppf locally, to homomorphisms $Y[p^N] \to Z[p^N]$ for all $N \ge n$. 

\subsubsection{} It follows from [\textit{ibid.}, Lemma 4.1.8] that there is a canonical isomorphism
\begin{align}
    \mathbb{D}(\mathcal{H}_{Y,Z})[\tfrac 1p] = \underline{\hom}(\mathbb{D}(Y)[\tfrac 1p], \mathbb{D}(Z)[\tfrac 1p])^{\le 0},
\end{align}
where $(-)^{\le 0}$ denotes the operation of taking the subspace of slope $\le 0$ of an $F$-isocrystal, and  
\begin{align} \label{eq:Isocrystal}
    \underline{\hom}(\mathbb{D}(Y)[\tfrac 1p], \mathbb{D}(Z)[\tfrac 1p])
\end{align} 
denotes the internal-Hom in the category of $F$-isocrystals. By the proof of Lemma 4.1.8 of [\textit{ibid.}], there is a canonical isomorphism of group-valued functors
\begin{align} \label{eq:UniversalCoverHomPdiv}
    \widetilde{\mathcal{H}}_{Y,Z} = \mathbf{Hom}(Y,Z)[\tfrac 1p],
\end{align}
where $\widetilde{\mathcal{H}}_{Y,Z}$ is the universal cover of $\mathcal{H}_{Y,Z}$ in the sense of Scholze--Weinstein (see Section \ref{Sec:PrelimBT}). 

\subsubsection{} \label{Sec:BlockMatrixForm} Now let $\kappa=\ovfp$. We will mostly be interested in $\mathcal{H}_Y\coloneqq \mathcal{H}_{Y,Y}$ for a $p$-divisible group $Y$, in which case $T_p\mathcal{H}_Y=\mathbf{Hom}(Y,Y)$ and $\widetilde{\mathcal{H}}_{Y}=\mathbf{Hom}(\tilde{Y},\tilde{Y})=\mathbf{Hom}(Y,Y)[\tfrac 1p]$ have an algebra structure given by composition. We will assume from now on that $Y=\bigoplus_i Y_i$ is a direct sum of isoclinic $p$-divisible groups with $Y_i$ of slope $s_i$ and $s_1<s_2<\dots<s_n$ (this is always true up to isogeny). Then we can write endomorphisms of $Y$ or $\tilde{Y}$ in “block matrix form” to get decompositions
\begin{align}\label{eq:Decomposition}
    \widetilde{\mathcal{H}}_{Y}&=\bigoplus_{i,j} \widetilde{\mathcal{H}}_{Y_i,Y_j} \\
    T_p \mathcal{H}_Y &= \bigoplus_{i,j} T_p \mathcal{H}_{Y_i,Y_j}.
\end{align}
It follows from \cite[Lemma 4.1.8 and Corollary 4.1.10]{CaraianiScholze} that the $p$-divisible groups $\mathcal{H}_{Y_i,Y_j}$ are isoclinic. Moreover, they are zero when $i>j$, \'etale $p$-divisible groups when $i=j$, and connected $p$-divisible groups\footnote{For $i<j$ the $p$-divisible groups $\mathcal{H}_{Y_i,Y_j}$ are related to spaces of extensions of $Y_i$ by $Y_j$, see Remark \ref{Rem:TwoSlopes}.} when $i<j$. This means that we get a lower triangular block matrix form (see the proof of Proposition 4.2.11 of [\textit{ibid.}]), and that the connected parts
\begin{align}\label{eq:DecompositionConnected}
    \widetilde{\mathcal{H}}^{\circ}_{Y} &= \bigoplus_{i<j} \widetilde{\mathcal{H}}_{Y_i,Y_j} \\
    T_p \mathcal{H}^{\circ}_{Y} &= \bigoplus_{i<j}  T_p \mathcal{H}_{Y_i,Y_j}
\end{align}
map to nilpotent endomorphisms under the natural maps to $\mathbf{Hom}(\tilde{Y}, \tilde{Y}) = \widetilde{\mathcal{H}}_Y$ and $\mathbf{Hom}(Y,Y)=T_p \mathcal{H}_{Y}$ respectively. Sending an endomorphism to the induced endomorphism on the associated graded of the slope filtration defines morphisms of formal groups
\begin{align}
    \mathbf{Hom}(\tilde{Y}, \tilde{Y}) &\to \bigoplus_i \mathbf{Hom}(\tilde{Y}_i, \tilde{Y}_i) \\
    \mathbf{Hom}(Y,Y) &\to \bigoplus_i \mathbf{Hom}(Y_i,Y_i)
\end{align}
with kernels $\widetilde{\mathcal{H}}^{\circ}_{Y}$ and $T_p \mathcal{H}^{\circ}_{Y}$ respectively. The targets of these morphisms are locally profinite group schemes. Therefore, since $\mathcal{H}_{Y_i}$ is \'etale, we may identify them with their locally profinite groups\footnote{If $V$ is a topological space we use the notation $\underline{V}$ for the functor on $\algo{\ovfp}$ sending $R$ to $\operatorname{Hom}_{\mathrm{cont}}(|\spec R|,V)$, where $|\spec R|$ is the topological space underlying the scheme $\spec R$. The functor $\underline{V}$ is representable by a finite scheme if $V$ is finite and discrete, and therefore also representable if $T$ is profinite or locally profinite.} of $\ovfp$-points
\begin{align}
    \bigoplus_i \mathbf{Hom}(\tilde{Y}_i, \tilde{Y}_i) &= \bigoplus_{i} \underline{\hom(\tilde{Y}_i, \tilde{Y}_i)(\ovfp)} \\
    \bigoplus_i \mathbf{Hom}(Y_i,Y_i) &= \bigoplus_{i} \underline{\hom(Y_i,Y_i)(\ovfp)}.
\end{align}

\subsubsection{} On Dieudonné modules, we have the inclusion
\begin{align}\label{Inclusion}
    \mathbb{D}({\mathcal{H}}_{Y})[\tfrac{1}{p}]=\underline{\mathrm{Hom}}(\mathbb{D}(Y)[\tfrac 1p],\mathbb{D}(Y)[\tfrac 1p] )^{\le 0}\subseteq \operatorname{Lie} \mathrm{GL}(\mathbb{D}(Y)[\tfrac 1p]))
\end{align}
as $\qpbreve$-vector spaces.\footnote{We will see later in Example \ref{Example:DieudonneLieInternalHom} that this inclusion upgrades to an inclusion of \textit{Dieudonné--Lie $\qpbreve$-algebras.}} If we write $P_\nu\subseteq \mathrm{GL}(\mathbb{D}(Y)[\tfrac{1}{p}])$ for the parabolic subgroup stabilising the slope filtration of $\mathbb{D}(Y)[\tfrac{1}{p}]$ and $U_\nu\subseteq P_\nu$ for its unipotent radical, we deduce from the previous discussion the following lemma.
\begin{Lem}\label{Lem:LieAlgebrasAndDieudonne}The inclusion \eqref{Inclusion} identifies $\mathbb{D}({\mathcal{H}}_{Y})[\tfrac{1}{p}]$ with $\operatorname{Lie} P_\nu$ and $\mathbb{D}({\mathcal{H}}^\circ_{Y})[\tfrac{1}{p}]$ with $\operatorname{Lie} U_\nu$.
\end{Lem}
\begin{proof}When $i\leq j$ the $F$-isocrystal $\underline{\hom}(\mathbb{D}(Y_i)[\tfrac 1p], \mathbb{D}(Y_j)[\tfrac 1p])$ is isoclinic of slope $s_i-s_j\leq 0$. Therefore, in this case
    \begin{align}
    \mathbb{D}(\mathcal{H}_{Y_i,Y_j})[\tfrac 1p] = \underline{\hom}(\mathbb{D}(Y_i)[\tfrac 1p], \mathbb{D}(Y_j)[\tfrac 1p])^{\leq 0}=\underline{\hom}(\mathbb{D}(Y_i)[\tfrac 1p], \mathbb{D}(Y_j)[\tfrac 1p]). 
\end{align}
On the one hand, \eqref{eq:Decomposition} implies that $\mathbb{D}({\mathcal{H}}_{Y})[\tfrac{1}{p}]=\operatorname{Lie} P_\nu.$ On the other hand, \eqref{eq:DecompositionConnected} gives  $\mathbb{D}({\mathcal{H}}^\circ_{Y})[\tfrac{1}{p}]=\operatorname{Lie} U_\nu$.
\end{proof}

\begin{Lem} \label{Lem:InternalHomCSD}
   If $\kappa=\ovfp$ and $Y=\bigoplus_i Y_i$ is a direct sum of isoclinic $p$-divisible groups, then $\mathcal{H}_{Y}$ is completely slope divisible.
\end{Lem}
\begin{proof} The lemma follows from Lemma \ref{Lem:CSDDieudonneOvfp} since by \cite[Lemma 4.1.8]{CaraianiScholze} each $\mathcal{H}_{Y_i,Y_j}$ is isoclinic. 
\end{proof}

\subsubsection{Automorphisms} \label{subsub:automorphisms} Let $\kappa=\ovfp$ as before. Let $\aut(\tilde{Y})$ be the functor on $\algo{\ovfp}$ of automorphisms of $\tilde{Y}$ and let $\aut(Y)$ be the functor on functor on $\algo{\ovfp}$ of automorphisms of $Y$. Since $Y=\varinjlim_n Y[p^n]$, this means that 
\begin{align}
    \aut(Y) = \varprojlim_n \aut(Y[p^n]).
\end{align}
\begin{Lem} \label{Lem:Representable}
    The functor $\aut(\tilde{Y})$ is representable by a formal scheme, and the functor $\aut(Y)$ is represented by a scheme.
\end{Lem}
\begin{proof}
There is a monomorphism
\begin{align}
    \alpha: \aut(\tilde{Y}) &\to \mathbf{Hom}(\tilde{Y}, \tilde{Y})^{\oplus 2} \\
    \gamma &\mapsto (\gamma, \gamma^{-1}) 
\end{align}
and the image consists of those pairs $(\gamma, \gamma')$ such that $\gamma \circ \gamma'=1=\gamma' \circ \gamma$. It follows from this that $\alpha$ is representable in closed immersions, and since 
\begin{align}
    \mathbf{Hom}(\tilde{Y}, \tilde{Y}) = \mathbf{Hom}(Y,Y)[\tfrac 1p] = \widetilde{\mathcal{H}}_{Y},
\end{align}
we see that $\aut(\tilde{Y})$ is representable by a formal scheme. Similarly, there is a monomorphism
\begin{align}
    \aut(Y) &\to \mathbf{Hom}(Y,Y)^{\oplus 2} \\
    \gamma &\mapsto (\gamma, \gamma^{-1}) 
\end{align}
representable in closed immersions, showing that $\aut(Y)$ is represented by a scheme since $$\mathbf{Hom}(Y,Y)=T_p \mathcal{H}_Y$$ is represented by a scheme by \cite[Proposition 4.1.2.(4)]{CaraianiScholze}.
\end{proof}

\subsubsection{} \label{subsub:SemiDirectProduct}
For each $i$, we can identify $\aut(\tilde{Y}_i)$ with the locally profinite group scheme $\underline{\aut(\tilde{Y}_i)(\ovfp)}$ since it is a closed subscheme of the locally profinite ring scheme $\operatorname{End}(\tilde{Y}_i)$. Similarly, we may identify $\aut(Y_i)$ with the profinite group scheme $\underline{\aut(Y_i)(\ovfp)}$. As above, there are natural maps
\begin{align}
    \aut(\tilde{Y}) &\to \prod_{i} \aut(\tilde{Y}_i) \\
    \aut(Y) &\to \prod_{i} \aut(Y_i)
\end{align}
taking a morphism to its induced morphism on the associated graded of the slope filtration. We write $\aut(\tilde{Y})^{\circ}$ respectively $\aut(Y)^{\circ}$ for its kernel. These morphisms split the natural inclusions of $\prod_{i} \aut(\tilde{Y}_i)$ into $\aut(\tilde{Y})$ and $\prod_i \aut(Y_i)$ into $\aut(Y)$ and thus induce semidirect product decompositions 
\begin{align}
    \aut(\tilde{Y}) &\simeq \aut(\tilde{Y})^{\circ} \rtimes \underline{\aut(\tilde{Y})(\ovfp)} \\
    \aut(Y) &\simeq \aut(Y)^{\circ} \rtimes \underline{\aut(Y)(\ovfp)}
\end{align}
The group scheme $\operatorname{Aut}^{\circ}(Y)$ has underlying topological space equal to a point, because it is a subscheme of $1 + \prod_{i<j} \mathbf{Hom}(\mathcal{H}_i, \mathcal{H}_j) \simeq \prod_{i <j} T_p \mathcal{H}_{Y_i,Y_j}$, and the latter has underlying topological space equal to a point since $\mathcal{H}_{Y_i,Y_j}$ is a connected $p$-divisible group for $i<j$. Thus we can think of $\operatorname{Aut}^{\circ}(Y)$ as the identity component of $\operatorname{Aut}(Y)$, explaining the notation. 

\subsubsection{} Our block matrix description of $\operatorname{End}(\tilde{Y})$ implies that all elements $\gamma$ of $\aut(\tilde{Y})^{\circ}$ are unipotent automorphisms of $\tilde{Y}$. This means that the logarithm map
\begin{align}
    L:\aut(\tilde{Y})^{\circ} & \to \widetilde{\mathcal{H}}^{\circ}_{Y} \\
    X &\mapsto \sum_{i=1}^{\infty} (-1)^{i+1} \tfrac{(X-1)^{i}}{i}
\end{align}
and the exponential map
\begin{align}
    E:\widetilde{\mathcal{H}}^{\circ}_{Y} &\to \aut(\tilde{Y})^{\circ} \\
    X &\mapsto \sum_{i=0}^{\infty} \tfrac{X^i}{i!}
\end{align}
are well-defined and we have $E \circ L=1$ and $L \circ E=1$. 
\begin{Rem} \label{Rem:ColimitQRSP}
In particular, $\aut(\tilde{Y})^{\circ}$ is isomorphic as a functor to $\widetilde{\mathcal{H}}^{\circ}_{Y}$. Thus it is represented by a formal scheme that is a filtered colimit of spectra of qrsp rings. It moreover implies that $\aut(\tilde{Y})^{\circ}$ is connected, as the notation suggests. 
\end{Rem}
\begin{Rem} \label{Rem:IntegralBCH}
It is important to note that the exponential map $E$ does not send $T_p \mathcal{H}_Y^{\circ} \subset \widetilde{\mathcal{H}}^{\circ}_{Y}$ to $\aut(\tilde{Y})^{\circ} \subset \aut(\tilde{Y})^{\circ}$ unless $p \gg 0$ because of the denominators in the exponential map. More precisely, if there are elements of $T_p \mathcal{H}_Y^{\circ} \subseteq \mathbf{\hom}(Y,Y)$ that are nilpotent of order $\ge p$, then the denominators of the exponential map will be divisible by $p$. In this case, $E$ does not send $T_p \mathcal{H}_Y^{\circ}$ to $\aut(\tilde{Y})^{\circ}$. Note that this can only happen if $Y$ has at least $p+1$ slopes. 
\end{Rem}

\subsubsection{} In order to work with Shimura varieties, it will be more fruitful to use the commutator bracket on $\mathbf{Hom}(Y,Y)[\tfrac 1p]$ to equip $\widetilde{\mathcal{H}}_{Y}$ with the structure of a Lie algebra. More precisely, this is a Lie algebra over the locally profinite ring-scheme $\underline{\qp}$ associated to the topological ring $\qp$.

The Baker--Campbell--Hausdorff (BCH) formula gives an expression for the group structure on $\aut(\tilde{Y})^{\circ}$ in terms of the Lie bracket on $\widetilde{\mathcal{H}}^{\circ}_{Y}$ (see \cite[Part I, Chapter IV, §7-8]{SerreLie}). In fact if $V$ is a (possibly infinite dimensional) vector space over a field of characteristic zero and $X,Y$ are two nilpotent endomorphisms of $V$, then the BCH formula expresses $\operatorname{exp}(X) \operatorname{exp}(Y)$ in terms of $X$ and $Y$ and their iterated Lie brackets.
\subsection{Dieudonn\'e--Lie algebras} This section is an interlude on Dieudonn\'e modules endowed with a Lie bracket. Write $\varphi$ for the Frobenius on both $\zpbreve$ and $\qpbreve$.

\begin{Def} \label{Def:DieudonneLie}
A \textit{\DLz} is a triple $(\mfap, \varphi_{\mfap}, [-,-])$ where $(\mfap, \varphi_{\mfap})$ is a Dieudonn\'e module over $\ovfp$ (see Definition \ref{Def:DieudonneI}) and
\begin{align}
    [-,-]:\mfap \times \mfap \to \mfap
\end{align}
is a Lie bracket such that the following diagram commutes
\begin{equation}
    \begin{tikzcd}
     \mfap[\tfrac 1p] \times  \mfap[\tfrac 1p] \arrow{d}{[-,-]} \arrow{r}{\varphi_{\mfap} \times \varphi_{ \mfap}}& \mfap[\tfrac 1p] \times \mfap[\tfrac 1p] \arrow{d}{[-,-]} \\
      \mfap[\tfrac 1p] \arrow{r}{\varphi_{ \mfap}}& \mfap[\tfrac 1p].
    \end{tikzcd}
\end{equation}
A morphism of \DLz s is a $\zpbreve$-linear map $f:\mfap \to \mfbp$ that respects the Lie brackets and induces a homomorphism of Dieudonn\'e modules. If $f$ is injective with finite cokernel we say that $f$ is an \textit{isogeny}. We write $\mbx(\mfap)$ for the $p$-divisible group attached to the Dieudonné module $(\mfap, \varphi_{\mfap})$. We also say that a \DLz\ is \textit{completely slope divisible} if the underlying Dieudonné module is so (see Definition \ref{Def:CompletelySlopeDivisible}).

Similarly, a \textit{Dieudonn\'e--Lie $\qpbreve$-algebra} is a triple $(\mfa, \varphi_{\mfa}, [-,-])$ where $(\mfa, \varphi_{\mfa})$ is a rational Dieudonn\'e module over $\ovfp$ and $[-,-]$ is a Lie bracket on $\mfa$ satisfying the same compatibility. We write $$(\mfa, \varphi_{\mfa}, [-,-])$$ for the Dieudonn\'e--Lie $\qpbreve$-algebra obtained from a  \DLz\ $$(\mfap, \varphi_{\mfap}, [-,-])$$ by inverting $p$.  We also denote by $\tildex({\mfa})$ the universal cover of the $p$-divisible group associated to an integral lattice of $(\mfa, \varphi_{\mfa})$. This assignment does not depend on the choice of the lattice. We will very often omit the Frobenius structure and the Lie bracket in the notation of Dieudonné--Lie algebras.
\end{Def}
\begin{Rem}
    Alternatively, Dieudonné--Lie $\qpbreve$-algebras can be defined as the Lie algebra objects in the symmetric monoidal category of $F$-isocrystals over $\ovfp$ such that the underlying $F$-isocrystal is a rational Dieudonné module.
\end{Rem}

\begin{Vb} \label{Example:DieudonneLieInternalHom}
The first example of a \DLz \ is the Dieudonn\'e module of the internal-Hom $p$-divisible group $\mathcal{H}_{Y}$ attached to a $p$-divisible group $Y$ over $\ovfp$, denoted by $\mathbb{D}(\mathcal{H}_{Y}).$
Indeed, the Lie bracket coming from the commutator bracket on the associative algebra $T_p \mathcal{H}_Y=\mathbf{Hom}(Y,Y)$,
induces an $\varphi$-equivariant Lie bracket on $\mfa$ by \cite[Corollary 1.2.5]{ExteriorPowers}. The Lie bracket on $\mathbf{Hom}(Y,Y)$ clearly sends the identity component $\mathbf{Hom}(Y,Y)^{\circ}$ to itself. This leads to our second example of a \DLz, the one induced on $\mathbb{D}(\mathcal{H}_{Y}^{\circ}).$ Arguing as in \cite[Lemma 3.2.23]{ChaiOortRigidity}, one can show that any Dieudonné--Lie $\qpbreve$-algebra embeds in $\mathbb{D}(\mathcal{H}_{Y})$ for some big $Y$. 
\end{Vb}

Given a \DLz, there is also a natural procedure to get many other isogenous \DLz s by using the Frobenius structure. Let us see this more in detail, as it will play an important role.

\begin{Cons}\label{Cons:PhiN}
    For a  \DLz\ $(\mfap, \varphi_{\mfap}, [-,-])$ and $n\in \mathbb{Z}$, we define $\Phi^n (\mfap)$ to be the $\zpbreve$-submodule $\varphi^n_{\mfap}(\mfap)\subseteq \mfa$. This $\zpbreve$-submodule is preserved by the Frobenius and the Lie bracket of $\mfa$. We then get on $\Phi^n (\mfap)$ a \DLz\ structure. Note that $\varphi_{\mfa}^{-n}$ induces an isomorphism $\Phi^n (\mfap)\xrightarrow{\sim} \mfa^{+,(n)}$ of \DLz s, where $\mfa^{+,(n)}$ denotes the $n$th Frobenius twist of $\mfa^{+}.$
\end{Cons}

\begin{Lem}\label{Lem:TateModulePhiN}
For $n\geq 0$, there is a natural exact sequence of fpqc sheaves $$ 0\to T_p\mathbb{X}(\mfap)\to T_p\mathbb{X}(\Phi^n(\mfap))\to \mathbb{X}(\mfap)[F^n]\to 0.$$
\end{Lem}

\begin{proof}
    Thanks to the isomorphism $\varphi_{\mfa}^{-n}\colon\Phi^n (\mfap)\xrightarrow{\sim} \mfa^{+,(n)}$, this is equivalent to proving that we have an exact sequence $$ 0\to T_p\mathbb{X}(\mfap)\to T_p\mathbb{X}(\mfap)^{(n)}\to \mathbb{X}(\mfap)[F^n]\to 0,$$ where $T_p\mathbb{X}(\mfap)\to T_p\mathbb{X}(\mfap)^{(n)}$ is induced by the $n$th-power of the (relative) Frobenius of $\mathbb{X}(\mfap)$. The result then follows by a classical diagram chase, using the fact that the Frobenius of $\mathbb{X}(\mfap)$ is faithfully flat.
\end{proof}
We will often use the following lemma to study \DL s and reduce ourself to the abelian case.
\begin{Lem} \label{Lem:MaximalSlopeCentral}
Let $\mfa$ be a Dieudonn\'e--Lie $\qpbreve$-algebra where all the slopes are negative. Let $\mu_1$ be the smallest slope of $\mfa$ and let $\mfb \subseteq \mfa$ be an $F$-stable $\qpbreve$-subspace that is isoclinic of that slope. Then $\mfb$ is contained in the centre of $\mfa$.
\end{Lem}
\begin{proof}
There are no non-zero $F$-equivariant maps $\mfb \otimes \mfa \to \mfa$
because, by assumption, all the slopes of $\mfb \otimes \mfa$ are strictly smaller than the slopes of $\mfa$. Hence the restriction of the Lie bracket to $\mfb \times \mfa$ is trivial.
\end{proof}

\subsubsection{Integrability} In general, for Dieudonné--Lie $\zpbreve$-algebras, the BCH formula is not well defined. We want to clarify here how to bypass this issue in our context. Let us first recall the formula presented in \cite[Part I, Chapter IV, §7-8]{SerreLie}. We focus on the \textit{nilpotent} setting, since it is the only one that we will need.

\begin{Def}
We say that a Dieudonn\'e--Lie $\qpbreve$-algebra $(\mfa, \varphi_{\mfa}, [-,-])$ is \textit{nilpotent} if the underlying Lie algebra $(\mfa, [-,-])$ is nilpotent. We also say that a \DLz \ is \textit{nilpotent} if the associated Dieudonn\'e--Lie $\qpbreve$-algebra is so.
\end{Def}

For positive integers $d,n$, we denote by $\Delta_n(d)\subseteq \mathbb{N}^n\times \mathbb{N}^n$ the subset of elements $(\underline{r},\underline{s})\in \mathbb{N}^n\times \mathbb{N}^n$ with $\underline{r}=(r_1,\dots,r_n)$ and $\underline{s}=(s_1,\dots,s_n)$ such that $\sum_{i=1}^n (r_i+s_i)=d$ and $r_i+s_i\neq 0$ for every $i$. The BCH series can be written as $$\mathrm{BCH}(X,Y)\coloneqq \sum_{d=1}^\infty \sum_{n=1}^\infty\sum_{(\underline{r},\underline{s})\in\Delta_n(d)}(-1)^{n-1}\frac{[X^{r_1}Y^{s_1}\cdots X^{r_n}Y^{s_n}]}{dn\prod_{i=1}^nr_i!s_i!}.$$

If $\mfa$ is a nilpotent Dieudonné--Lie $\qpbreve$-algebra, then for every $a,b\in \mfa$, there is a well-defined element $\mathrm{BCH}(a,b)\in \mathfrak a$. Indeed, for $d$ and $n$ large enough, the terms of the series vanish. For Dieudonné--Lie $\zpbreve$-algebras, instead, the series might not be defined when the denominators are too divisible by $p$. It then makes sense to consider the following definition.
\begin{Def}\label{Def:IntegrableDLAlgebras}
We say that a nilpotent Dieudonné--Lie $\zpbreve$-algebra is \textit{integrable} if for every $a,b\in \mfa^+$, each summand $$(-1)^{n-1}\frac{[a^{r_1}b^{s_1}\cdots a^{r_n}b^{s_n}]}{dn\prod_{i=1}^nr_i!s_i!}$$ of $\mathrm{BCH}(a,b)$ lies in $\mfa^+$.
\end{Def}

\begin{Lem} \label{Lem:ExistenceLattice}
    If $\mfa^+$ is a nilpotent \DLz, then $p^2\mfa^+$ is an integrable Dieudonn\'e--Lie $\zpbreve$-subalgebra.
\end{Lem}
\begin{proof}
 Write $\mfbp$ for the Dieudonn\'e--Lie $\zpbreve$-subalgebra $p^2\mfa^+\subseteq \mfap$. We have to prove that for every $v,w\in \mfbp$, each term $$(-1)^{n-1}\frac{[v^{r_1}w^{s_1}\cdots v^{r_n}w^{s_n}]}{dn\prod_{i=1}^nr_i!s_i!}$$ lies in $\mfbp$. By the bilinearity of the Lie bracket, we have that $[\mfbp,\mfbp]\subseteq p^2\mfbp$. Thus by induction, we deduce that for every $e\geq 2$ and every set of elements $\{v_1,\dots,v_{e}\}$ in $\mfbp$, the nested bracket $[v_1\cdots v_{e}]$ lies in $p^{2e-2}\mfbp$. The nested bracket $[v^{r_1}w^{s_1}\cdots v^{r_n}w^{s_n}]$ is then an element of $p^{2d-2}\mfbp$. Since the denominator $dn\prod_{i=1}^nr_i!s_i!$ divides $(d!)^2$, we get the desired result thanks to the fact that $\tfrac{p^{d-1}}{d!}$ is an element of $\zpbreve$.\end{proof}

\subsubsection{Dieudonn\'e--theory and bilinear maps}
Let $(\mfap, \varphi_{\mfap}, [-,-])$ be a Dieudonn\'e--Lie algebra and let $\mathbb{X}(\mfap)$ be the unique $p$-divisible group over $\ovfp$ with (covariant) Dieudonn\'e module $(\mfap, \varphi_{\mfap})$. We want to equip its universal cover and its Tate-module with a bilinear map, coming from the Lie bracket on $\mfap$. For this we record a result on the interaction between Dieudonn\'e theory for universal covers and $\qp$-bilinear maps. The analogous result for bihomomorphisms of finite flat group schemes is \cite[Corollary 1.2.5]{ExteriorPowers}.
\begin{Lem} \label{Lem:LieBracketDiedonneInducesLieBracket}
Let $Y_1,Y_2,Y_3$ be $p$-divisible groups over $\ovfp$, then there is a functorial and $\qp$-linear bijection between the space of $\qpbreve$-bilinear maps
\begin{align}
    g:\mathbb{D}(Y_1)[\tfrac{1}{p}] \times \mathbb{D}(Y_2)[\tfrac{1}{p}] \to \mathbb{D}(Y_3)[\tfrac{1}{p}]
\end{align}
that satisfy $g(\varphi_{Y_1} x, \varphi_{Y_2} y) = \varphi_{Y_3}(g(x,y)$ and the space of bilinear maps
\begin{align}
    f:\tilde{Y}_1 \times \tilde{Y}_2 \to \tilde{Y}_3.
\end{align}
Moreover if $Y_1=Y_2=Y_3$, then $f$ satisfies the Jacobi identity if and only if $g$ does.
\end{Lem}
\begin{proof}
Recall that the internal-Hom $p$-divisible group $\mathcal{H}_{Y_2,Y_3}$ satisfies
\begin{align}
\widetilde{\mathcal{H}}_{Y_2,Y_3}=\mathbf{Hom}(Y_2,Y_3)[\tfrac{1}{p}]=\mathbf{Hom}(\tilde{Y}_2, \tilde{Y_3}). 
\end{align}
It follows from the usual tensor-hom adjunction for $\qp$-vector spaces that bilinear maps
\begin{align}
    \tilde{Y}_1 \times \tilde{Y}_2 \to \tilde{Y}_3
\end{align}
are in bijection with homomorphisms
\begin{align}
    \tilde{Y}_1 \to \widetilde{\mathcal{H}}_{Y_2,Y_3}.
\end{align}
It also follows from the tensor-hom adjunction in the category of $F$-isocrystals that $\qpbreve$-bilinear maps
\begin{align}
    \mathbb{D}(Y_1)[\tfrac{1}{p}] \times \mathbb{D}(Y_2)[\tfrac{1}{p}] \to \mathbb{D}(Y_3)[\tfrac{1}{p}]
\end{align}
that commute with the Frobenius as above are in bijection with morphisms of $F$-isocrystals
\begin{align}
    \mathbb{D}(Y_1)[\tfrac{1}{p}] \to \underline{\hom}(\mathbb{D}(Y_2)[\tfrac{1}{p}], \mathbb{D}(Y_3)[\tfrac{1}{p}]),
\end{align}
where $\underline{\hom}$ denotes the internal-Hom in the category of $F$-isocrystals. Since the slope of $\mathbb{D}(Y_1)[\tfrac{1}{p}]$ is bounded above by $0$, these are also in bijection with morphisms of $F$-isocrystals
\begin{align}
    \mathbb{D}(Y_1)[\tfrac{1}{p}] \to \underline{\hom}(\mathbb{D}(Y_2)[\tfrac{1}{p}], \mathbb{D}(Y_3)[\tfrac{1}{p}])^{\le 0}.
\end{align}
 By \cite[Lemma 4.1.7]{CaraianiScholze} there is an isomorphism
\begin{align}
    \mathbb{D}(\mathcal{H}_{Y_2,Y_3})[\tfrac{1}{p}] = \underline{\hom}(\mathbb{D}(Y_2)[\tfrac{1}{p}], \mathbb{D}(Y_3)[\tfrac{1}{p}])^{\le 0}
\end{align}
and Dieudonn\'e theory over $\ovfp$ tells us that homomorphisms
\begin{align}
    \tilde{Y}_1 \to \widetilde{\mathcal{H}}_{Y_2,Y_3}
\end{align}
are in bijection with morphisms of $F$-isocrystals
\begin{align}
    \mathbb{D}(Y_1)[\tfrac{1}{p}] \to \mathbb{D}(\mathcal{H}_{Y_2,Y_3})[\tfrac{1}{p}],
\end{align}
which is what we wanted to prove. Similarly, we can prove a correspondence for trilinear maps. Since the Jacobi identity can be expressed as the vanishing of a trilinear map, this concludes the proof of the lemma.
\end{proof}
\begin{Rem} \label{Rem:ExplicitBilinear}
The correspondence between bilinear maps of rational Dieudonn\'e modules and bilinear maps of universal covers of $p$-divisible groups can be described explicitly for $R\in \mathrm{QRSP}_{\ovfp}$ using the isomorphism
\begin{align}
    \tilde{Y}_i(R) = \left(B^+_{\mathrm{cris}}(R) \otimes_{\qpbreve} \mathbb{D}(Y_i)[\tfrac{1}{p}]  \right)^{\varphi=1}
\end{align}
of Lemma \ref{Lem:IntegralPHodgeTheoryTateModuleUniversalCover}. Indeed, a $\qpbreve$-bilinear map
\begin{align}
    \mathbb{D}(Y_1)[\tfrac{1}{p}] \times \mathbb{D}(Y_2)[\tfrac{1}{p}] \to \mathbb{D}(Y_3)[\tfrac{1}{p}]
\end{align}
that respects the Frobenius as above induces a $B^+_{\mathrm{cris}}(R)$-bilinear map
\begin{align}
    B^+_{\mathrm{cris}}(R) \otimes_{\qpbreve} \mathbb{D}(Y_1)[\tfrac{1}{p}] \times B^+_{\mathrm{cris}}(R) \otimes_{\qpbreve} \mathbb{D}(Y_2)[\tfrac{1}{p}] \to B^+_{\mathrm{cris}}(R) \otimes_{\qpbreve} \mathbb{D}(Y_3)[\tfrac{1}{p}]
\end{align}
that induces a $\qp$-bilinear map on the $\varphi=1$ subspaces.
\end{Rem}
\begin{Cor} \label{Cor:LieBracketIntegral}
Suppose that we are given a $\zpbreve$-bilinear map
\begin{align}
    g^+:\mathbb{D}(Y_1) \times \mathbb{D}(Y_2) \to \mathbb{D}(Y_3)
\end{align}
satisfying $g^+(\varphi_{Y_1}(x), \varphi_{Y_2} (y)) = \varphi_{Y_3}(g^+(x,y))$. Then the induced map $f:\tilde{Y}_1 \times \tilde{Y}_2 \to \tilde{Y}_3$ restricts to a $\mathbb{Z}_p$-bilinear Lie bracket 
\begin{align}
    f^{+}:T_p Y_1 \times T_p Y_2 \to T_p Y_3.
\end{align}
\end{Cor}
\begin{proof}
It suffices to show this on $R$-valued points for semiperfect $R$ and in fact, by Yoneda, it suffices to check it in the universal case when $R$ is the ring underlying $T_p Y_1 \times T_p Y_2$. But this $R$ is qrsp, so we can use Lemma \ref{Lem:IntegralPHodgeTheoryTateModuleUniversalCover} which tells us that
\begin{align}
    T_p Y_i(R) = \left({A}_{\mathrm{cris}}(R) \otimes_{\zpbreve} \mathbb{D}(Y_i) \right)^{\varphi=1},
\end{align}
and we see that $f^{+}$ sends $T_p Y_1(R) \times T_p Y_2(R)$ to $T_p Y_3(R)$ because $g^{+}$ sends $\mathbb{D}(Y_1) \times \mathbb{D}(Y_2)$ to $\mathbb{D}(Y_3)$.
\end{proof}

\subsection{Formal homogeneous spaces} In this section we focus on the fundamental constructions obtained from the datum of a nilpotent \DLz. They will be widely used in the text to describe the infinitesimal behaviour of central leaves of Shimura varieties.

\begin{Cons}\label{Def:TildePi}
Let $\mfa$ be a nilpotent Dieudonn\'e--Lie $\qpbreve$-algebra (see Definition \ref{Def:DieudonneLie}). We can attach to $\mfa$ an fpqc sheaf of $\qp$-vector spaces $\tildex(\mfa)$ over $\algo{\ovfp}$, defined as the universal cover of any $p$-divisible group corresponding to a Frobenius-stable lattice $\mfap\subseteq\mfa$. By Lemma \ref{Lem:LieBracketDiedonneInducesLieBracket}, the Lie bracket on $\mfa$ induces a nilpotent Lie bracket on $\tildex(\mfa)$. The BCH formula defines then a formal group structure $$m_{\mathrm{Lie}}\colon \tildex(\mfa) {\times} \tildex(\mfa)\to \tildex(\mfa).$$ We write $\tilde{\Pi}(\mfa)$ for the formal group $(\tilde{\mathbb{X}}(\mfa), m_{\mathrm{Lie}})$. The assignment $\mfa\mapsto \tilde{\Pi}(\mfa)$ defines a functor 
    $$\tilde{\Pi}\colon \left\{ \textrm{Nilpotent Dieudonn\'e--Lie } \qpbreve \textrm{-algebras} \right\}\to \left\{ \textrm{Perfect formal groups over }\ovfp \right\}.$$ \end{Cons}

\begin{Cons} \label{Def:PiAplus}    
If $\mfap$ is an integrable nilpotent Dieudonn\'e--Lie $\zpbreve$-algebra, then by Corollary \ref{Cor:LieBracketIntegral}, the Lie bracket on $\mfap$ induces a Lie bracket on $T_p \mbx(\mfap)$, where $\mathbb{X}(\mathfrak{a}^+)$ is the $p$-divisible group attached to $\mfap$. This shows that the formal group structure $m_{\mathrm{Lie}}$ on $\tilde{\mathbb{X}}(\mfa)$ restricts to a group scheme structure on $T_p \mathbb{X}(\mfap)$. We denote by $\Pi(\mfap)$ the group scheme $(T_p \mathbb{X}(\mfap), m_{\mathrm{Lie}})$. Since $p^m\mfap\subseteq\mfap$ is an ideal for every $m\geq 0$, we have a natural exact sequence
$$0\to \Pi(p^m\mfap)\to \Pi (\mfap)\to \Pi_m(\mfap)\to 0,$$
 where $\Pi_m(\mfap)$ is the affine finite scheme $\mbx(\mfap)[p^m]$ endowed with the (non-commutative) group scheme structure induced by $m_\mathrm{Lie}.$ This shows that $$\Pi(\mfap)=\varprojlim_{m\geq 0} \Pi_m(\mfap) $$ is a \textit{profinite group scheme}\footnote{We recall that a \emph{profinite group scheme} over $\fpbar$ is a group scheme over $\fpbar$ that is the inverse limit of finite (not necessarily commutative) group schemes over $\fpbar$.}

In this case, the assignment $\mfap\mapsto {\Pi}(\mfap)$ defines a functor 
    $${\Pi}\colon \left\{ \textrm{Integrable nilpotent Dieudonn\'e--Lie } \zpbreve \textrm{-algebras} \right\}\to \left\{ \textrm{Profinite group schemes over }\ovfp \right\}.$$

\end{Cons}
\begin{Rem} 
    If the slopes of $\mfa$ are negative, then $\tilde{\Pi}(\mfa)$ is a connected affine formal group. By \cite[Lemma 3.2.23]{ChaiOortRigidity}, such a formal group always embeds into $\aut(\tilde{Y})^\circ$ for some big $Y$.  In the integral situation, if the slopes of $\mfap$ are negative, then $\Pi(\mfap)$ is a connected profinite group scheme sitting inside $\tilde{\Pi}(\mfa)$.
\end{Rem}

To continue our analysis, it will be convenient to work under additional assumptions on $\mfap.$

\begin{Def}\label{PlainZpbreveAlgebras} A \textit{plain \DLz} is a completely slope divisible integrable \DLz\ $\mfap$ such that all its slopes are negative. For every slope $\mu$, we write $\mfap_\mu\subseteq \mfap$ for the $\zpbreve$-subspace of slope $\mu$. Note that if $\mu_1$ is the minimal slope of $\mfap$, then $\mfap_{\mu_1}\subseteq \mfap$ is a central plain \DLzs\ by Lemma \ref{Lem:MaximalSlopeCentral} and Lemma \ref{Lem:SaturedSubCSD}.
\end{Def}

As we have seen in Construction \ref{Cons:PhiN}, for every $\mfap$ and $n\in \mathbb{Z}$, there is a \DLz\ $\Phi^n(\mfap)$, constructed using the Frobenius, which is isogenous to $\mfap$. It is easy to check that if $\mfap$ is plain, then $\Phi^n(\mfap)$ is plain for every $n$. 
\begin{Cons}\label{Def:ZA}Let $\mfap$ be a plain Dieudonn\'e--Lie $\zpbreve$-algebra. For every $n\geq 0$, we write $\Pi^n(\mfap)$ for $\Pi(\Phi^n(\mfap))$ and, for $m\geq 0$, we write $\Pi^n_m(\mfap)$ for $\Pi_m(\Phi^n(\mfap))$. There are natural maps $\alpha_n\colon \Pi_n(\mfap)\to \Pi^n_n(\mfap).$ We define $ Z^n(\mfap)$ to be the fppf quotient $$\Pi_{n}^n(\mfap)/\alpha_n(\Pi_n(\mfap))$$ over $\algo{\ovfp}$. We also write $Z(\mfap)$ for the fppf sheaf obtained as the inductive limit $$\varinjlim_n  Z^n(\mfap).$$ There is a natural action of $\tilde{\Pi}(\mfa)$ on the formal scheme $Z(\mfap)$ and an equivariant map $\tildeu(\mfa) \to Z(\mfap)$. We say that $Z(\mfap)$ is the \textit{formal homogeneous space associated to $\mfap$}. 
\end{Cons}

\begin{Rem}By Lemma \ref{Lem:TateModulePhiN}, if $\mfap$ is abelian, then $ Z^n(\mfap)=\mathbb{X}(\mfap)[F^n]$ and $Z(\mfap)=\mathbb{X}(\mfap).$
In general, the formal scheme $Z(\mfap)$ is the fpqc quotient $\tilde{\Pi}(\mfa)/\Pi(\mfap)$ (see Lemma \ref{Lem:FpqcQuotient}). 
\end{Rem}

We want to prove a fundamental representability result for $Z(\mfap).$ For this we use a construction that allows us to reduce many statements to the case when $\mfap$ is abelian.

\begin{Cons}\label{Cons:TorsorForInductionBySlopes}
    Let $\mfap$ be a plain \DLz\ and let $\mu_1$ be the minimal slope of $\mfap$. The formal group structure $m_\mathrm{Lie}$ induces a morphism of affine schemes $\Pi^{n}_n(\mfap_{\mu_1})\times\Pi^{n}_n(\mfap)\to \Pi^{n}_n(\mfap)$. By Lemma \ref{Lem:MaximalSlopeCentral}, the group $\tilde{\Pi}(\mfa_{\mu_1})$ is in the centre of $\tilde{\Pi}(\mfa)$, thus we also get a morphism $$\mathbb{X}(\mfap_{\mu_1})[F^n]\times  Z^n(\mfap)\to  Z^n(\mfap)$$
   which endows $ Z^n(\mfap)$ with a left action of $\mathbb{X}(\mfap_{\mu_1})[F^n]$. This makes $ Z^n(\mfap)$ an fppf torsor over $ Z^n(\mfap/\mfap_{\mu_1})$ under the finite syntomic group scheme $\mathbb{X}(\mfap_{\mu_1})[F^n]$.
\end{Cons}

\begin{Prop}\label{Lem:RepresentabilityZFiniteLevel}
 If $\mfap$ is a plain \DLz, then for every $n\geq 1$ the fppf sheaf $ Z^n(\mfap)$ is represented by $\Spec R_{n,m}$ where $m$ is the dimension of $\mathbb{X}(\mfap)$ and  $$R_{n,m}\coloneqq \ovfp[X_1,\cdots, X_m]/(X_1^{p^n},\cdots,X_m^{p^n}).$$ Moreover, the torsor $ Z^n(\mfap)\to  Z^n(\mfap/\mfap_{\mu_1})$ of Construction \ref{Cons:TorsorForInductionBySlopes} is trivial.
\end{Prop}

\begin{proof}
  We want to prove the result by induction on the number of slopes of $\mfap$. In the isoclinic case the result follows from \cite[Proposition 2.1.2]{Messing}. For the inductive step, we first note that by Construction \ref{Cons:TorsorForInductionBySlopes}, the fppf sheaf $ Z^n(\mfap)$ is represented by a connected scheme which is finite and syntomic over $ Z^n(\mfap/\mfap_{\mu_1})$. The result is then a consequence of \cite[Proposition 3.6.8 of Chapter III]{DemazureGabriel}. 
\end{proof}

\begin{Cor}\label{Cor:RepresentabilityFormalHomogeneousSpace} If $\mfap$ is a plain \DLz, then $Z(\mfap)$ is a formal Lie variety of the same dimension as $\mathbb{X}(\mfap)$.
\end{Cor}

By Corollary \ref{Cor:RepresentabilityFormalHomogeneousSpace} we get a functor
 $$Z\colon \left\{ \textrm{Plain Dieudonn\'e--Lie } \zpbreve \textrm{-algebras} \right\}\to \left\{ \textrm{Formal Lie varieties over }\ovfp \right\}.$$ 

We can summarise the constructions associated to a plain \DLz\ $\mfap$ with the following table.

		\renewcommand{\arraystretch}{1.5}
		\begin{center}
			\vskip1em
			
			\begin{tabular}{|lc||c|c|c|}
				
				\hline
				&&Tate module &Universal cover&Quotient\\
				\hline\hline
				&Commutative&
				$T_p\mathbb{X}(\mfap)$&$\tildex(\mfa)$&$\mathbb{X}(\mfap)$
				
				\\ \hline
				&Non-commutative\ \ \ &$\Pi(\mfap)$&$\tildeu(\mfa)$&$Z(\mfap)$
				\\ \hline
			\end{tabular}
			\vskip1.8em
		\end{center}

 \begin{Lem} \label{Lem:IsTorsor}
    The natural map $\tildeu(\mfa) \to Z(\mfap)$ is an fpqc torsor under the affine group scheme $\Pi(\mfap)$. 
\end{Lem}
\begin{proof}
 Thanks to Lemma \ref{Lem:TateModulePhiN}, for every $n\geq 0$ we have a cartesian diagram
\begin{equation}
    \begin{tikzcd}
        \Pi(\mfap) \ar[dr, phantom, "\square"]\arrow{r} \arrow{d} & \Pi^n(\mfap) \arrow{d} \\
        \alpha_n(\Pi_n(\mfap)) \arrow{r} & \Pi^n_n(\mfap).
    \end{tikzcd}
\end{equation}

By diagram chasing, we deduce that $\Pi^n(\mfap) \to Z^n(\mfap)$ is a torsor for $\Pi(\mfap)$. This yields the desired result.
\end{proof}

 \begin{Lem} \label{Lem:UniversalProperty}
If $X$ is an fpqc sheaf, $P \to X$ is an fpqc torsor under the affine group scheme $\Pi(\mfap)$ and $P \to \tildeu(\mfa)$ is a $\Pi(\mfap)$-equivariant map, then there is a unique induced map $X \to Z(\mfap)$ lying in the commutative diagram
    \begin{equation}
        \begin{tikzcd}
            P \arrow{r} \arrow{d} & \tildeu(\mfa) \arrow{d} \\
            X \arrow[r, densely dotted] & Z(\mfap).
        \end{tikzcd}
    \end{equation}
\end{Lem}
\begin{proof}
If $x:\spec R \to X$ is an $R$-point of $X$, then we need to produce an $R$-point $y:\spec R \to Z(\mfap)$. After passing to the fpqc cover $\spec R \times_{X} P=:\spec R' \to \spec R$, we can lift $x$ to a $\spec R'$-point $\tilde{x}$ of $P$. We then let $y'$ be the image of $\tilde{x}$ in $Z(\mfap)(R')$. We need to show that $y'$ descends to an $R$-point $y$ of $Z(\mfap)$. Since the latter is an fpqc-sheaf, by Corollary \ref{Cor:RepresentabilityFormalHomogeneousSpace} we simply need to show that the two pullbacks of $y'$ to $\spec R'':=\spec R' \times_{\spec R} \spec R'$ agree. But $\spec R''=\Pi(\mfap) \times \spec R$, and the condition on the two pullbacks boils down to asking that $y'$ is fixed by the action of $\Pi(\mfap)$. This follows from Lemma \ref{Lem:IsTorsor}. 
\end{proof}

\begin{Lem}\label{Lem:FpqcQuotient}
Let $\alpha$ be an ordinal as in \cite[Tag 000J]{stacks-project} and let $\mathrm{Alg}^{\mathrm{op}}_{\ovfp,\alpha}\subseteq \mathrm{Alg}^{\mathrm{op}}_{\ovfp}$ be the subcategory of algebras of cardinality at most $\alpha$. If $$\tilde{\Pi}(\mfa)/_\alpha \Pi(\mfap)$$ is the quotient as an fpqc sheaf over $\mathrm{Alg}^{\mathrm{op}}_{\ovfp,\alpha}$, then $\tilde{\Pi}(\mfa)/_\alpha \Pi(\mfap)=Z(\mfap).$ In particular, the quotient is independent of $\alpha.$
\end{Lem}
\begin{proof}
Arguing as in Lemma \ref{Lem:UniversalProperty}, since $\tildeu(\mfa) \to Z(\mfap)$ is a $\Pi(\mfap)$-torsor of fpqc sheaves over $\mathrm{Alg}^{\mathrm{op}}_{\ovfp,\alpha}$, there is a canonical map $\tilde{\Pi}(\mfa)/_\alpha \Pi(\mfap)\to Z(\mfap)$ factorising the morphism $\tilde{\Pi}(\mfa)\to Z(\mfap).$
Both $\tilde{\Pi}(\mfa)\to \tilde{\Pi}(\mfa)/_\alpha \Pi(\mfap)$ and $(\tilde{\Pi}(\mfa)/_\alpha \Pi(\mfap))\times_{Z(\mfap)}\tilde{\Pi}(\mfa)\to \tilde{\Pi}(\mfa)/_\alpha \Pi(\mfap)$ are fpqc torsors under $ \Pi(\mfap)$, thus the diagram
\begin{equation}
        \begin{tikzcd}
            \tilde{\Pi}(\mfa) \arrow[r,"="]\arrow{d} & \tilde{\Pi}(\mfa) \arrow{d} \\
            \tilde{\Pi}(\mfa)/_\alpha \Pi(\mfap) \arrow[r] & Z(\mfap)
        \end{tikzcd}
    \end{equation}
    is cartesian. Thanks to the fact that $\tilde{\Pi}(\mfa)\to Z(\mfap)$ is a surjection of fpqc sheaves, we deduce that $\tilde{\Pi}(\mfa)/_\alpha \Pi(\mfap)\to Z(\mfap)$ is an isomorphism, as we wanted.
\end{proof}

\begin{Lem} \label{Lem:PerfectionZ}
For every plain \DLz \ $\mfap$, the natural map $\tilde{\Pi}(\mfa) \to Z(\mfap)$ induces an isomorphism $$\tilde{\Pi}(\mfa) \simeq Z(\mfap)^\mathrm{perf}.$$
\end{Lem}
\begin{proof}
    For every $n\geq 0$, we have the following factorisation of the $n$th-power of the absolute Frobenius of $Z(\mfap)$ 
\begin{center}
       \begin{tikzcd}
        Z(\mfap)\arrow[rr,"F^n"] \arrow[dr,"\sim"] & & Z(\mfap)\\
        & Z(\Phi^{-n}(\mfap)), \arrow[ur]
    \end{tikzcd} 
\end{center}
where $Z(\Phi^{-n}(\mfap))\to Z(\mfap)$ is induced by the natural inclusion $\Phi^{-n}(\mfap)\subseteq \mfap.$
This implies that $$Z(\mfap)^\mathrm{perf}=\varprojlim_n Z(\Phi^{-n}(\mfap)).$$ If $s$ is the maximal slope of $\mfap$, we have $$p^n\mfap\subseteq \Phi^{-n}(\mfap)\subseteq p^{-ns}\mfap$$ for every $n$ divisible by the denominator of $s$. This shows that $$\varprojlim_n Z(\Phi^{-n}(\mfap))=\varprojlim_n Z(p^n\mfap).$$
By Lemma \ref{Lem:FpqcQuotient}, we have $$Z(p^n\mfap)=\tildeu(\mfa)/\Pi(p^n\mfap)$$ as fpqc sheaves for every $n\geq0$. We deduce that $\varprojlim_n Z(p^n\mfap)\to Z(\mfap)$ is naturally a torsor under the profinite group scheme $\Pi(\mfap)=\varprojlim_m\Pi_m(\mfap)$.

The natural map $$\tilde{\Pi}(\mfa)\to \varprojlim_n Z(p^n\mfap)=Z(\mfap)^\mathrm{perf}$$ is then an equivariant morphism of $\Pi(\mfap)$-torsors over $Z(\mfap)$, thus an isomorphism.
\end{proof}

\subsection{Automorphism groups for Shimura varieties of Hodge type} \label{Sec:AutHodge}
Let $Y$ be a $p$-divisible group over $\ovfp$ and fix an isomorphism $\mathbb{D}(Y)[\tfrac{1}{p}] \simeq V \otimes_{\qp} \qpbreve$, where $V$ is a vector space over $\qp$. In order to generalise Section \ref{Sec:EndomorphismsAutomorphisms} to Shimura varieties of Hodge type, we let $b \in \operatorname{GL}_V(\qpbreve)$ be the Frobenius of the $F$-isocrystal $\mathbb{D}(Y)[\tfrac 1p]$. Then the internal-Hom $F$-isocrystal 
\begin{align}
   \underline{\hom}(\mathbb{D}(Y)[\tfrac 1p], \mathbb{D}(Y)[\tfrac 1p])
\end{align}
is isomorphic to the $F$-isocrystal (where $\operatorname{Ad} \sigma b$ is the Frobenius given by the adjoint action of $b \in \operatorname{GL}_V(\qpbreve)$ on $\mathfrak{gl}_V$)  
\begin{align}
    (\mathfrak{gl}_V \otimes \qpbreve, \operatorname{Ad} \sigma \; b).
\end{align}
If we are given a reductive group $G \subseteq \operatorname{GL}_{V}$ such that $b \in G(\qpbreve)$, then we get an inclusion of $F$-isocrystals
\begin{align}
    (\mathfrak{g} \otimes \qpbreve, \operatorname{Ad} \sigma \; b) \subseteq (\mathfrak{gl}_V \otimes \qpbreve, \operatorname{Ad} \sigma \;b).
\end{align}
The slope filtration of this $F$-isocrystal is described in \cite[Section 3]{KimCentralLeaves}. The slope $t$ part is given by 
\begin{align}
    \bigoplus_{\substack{\alpha_0 \in \Phi_0^{+} \\ \langle \alpha_0, \nu_b \rangle=t}} \mathfrak{u}_{\alpha_0}, 
\end{align}
where $\alpha_0$ runs over the relative roots of $G_{\qpbreve}$ with respect to a maximal $\qpbreve$-split torus $S$ defined over $\qp$ that is contained in a Borel subgroup $B$ with respect to which the Newton cocharacter $\nu_b$ of $b$ is dominant (see \cite[Section 1.1.2]{KMPS} for the definition of the Newton cocharacter). We see that the slope $\le 0$ part corresponds precisely to the Lie algebra of the standard parabolic subgroup $P_b=P_{\nu_{b}}$ associated to $\nu_b$ and that the slope $0$ part corresponds to the Lie algebra of the Levi $M_b$.

Taking non-positive slope parts we get an $F$-stable sub-isocrystal
\begin{align}
     \operatorname{Lie} P_{\nu_b} \subseteq \mathbb{D}(\mathcal{H}_{Y})[\tfrac 1p]
\end{align}
and intersecting with $\mathbb{D}(\mathcal{H}_Y)$ we get a \DLz\ which we write as $\mathbb{D}(\mathcal{H}_{Y}^{G})$ for a $p$-divisible group $\mathcal{H}_{Y}^{G} \subseteq \mathcal{H}_Y$. Note that by construction and Corollary \ref{Cor:LieBracketIntegral},
\begin{align}
    T_p \mathcal{H}_{Y}^{G} \subseteq T_p \mathcal{H}_{Y}=\mathbf{Hom}(Y,Y)
\end{align}
is stable under the commutator bracket.
\begin{Cor} \label{Cor:DimHGY}
The dimension of $\mathcal{H}^G_Y$ is equal to $\langle 2 \rho, \nu_b \rangle$.
\end{Cor}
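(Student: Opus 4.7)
The plan is to read off the dimension of $\mathcal{H}_Y^G$ from the slope decomposition of its rational Dieudonn\'e module $(\operatorname{Lie} P_{\nu_b},\operatorname{Ad}\sigma\, b)$, and then to match the resulting sum with $\langle 2\rho,\nu_b\rangle$ via the standard combinatorics of root systems.

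The first step is to recall the classical identity expressing the dimension of a $p$-divisible group $X$ over $\ovfp$ in terms of the Newton slopes of $\mathbb{D}(X)[\tfrac{1}{p}]$. With the sign convention of Section~\ref{Sec:PrelimBT}, in which $\mathbb{D}(\mu_{p^\infty})$ has Frobenius $1/p$ (so slope $-1$) while $\mu_{p^\infty}$ has dimension $1$, the slopes lie in $[-1,0]$ and one has
\[
\dim X \;=\; -\sum_{\lambda}\lambda\cdot\dim_{\qpbreve}\!\bigl(\mathbb{D}(X)[\tfrac{1}{p}]\bigr)_{\lambda},
\]
the sum being taken over Newton slopes with multiplicity. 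This is just the classical ``dimension equals sum of slopes'' formula translated into the present sign convention.

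Next I would apply this to $\mathcal{H}_Y^G$, whose rational Dieudonn\'e module is $\operatorname{Lie} P_{\nu_b}$ with the Frobenius structure inherited from $(\mathfrak{g},\operatorname{Ad}\sigma\, b)$. Using the explicit slope decomposition recorded just before the corollary, the slope-zero piece is $\operatorname{Lie} M_b$ and the strictly negative slopes live on one-dimensional root spaces $\mathfrak{u}_{\alpha_0}$ with $\langle\alpha_0,\nu_b\rangle\neq 0$; each such space contributes slope $\pm\langle\alpha_0,\nu_b\rangle$, and after intersecting with $\operatorname{Lie} P_{\nu_b}$ exactly one member of each pair $\{\alpha_0,-\alpha_0\}$ survives. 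Plugging this into the dimension formula, the $\operatorname{Lie} M_b$ piece contributes $0$, and the minus sign in the formula cancels the sign from the strictly negative slopes, yielding
\[
\dim\mathcal{H}_Y^G \;=\; \sum_{\alpha_0\in \Phi_0^+\setminus\Phi_{M_b,0}^+}\langle \alpha_0,\nu_b\rangle.
\]

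Finally, since $\nu_b$ is dominant for the chosen Borel, $\langle \alpha_0,\nu_b\rangle=0$ for every $\alpha_0\in\Phi_{M_b,0}^+$, so the sum may be extended at no cost over all of $\Phi_0^+$, producing $\langle 2\rho_0,\nu_b\rangle=\langle 2\rho,\nu_b\rangle$ (the last equality because $\nu_b$ factors through a maximal split torus, so only the image of $2\rho$ in that cocharacter lattice contributes). The one thing one has to keep careful track of is the sign convention linking the ``slope $\le 0$'' half of $(\mathfrak{g},\operatorname{Ad}\sigma\, b)$ with the parabolic $\operatorname{Lie} P_{\nu_b}$ of Section~\ref{Sec:AutHodge} and with the slope convention for $p$-divisible groups of Section~\ref{Sec:PrelimBT}; this is the only subtle point, and once pinned down the computation above is purely formal.
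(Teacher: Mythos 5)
Your proof is correct and takes essentially the same approach as the paper: compute $\dim\mathcal{H}_Y^G$ as (minus) the sum of slopes of $\operatorname{Lie} P_{\nu_b}$, use the relative root-space decomposition to reduce to $\sum_{\alpha_0\in\Phi_0^+}\langle\alpha_0,\nu_b\rangle\dim\mathfrak{u}_{\alpha_0}$, and identify this with $\sum_{\alpha\in\Phi^+}\langle\alpha,\nu_b\rangle=\langle 2\rho,\nu_b\rangle$. The paper records only the final displayed computation; you supply the ``dimension equals minus sum of slopes'' dictionary and track the sign conventions more explicitly, but the underlying argument is identical.
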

\begin{proof}
This is explained at the end of the proof of \cite[Proposition 3.1.4]{KimCentralLeaves}.
\end{proof}
Thanks to Lemma \ref{Lem:LieBracketDiedonneInducesLieBracket}, we have that
\begin{align}
    \widetilde{\mathcal{H}}^G_{Y} \subseteq \widetilde{\mathcal{H}}_{Y}=\mathbf{Hom}(Y,Y)[\tfrac 1p]
\end{align}
is closed under the Lie bracket (the commutator) of $\mathbf{Hom}(Y,Y)[\tfrac 1p]$. The Dieudonn\'e module
\begin{align}
    \mathbb{D}(\mathcal{H}_{Y}^{G,\circ})[\tfrac 1p]
\end{align}
is stable under the Lie bracket and consists precisely of the strictly negative slope part of the isocrystal $\operatorname{Lie} P_{\nu_b}$, which can be identified with
\begin{align}
    \operatorname{Lie} U_{\nu_b} \subseteq \operatorname{Lie} P_{\nu_b},
\end{align}
where $U_{\nu_b}$ is the unipotent radical of $P_{\nu_b}$.
\begin{Cor} \label{Cor:CSDHodge}
    If $Y$ is completely slope divisible, then $\mathcal{H}_Y^G$ is completely slope divisible.
\end{Cor}
\begin{proof}
    This follows from Lemma \ref{Lem:SaturedSubCSD} and Lemma \ref{Lem:InternalHomCSD}.
\end{proof}

\subsubsection{} \label{Sec:Tensors}
Let us fix some notation to define the group of automorphisms of $\tilde{Y}$ which preserve the $G$-structure. Here we use the convention that for an object $M$ in a rigid category we write $M^{\otimes}$ for the direct sum of $M^{\otimes n} \otimes (M^{\ast})^{\otimes m}$ for all pairs of integers $m \ge 0, n \ge 0$.

As in \cite[Section 1.3.4]{KMPS}, we can choose a collection of tensors $\{s_{\alpha}\}_{\alpha \in \mathscr{A}} \subseteq V^{\otimes}$ such that $G$ is their pointwise stabiliser in $\operatorname{GL}_V$. If we identify the Lie algebra $\mathfrak{gl}_V$ of $\operatorname{GL}_V$ with the vector space of endomorphisms of $V$, then by \cite[Lemma 5.3.3]{OrdinaryHO} $\mathfrak{g} \subseteq \mathfrak{gl}_V$ consists of those endomorphisms $g$ satisfying $g^{\otimes} s_{\alpha}=0$, let us call such endomorphisms \emph{tensor-annihilating endomorphisms}. It follows that
\begin{align}
     \mathbb{D}(\mathcal{H}_{Y}^{G,\circ})[\tfrac 1p] \subseteq \mathbb{D}(\mathcal{H}_{Y})[\tfrac 1p]
\end{align}
is the subspace of $\mathbb{D}(\mathcal{H}_{Y})[\tfrac 1p]$ of tensor-annihilating endomorphisms. Therefore by Remark \ref{Rem:ExplicitBilinear} it follows that for $R\in \mathrm{QRSP}_{\ovfp}$ we have that
\begin{align}
\widetilde{\mathcal{H}}_Y^G(R) \subseteq \widetilde{\mathcal{H}}_Y(R) = \hom(Y_R, Y_R)[\tfrac{1}{p}]
\end{align}
consists of the endomorphisms $\tilde{Y}_R \to \tilde{Y}_R$ such that the induced endomorphism
\begin{align}
    \mathbb{D}(g):\mathbb{D}(Y)[\tfrac{1}{p}] \otimes_{\qpbreve} B_{\mathrm{cris}}^+(R) \to \mathbb{D}(Y)[\tfrac{1}{p}] \otimes_{\qpbreve} B_{\mathrm{cris}}^+(R)
\end{align}
satisfies $g^{\otimes} (s_{\alpha} \otimes 1)=0$ for all $\alpha$. It follows that
\begin{align}
    T_p \mathcal{H}_Y^G(R) \subseteq T_p \mathcal{H}_Y(R) = \hom(Y_R,Y_R)
\end{align}
consists precisely of the endomorphisms $g:Y_R \to Y_R$ such that the induced endomorphism
\begin{align}
    \mathbb{D}(g):\mathbb{D}(Y)\otimes_{\zpbreve} A_{\mathrm{cris}}(R) \to \mathbb{D}(Y) \otimes_{\zpbreve} A_{\mathrm{cris}}(R)
\end{align}
satisfies $g^{\otimes} (s_{\alpha} \otimes 1)=0$ for all $\alpha$. In both cases we will use the term \emph{tensor-annihilating endomorphisms} to denote such endomorphisms of $\tilde{Y}_R$ or $Y_R$. We use also the notion of \emph{tensor-preserving automorphism} of $\tilde{Y}_R$ which is an automorphism $g$ of $\tilde{Y}_R$ such that the induced automorphism
\begin{align}
    \mathbb{D}(g):\mathbb{D}(Y)[\tfrac{1}{p}] \otimes_{\qpbreve} B_{\mathrm{cris}}^+(R) \to \mathbb{D}(Y)[\tfrac{1}{p}] \otimes_{\qpbreve} B_{\mathrm{cris}}^+(R)
\end{align}
satisfies $g^{\otimes} (s_{\alpha} \otimes 1)=(s_{\alpha} \otimes 1)$ for all $\alpha$.
\begin{Lem} \label{Lem:SubgroupClosed}
There is a (unique) closed subgroup
\begin{align}
   \aut_G(\tilde{Y}) \subseteq \aut(\tilde{Y})
\end{align}
such that on qrsp $\ovfp$-algebras $R$ the subgroup
\begin{align}
    \aut_G(\tilde{Y})(R) \subseteq \aut(\tilde{Y})(R)
\end{align}
consists precisely of the tensor-preserving automorphisms. Moreover, there is an isomorphism of formal groups
\begin{align}
    \aut_G(\tilde{Y}) \simeq \aut_G(\tilde{Y})^{\circ} \rtimes \underline{J_b(\qp)},
\end{align}
where $\aut_G(\tilde{Y})^{\circ}$ is the intersection of $\aut(\tilde{Y})^{\circ}$ with $\aut_G(\tilde{Y})$ inside $\aut(\tilde{Y})$ and $J_b(\Qp) \subseteq G(\qpbreve)$ is the twisted centraliser of $b$.
\end{Lem}
\begin{proof}
It is clear from the definition that the exponential of a nilpotent tensor-annihilating endomorphism of $\tilde{Y}_R$ is a unipotent tensor-preserving automorphism of $\tilde{Y}_R$. Conversely the logarithm of a unipotent tensor-preserving automorphism of $\tilde{Y}_R$ is a nilpotent tensor-annihilating endomorphism of $\tilde{Y}_R$. Since $\aut(\tilde{Y})^{\circ} \subseteq \aut(\tilde{Y})$ is precisely the subgroup of unipotent automorphisms, the exponential map defines an isomorphism of functors
\begin{align}
    \mathcal{H}^{G,\circ}_{Y} \simeq \aut_G(\tilde{Y})^{\circ},
\end{align}
and thus $\aut_G(\tilde{Y})^{\circ} \subseteq \aut(\tilde{Y})^{\circ}$ is representable in closed immersions. We have seen that there is a semi-direct product decomposition
\begin{align}
    \aut(\tilde{Y}) \simeq \aut(\tilde{Y})^{\circ} \rtimes \underline{\aut(\tilde{Y})(\ovfp)}.
\end{align}
There is a closed subgroup of the locally profinite group $\underline{\aut(\tilde{Y})(\ovfp)}$ consisting of those automorphisms of $\tilde{Y}$ that are tensor-preserving. By Dieudonn\'e theory we can identify this group with the group of tensor-preserving automorphisms of the $F$-isocrystal $\mathbb{D}(Y)[\tfrac{1}{p}]$; this group is precisely the twisted centraliser $J_b(\Qp) \subseteq G(\qpbreve)$ of $b$. 

The action of $J_b(\Qp) \subseteq G(\qpbreve)$ on $\operatorname{Lie} G$ stabilises $\operatorname{Lie} U_{\nu_b}$ since $J_b(\Qp)$ is contained in the centraliser of $\nu_b$ inside $G(\qpbreve)$. Therefore we get a closed subgroup
\begin{align}
    \aut_G(\tilde{Y})^{\circ} \rtimes \underline{J_b(\qp)} \subseteq \aut(\tilde{Y})^{\circ} \rtimes \underline{\aut_G(\tilde{Y})(\ovfp)},
\end{align}
whose $R$-points for qrsp $R$ give the group of tensor-preserving automorphisms of $\tilde{Y}_R$.
\end{proof}
\begin{Rem}
    Our construction does \emph{not} agree with \cite[Definition 2.3.1]{KimCentralLeaves}, which defines $\aut_G(\tilde{Y})$ as the intersection of 
\begin{align} \label{Eq:Alternativedefinition}
    \left(\widetilde{\mathcal{H}}^G_{Y} \times \widetilde{\mathcal{H}}^G_{Y}\right) \cap \aut(\tilde{Y}).
\end{align}
By the discussion above, the $R$-points of this functor are given by automorphisms $g$ of $\tilde{Y}_R$ such that the induced automorphism
\begin{align} 
    \mathbb{D}(g):\mathbb{D}(Y)[\tfrac{1}{p}] \otimes_{\qpbreve} B_{\mathrm{cris}}^+(R) \to \mathbb{D}(Y)[\tfrac{1}{p}] \otimes_{\qpbreve} B_{\mathrm{cris}}^+(R)
\end{align}
satisfies $g^{\otimes} (s_{\alpha} \otimes 1)=0$ for all $\alpha$. Note that an automorphism $g$ of $\tilde{Y}_R$ induces an automorphism of $\mathbb{D}(Y)[\tfrac{1}{p}] \otimes_{\qpbreve} B_{\mathrm{cris}}^+(R)$. Thus if $g$ is an $R$-point of this intersection then $g^{\otimes} (s_{\alpha} \otimes 1)$ cannot be zero unless $s_{\alpha} \otimes 1=0$ which implies that $s_{\alpha}=0$. Therefore the functor \eqref{Eq:Alternativedefinition} is empty unless $G=\operatorname{GL}_V$.
\end{Rem}
\subsubsection{} Fortunately, the rest of \cite{KimCentralLeaves} only uses the characterisation of $\aut_G(\tilde{Y})$ of Lemma \ref{Lem:SubgroupClosed}. In particular, \cite[Proposition 3.2.4]{KimCentralLeaves} is correct as stated. Therefore the rest of \cite{KimCentralLeaves} is not affected. We end this section by defining
\begin{align}
    \aut_G(Y) =  \aut_G(\tilde{Y}) \times_{\aut(\tilde{Y})} \aut(Y)
\end{align}
so that
\begin{align}
    \aut_G(Y)^{\circ} =  \aut_G(\tilde{Y})^{\circ} \times_{\aut(\tilde{Y})^{\circ}} \aut(Y)^{\circ}
\end{align}
\begin{Rem}
If  $p \gg 0$, then $\mathbb{D}(\mathcal{H}_{Y}^{\circ})$ is plain and  the exponential map for $\aut(Y)^{\circ}$ has no denominators $p$. We can then identify the group schemes
\begin{align}
\Pi(\mathbb{D}(\mathcal{H}_{Y}^{\circ})) \simeq \aut_G(Y)^{\circ}.
\end{align}
In general, there is no containment in either direction. 
\end{Rem}
\subsection{Strongly nontrivial actions} \label{Sec:Action} Let $(\mfap, \varphi_{\mfap}, [-,-])$ be a plain \DLz \ with associated Dieudonn\'e--Lie $\qpbreve$-algebra $(\mfa, \varphi_{\mfa}, [-,-])$. Let $\aut(\mfa, \varphi_{\mfa})$ be the automorphism group of the underlying $F$-isocrystal, considered as an algebraic group over $\qp$ (as in \cite[Section 1.1]{RapoportRichartz}). Then the Lie algebra of $\aut(\mfa, \varphi_{\mfa})$ can be identified with the endomorphism algebra of the $F$-isocrystal, equipped with the commutator bracket. 

There is a closed subgroup
\begin{align}
    \aut(\mfa, \varphi_{\mfa},[-,-]) \subseteq \aut(\mfa, \varphi_{\mfa})
\end{align}
consisting of those automorphisms preserving the Lie bracket. 

\begin{Def}\label{Def:StronglyNonTrivial}
Let $Q$ be an algebraic group over $\qp$ equipped with a group homomorphism
\begin{align}
    Q \to \aut(\mfa, \varphi_{\mfa},[-,-]).
\end{align}
We call such a group homomorphism (or action) \emph{strongly non-trivial} if the induced linear representation of $Q$ on $\mfa$ does not have trivial subquotients.
\end{Def}
\begin{Vb}
If $\mfa=\operatorname{Lie} U_{\nu_b}$ for some element $b \in G(\qpbreve)$ admissible with respect to some Shimura datum on $G$, then the algebraic group $J_b$ has a natural and strongly non-trivial action on $\mfa$. Moreover, by Proposition 5.5.1 of \cite{OrdinaryHO} the restriction of this action to a maximal torus $T \subseteq J_b$ is still strongly non-trivial.
\end{Vb}

\section{Formal neighbourhoods of central leaves} \label{Sec:CentralLeaves}
\subsection{Introduction}
In this section we will recall the constructions of the canonical integral models of Shimura varieties of Hodge type at hyperspecial level from \cite{KisinModels}. We then recall the definitions of central leaves $\cb$ inside the special fibers $\shg$ of these canonical integral models, and also the definition of Igusa varieties from \cites{KimCentralLeaves, Hamacher, HamacherKim}. The main goal of this section is to study the structure of the formal completions $\cb^{/x}$ of central leaves at points $x \in \cb(\ovfp)$. We will show, using the work of Caraiani--Scholze and Kim, that the perfection of the formal scheme $\cb^{/x}$ has a  canonical (generally non-commutative) group structure. These groups are of the form $\tildeu(\mfa)$ for a Dieudonn\'e--Lie $\qpbreve$-algebra $\mfa$, as introduced in Section \ref{Sec:AutoEndo}. 

In Section \ref{Sec:StronglyTateLinear} we will show that $F$-stable Lie subalgebras $\mfb \subseteq \mfa$ give rise to formally smooth closed subschemes of $\cb^{/x}$. These are precisely the \emph{strongly Tate-linear} subspaces of Chai and Oort. We end by stating a conjecture on the monodromy group of the universal isocrystal over such subspaces.

\subsection{Integral models} \label{Sec:IntegralModels} For a symplectic space $(V, \psi)$ over $\mathbb{Q}$, we write $\g_V \coloneqq \operatorname{GSp}(V, \psi)$ for the group of symplectic similitudes of $V$ over $\mathbb{Q}$. It admits a Shimura datum $\gvx$, where $\mathsf{H}_V$ is the union of the Siegel upper and lower half-spaces. Let $\gx$ be a Shimura datum of Hodge type with reflex field $\mathsf{E}$ and let $\gx \to \gvx$ be a Hodge embedding. 

Fix a prime $p>2$ and a place $v$ above $p$ of the reflex field $\mathsf{E}$, and write $G=\g \otimes \qp$, write $E=\mathsf{E}_v$ for the $v$-adic completion of $\mathsf{E}$ and $\mathcal{O}_E$ for its ring of integers. Let $K_p \subseteq G(\qp)$ be a hyperspecial subgroup. Then after possibly changing the Hodge embedding and the symplectic space $V$, we can find a self dual $\mathbb{Z}_{(p)}$-lattice $V_{(p)}$, such that $K_p$ is the stabiliser in $G(\qp)$ of $V_p\coloneqq V_{(p)} \otimes_{ \mathbb{Z}_{(p)}} \zp$, see Section 2.3.15 of \cite{KisinPappas}. Write $G_{\mathbb{Z}_{(p)}}$ for the Zariski closure of $G$ in $GL(V_{\mathbb{Z}_{(p)}})$, then $G_{\mathbb{Z}_{(p)}}\otimes_{\mathbb{Z}_{(p)}}\mathbb{Z}_p$ is a reductive integral model $\mathcal{G}$ of $G$. 

For every sufficiently small compact open subgroup $U^p \subseteq \gafp$, we can find $\mathcal{U}^p \subseteq \g_V(\afp)$ such that the Hodge embedding induces a closed immersion (see Lemma 2.1.2 of \cite{KisinModels})
\begin{align}
    \mathbf{Sh}_{U}\gx \to \mathbf{Sh}_{\mathcal{U}}\gvx \otimes_{\mathbb{Q}} \mathsf{E}
\end{align}
of (canonical models of) Shimura varieties of levels $U=U^pU_p$ and $\mathcal{U}=\mathcal{U}^p \mathcal{U}_p$, respectively. We let $\mathcal{S}_{\mathcal{U}}\gvx$ over $\zp$ be the moduli-theoretic integral model of $\mathbf{Sh}_{\mathcal{U}}\gvx$; it is a moduli space of polarised abelian schemes $(A, \lambda)$ up to prime-to-$p$ isogeny with level $\mathcal{U}^p$-structure. Let
\begin{align}
    \mathscr{S}_{U}\coloneqq \mathscr{S}_U\gx \to \mathcal{S}_{\mathcal{U}}\gvx \otimes_{\zp} \mathcal{O}_{E}
\end{align}
be the normalisation of the Zariski closure of $\mathbf{Sh}_{U}\gx$ in $\mathcal{S}_{\mathcal{U}}\gvx \otimes_{\zp} \mathcal{O}_{E}$. Then by the main result of \cite{KisinModels} and \cite{MadapusiPeraKim}, see Section 2 of \cite{MadapusiPeraKim}, the scheme $\mathbf{Sh}_{U}\gx$ is smooth and in fact isomorphic to the canonical integral model of $\mathbf{Sh}_U\gx$. The main result of \cite{xu2020normalization} tells us that 
\begin{align}
    \mathscr{S}_{U}\to \mathcal{S}_{\mathcal{U}} \otimes_{\mathbb{Z}_{p}} \mathcal{O}_{E,v}
\end{align}
is a closed immersion. Choose an algebraic closure $\ovfp$ of the residue field of $\mathcal{O}_{E}$ and let $\shg$ be the base change to $\ovfp$ of $\mathscr{S}_{U}$. We will write $\shgv$ for the base change to $\ovfp$ of $\mathcal{S}_{\mathcal{U}}$. Then the pullback of the universal abelian scheme (up to prime-to-$p$ isogeny) over $\mathcal{S}_{\mathcal{U}}$ gives rise to an abelian scheme (up to prime-to-$p$ isogeny) $A$ over $\operatorname{Sh}_{G,U}$ with associated $p$-divisible group $X=A[p^{\infty}]$. 

\subsubsection{Tensors} Recall the notation $V_{\mathbb{Z}_{(p)}}^\otimes$ from Section \ref{Sec:Tensors}. By \cite[Lemma 1.3.2]{KisinModels}, the subgroup $G_{\mathbb{Z}_{(p)}} \subseteq GL(V_{\mathbb{Z}_{(p)}})$ is the stabiliser of a collection of tensors $s_\alpha \in V_{\mathbb{Z}_{(p)}}^\otimes$.

For $x \in \shg(\ovfp)$, we write $A_x$ for the abelian variety up to prime-to-$p$ isogeny over $\ovfp$ corresponding to the image of $x \in \shgv$. Let $x \in \shg(\ovfp)$ and let $\mathbb{D}_{\mathrm{contr},x}$ be the (contravariant!)\footnote{As in \cite{OrdinaryHO}, we use both covariant and contravariant Dieudonn\'e theory in this paper. Since we mostly use the covariant theory, we will denote all our contravariant Dieudonn\'e modules with the subscript $_\mathrm{contr}$. The reason for this is that most of the literature on integral models of Shimura varieties uses the contravariant theory, while results about internal-Hom $p$-divisible groups are best expressed in the covariant theory.} Dieudonn\'e module of $A_x[p^{\infty}]$. It is explained in \cite[Section 6.3]{ShankarZhou} that there are canonical tensors $\{s_{\alpha,\mathrm{cris},x}\}$ in $
    \mathbb{D}_{\mathrm{contr},x}^{\otimes},$
that are invariant under the Frobenius on $\mathbb{D}_{\mathrm{contr},x}[\tfrac{1}{p}]$. It is moreover explained there that there is an isomorphism
\begin{align} \label{Eq:Basis}
   \mathbb{D}_{\mathrm{contr},x} \simeq V_{(p)} \otimes_{\mathbb{Z}_{(p)}} \zpbreve
\end{align}
taking $s_{\alpha,\mathrm{cris},x}$ to $s_{\alpha} \otimes 1$. Under such an isomorphism, the Frobenius is given by an element $b_x \in G(\qpbreve)$, which is well defined up to $\sigma$-conjugacy by $\mathcal{G}(\zpbreve)$, where $\sigma:G(\qpbreve) \to G(\qpbreve)$ is induced by the Frobenius $\sigma:\qpbreve \to \qpbreve$ (which we called $\varphi$ before). We will denote by $\llbracket b_x\rrbracket$ the $\mathcal{G}(\zpbreve)$-$\sigma$-conjugacy class of $b_x$ and by $[b_x]$ the $G(\qpbreve)$-$\sigma$-conjugacy class of $b_x$. 

\subsection{Central leaves} 
It follows from \cite[Corollary 3.3.8]{HamacherKim} that for $b \in G(\qpbreve)$ there are (reduced) locally closed subschemes
\begin{align}
    \cb \subseteq \shgb \subseteq \shg
\end{align}
of $\shg$ such that their $\ovfp$-points are given by
\begin{align}
    \cb(\ovfp)&=\{x \in \shg(\ovfp) \; : \; \llbracket b_x\rrbracket=\llbracket b \rrbracket \} \\
    \shgb(\ovfp)&=\{x \in \shg(\ovfp) \; : \; [b_x]=[b] \}.
\end{align}
The subscheme $\shgb$ is called the \emph{Newton stratum} attached to $[b]$, and the subscheme $\cb \subseteq \shgb$ is called the \emph{central leaf} attached to $\llbracket b \rrbracket$. We note that the natural map $\cb \to \shgb$ is a closed immersion by \cite[Corollary 3.3.8]{HamacherKim} and that the central leaf $\cb$ is smooth and equidimensional by \cite[Corollary 5.3.1]{KimCentralLeaves}. The following remark is \cite[Remark 2.1.4]{vHXiao}.

\begin{Rem} \label{Rem:ChaiOortLeaves}
When $\gx=\gvx$, then the $\mathcal{G}(\zpbreve)$-conjugacy class $\llbracket b_x \rrbracket$ captures precisely the isomorphism class of the polarised $p$-divisible group $(A_x[p^{\infty}], \lambda_x)$, where an isomorphism of polarised $p$-divisible groups $f:(Y, \mu) \to (Y', \mu')$ is an isomorphism $f:Y \to Y'$ such that $f^{\ast} \mu'=c \mu$ for some $c \in \mathbb{Z}_p^{\times}$. In particular, when $\gx=\gvx$ our central leaves do \emph{not} agree with those defined in \cite{ChaiOort}, which are defined using isomorphisms $f:(Y, \mu) \to (Y', \mu')$ with $f^{\ast} \mu'=\mu$. However, note that both variants central leaves are closed in their respective Newton strata and are smooth of the same dimension; so our central leaves are finite disjoint unions of the central leaves considered in \cite{ChaiOort}.
\end{Rem}
\subsubsection{} Fix a point $x \in \shg(\ovfp)$ and write $Y=A_x[p^{\infty}]$ and write $\lambda$ for the induced polarisation. We write $\llbracket b \rrbracket \coloneqq  \llbracket b_x \rrbracket$ for the $\mathcal{G}(\zpbreve)$-$\sigma$-conjugacy class of elements of $G(\qpbreve)$ associated to $x$. We write $\cgsp \subseteq \shgv$ for the central leaf associated to the polarised $p$-divisible group $(Y, \lambda)$. \smallskip 

Let $X=(A[p^{\infty}], \lambda_X)$ for the universal polarised $p$-divisible group over $\cgsp$. We let 
\begin{align}
    \igCSgsp  \to \cgsp,
\end{align}
be the fpqc sheaf over $\cgsp$ which sends a scheme $T \to \cgsp$ to the set of isomorphisms $X_T \xrightarrow{\sim} Y_{T}$ compatible with the polarisation $\lambda_X$ and $\lambda$ up to a scalar in $\underline{\zp^{\times}}(T)$. This is tautologically a quasi-torsor for the closed subgroup scheme $\aut_{\lambda}(Y) \subseteq \aut(Y)$ of automorphisms that preserve polarisations up to a scalar. It follows from \cite[Proposition 4.3.3]{CaraianiScholze} that $\igCSgsp \to \cgsp$ is representable by a scheme, and from \cite[Corollary 4.3.5]{CaraianiScholze} that this scheme is perfect. They moreover show that $\igCSgsp  \to \cgsp$ is faithfully flat, see \cite[Corollary 4.3.9]{CaraianiScholze}, and thus a torsor for $\aut_{\lambda}(Y)$ in the fpqc topology. Finally, by \cite[Corollary 4.3.5]{CaraianiScholze}, the action of $\aut_{\lambda}(Y)$ on $\igCSgsp$ extends to an action of $\aut_{\lambda}(\tilde{Y})$, the group of automorphisms of $\tilde{Y}$ preserving the polarisation up to a scalar in $\underline{\qp^{\times}}$.

We now make the following assumption. 
\begin{Hyp} \label{Hyp:CSD}
The $p$-divisible group $Y$ is completely slope divisible. 
\end{Hyp}
It follows from \cite[Proposition 2.4.5]{KimCentralLeaves} that for every Newton stratum $\shgb \subseteq \shg$ we can always find a central leaf $\cbp \subseteq \shgb$ such that $\cbp \subseteq \cgspp$ where $Y'$ is a completely slope divisible $p$-divisible group. Thus this is not an unreasonable assumption. 

As explained in \cite[Section 3.2.3]{MantovanPEL}, this implies that the universal $p$-divisible group $X=A[p^{\infty}]$ over $\cgsp$ admits a slope filtration and we will denote the associated graded pieces for the slope filtration by $X_i$. We can then consider the Mantovan Igusa variety  
\begin{align}
    \iggsp \to \cgsp,
\end{align}
which represents the functor over $\cgsp$ sending $T \to \cgsp$ to the set of isomorphisms $X_{i,T} \xrightarrow{\sim} Y_{i, T}$ compatible with the polarisations up to a scalar in $\underline{\zp^{\times}}(T)$. Note that $\iggsp \to \cgsp$ is tautologically a quasi-torsor for the closed subgroup scheme of $\prod_i \aut(Y_i)$ of isomorphisms that preserve polarisations up to a scalar. Under the identification
\begin{align}
    \prod_i \aut(Y_i) \xrightarrow{\sim} \underline{\aut(Y)(\ovfp)}
\end{align}
of Section \ref{subsub:SemiDirectProduct}, we can identify this closed subgroup with the profinite group scheme $\underline{\aut_{\lambda}(Y)(\ovfp)} \subseteq \underline{\aut(Y)(\ovfp)}$. It follows from work of Mantovan, see \cite[Proposition 1, Proposition 4]{MantovanPEL}, that $\cgsp$ is smooth and that $\iggsp \to \cgsp$ is representable and profinite pro-\'etale. In particular, $\iggsp \to \cgsp$ is a torsor for $\underline{\aut_{\lambda}(Y)(\ovfp)}$. 

\subsubsection{} There is an obvious natural map $\igCSgsp \to \iggsp$ over $\cgsp$ sending an isomorphism to the induced isomorphism on the associated graded of the slope filtration. This morphism is equivariant for the morphism $\aut_{\lambda}(Y) \to \underline{\aut(Y)(\ovfp)}$ of Section \ref{subsub:SemiDirectProduct}. By \cite[Proposition 4.3.8]{CaraianiScholze}, this map identifies $\igCSgsp \to \iggsp$ with the perfection of the scheme $\iggsp$. 
\begin{Lem}
    The following commutative diagram is Cartesian
    \begin{equation} \label{eq:CartesianGSp}
    \begin{tikzcd}
    \igCSgsp \arrow{d} \arrow{r} & \iggsp \arrow{d} \\
    \cgsp^{\mathrm{perf}} \arrow{r} & \cgsp,
    \end{tikzcd}
\end{equation}
\end{Lem}
\begin{proof}
Since profinite pro-\'etale covers of perfect schemes are again perfect schemes, we see that the fibre product of \eqref{eq:CartesianGSp} is given by the perfection of $\iggsp$. Since $\igCSgsp \to \iggsp$ identifies $\igCSgsp$ with the perfection of $\iggsp$ by \cite[Proposition 4.3.8]{CaraianiScholze}, we are done.
\end{proof}

\subsection{Igusa varieties for Shimura varieties of Hodge type} \label{Sec:CentralLeavesHodgeType} Let $x \in \shg(\ovfp)$ as above and choose an isomorphism $$\mathbb{D}_{\mathrm{contr},x} \simeq V_{(p)} \otimes_{\mathbb{Z}_{(p)}} \zpbreve$$
taking $s_{\alpha,\mathrm{cris},x}$ to $s_{\alpha} \otimes 1$ as in \eqref{Eq:Basis}. This induces an isomorphism from $V_{(p)}^{\ast} \otimes \zpbreve$ to the covariant Dieudonn\'e module $\mathbb{D}(Y)$ and thus gives us Frobenius invariant tensors $\{s_{\alpha,\mathrm{cris},x}\} \subseteq \mathbb{D}(Y)^{\otimes}$. Let $b \in G(\qpbreve) \subseteq \operatorname{GL}(V^{\ast})(\qpbreve)$ be the element corresponding to the Frobenius on $\mathbb{D}(Y)[\tfrac{1}{p}]$. Under such an isomorphism, the Frobenius is given by an element $b=b_x \in G(\qpbreve) \subseteq \operatorname{GL}(V^{\ast})(\qpbreve)$. In particular, we can apply the result of Section \ref{Sec:AutHodge} and form the objects $\mathcal{H}_Y^{G, \circ} \subseteq \mathcal{H}_Y$ and $\aut_G(\tilde{Y})$. We then consider the Dieudonn\'e--Lie $\zpbreve$-algebra $\mfap=\mathbb{D}(\mathcal{H}_Y^{G, \circ})$ and $\mfa=\mfap \otimes \qpbreve$. We have seen in Section \ref{Sec:AutHodge} that $\mfa \simeq \operatorname{Lie} U_{\nu_b}$ and that $\aut_G(\tilde{Y})^{\circ} \simeq \tilde{\Pi}(\mfa)$. 

\subsubsection{} Hamacher and Kim define an Igusa variety $\igCS \to \cb^{\mathrm{perf}}$ by pulling back $\igCSgsp \to \cgsp^{\mathrm{perf}}$ along $\cb^{\mathrm{perf}} \to \cgsp^{\mathrm{perf}}$, and then taking the closed subset where the pull-back of the universal isomorphism 
\begin{align}
    X \simeq Y_{\igCSgsp}
\end{align}
preserves the tensors on geometric points, see \cite[Section 5.1 and Lemma 5.1.1]{HamacherKim}. They then prove that $\igCS \to \cb^{\mathrm{perf}}$ is a pro-\'etale torsor for $\underline{\aut_G(Y)(\ovfp)} \subseteq \underline{\aut_{\lambda}(Y)(\ovfp)}$ and that $\igCS$ is stable under the action of $\aut_G(\tilde{Y}) \subseteq \aut(\tilde{Y})$. Using the invariance of the \'etale site under perfections, see \cite[Tag 0BTY]{stacks-project}, we get a pro-\'etale torsor $\ig \to \cb$ sitting in a cartesian diagram
\begin{equation}
    \begin{tikzcd}
    \igCS \arrow{r} \arrow{d} & \ig \arrow{d} \\
    \cb^{\mathrm{perf}} \arrow{r} & \cb,
    \end{tikzcd}
\end{equation}
that fits in a commutative cube with \eqref{eq:CartesianGSp}. The map $\igCS \to \cb$ is faithfully flat since $\igCS \to \cb^{\mathrm{perf}}$ is a pro-étale cover and $\cb$ is smooth. We want to prove that it is actually an fpqc torsor under $\aut_G(Y)$. We first prove it at the level of infinitesimal neighbourhoods (or rather formal completions in a point, as defined as in \cite[Tag 0AIX]{stacks-project}).

\begin{Lem} \label{Lem:FormalTorsor}
If $y \in \igCS(\ovfp)$ is an $\ovfp$-point over $x\in \cb(\ovfp)$, then $\igCS^{/y} \to \cb^{/x}$ is a torsor in the fpqc topology under the group scheme $\aut^{\circ}_G(Y)$. 
\end{Lem}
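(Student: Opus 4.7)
My plan is to deduce the torsor structure from the moduli interpretation of $\igCS$ together with the action of $\mathbf{Aut}_G(\tilde Y)$ just recalled. An $R$-valued point of $\igCS$ over $C$ is a tensor-preserving isomorphism $\phi\colon X_R \xrightarrow{\sim} Y_R$, and $\mathbf{Aut}_G(\tilde Y)$ acts by post-composition $\phi\mapsto g\circ \phi$. Since $\mathbf{Aut}^{\circ}_G(Y)\subseteq \mathbf{Aut}_G(Y)\subseteq \mathbf{Aut}_G(\tilde Y)$ as a closed subgroup, restriction gives an action of $\mathbf{Aut}^{\circ}_G(Y)$ on $\igCS$ preserving the structure map to $C$, and hence an action map
$$\alpha\colon \mathbf{Aut}^{\circ}_G(Y)\times \igCS\longrightarrow \igCS\times_C \igCS,\quad (g,z)\mapsto (z,\, g\cdot z).$$

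The first step is to check that $\alpha$ restricts to a morphism of formal schemes
$$\tilde\alpha\colon \mathbf{Aut}^{\circ}_G(Y)\times \igCS^{/y}\longrightarrow \igCS^{/y}\times_{C^{/x}}\igCS^{/y}.$$
The key observation is that $\mathbf{Aut}^{\circ}_G(Y)\simeq \Pi(\mathfrak{a}^+)$ is, by Lemma \ref{Lem:QRSP} together with the description of Tate modules of connected $p$-divisible groups in Section \ref{Sec:PrelimBT}, a qrsp scheme whose underlying topological space is the single point corresponding to the identity. Consequently, acting by $\mathbf{Aut}^{\circ}_G(Y)$ deforms $y$ only inside its connected component in the fibre $\igCS_x$, so the image of $\alpha$ on formal neighbourhoods at $y$ lies in the formal completion at $(y,y)$.

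The core of the argument is to invert $\tilde\alpha$. Given a pair $\phi,\phi'\in \igCS^{/y}(R)$ of tensor-preserving trivialisations both deforming $y$, the element $g:=\phi'\circ\phi^{-1}\in \mathbf{Aut}_G(Y)(R)$ reduces to the identity at the closed point of $\Spec R$. Using the semi-direct product decomposition $\mathbf{Aut}_G(Y)\simeq \mathbf{Aut}^{\circ}_G(Y)\rtimes \underline{\mathbf{Aut}_G(Y)(\ovfp)}$ inherited from that for $\mathbf{Aut}_G(\tilde Y)$, this forces $g\in \mathbf{Aut}^{\circ}_G(Y)(R)$, and the rule $(\phi,\phi')\mapsto (\phi,g)$ provides a two-sided inverse to $\tilde\alpha$. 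Faithful flatness of $\igCS^{/y}\to C^{/x}$ then follows from the faithful flatness of $\igCS\to C$ recalled just before the lemma, since the resulting completed local ring map is flat and local. The main obstacle I anticipate is carefully handling the non-Noetherian and qrsp nature of $\igCS^{/y}$ and $\mathbf{Aut}^{\circ}_G(Y)$: one needs to verify that the relevant fibre products and formal completions behave well in the framework of Section \ref{sec:PrelimFormalSchemes}, but this should be essentially formal once the moduli picture is in place.
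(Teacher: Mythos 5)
Your proposal is a genuinely different route from the paper's proof, and it has a real gap at the crucial step.

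The paper does not argue via the moduli interpretation at all. Instead it invokes Section 5.2 of \cite{HamacherKim} to identify $\igCS^{/y}$ with the formal group $\mathbf{Aut}^{\circ}_G(\tilde{Y})$, and Theorem 5.1.3 of \cite{KimCentralLeaves} to identify the map $\mathbf{Aut}^{\circ}_G(\tilde{Y}) \to \shgb^{/x}$ with the orbit map whose scheme-theoretic image is $C^{/x}$. After checking that $\mathbf{Aut}^{\circ}_G(Y)$ is the stabiliser of $x$, one gets a \emph{monomorphism} of formal schemes $\mathbf{Aut}^{\circ}_G(\tilde Y)/\mathbf{Aut}^{\circ}_G(Y) \to C^{/x}$, and this is upgraded to an isomorphism by observing that both sides are formally smooth of the \emph{same} dimension $\langle 2\rho,\nu_b\rangle$ (Proposition \ref{Prop:RepresentableSmoothFiltration} together with Corollary \ref{Cor:DimHGY} on the left; Corollary 5.3.1 of \cite{KimCentralLeaves} on the right) and applying Lemma \ref{Lem:ClosedImmersion}. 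The dimension comparison is doing the real work.

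The gap in your argument is precisely where you dismiss this step. Showing that the action map is an isomorphism onto $\igCS^{/y}\times_{C^{/x}}\igCS^{/y}$ only gives a quasi-torsor; for a torsor you must also know that $\igCS^{/y}\to C^{/x}$ is an fpqc cover, i.e.\ faithfully flat. You assert this ``follows from the faithful flatness of $\igCS\to C$ since the resulting completed local ring map is flat and local,'' but this is exactly the non-obvious part. The local ring $\mathcal{O}_{\igCS,y}$ is the local ring of a perfect scheme, so it is far from Noetherian, and completion along $\mathfrak{m}_{C,x}$ need not preserve flatness in this generality. The paper is careful about precisely this kind of issue; it appeals to Lemma 5.2.3 of \cite{HamacherKim} for such flatness statements, but that appeal happens later in the proof of Proposition \ref{Prop:TorsorHodgeType}, which in turn \emph{uses} the present lemma as input. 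Without either that flatness lemma (applied correctly, and you would still need to know that its hypotheses hold here) or the dimension-count argument, surjectivity onto $C^{/x}$ is unproven — a priori the image could be a proper formally smooth formal subscheme of $C^{/x}$, and the quasi-torsor structure would say nothing about whether that happens. Your first two steps (moduli description, inverting the action map on $R$-points via the semi-direct product decomposition) are essentially fine for connected test rings $R$, but you should also note that checking that $\phi'\circ\phi^{-1}$ lands in the Hodge-type group $\mathbf{Aut}_G(Y)$ rather than just $\mathbf{Aut}_\lambda(Y)$ requires unwinding the definition of $\mathbf{Aut}_G$ via qrsp test rings and colimits, which is not a tautology for arbitrary $R$.
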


\begin{proof}
In \cite[Section 5.2]{HamacherKim}, Hamacher and Kim show that the formal completion at a point $y$ of $\igCS$ is isomorphic to $\aut^{\circ}_G(\tilde{Y})$ and that the natural action of $\aut^{\circ}_G(\tilde{Y})$ on $\igCS$ corresponds to the multiplication map under this isomorphism. Moreover, the morphism $\igCS^{/y} \to \cb^{/x}$ corresponds to the restriction of the action map (constructed in \cite[Theorem 4.3.1]{KimCentralLeaves})
\begin{align}
    \aut^{\circ}_G(\tilde{Y}) \times \shgb^{/x} \to \shgb^{/x}
\end{align}
to the closed point $x \in \shgb^{/x}$. It follows from \cite[Theorem 5.1.3]{KimCentralLeaves} that the scheme-theoretic image of this restriction
\begin{align}
    \aut^{\circ}_G(\tilde{Y})  \to \shgb^{/x}
\end{align}
is $\cb^{/x} \subseteq \shgb^{/x}$ and in fact this identifies $\aut^{\circ}_G(\tilde{Y})$ with the perfection of $\cb^{/x}$. Now we apply this to $\gx=\gvx$, where we already know that 
\begin{align}
    \igCSgsp \to \iggsp
\end{align}
is a torsor for $\aut_{\lambda}^{\circ}(Y)$. Since the map $\iggsp \to \cgsp$ is pro-étale, it induces isomorphisms of formal completions (see the proof of \cite[Proof of Proposition 6.1.1]{OrdinaryHO})
\begin{align}
    \iggsp^{/z} \to \cgsp^{/x},
\end{align}
where $z$ is the image of $y$ under $\igCSgsp \to \iggsp$. It follows that on formal neighborhoods we get an equivariant map
\begin{align}
    \aut_{\lambda}^{\circ}(\tilde{Y}) \to \cgsp^{/x}
\end{align}
which is an $\aut_{\lambda}^{\circ}(Y)$-torsor in the fpqc topology. Because $\aut_{G}^{\circ}(Y)$ is the intersection of $\aut_{\lambda}^{\circ}(Y)$ with $\aut_G(\tilde{Y})$, it follows that $\aut^{\circ}_G(Y)\subseteq \aut^{\circ}_G(\tilde{Y})$ is the stabiliser of $x\in \cb^{/x}$. Therefore, the natural map
\begin{align}
    \aut_{G}^{\circ}(\tilde{Y}) \to \cb^{/x}
\end{align}
is a quasi-torsor for $\aut_{G}^{\circ}(Y)$. It is also an fpqc cover, because $\igCS \to \cb$ is, and thus it is a torsor for $\aut_{G}^{\circ}(Y)$ in the fpqc topology. \end{proof}

\begin{Prop} \label{Prop:TorsorHodgeType}
The map $\igCS \to \cb$ is an fpqc torsor under $\aut_G(Y)$.
\end{Prop}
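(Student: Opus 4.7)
The plan is to verify the two defining conditions of an fpqc torsor: (a) the map $\igCS \to C$ is an fpqc cover, and (b) the action map
\[
\phi \colon \mathbf{Aut}_G(Y) \times \igCS \to \igCS \times_C \igCS, \qquad (g,y) \mapsto (g \cdot y,\, y),
\]
is an isomorphism. The action of $\mathbf{Aut}_G(Y) \subseteq \mathbf{Aut}_G(\tilde{Y})$ on $\igCS$ is obtained by restricting the Hamacher--Kim action, and since a genuine automorphism of $Y$ (as opposed to a quasi-isogeny) preserves the isomorphism class of the universal $p$-divisible group $X$ over $C$, this restricted action factors through $C$ itself, not merely $C^{\mathrm{perf}}$.

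For (a), factor $\igCS \to C$ through $C^{\mathrm{perf}}$. The first map $\igCS \to C^{\mathrm{perf}}$ is a pro-étale torsor under $\underline{\mathbf{Aut}_G(Y)(\ovfp)}$ by the Hamacher--Kim construction, hence fpqc. The second map is the perfection of a smooth $\ovfp$-scheme: it is a cofiltered limit of relative Frobenii on $C$, each finite locally free, and is therefore faithfully flat.

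For (b), exploit the semidirect decomposition $\mathbf{Aut}_G(Y) \simeq \mathbf{Aut}^{\circ}_G(Y) \rtimes \underline{\mathbf{Aut}_G(Y)(\ovfp)}$. The pro-étale torsor structure of $\igCS \to C^{\mathrm{perf}}$ gives an identification $\igCS \times_{C^{\mathrm{perf}}} \igCS \simeq \underline{\mathbf{Aut}_G(Y)(\ovfp)} \times \igCS$, which handles the discrete component of $\phi$. The connected component, that is the restriction of $\phi$ to $\mathbf{Aut}^{\circ}_G(Y) \times \igCS$ composed with the closed immersion $\igCS \times_{C^{\mathrm{perf}}} \igCS \hookrightarrow \igCS \times_C \igCS$, can be analysed after completing along the diagonal at each $\ovfp$-point $\bar{y}$, where it recovers precisely the torsor isomorphism of Lemma \ref{Lem:FormalTorsor}. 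By $\mathbf{Aut}_G(Y)$-equivariance of $\phi$, the same conclusion holds at every geometric point. Combined with the flatness of both source and target of $\phi$ over $\igCS$ via the second projection, using Lemma \ref{Lem:QRSP} for the connected component, an fpqc descent argument forces $\phi$ to be a global isomorphism.

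The main obstacle is stitching the pro-étale torsor structure over the perfection together with the formal torsor structure on completions to yield a genuine scheme-theoretic isomorphism. Concretely, one needs to verify carefully that these two pieces of information, combined with the flatness of $\igCS \to C$ and the semidirect decomposition of $\mathbf{Aut}_G(Y)$, together pin down $\phi$ as an isomorphism on the nose rather than only on fibres and completions.
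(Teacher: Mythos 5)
Your overall strategy tracks the paper's: reduce to showing the action map
\[
\phi \colon \mathbf{Aut}_G(Y) \times \igCS \to \igCS \times_C \igCS
\]
is an isomorphism, and use the formal-completion picture from Lemma \ref{Lem:FormalTorsor} to control $\phi$ near each closed point. But the step you flag as "the main obstacle" is precisely where your argument stops being a proof, and your proposed fix ("an fpqc descent argument") does not resolve it. Knowing that $\phi$ induces an isomorphism on (appropriate) completions at every $\ovfp$-point does not, by itself, force $\phi$ to be an isomorphism: you need a mechanism that converts pointwise formal data into a global statement about non-Noetherian schemes, and no descent datum has been exhibited. There is also a subtle point you do not engage with: the relevant completions are \emph{not} the $\mathfrak{m}$-adic completions of $\mathcal{O}_{\igCS}$ or $\mathcal{O}_{\igCS \times_C \igCS}$, but rather $I$-adic completions where $I$ is pulled back from $\mathcal{O}_{C,x}$; Hamacher--Kim (their Lemma 5.2.3) is needed both to identify these completions with what appears in Lemma \ref{Lem:FormalTorsor} and to know that $\mathcal{O}_{Z,z} \to \mathcal{O}_{Z,z}^{\wedge, I}$ is faithfully flat, which is what makes the completion argument detect anything about the uncompleted ring.

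The paper's route around the obstacle is concrete and worth internalising. First, the formal-torsor statement (Lemma \ref{Lem:FormalTorsor}) together with faithful flatness of the $I$-adic completion shows $\phi$ is flat at every closed point, and then a general criterion (Lemma 00HT(7) of the Stacks Project) upgrades pointwise flatness to flatness. Second --- and this is the ingredient missing from your outline entirely --- the action map is a \emph{closed immersion}, because the corresponding map in the Siegel case is one and $\phi$ is its base change intersected with a closed condition. A surjective flat closed immersion is automatically an isomorphism, which finishes the argument without any descent. Your semidirect-product decomposition and the $\igCS \times_{C^{\mathrm{perf}}} \igCS \simeq \underline{\mathbf{Aut}_G(Y)(\ovfp)} \times \igCS$ identification are fine observations, but they are peripheral to the genuine difficulty and do not substitute for the closed-immersion-plus-flatness argument.
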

\begin{proof}
Since $\igCS \to \cb$ is faithfully flat, it suffices to prove that it is a quasi-torsor under $\aut_G(Y)$. In other words, we want to show that the action map
\begin{align} \label{eq:action}
    \aut_G(Y) \times \igCS \to \igCS \times_{\cb} \igCS
\end{align}
is an isomorphism. This map is clearly a homeomorphism because $\ig \to \cb$ is an $\underline{\aut_G(Y)(\ovfp)}$-torsor and both $\igCS \to \ig$ and $\aut_G(Y) \to \underline{\aut_G(Y)(\ovfp)}$ are universal homeomorphisms.

It follows from \cite[Lemma 5.2.3]{HamacherKim} that when $Z$ is either the source or the target of \eqref{eq:action} and $z\in Z(\ovfp)$ is an $\ovfp$-point over $x$, then $Z^{/z}$ is pro-represented by the formal spectrum of the $I$-adic completion of $\mathcal{O}_{Z,x}$, where $I$ is the maximal ideal of $\mathcal{O}_{\cb,x}$. Moreover, they prove that $\mathcal{O}_{Z,x} \to \mathcal{O}_{Z,x}^{\wedge,I}$ is faithfully flat.

It follows from Lemma \ref{Lem:FormalTorsor} that the action map \eqref{eq:action} is flat at all closed points and therefore it is flat by \cite[Lemma 00HT.(7)]{stacks-project}. We moreover know that the action map is a closed immersion because this is true in the Siegel case, so the action map is a surjective flat closed immersion and therefore an isomorphism (see
\cite[Tag 04PW]{stacks-project}).
\end{proof}

\subsubsection{Sustained deformation space} \label{Sec:DeformationTheoryLeaves} Chai and Oort proved that the deformation theory of $\cgsp$ is completely determined by the $\aut_{\lambda}(Y)$-torsor $\igCSgsp \to \cgsp$. 

\begin{Thm}[{\cite[Lemma 3.6, Theorem 4.3]{ChaiFoliation}}]\label{Thm:ChaiOortSustained}
    There exists a formal scheme $\mathbf{Def}_{\mathrm{sus}}(Y, \lambda)$ such that, when evaluated on Artinian local $\ovfp$-algebras, represents deformations of the trivial $\aut_{\lambda}(Y)$-torsor over $\ovfp$. This formal scheme is  formally smooth and formally of finite type. In addition, the morphism \begin{align}
    \cgsp^{/x} \to \mathbf{Def}_{\mathrm{sus}}(Y, \lambda),
\end{align} which for every Artinian $\ovfp$-algebra $R$ maps an $R$-point  of $\cgsp^{/x}$ to its fibre with respect to $\igCSgsp \to \cgsp^{/x}$, is an isomorphism of formal schemes.
\end{Thm}It is possible to similarly show that $\cb^{/x}$ can be identified with the deformation space of the trivial $\aut_{G}(Y)$-torsor, but this result does not enter into our proof.
\subsubsection{} \label{Sec:NoPolarisation} Let us analyse more in detail the total deformation space of $Y$, without taking into account for the moment any extra structures. Recall that there exists a formally smooth and formally of finite type formal scheme $\textbf{Def}\, (Y)$ such that, on Artinian local $\ovfp$-algebras, it represents all the deformations of $Y$. This formal scheme can be interpreted as the reduction modulo $p$ of the formal completion of a Rapoport--Zink space (see \cite[Section 4.3]{KimCentralLeaves} or \cite[Lemma 4.2.2]{OrdinaryHO}). Using this description, for every $R\in \algo{\ovfp}$ the set
    $\textbf{Def}\, (Y)(R)$ parametrises isomorphism classes of $(X,\iota)$ where $X$ is a $p$-divisible group over $\Spec R$ and $\iota\colon X\dashrightarrow Y_R$ is a quasi-isogeny which becomes an isomorphism of $p$-divisible groups on $\Spec R^{\mathrm{red}}$. The formal group $\aut(\tilde{Y})^\circ$ acts (formally) on $\textbf{Def}\, (Y)$ by sending $$(\gamma,(X,\iota))\in \aut(\tilde{Y})^\circ(R)\times \textbf{Def}\, (Y)(R)$$ to $(X,\gamma\circ\iota)$. The action is well-defined because $\aut(\tilde{Y})^\circ(R_\mathrm{red})=1$. The stabiliser of the closed point $x\in \textbf{Def}\, (Y)$ is given by those $\gamma\in \aut(\tilde{Y})^\circ(R)$ such that $(Y_R,\gamma)\simeq (Y_R,\mathrm{id})$. This corresponds to the closed formal subscheme $\aut(Y)^\circ\subseteq \aut(\tilde{Y})^\circ$.

As in Theorem \ref{Thm:ChaiOortSustained} it follows from \cite[Lemma 3.6, Theorem 4.3]{ChaiFoliation} that there exists a formally smooth closed formal subscheme $\defsus(Y)\subseteq \textbf{Def}\, (Y)$ which represents deformations of the trivial $\aut(Y)$-torsor on Artinian $\ovfp$-algebras. It can be identified with the subfunctor which consists of those $(X,\iota)$ with $X$ fpqc locally isomorphic to the base change of $Y$. It follows that the $\aut(\tilde{Y})^\circ$-action on $\textbf{Def}\, (Y)$ preserves $\defsus(Y)$. There is a well-defined map $\aut(\tilde{Y})^\circ \to \defsus(Y)$ given by the orbit through the closed point.  

\begin{Lem}
    The orbit map through the closed point $\aut(\tilde{Y})^\circ \to \defsus(Y)$ is a torsor in the fpqc topology for the natural left translation action of $\aut(Y)^\circ$ on $\aut(\tilde{Y})^\circ$.
\end{Lem}
\begin{proof}
Write $\defsus^{\mathrm{perf}}(Y)$ for the perfection of $\defsus(Y)$ as a formal scheme. It follows from the universal property of the perfection that the orbit map $\aut(\tilde{Y})^\circ \to \defsus(Y)$ lifts to a map $\aut(\tilde{Y})^\circ \to \defsus^{\mathrm{perf}}(Y)$. By \cite[Theorem 5.1.3]{KimCentralLeaves}, this induced map is an isomorphism. The map $\defsus^{\mathrm{perf}}(Y)\to \defsus(Y)$ is an fpqc cover because $\defsus(Y)$ is the formal spectrum of a complete regular Noetherian local ring, and thus it is Frobenius-smooth. We deduce that the orbit map $\aut(\tilde{Y})^\circ \to \defsus(Y)$ is an fpqc cover. Since it is clearly a quasi-torsor for the natural action of $\aut(Y)^\circ$, the stabilizer of the closed point $x$, we see that it must be a torsor.
\end{proof}

\subsubsection{} \label{subsub:DescriptionTorsor} We would now like to identify the $\aut(Y)^\circ$-torsor over $\defsus(Y)$ that we have constructed above. Consider the fpqc sheaf $P$ over $\algo{\ovfp}$ sending $R$ to the set of isomorphism classes of triples $(X, \iota, \alpha)$, where $(X, \iota)$ is as above and $\alpha:X \to Y_T$ is an isomorphism that agrees with $\iota$ on $\spec R^{\mathrm{red}}$.\footnote{If $R$ is Noetherian, then the map $R \to R^{\mathrm{red}}$ has finitely generated kernel and hence the quasi-isogeny $\gamma$ is the unique lift of the isomorphism over $R^{\mathrm{red}}$; this forces $\alpha=\gamma$. Thus we see that maps $\spec R \to P$ with $R$ a Noetherian $\ovfp$-algebra factor through the closed point $\spec \ovfp \to P$. This reflects the fact that $P \simeq \aut(\tilde{Y})^{\circ}$ is the perfection of the formal spectrum of a power series ring.} There is a natural map $P \to \defsus(Y)$ given by forgetting $\alpha$, and this map is a quasi-torsor for $\aut(Y)^\circ$ by construction. There is a natural map
\begin{equation}
\label{eq:UniversalIsomorphism}\aut(\tilde{Y})^\circ \to P
\end{equation}
which on $\spec R$ points sends $\gamma:Y_{R} \to Y_{R}$ to $(Y_{R}, \gamma, \operatorname{Id})$; this is well defined since $\gamma$ is the identity on $\spec R^{\mathrm{red}}$ by assumption. This map is clearly $\aut(Y)^\circ$-equivariant and commutes with the natural map to $\defsus(Y)$ of the source and the target. It follows that it is an isomorphism since the source is a torsor and the target is a quasi-torsor (and thus a posteriori a torsor).

\begin{Rem} \label{Rem:TwoSlopes}
If $Y=Y_1 \oplus Y_2$ has two slopes, then $\aut(\tilde{Y})^{\circ}$ is isomorphic to $\widetilde{\mathcal{H}}_{Y_1,Y_2}$ and $\aut(Y)^{\circ} \simeq T_p \mathcal{H}_{Y_1,Y_2}$ so that $\defsus(Y) \simeq \mathcal{H}_{Y_1,Y_2}$. This gives $\defsus(Y)$ the structure of a $p$-divisible formal group. When $Y$ is ordinary, then $\defsus(Y)=\textbf{Def}\, (Y)$ and the formal group structure on $\textbf{Def}\, (Y)$ is the one coming from the classical Serre--Tate coordinates, see \cite[Section 4]{OrdinaryHO}. 

In both cases, the $p$-divisible formal group $\mathcal{H}_{Y_1,Y_2}$ is closely related to the (group-valued) functor on Artinian local rings of extensions of $Y_1$ by $Y_2$, see \cite{ChaiTwoSlopes}. In the ordinary case, the space of extensions is actually isomorphic to $\defsus(Y)=\textbf{Def}\, (Y)$ via the natural map. 
\end{Rem}
\subsection{Strongly Tate-linear subspaces} \label{Sec:StronglyTateLinear}
Let $Y$ be a completely slope divisible $p$-divisible group over $\ovfp$ and let $X$ be the universal $p$-divisible group over the sustained deformation space $\defsus(Y)$. Recall that by \cite[Lemma 2.4.4]{DeJongDieudonne}, the $p$-divisible group $X$, defined over a formal scheme, admits a unique algebraisation to the scheme associated to $\defsus(Y)$.  Thanks to \ref{Thm:BMDieudonne} we can then attach to $X$ its rational Dieudonné module $\calM=\mathbb{D}(X)[\tfrac 1p]$. By Corollary \ref{Cor:ExistenceSlopeFiltration}, the $F$-isocrystal $\calM$ admits the slope filtration. Since $\defsus(Y)$ is simply connected, the graded object $\mathrm{Gr}_{S_\bullet}(\calM)$ is trivial as an isocrystal (see \cite[Proposition 3.3.4]{AddezioII}).  

\subsubsection{}
Let $Z \subseteq \defsus(Y)$ be a formally smooth formal closed subscheme with closed point $z$ and write $\calM_Z$ for the restriction of $\calM$ to $Z$. The monodromy group $G(\calM_Z)$ of the isocrystal $\calM_Z$ with respect to $z$ admits by definition an embedding into the automorphism group scheme of the fibre $\calM_z$. Since $\calM_z$ is the Dieudonné module of $Y$, we get the inclusion $$ G(\calM_Z)\subseteq \mathrm{GL}(\mathbb{D}(Y)[\tfrac{1}{p}]).$$ Thanks to the fact that $\calM_Z$ admits the slope filtration and $\mathrm{Gr}_{S_\bullet}(\calM)$ is trivial as an isocrystal, we deduce that $G(\calM_Z)$ is contained in the unipotent radical of the parabolic subgroup of $\mathrm{GL}(\mathbb{D}(Y)[\tfrac{1}{p}])$ stabilising the slope filtration. By Lemma \ref{Lem:LieAlgebrasAndDieudonne} we deduce that we have the following chain of containments
\begin{align}
    \operatorname{Lie} G(\calM_Z) \subseteq \mathbb{D}(\mathcal{H}_{Y}^{\circ})[\tfrac 1p] \subseteq \mathbb{D}(\mathcal{H}_{Y}){[\tfrac 1p]}\subseteq\operatorname{Lie} \operatorname{GL}(\mathbb{D}(Y)){[\tfrac 1p]}.
\end{align}
 Since $\calM$ has the structure of an $F$-isocrystal, we get by \cite[Section 2.2]{CrewIsocrystals} an isomorphism
\begin{align}
   G(\calM_Z)^{(1)} \xrightarrow{\sim} G(\calM_Z),
\end{align}
which induces an isomorphism
\begin{align}
  \operatorname{Lie} G(\calM_Z)^{(1)} \xrightarrow{\sim} \operatorname{Lie} G(\calM_Z)
\end{align}
compatible with the $F$-structure on $\operatorname{Lie} \operatorname{GL}(\mathbb{D}(Y)){[\tfrac 1p]}$. In particular
\begin{align}
    \operatorname{Lie} G(\calM_Z) \subseteq \mathbb{D}(\mathcal{H}_{Y}^{\circ})[\tfrac 1p]=:\mfa
\end{align}
is an $F$-stable Lie subalgebra.

\subsubsection{} We have seen in Section \ref{Sec:NoPolarisation} that there is an action of $\aut(\tilde{Y})^{\circ}$ on $\defsus(Y)$ together with an equivariant map
\begin{align}
    \aut(\tilde{Y})^{\circ} \to \defsus(Y)
\end{align}
which is an $\aut(Y)^{\circ}$-torsor in the fpqc topology. If we consider $\mfap=\mathbb{D}(\mathcal{H}_{Y}^{\circ})$ as a nilpotent \DLz, then it is completely slope divisible by Lemma \ref{Lem:InternalHomCSD}. Nonetheless, it is generally \emph{not} integrable, so we will often consider $p^2 \mfap$ instead which is plain by Lemma \ref{Lem:ExistenceLattice}. 
\begin{Lem}  \label{Lem:Inclusion}
The exponential isomorphism $E\colon \tilde{\mathcal{H}}^\circ_Y\xrightarrow{\sim} \mathbf{Aut}(\tilde{Y})^\circ$ identifies
$\Pi(p^2 \mfap)$ with the normal  profinite  subgroup scheme $$\mathbf{Aut}(Y)^\circ_{p^2}\subseteq \mathbf{Aut}(Y)^\circ,$$ of those automorphisms which are trivial when restricted to $Y[p^2].$ In particular, we get a short exact sequence of fpqc sheaves
    \begin{align}
        1 \to \Pi(p^2 \mfap) \to \aut(Y)^{\circ} \to H \to 1,
    \end{align}
    with $H\subseteq \mathbf{Aut}(Y[p^2])$ a finite group scheme over $\ovfp$.
\end{Lem}
\begin{proof}The exponential restricts to a morphism $$ p^2T_p\mathcal{H}_Y^\circ\to \aut(Y)^{\circ}_{p^2},$$ since $\tfrac{p^{2n}}{n!}\in p^2\zp$ for every $n\geq 1$. Similarly, the logarithm induces a morphism $$\aut(Y)^{\circ}_{p^2}\to p^2T_p\mathcal{H}_Y^\circ.$$ These morphisms are mutual inverses. 

For the second part, noting that $\aut(Y[p^{2}])$ is an affine finite type group scheme,\footnote{The automorphism group of an affine finite $k$-scheme $\spec A$ is a closed subgroup of $\operatorname{GL}(A)$, hence affine and of finite type.} it follows from \cite[second theorem on page 144]{Waterhouse} that there exists an affine closed subgroup $H\subseteq \aut(Y[p^2])$ such that $\aut(Y)\to \aut(Y[p^2])$ factors through $H$ and $\aut(Y)\to H$ is faithfully flat. Arguing as in \cite[Lemma 4.1.5]{CaraianiScholze}, we deduce that $H$ is a finite group scheme. This completes the proof.
\end{proof}

\begin{Lem} \label{Lem:Plainification}
There is a unique morphism $Z(p^2 \mfap) \to \defsus(Y)$ making the following diagram commute
\begin{equation}
    \begin{tikzcd}
            \tildeu(\mfa) \arrow{d} \arrow[r, equals] & \tildeu(\mfa) \arrow{d} \\
            Z(p^2 \mfap) \arrow[r, densely dotted] & \defsus(Y).
     \end{tikzcd}
\end{equation}
Moreover, it is finite, faithfully flat and $\tildeu(\mfa)$-equivariant.
\end{Lem}
\begin{proof}
The free action of $\aut(Y)^{\circ}$ on $\tildeu(\mfa)$ induces a free action of $H$ on $Z(p^2 \mfap)$ and we consider the fppf quotient $Z(p^2 \mfap)/H$. Since the $H$-action preserves the presentation of $Z(p^2 \mfap)$ as the colimit of the schemes $Z^n(p^2 \mfap)$, it follows that $Z(p^2 \mfap)/H$ is representable by a formal scheme. The map $\tildeu(\mfa) \to Z(p^2 \mfap)/H$ is faithfully flat and a quasi-torsor for $\aut(Y)^{\circ}$, it is thus a torsor for $\aut(Y)^{\circ}$. It follows from the proof of Lemma \ref{Lem:UniversalProperty} that there is an isomorphism $Z(p^2 \mfap)/H \to \defsus(Y)$, proving the lemma. 
\end{proof}

\subsubsection{} For a sub-$F$-isocrystal $\mfb \subseteq \mfa$ stable under the Lie bracket we define $\mfbp = \mfb \cap \mfap$. This is a completely slope divisible and nilpotent \DLz, see Lemma \ref{Lem:SaturedSubCSD}. It follows from Lemma \ref{Lem:ExistenceLattice} that $p^2 \mfbp$ is plain. We can then consider the formal homogenous space $Z(p^2 \mfbp)$, which comes equipped with a natural monomorphism $Z(p^2 \mfbp) \to Z(p^2 \mfap)$ which is a closed immersion by Lemma \ref{Lem:ClosedImmersion}. Let us explain how to construct a formal homogeneous space associated with $\mathfrak{b}^+$ itself, despite it not being integrable as a Lie $\zpbreve$-algebra.

\begin{Cons}[$Z(\mfbp)$] Let $\aut_{\mfbp}(Y)^{\circ}$ be the intersection of $\aut(Y)^{\circ}$ with $\tildeu(\mfb)$. This is an affine group scheme containing $\Pi(p^2\mfbp)$ as a closed subgroup; in fact if $H$ is an in the statement of Lemma \ref{Lem:Inclusion}, then $\Pi(p^2\mfbp) \subseteq \aut_{\mfbp}(Y)^{\circ}$ is the kernel of the natural map $ \aut_{\mfbp}(Y)^{\circ} \to H$. If we let $H_2$ be the scheme-theoretic image of this natural map, then it follows from \cite[second theorem on page 144]{Waterhouse} that $\aut_{\mfbp}(Y)^{\circ} \to H_2$ is faithfully flat. We deduce from this and Lemma \ref{Lem:FpqcQuotient} that there is a natural action of $H_2$ on $Z(p^2 \mfbp)$. Moreover, the natural map $Z(p^2 \mfbp) \to Z(p^2 \mfap) \to Z(\mfap)$ is $H$-invariant inducing a monomorphism $Z(p^2 \mfbp) / H \to Z(\mfap)$, which is a closed immersion by Lemma \ref{Lem:ClosedImmersion}. We define $Z(\mfbp)$ to be the quotient $Z(p^2 \mfbp) / H$. 
\end{Cons}
 We are now in the position to state the following conjecture.
\begin{Conj} \label{Conj:LocalMonodromySTL}
Let $Z \xhookrightarrow{} \defsus(Y)$ be a closed immersion. For each $F$-stable Lie subalgebra $\mfb \subseteq \mfa$ there is an inclusion $Z \subseteq Z(\mfbp)$ if and only if $\operatorname{Lie} G(\calM_Z) \subseteq \mfb.$ In particular $\operatorname{Lie} G(\calM_{Z(\mfbp)})=\mfb$.
\end{Conj}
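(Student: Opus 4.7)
The forward direction $Z \subseteq Z(\mathfrak{b}^+) \Rightarrow \operatorname{Lie} G(\calM_Z) \subseteq \mathfrak{b}$ is immediate from Theorem \ref{Thm:IntroMonodromyBoundedAbove}: a closed immersion of formally smooth formal schemes induces an inclusion of Tannakian subcategories, hence $\operatorname{Lie} G(\calM_Z) \subseteq \operatorname{Lie} G(\calM_{Z(\mathfrak{b}^+)}) \subseteq \mathfrak{b}$. The substance of the conjecture is the converse.

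For the converse, set $\mathfrak{b}_Z = \operatorname{Lie} G(\calM_Z)$ and let $\mathfrak{b} \supseteq \mathfrak{b}_Z$ be an $F$-stable Lie subalgebra; the plan is to produce the factorisation $Z \hookrightarrow Z(\mathfrak{b}^+)$. Recall from Section \ref{Sec:DeformationTheoryLeaves} that $Z(\mathfrak{b}^+) \subseteq \defsus(Y)$ is identified with the formal deformation space of the trivial $\Pi(\mathfrak{b}^+)$-torsor, and inclusion $Z \hookrightarrow Z(\mathfrak{b}^+)$ is equivalent to saying that the pushed out $\Pi(\mathfrak{a}_Y^+/\mathfrak{b}^+)$-torsor on $Z$ is trivial. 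Using the filtration of $\mathfrak{a}_Y/\mathfrak{b}$ by its lower central series and its Dieudonn\'e-theoretic graded pieces (as in Section \ref{Sec:GradedObjectsPDivisible}), it suffices by induction to show that each abelian extension of $p$-divisible formal groups obtained by taking one graded piece at a time splits after pullback to $Z$.

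At each step of the induction, the splitting is controlled by Dieudonn\'e theory: via the Cartier--Witt stack formalism recalled in Section \ref{Sec:Background} (together with Anschütz--Le Bras for comparison between $p$-divisible groups and their Dieudonn\'e modules over qrsp bases), the obstruction to splitting is a morphism of $F$-isocrystals on $Z$ built as a subquotient of $\calM|_Z$. By construction of the monodromy group, this subquotient is $G(\calM_Z)$-equivariantly a quotient of the adjoint representation on $\mathfrak{a}_Y/\mathfrak{b}$; since by hypothesis $G(\calM_Z)$ acts through $\mathfrak{b}$, the relevant obstruction class lies in the invariants and hence corresponds to a genuine morphism of isocrystals on $Z$, which in turn trivialises the extension. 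The "in particular" statement follows by taking $Z = Z(\mathfrak{b}^+)$: the forward implication gives $\operatorname{Lie} G(\calM_{Z(\mathfrak{b}^+)}) \subseteq \mathfrak{b}$, and equality must hold because a strict inclusion $\mathfrak{b}' \subsetneq \mathfrak{b}$ together with the converse would force $Z(\mathfrak{b}^+) \subseteq Z(\mathfrak{b}'^+)$, violating the dimension formula $\dim Z(\mathfrak{b}^+) = \dim \mathbb{X}(\mathfrak{b}^+)$ of Proposition \ref{Prop:RepresentableSmoothFiltration}.

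The main obstacle is the inductive splitting argument: translating "small monodromy of $\calM|_Z$" into "the universal $p$-divisible extension over $\defsus(Y)$ is trivial after pullback to $Z$" requires comparing an isocrystal-theoretic vanishing statement with an integral deformation-theoretic statement. This comparison is not purely formal — one has to argue that descent of the universal $\Pi(\mathfrak{a}_Y^+/\mathfrak{b}^+)$-torsor to the Cartier--Witt stack of $Z$ is governed by precisely the Tannakian invariants cut out by $G(\calM_Z)$, and for this one likely needs the same sort of careful analysis of global sections of subquotient isocrystals (in the style of Propositions \ref{Prop:global-sections-generic-point-2} and \ref{Prop:glob-sect-comp}) used in the proof of Theorem \ref{Thm:LocalGlobalMonodromyTheorem}, combined with the Tannakian criterion of Proposition \ref{Prop:Tann-crit}.
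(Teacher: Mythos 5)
You should first note a meta-point: the statement you are proving is labelled a \emph{Conjecture} in the paper, not a theorem. The authors do not give a proof; the remark immediately following only records that the conjecture holds when $\mathfrak{b}=\mathbb{D}(\mathcal{H}_Y^{G,\circ})$ for a Hodge-type Shimura datum $(G,X)$ with $G^{\mathrm{ad}}$ $\mathbb{Q}$-simple, as a consequence of Corollary~2.2.1 of \cite{OrdinaryHO} (maximal global monodromy) plus Theorem \ref{Thm:LocalGlobalMonodromyTheorem}. Your forward direction $Z \subseteq Z(\mathfrak{b}^+) \Rightarrow \operatorname{Lie}G(\calM_Z) \subseteq \mathfrak{b}$ is fine and is exactly Theorem \ref{Thm:MonodromyBoundedAbove}, as you say. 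The ``in particular'' deduction from the two implications is also sound. The issue is the converse, which is the open part.

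There are two concrete gaps. The first is structural: the conjecture only requires $\mathfrak{b}$ to be an $F$-stable \emph{Lie subalgebra}, not a Lie ideal, of $\mathfrak{a}_Y$. Hence $\mathfrak{a}_Y/\mathfrak{b}$ is just an $F$-stable $\qpbreve$-vector space, not a Dieudonn\'e--Lie algebra; $\Pi(\mathfrak{b}^+)$ is not normal in $\Pi(\mathfrak{a}_Y^+)$; and there is no group $\Pi(\mathfrak{a}_Y^+/\mathfrak{b}^+)$ to push a torsor out to. The inclusion $Z(\mathfrak{b}^+) \hookrightarrow Z(\mathfrak{a}_Y^+)$ in the paper comes from the \emph{subgroup} inclusion $\tilde{\Pi}(\mathfrak{b}) \subseteq \tilde{\Pi}(\mathfrak{a}_Y)$ after quotienting by the respective integral subgroups, not from a short exact sequence. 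Consequently the proposed ``filtration of $\mathfrak{a}_Y/\mathfrak{b}$ by its lower central series'' and the inductive splitting of successive abelian extensions is not available in the form you use it: the relevant criterion is a reduction of structure group, and reduction of structure group to a non-normal subgroup is not an ``extension splits'' condition.

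The second and more fundamental gap is the passage from the rational Tannakian hypothesis to the required integral conclusion. The hypothesis $\operatorname{Lie}G(\calM_Z) \subseteq \mathfrak{b}$ is a statement about the Tannakian monodromy group of the \emph{isocrystal} $\calM_Z$, a $\qpbreve$-group, while the desired conclusion is that the universal $\Pi(\mathfrak{a}_Y^+)$-torsor over $Z$ --- an integral, characteristic-$p$ object --- reduces its structure group to the integral $\Pi(\mathfrak{b}^+)$. Your step ``the obstruction class lies in the invariants and hence corresponds to a genuine morphism of isocrystals on $Z$, which in turn trivialises the extension'' conflates these levels. Knowing the monodromy group is small bounds the \emph{rational} structure of $\calM_Z$, but it does not by itself produce the integral reduction; this is exactly the direction the paper's Cartier--Witt argument (Theorem \ref{Thm:MonodromyBoundedAbove}) does \emph{not} address, because there one starts from the integral reduction and deduces the rational bound, not the other way round. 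You correctly flag at the end that ``this comparison is not purely formal,'' but that caveat is the entire content of the conjecture: no argument is given for it, and the cited tools (Propositions \ref{Prop:global-sections-generic-point-2}, \ref{Prop:glob-sect-comp}, \ref{Prop:Tann-crit}) are used in the paper in the rational setting and do not supply the required integral refinement.
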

\begin{Vb}
Let the notation be as in Section \ref{Sec:IntegralModels} and consider a completely slope divisible central leaf $\cb$ lying in a $\mathbb{Q}$-non-basic Newton stratum (see Definition \ref{Def:QNonBasic}). Take a point $x \in \cb$ with corresponding $p$-divisible group $Y$, and let $\mfbp=\mathbb{D}(\mathcal{H}_Y^{G,\circ})$. It follows from Lemma \ref{Lem:FormalTorsor} and the discussions above that $Z(\mfbp) \subseteq \defsus(Y)$ can be identified with $\cb^{/x} \subseteq \defsus(Y)$ (both are the scheme-theoretic image of $\tildeu(\mfb) \to \defsus(Y)$). In this case, the unipotent radical of the monodromy group $G(\mathcal{M}_{\cb})$ is isomorphic to $U_{\nu_b}$ by \cite[Corollary 3.3.5]{OrdinaryHO}\footnote{The statement of 
\cite[Corollary 3.3.5]{OrdinaryHO} contains the assumption that \cite[Hypothesis 2.3.1]{OrdinaryHO} holds. This is true for us because $K_p$ is hyperspecial, see \cite[Lemma 2.3.2]{OrdinaryHO}.}, which uses \cites{d2020monodromy,AddezioII}, and then Theorem \ref{Thm:LocalGlobalMonodromyTheorem} allows us to conclude. In particular, we see that the monodromy group of $\mathcal{M}$ over $Z(\mfbp)$ has Lie algebra $\mfb$. \smallskip 

As a special case of this if $Y$ has height $h$ and dimension $d$, then $\defsus(Y)$ can be realised as the complete local ring of a central leaf in a PEL type unitary Shimura variety of signature $(h-d,d)$ associated to an imaginary quadratic field $\mathsf{E}$ in which $p$ splits. In particular, we know that the monodromy group of $\calM$ over $\defsus(Y)$ is isomorphic to the unipotent group corresponding to the nilpotent Lie algebra $\mathbb{D}(\mathcal{H}_{Y}^{\circ})[\tfrac{1}{p}]=\mfa$.
\end{Vb}

\begin{Cons}[$U(\mathfrak{b}^+)$]There is a $\zpbreve$-algebra structure on $\mathbb{D}(\mathcal{H}_{Y})$ and $\mfap:=\mathbb{D}(\mathcal{H}_{Y}^{\circ}) \subseteq \mathbb{D}(\mathcal{H}_{Y})$ is an algebra ideal (and Lie subalgebra). In particular $1+p^2\mfap$ is a subgroup of its group of units which defines a unipotent algebraic group over $\zpbr$. After inverting $p$, it follows from the BCH formula that we may identify $1+\mfa$ with the exponential of the Lie  $\qpbreve$-algebra $\mfa$. Given $\mfb \subseteq \mfa$, we therefore get a subgroup $1+ \mfb$ defined by the exponential of the Lie $\qpbreve$-algebra $\mfb$. We define $U(\mathfrak{b}^+)$ to be the intersection of the subgroups $1+p^2\mfap$ and $1+\mfb$, which we can identify with $1+p^2\mfbp$ where $\mfbp=\mfap \cap \mfb$. This defines a unipotent group  $U(\mathfrak{b}^+)$ over $\zpbr$. We write $U(\mfb)$ for the base change to $\qpbreve$.\end{Cons}

\section{Local monodromy of strongly Tate-linear subvarieties}
\label{Sec:MonodromyTateLinear}
\subsection{Introduction}
In this section we will prove half of Conjecture \ref{Conj:LocalMonodromySTL}. Let $Y$ be a completely slope divisible $p$-divisible group over $\ovfp$ and let $X$ be the universal $p$-divisible group over the sustained deformation space $\defsus(Y)$. Let $\mfap=\mathbb{D}(\mathcal{H}_Y^{\circ})$ be the Dieudonn\'e--Lie algebra associated to the identity component $\mathcal{H}_Y^{\circ}$ of the internal-Hom $p$-divisible group $\mathcal{H}_Y$ of $Y$, see Example \ref{Example:DieudonneLieInternalHom}. Let $\mfb \subseteq \mfa$ be an $F$-stable Lie subalgebra with associated strongly Tate-linear subspace $ Z(\mfbp) \subseteq \defsus(Y)$. Write $\mathcal{M}=\mathbb{D}(X)[\tfrac 1p]$ for the isocrystal over $\defsus(Y)$ coming from the Dieudonn\'e module of $X$.
\begin{Thm} \label{Thm:MonodromyBoundedAbove}
There is a natural closed immersion $G(\calM_{Z(\mfbp)}) \hookrightarrow U(\mfb)$. \end{Thm}
In the proof we will make use of the Cartier--Witt stacks of Drinfeld \cite{DrinfeldPrismatization}  and Bhatt--Lurie \cite{BhattLurieII} associated to quasi-syntomic schemes of characteristic $p$. Given such a scheme $X$, there is a $p$-adic formal stack $X^{\prism}$, the \emph{prismatisation} of $X$, such that coherent crystals on $X$ are the same as coherent sheaves on $X^{\prism}$. In Section \ref{Sec:CWStacks} we will give a more detailed overview of this construction and its properties.

\subsubsection{} We will now give a sketch of the proof. Consider the $\aut(Y)$-torsor
\begin{align} \label{eq:FrameTorsor}
    P:=\textbf{Isom}(X,Y_{\defsus(Y)}) \to \defsus(Y)
\end{align}
over $\defsus(Y)$. The locally free crystal $\mathcal{M}^+=\mathbb{D}(X)$ defines a vector bundle $\mathcal{V}^+$ over the prismatisation\footnote{We pretend for now that our formal schemes are actually schemes, in the actual proof there is an additional algebraisation step.} $\defsus(Y)^\prism$. This vector bundle has an associated frame bundle 
\begin{align} \label{eq:PrismaticFrameTorsor}
    \textbf{Isom}(\mathcal{V}^+, \mathbb{D}(Y)_{\defsus(Y)^{\prism}}).
\end{align}
If we apply the prismatisation functor to the map \eqref{eq:FrameTorsor}, then we get a morphism
\begin{align}
    P^{\prism} \to \defsus(Y)^{\prism},
\end{align}
which is a torsor for the $p$-adic formal group $\aut(Y)^{\prism}$ (this will follow from Lemma \ref{Lem:PrismatizationPreservesTorsors}). Dieudonn\'e theory gives us a homomorphism of group schemes over $\spf \zp$
\begin{align} \label{eq:PrismaticHomomorphism}
    \aut(Y)^{\prism} \to \aut(\mathbb{D}(Y))
\end{align}
and a morphism
\begin{align}
    P^{\prism} \to \textbf{Isom}(\mathcal{V}^+, \mathbb{D}(Y)_{\defsus(Y)^{\prism}}),
\end{align}
which is $\aut(Y)^{\prism}$-equivariant via the homomorphism \eqref{eq:PrismaticHomomorphism}. The right hand side roughly speaking parametrises all isomorphisms between $\mathcal{V}^+$ and $\mathbb{D}(Y)_{\defsus(Y)^{\prism}}$, while the left hand side parametrises those isomorphisms that are compatible with the $F$-structures.

So how does this help us? After pulling back via $Z\coloneqq Z(p^2 \mfbp) \to Z(\mfbp) \to \defsus(Y)$, the torsor \eqref{eq:FrameTorsor} has a reduction to a $\Pi(p^2 \mfbp)$-torsor by construction. Feeding this fact into the prismatisation machinery will give us a reduction of the (pullback to $Z$ of the) torsor \eqref{eq:PrismaticFrameTorsor} to a $U(\mfbp)$-torsor. If we apply the Tannakian perspective on torsors and invert $p$, then this will exactly give us a closed immersion
\begin{align}
    G(\calM_{Z}) \hookrightarrow U(\mfb),
\end{align}
which is what we want to prove once we identify $G(\calM_{Z})=G(\calM_{Z(\mfbp)})$ using Proposition \ref{Prop:FiniteFlatMonodromy}.
\subsection{Cartier--Witt stacks}\label{Sec:CWStacks}Let us briefly recall the main properties of \textit{Cartier--Witt stacks} from \cites{DrinfeldPrismatization,BhattLurieII}. We will deal only with {quasi-syntomic schemes} $X$ over $\mathbb{F}_p$, as in Definition \ref{Def:Quassyntomic}.

\subsubsection{}
Write $\nilpo\subseteq \algo{\Zp}$ for the full subcategory of $p$-nilpotent algebras with the fpqc topology. A \textit{$p$-adic formal stack} is a groupoid valued functor $\mathcal{F}$ on $\nilpo$ whose diagonal is representable in formal algebraic spaces and which admits an fpqc cover $\mathcal{X} \to \mathcal{F}$, where $\mathcal{X}$ is a $p$-adic formal algebraic space over $\Spf \Zp$ (cf. \cite[Tag 0AIM]{stacks-project}).

Drinfeld and Bhatt--Lurie define a prismatisation functor
\begin{align}
    X \mapsto X^{\prism}
\end{align}
which goes from the category of quasi-syntomic $\Fp$-schemes to the category of $p$-adic formal stacks endowed with an endomorphism $F:X^{\prism}\to X^{\prism}$, lifting the Frobenius on the special fibre\footnote{They also define a derived version of this functor, which we will not use in this text.}. 

\subsubsection{} \label{Sec:PropertiesPrismatisation}
For every quasi-syntomic scheme $X$, it follows from \cite[Proposition 8.15]{BhattLurieII} that there is an equivalence between the category of crystals in quasi-coherent $\mathcal{O}$-modules on the absolute prismatic site of $X$ (or the absolute crystalline site by \cite[Example 4.7]{BhattScholzePrismaticCrystals}) and quasi-coherent $\mathcal{O}$-modules on the Zariski site of $X^\prism$.

There are a few important properties of this functor that we will use.
\begin{itemize}
    \item[--] If $X=\operatorname{Spec} R$ is a semiperfect quasi-syntomic scheme, then $X^{\prism}$ is simply $\Spf A_{\operatorname{cris}}(R)$, the formal spectrum of Fontaine's ring of crystalline periods (this is \cite[Lemma 6.1]{BhattLurieII}). For example $(\spec \mathbb{F}_p)^{\prism}=\Spf \Zp$.
    
    \item[--] If $f:X \to Y$ is a quasi-syntomic cover, see Definition \ref{Def:Quassyntomic}, then $f:X^{\prism} \to Y^{\prism}$ is an fpqc cover (this is \cite[Proposition 7.5]{BhattLurieII}). For example, this means that $X^{\prism} \to (\spec \mathbb{F}_p)^{\prism}=\Spf \Zp$ is automatically flat if $X$ is qrsp.
    
    \item[--] The functor commutes with products and with fibre products in the case that the structure maps are flat and quasi-syntomic, by \cite[Proposition 7.5, Remark 8.9]{BhattLurieII}.

\end{itemize}
\subsection{Proof of Theorem \ref{Thm:MonodromyBoundedAbove}} \label{Sec:ProofOfMonodromyBoundedAbove}  Let the notation be as in the statement of Theorem \ref{Thm:MonodromyBoundedAbove}. At the moment, the theory of Cartier--Witt stacks is not available for general formal schemes over $\Fp$. Let us then explain how to quickly reduce ourself to an algebraic setting in our proof.

\subsubsection{Algebraisation} We need the following lemma. Let $A$ be a complete Noetherian local ring over $\ovfp$, let $D=\varprojlim D_n$ and $E=\varprojlim E_n$ be countably indexed inverse limits of finite group schemes over $\ovfp$. 
\begin{Lem} \label{Lem:Algebraisation}
Let $D \to E$ be a morphism of group schemes, let $P$ (respectively $Q$) be a $D$ (respectively $E$)-torsor over $\spf A$ for the fpqc topology and let $P \to Q$ be a $D$-equivariant map. Then there is a unique $D$ (respectively $E$)-torsor $P^{\mathrm{alg}}$ (respectively $Q^{\mathrm{alg}}$) over $\spec A$ together with a $D$-equivariant map 
$P^{\mathrm{alg}} \to Q^{\mathrm{alg}}$ over $\spec A$, recovering the given map upon base-change along $\spf A \to \spec A$.
\end{Lem}
\begin{proof}
Note that $P=\varprojlim_n P_n$, where $P_n$ is the induced $D_n$-torsor along $D \to D_n$. Since $P_n$ algebraises uniquely by \cite[Tag 01ZC]{stacks-project}, the existence and uniqueness of $P^{\mathrm{alg}}$ follows and a similar argument works for $Q$. To show that the morphism $P \to Q$ algebraises, we note that for each $n$ the morphism $D \to E \to E_n$ factors through $D \to D_m$ for some $m$ by \cite[Tag 01ZC]{stacks-project}. The induced morphisms $P_m \to Q_{n}$ algebraises uniquely by \cite[Tag 01ZC]{stacks-project}, completing the proof. 
\end{proof}

\subsubsection{} \label{subsub:Algebraisation}
The formal scheme $Z\coloneqq Z(p^2 \mfbp)$ is equal to $\spf A$ for a complete Noetherian regular local ring $A$. It follows from \cite[Proposition 2.4.8]{DeJongDieudonne} that the category of Dieudonn\'e isocrystals over $\spf A$ is equivalent to the category of Dieudonn\'e isocrystals over $\spec A$. The universal $\aut(Y)$-torsor $P \to \defsus(Y)=\spf B$ comes (uniquely) from an algebraic torsor $P^{\mathrm{alg}} \to \spec B$. Indeed, $\aut(Y)$ is an inverse limit of finite flat group schemes, and thus Lemma \ref{Lem:Algebraisation} applies. Similarly, the natural $\Pi(p^2 \mfbp)$-torsor $Q:=\tildeu(\mathfrak{b}) \to Z$ comes (uniquely) from a $\Pi(p^2 \mfbp)$-torsor $Q^{\mathrm{alg}}$ over $\spec A$. \smallskip 

Finally, we want to algebraise the natural $\Pi(p^2 \mfbp)$-equivariant map $Q=\tildeu(\mathfrak{b}) \to \tildeu(\mathfrak{a})=P$. If we consider instead the induced $\Pi(p^2 \mfbp)$-equivariant map
\begin{align}
    Q \to P \times_{\defsus(Y)} Z=:P_Z,
\end{align}
then by Lemma \ref{Lem:Algebraisation} it algebraises (uniquely) to a $\Pi(p^2 \mfbp)$-equivariant map $Q^{\mathrm{alg}} \to P^{\mathrm{alg}}$.

\begin{Not}\label{Not:Algebraisation} In Section \ref{Sec:ProofOfMonodromyBoundedAbove} we will often treat $Z$ as the affine scheme $\spec A$ rather than the formal scheme $\Spf A$ and we will simply write $Q \to Z$ for the algebraic torsor $Q^{\mathrm{alg}}$ defined above. The same applies to $P \to \defsus(Y)$ and $Z(\mfbp)$.
\end{Not}

\subsubsection{} Let $\calM^+$ be the $F$-crystal over $\defsus(Y)$ attached to the universal $p$-divisible group $X$. We write $\calM$ for the induced $F$-isocrystal and $\calM_Z$ for the pull-back of $\calM$ to $Z$ along $$\pi:Z=Z(p^2 \mfbp) \to Z(\mfbp) \to\defsus(Y),$$ and $P_Z$ for $\pi^*P.$   The basic idea of the proof is to use descent of isocrystals along $Q \to Z$ to describe $\mathcal{M}_{Q}$ as a constant isocrystal equipped with a descent datum (or equivalently a $\Pi(p^2 \mfbp)$-equivariant structure). However, it seems quite hard to compare the group scheme $\Pi(p^2 \mfbp)$ over $\ovfp$ with the monodromy group of $\calM$, which is an algebraic group over $\qpbreve$. This is where the Cartier--Witt stacks of \cite{DrinfeldPrismatization},\cite{BhattLurieII} come in. 

The Dieudonn\'e module $\mathbb{D}(Y)$ of $Y$ is a trivial vector bundle on $\ovfp^{\prism}=\Spf(\zpbreve)$ endowed with a Frobenius. We denote by $\operatorname{GL}(\mathbb{D}(Y))$ the $p$-adic formal group over $\Spf(\zpbreve)$ of $\zpbreve$-linear automorphisms of $\mathbb{D}(Y)$ (thus forgetting the $F$-structure). Let $U(\mfbp) \subseteq \operatorname{GL}(\mathbb{D}(Y))$ denote the inclusion of the $p$-adic completion of the unipotent group $U(\mfbp)$.

By the formalism of Cartier--Witt stacks, the crystal $\calM^+_Z$ corresponds to a vector bundle $\calV^+$ on $Z^{\prism}$. In turn, this defines a $\operatorname{GL}(\mathbb{D}(Y))$-torsor 
\begin{align}
    \textbf{Isom}(\calV^+, \mathbb{D}(Y)_{Z^{\prism}}) \to Z^{\prism}
\end{align}
over $Z^{\prism}$ which we denote by $\mathfrak{Q}\to Z^{\prism}$. On the other hand, $Q \to Z$ induces a $\Pi(p^2 \mfbp)^{\prism}$-torsor $Q^\prism\to Z^{\prism}$ of $p$-adic formal stacks by the following lemma.
   
\begin{Lem}\label{Lem:PrismatizationPreservesTorsors}
Let $T$ be a torsor over a quasi-syntomic scheme $S$ over $\ovfp$ under a qrsp group scheme $G$ over $\ovfp$. The prismatisation $G^\prism$ of $G$ is a formal group scheme and $T^\prism\to S^\prism$ is a $G^\prism$-torsor of formal stacks.
\end{Lem}
\begin{proof}
We want to make use of the properties of prismatisation outlined in Section \ref{Sec:PropertiesPrismatisation}. Since $G$ is qrsp, its prismatisation is a $p$-adic formal scheme. In addition, since prismatisation of $\Fp$-schemes commutes with products, it follows that $G^{\prism}$ is a formal group scheme. The fact that prismatisation sends quasi-syntomic covers to fpqc covers and commutes with fibre products when the structure maps are quasi-syntomic covers tells us that $T^{\prism} \to S^{\prism}$ is a torsor for $G^\prism$.
\end{proof}

\subsubsection{} \label{subsub:ConstructionHomomorphism} We continue with the notation from the statement of Theorem \ref{Thm:MonodromyBoundedAbove}. We may apply Lemma \ref{Lem:PrismatizationPreservesTorsors} in our situation since the group scheme $\Pi(p^2 \mfbp)$ over $\ovfp$ is qrsp by the discussion in Section \ref{Sec:Representability}. Write $\Pi(p^2 \mfbp)=\spec R$ and consider the tautological element $g_{\mathrm{univ}} \in \Pi(p^2 \mfbp)(R)$. This element corresponds to an automorphism
\begin{align}
    g_{\mathrm{univ}}: Y_R \to Y_R,
\end{align}
and this induces an automorphism of Dieudonn\'e modules
\begin{align}
    \mathbb{D}(g_{\mathrm{univ}}): \mathbb{D}({Y})\otimes_{\zpbreve} A_{\mathrm{cris}}(R)\to \mathbb{D}({Y})\otimes_{\zpbreve} A_{\mathrm{cris}}(R).
\end{align}
This corresponds precisely to a $\spf A_{\mathrm{cris}}(R)=\Pi(p^2 \mfbp)^{\prism}$-point of $\operatorname{GL}(\mathbb{D}(Y))$, in other words, it corresponds to a map
\begin{align}
    \rho: \Pi(p^2 \mfbp)^{\prism} \to \operatorname{GL}(\mathbb{D}(Y)).
\end{align}
\begin{Lem}\label{Lem:TheImageIsInsideU}
The image of $\rho$ lands in the closed subgroup $U(\mfbp) \subseteq \operatorname{GL}(\mathbb{D}(Y))$. Moreover, the morphism $\rho$ is a group homomorphism.
\end{Lem}
\begin{proof}
The definition of $\Pi(p^2 \mfbp)$ tells us that $g_{\mathrm{univ}}$ is of the form $1+p^2f$ where $f \in T_p \mbx(\mfbp)(R)$, see Lemma \ref{Inclusion}. Therefore $\mathbb{D}(g_\mathrm{univ})$ has the form $1+p^2 \mathbb{D}(f)$, thus it lies in 
\begin{align}
    1 + p^2\mfbp \otimes_{\zpbreve} {A}_{\mathrm{cris}}(R) \subseteq \operatorname{End}(\mathbb{D}({Y})\otimes_{\zpbreve} A_{\mathrm{cris}}(R)).
\end{align}
This implies that $\rho$ factors through the unipotent group associated to $\mfbp$. The second claim of the lemma is that the following diagram commutes (where the vertical maps are the multiplication maps)
\begin{equation}
    \begin{tikzcd}
    \Pi(p^2 \mfbp)^{\prism} \times \Pi(p^2 \mfbp)^{\prism} \arrow{d} \arrow{r}{\rho} & \operatorname{GL}(\mathbb{D}(Y)) \times \operatorname{GL}(\mathbb{D}(Y)) \arrow{d} \\
    \Pi(p^2 \mfbp)^{\prism} \arrow{r} & \operatorname{GL}(\mathbb{D}(Y)).
    \end{tikzcd}
\end{equation}
This is essentially a tautological consequence of the functoriality of Dieudonn\'e theory, but we spell out the proof for the benefit of the reader. For $i=1,2$ let $p_{i, \operatorname{GL}}:\operatorname{GL}(\mathbb{D}(Y)) \times \operatorname{GL}(\mathbb{D}(Y)) \to \operatorname{GL}(\mathbb{D}(Y))$ and $p_{i,\Pi}:\Pi(p^2 \mfbp)^{\prism} \times \Pi(p^2 \mfbp)^{\prism} \to \Pi(p^2 \mfbp)^{\prism}$ be the projection maps. Using the Yoneda lemma, it suffices to show the equality
\begin{align}
    p_{1, \operatorname{GL}}^{\ast} \mathbb{D}(g_{\mathrm{univ}}) \circ p_{2, \operatorname{GL}}^{\ast} \mathbb{D}(g_{\mathrm{univ}}) = \mathbb{D}(p_{1,\Pi}^{\ast} g_{\mathrm{univ}} \circ p_{2,\Pi}^{\ast} g_{\mathrm{univ}})
\end{align}
as elements of
\begin{align}
    \operatorname{GL}(\mathbb{D}(Y))\left(\Pi(p^2 \mfbp)^{\prism} \times \Pi(p^2 \mfbp)^{\prism} \right) = \mathrm{Aut}(\mathbb{D}(Y) \otimes_{\zpbreve} {A}_{\mathrm{cris}}(R \otimes_{\ovfp} R)).
\end{align}
But functoriality of Dieudonn\'e theory tells us that
\begin{align}
    \mathbb{D}(p_{1,\Pi}^{\ast} g_{\mathrm{univ}} \circ p_{2,\Pi}^{\ast} g_{\mathrm{univ}}) &= \mathbb{D}(p_{1,\Pi}^{\ast} g_{\mathrm{univ}})  \circ \mathbb{D}(p_{2,\Pi}^{\ast} g_{\mathrm{univ}}) \\ &=p_{1, \operatorname{GL}}^{\ast} \mathbb{D}(g_{\mathrm{univ}}) \circ p_{2, \operatorname{GL}}^{\ast} \mathbb{D}(g_{\mathrm{univ}}).
\end{align}
\end{proof}
\subsubsection{} \label{subsub:ConstructionReduction}
By the Yonenda lemma, there is a canonical isomorphism
\begin{align}
    h_{\text{univ}}:X_{P_Z} \xrightarrow{\sim} Y_{P_Z}
\end{align}
Applying the Dieudonn\'e module functor we get an isomorphism
\begin{align}\mathbb{D}(h_\mathrm{univ})\colon
    \mathcal{V}^{+}_{P_Z^{\prism}} \xrightarrow{\sim} \mathbb{D}(Y)_{P_Z^{\prism}},
\end{align}
which corresponds to a morphism
\begin{align}
\sigma \colon P_Z^{\prism} \to \mathfrak{Q}
\end{align}
of $p$-adic formal stacks.

\begin{Lem} \label{Lem:Reduction}
The map $\sigma$ is $\Pi(p^2 \mfbp)^{\prism}$-equivariant, where $\Pi(p^2 \mfbp)^{\prism}$ acts on $\mathfrak{Q}$ via $\rho$.
\end{Lem}
\begin{proof}
We want to show that the following diagram commutes
\begin{equation}
    \begin{tikzcd}
    \Pi(p^2 \mfbp)^{\prism} \times P_Z^{\prism} \arrow{r}{(\rho, \sigma)} \arrow{d} & \operatorname{GL}(\mathbb{D}(Y)) \times \mathfrak{Q} \arrow{d} \\
    P_Z^{\prism} \arrow{r}{\sigma} & \mathfrak{Q},
    \end{tikzcd}
\end{equation}
where the vertical maps are given by the respective action maps. It suffices to prove that the diagram commutes for the universal point $(g_{\mathrm{univ}}^{\prism}, h_{\mathrm{univ}}^{\prism})$ induced by the identity $$\Pi(p^2 \mfbp)^{\prism} \times P_Z^{\prism}\to \Pi(p^2 \mfbp)^{\prism} \times P_Z^{\prism}.$$ We write $\Pi(p^2 \mfbp)=\spec R$ and $P_Z=\spec S$. By construction, $(\rho,\sigma)$ sends $(g_{\mathrm{univ}}^{\prism}, h_{\mathrm{univ}}^{\prism})$ to $(\mathbb{D}(g_{\mathrm{univ}}), \mathbb{D}(h_{\mathrm{univ}}))$. The map to $P_Z^{\prism}$ sends instead $(g_{\mathrm{univ}}^{\prism}, h_{\mathrm{univ}}^{\prism})$ to the composition $(g_{\mathrm{univ}} \circ  h_{\mathrm{univ}})^\prism$. The commutativity of the diagram is equivalent to the equality
\begin{align}
\mathbb{D}(g_{\mathrm{univ}} \circ  h_{\mathrm{univ}}) = \mathbb{D}(g_{\mathrm{univ}}) \circ  \mathbb{D}(h_{\mathrm{univ}}),
\end{align}
which follows from functoriality of Dieudonn\'e theory.
\end{proof}

\subsubsection{} The last result we need for the proof of Theorem \ref{Thm:MonodromyBoundedAbove}, is the following special case of \cite[Proposition in Section 6.4]{Wedhorn}.
\begin{Lem} \label{Lem:RepLattice}
Let $\mathfrak{G}$ be a smooth group scheme over $\zpbreve$ with generic fibre $\mathfrak{G}_{\eta}$. Then every representation $\rho:\mathfrak{G}_{\eta} \to \operatorname{GL}(V)$, where $V$ is a finite dimensional $\qpbreve$-vector space, extends to a representation $\mathfrak{G} \to \operatorname{GL}(\Lambda)$ for some $\zpbreve$-lattice $\Lambda \subseteq V$.
\end{Lem}
\begin{proof}[Proof of Theorem \ref{Thm:MonodromyBoundedAbove}]
We recall that the conventions of Notation \ref{Not:Algebraisation} are in force. It follows from the discussion in Section \ref{subsub:Algebraisation} that we have the following commutative diagram
\begin{equation}
\begin{tikzcd}[sep=huge]
 Q \arrow[rd,swap,"\Pi(p^2\mathfrak{b}^+)"] \arrow[r] & P_Z \arrow[r]\arrow[d,"\aut(Y)"]  & P \arrow[d, "\aut(Y)"] \\
& Z \arrow[r, "\pi"]  & \defsus(Y).
\end{tikzcd}
\end{equation}
Here $P \to \defsus(Y)$ is the universal  $\aut(Y)$-torsor, $P_Z\to Z$ is the base change to $Z$, and $Q \to Z$ is a $\Pi(p^2 \mfbp)$-torsor. In Section \ref{subsub:ConstructionReduction}, we have constructed the following commutative diagram of torsors
\begin{equation}
\begin{tikzcd}[sep=huge]
 Q^\prism \arrow[rd,swap,"\Pi(p^2\mathfrak{b}^+)^\prism"] \arrow[r] &  P_Z^\prism \arrow[r,"\sigma"]\arrow[d]  & \mathfrak{Q} \arrow[ld, "\operatorname{GL}(\mathbb{D}(Y))"] \\
& Z^\prism.  & 
\end{tikzcd}
\end{equation}
By Lemma \ref{Lem:Reduction}, the induced morphism 
    \begin{align}
        \sigma:Q^{\prism} \to \mathfrak{Q},
    \end{align}
    is $\Pi(p^2 \mfbp)^{\prism}$-equivariant via the map $\rho$ of Section \ref{subsub:ConstructionHomomorphism}. Since $\rho$ factors through $U(\mfbp)$ by Lemma \ref{Lem:TheImageIsInsideU}, we get a reduction of the $\operatorname{GL}\left(\mathbb{D}(Y)\right)$-torsor $\mathfrak{Q}\to Z^\prism$ to a $U(\mfbp)$-torsor $\mathfrak{R}\to Z^\prism$ sitting between $Q^{\prism}$ and $\mathfrak{Q}$. We can associate to this the symmetric tensor functor
\begin{align} 
        \Psi:\operatorname{Rep}_{\zpbreve}(U(\mfbp))\to \operatorname{Vect}(Z^{\prism})
\end{align}
which sends $V\in \operatorname{Rep}_{\zpbreve}(U(\mfbp))$ to $$\mathfrak{R}\times^{U(\mathfrak{b^+})}(V\otimes_{\zpbreve} Z^\prism).$$ The tautological representation $U(\mfbp) \hookrightarrow \operatorname{GL}\left(\mathbb{D}(Y)\right)$ is sent by $\Psi$ to the vector bundle $\calV^+$. Applying Lemma \ref{Lem:RepLattice} and passing to isogeny categories, we get an exact tensor functor
\begin{align}
    \Psi_{\qpbreve}:\operatorname{Rep}_{\qpbreve} (U(\mfb)) \to \operatorname{Vect}(Z^{\prism})[\tfrac 1p]
\end{align}
sending the defining representation of $U(\mfb)$ to $\calV$. We can compose this with the natural inclusion
\begin{align}
    \operatorname{Vect}(Z^{\prism})[\tfrac 1p] \hookrightarrow \operatorname{Isoc}(Z)
\end{align}
and apply Tannaka duality to get a morphism of group schemes 
\begin{align}
    G(\calM_Z) \to U(\mfb).
\end{align}
This is a closed immersion because the constructed functor 
\begin{align}
    \Psi_{\qpbreve}:\operatorname{Rep}_{\qpbreve} (U(\mfb)) \to \operatorname{Isoc}(Z)
\end{align}
between Tannakian categories commutes with the $\qpbreve$-linear fibre functor obtained by restricting the objects to the closed point of $Z$ (see \cite[Proposition 2.21.(b)]{Deligne1982}). By Proposition \ref{Prop:FiniteFlatMonodromy}, there is a natural isomorphism
\begin{align}
     G(\calM_{Z(\mfbp)}) \to G(\calM_Z)
\end{align}
and thus we get a closed immersion $G(\calM_{Z(\mfbp)}) \to U(\mfb)$ as desired.
\end{proof}

\section{Rigidity} \label{Sec:Rigidity}
\subsection{Statement}

Let $(\mfap, \varphi_{\mfap},[-,-])$ be a plain \DLz \ and let $Q$ be an algebraic group over $\qp$ together with a strongly non-trivial action on $(\mfa, \varphi_{\mfa},[-,-])$ (see Definition \ref{Def:StronglyNonTrivial}), and let $\Gamma \subseteq Q(\qp)$ be a compact open subgroup preserving $\mfap$. Recall that for any $F$-stable Lie subalgebra $\mfb \subseteq \mfa$ there is a plain Dieudonn\'e--Lie $\zpbreve$-subalgebra $\mfbp \subseteq \mfap$ defined by $\mfbp=\mfb \cap \mfap$. In particular, such a $\mfb$ defines a subspace
$ Z(\mfbp) \subseteq Z(\mfap)$. The main goal of this section is to explain how the following theorem follows from \cite[Theorem 5.1]{ChaiOortRigidity}. 
\begin{Thm}[Rigidity] \label{Thm:Rigidity}
If $Z \subseteq Z(\mfap)$ is a $\Gamma$-stable integral closed formal subscheme, then there is an $F$-stable Lie subalgebra $\mfb_Z \subseteq \mfa$ such that $Z=Z(\mfbp_Z)$.
\end{Thm}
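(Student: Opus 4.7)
The plan is to proceed by induction on the nilpotency class $n=n(\mathfrak{a})$, reducing the problem at each step to a rigidity question about closed formal subschemes of the abelian $p$-divisible formal group corresponding to the last term of the lower central series. The overall strategy is modelled on the rigidity of biextensions of $p$-divisible formal groups of Chai--Oort \cite{ChaiOortRigidityBiextentions}, translated to the noncommutative setting of the formal group $\tildeu(\mathfrak{a})$.

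\textbf{Base case.} When $n=1$ the Lie algebra $\mathfrak{a}$ is abelian, so the Baker--Campbell--Hausdorff product collapses to addition, $\tildeu(\mathfrak{a})=\tilde{\mathbb{X}}(\mathfrak{a})$, $\Pi(\mathfrak{a}^+)=T_p\mathbb{X}(\mathfrak{a}^+)$, and $Z(\mathfrak{a}^+)\simeq\mathbb{X}(\mathfrak{a}^+)$ is a $p$-divisible formal group on which $\Gamma$ acts by group automorphisms. The strong non-triviality of the $\Gamma$-action supplies the scaling symmetry needed to apply Chai's rigidity theorem for $p$-divisible formal groups \cite{ChaiRigidity}: any nonempty integral $\Gamma$-stable closed formal subscheme must be a $p$-divisible formal subgroup, which by Dieudonné theory arises from an $F$-stable $\zpbreve$-saturated submodule $\mathfrak{b}_Z^+\subseteq \mathfrak{a}^+$, and $Z=\mathbb{X}(\mathfrak{b}_Z^+)=Z(\mathfrak{b}_Z^+)$.

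\textbf{Inductive step.} Let $\mathfrak{a}_{n-1}$ denote the last nonzero term of the lower central series; it is central in $\mathfrak{a}$ by the very definition of the series. Writing $\bar{\mathfrak{a}}=\mathfrak{a}/\mathfrak{a}_{n-1}$, the quotient of groups $\tildeu(\mathfrak{a})\twoheadrightarrow\tildeu(\bar{\mathfrak{a}})$ descends to a surjection of formal schemes $q\colon Z(\mathfrak{a}^+)\twoheadrightarrow Z(\bar{\mathfrak{a}}^+)$ which is a torsor under $\mathbb{X}(\mathfrak{a}_{n-1}^+)$. Let $\bar{Z}$ be the scheme-theoretic image of $Z$ in $Z(\bar{\mathfrak{a}}^+)$ in the sense of Definition \ref{Def:SchemeTheoreticImage}; Lemma \ref{Lem:SchemeTheoreticImageEquivariant} equips it with a $\Gamma$-action, and since $n(\bar{\mathfrak{a}})<n$ the inductive hypothesis yields $\bar{Z}=Z(\bar{\mathfrak{c}}^+)$ for some $F$-stable Lie subalgebra $\bar{\mathfrak{c}}\subseteq\bar{\mathfrak{a}}$. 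Replacing $\mathfrak{a}$ by the preimage of $\bar{\mathfrak{c}}$ (an $F$-stable Lie subalgebra still containing $\mathfrak{a}_{n-1}$), we reduce to the case $\bar{Z}=Z(\bar{\mathfrak{a}}^+)$, i.e., $q|_Z$ is scheme-theoretically surjective. The fiber $F_0$ of $q|_Z$ over the origin is then a $\Gamma$-stable integral closed formal subscheme of the abelian $p$-divisible formal group $\mathbb{X}(\mathfrak{a}_{n-1}^+)$, so the base case gives $F_0=\mathbb{X}(\mathfrak{b}_0^+)$ for some $F$-stable Lie subalgebra $\mathfrak{b}_0\subseteq\mathfrak{a}_{n-1}$.

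\textbf{Conclusion and main obstacle.} It remains to produce an $F$-stable Lie subalgebra $\mathfrak{b}_Z\subseteq\mathfrak{a}$ with $\mathfrak{b}_Z\cap\mathfrak{a}_{n-1}=\mathfrak{b}_0$ and $\mathfrak{b}_Z\twoheadrightarrow\bar{\mathfrak{a}}$, and to check that $Z=Z(\mathfrak{b}_Z^+)$. The potential discrepancy between $Z$ and a candidate $Z(\mathfrak{b}_Z^+)$ sitting over the same $\bar{Z}$ with the same fiber over the origin is measured by a ``shear'' morphism from a perfection of $Z(\bar{\mathfrak{a}}^+)$ to the abelian $p$-divisible formal group $\mathbb{X}((\mathfrak{a}_{n-1}/\mathfrak{b}_0)^+)$, and the $\Gamma$-equivariance of $Z$ together with the strong non-triviality of the $\Gamma$-action should force this shear to vanish after a suitable choice of $\mathfrak{b}_Z$. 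This is the main obstacle of the proof: the group structure on $\tildeu(\mathfrak{a})$ is noncommutative, so the shear analysis cannot be reduced directly to the commutative biextension setting of \cite{ChaiOortRigidityBiextentions}; it must instead be carried out against the iterated BT-fibration structure of $Z(\mathfrak{a}^+)$, using the topological commutative algebra of completed perfections of power series rings over $\ovfp$ collected in Appendix \ref{Appendix:A}.
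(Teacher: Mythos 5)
The skeleton you propose is reasonable, but the proof is incomplete in exactly the place where all the difficulty lives, and the decomposition you chose is different from the paper's in a way that matters. The paper does not peel off the last term $\mathfrak{a}_{n-1}$ of the lower central series; it instead takes $\mathfrak{b}\subseteq\mathfrak{a}$ to be the maximal $F$-stable subalgebra isoclinic of the \emph{smallest} slope $\mu_1$, which is central by Lemma~\ref{Lem:MaximalSlopeCentral}. This choice is load-bearing: it gives a canonical $F$-equivariant retraction $\rho\colon\mathfrak{a}\to\mathfrak{b}$ and section $\sigma\colon\mathfrak{c}\to\mathfrak{a}$ (unique by slope reasons), and the strict slope gap $\mu_0<\mu_1$ between $\mathfrak{c}$ and $\mathfrak{b}$ is precisely what drives the approximation Lemma~\ref{Lem:RestrComplPerf}, which shows the composite $\tildex(\mathfrak{a})\xrightarrow{\rho}\tildex(\mathfrak{b})\to\mathbb{X}(\mathfrak{b}^+)$ lands in a complete restricted perfection of $\Gamma(Z(\mathfrak{a}^+),\mathcal{O})$. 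If you use $\mathfrak{a}_{n-1}$ instead, that central term need not be isoclinic, the retraction onto it is not canonically defined, and the slope-gap estimates do not go through. Also note that the paper runs the induction on $\dim Z(\mathfrak{a}^+)$, not on the nilpotency class; your reduction (replacing $\mathfrak{a}$ by $\pi^{-1}(\bar{\mathfrak{c}})$) does not decrease the nilpotency class, and you should check that $\Gamma$ still preserves the replacement.

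The more serious issue is that you have not actually carried out the key step — you defer it as ``the main obstacle.'' In the paper, this is Propositions~\ref{Prop:Stable} and~\ref{Prop:SchemeTheoreticImageNon-trivial}. Proposition~\ref{Prop:Stable} shows that $Z$ is stable under translation by the $p$-divisible subgroup $Z(\mathfrak{b}_1^+)\subseteq\mathbb{X}(\mathfrak{b}^+)$ obtained as the scheme-theoretic image of $\tilde{Z}$ under $\rho$; the proof needs a delicate congruence (Claim~\ref{Claim:Congruence}) comparing the action of $\exp(p^{na}X)\in\Gamma$ with a translation modulo $\varphi_{\mathrm{rel}}^{2nr-c}(\mathfrak{m})$, and then invokes the Chai--Oort rigidity criterion for complete restricted perfections (Theorem~\ref{Thm:CompRestrRig}). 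Proposition~\ref{Prop:SchemeTheoreticImageNon-trivial} rules out the degenerate case where $Z$ sits as a graph over $\bar Z$ — precisely the ``shear vanishes'' claim you gesture at — by showing that if the scheme-theoretic image in $\mathbb{X}(\mathfrak{b}^+)$ were trivial, then $\sigma(\mathfrak{c})$ would be a Lie subalgebra of $\mathfrak{a}$, contradicting minimality of $\mathfrak{a}$. None of this is routine, and it cannot be deferred. Two smaller gaps: you assert without proof that the fiber $F_0$ of $q|_Z$ over the origin is integral (the paper avoids this by working with scheme-theoretic images throughout, which are integral by construction via Definition~\ref{Def:SchemeTheoreticImage}), and the conclusion step needs the non-triviality of the central subgroup obtained, which is exactly Proposition~\ref{Prop:SchemeTheoreticImageNon-trivial} and is not a formality.
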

In other words, every subspace $Z \subseteq Z(\mfap)$ that is stable under a strongly non-trivial action of a $p$-adic Lie group is strongly Tate-linear. Note that, a priori, this result is more general than considering Hecke orbits on Shimura varieties. Theorem \ref{Thm:Rigidity} is essentially equivalent to \cite[Theorem 5.1]{ChaiOortRigidity}, although it is stated in a different language. In the next section, we will translate the language of Dieudonn\'e--Lie $\zpbreve$-algebras into the language of Tate unipotent groups of [\textit{ibid.}].

\subsection{Tate unipotent groups} Equip $\Pi(\mfap)$ with the filtration by normal subgroups $\operatorname{Fil}^{\bullet} \Pi(\mfap)$ given by $\operatorname{Fil}^{\lambda} N = \Pi(\mfap_{>-\lambda})$ for $\lambda \in (0,1]$. For all $i \ge 0$ there is an induced filtration by normal subgroups $\operatorname{Fil}^{\bullet} \Pi_i(\mfap)$ of $\Pi_i(\mfap)$ for each $i$. The family $$\left\{\Pi_i(\mfap),\operatorname{Fil}^{\bullet} \Pi_i(\mfap)\right\}_{i \in \mathbb{Z}_{\ge 0}}$$ is a terraced Tate unipotent group over $\fpbar$ in the sense of \cite[Definition 3.1]{ChaiOortRigidity}. In particular this means that $\Pi(\mfap)=\varprojlim_i \Pi_i(\mfap)$ is a Tate unipotent group in the sense of \cite[Definition 3.2.4]{ChaiOortRigidity}, see [\textit{ibid.}, Remark 3.2.5]. It follows from the discussion in [\textit{ibid.}, Remark 3.2.3] that we may identify the Mal'cev completion of $\Pi(\mfap)$, denoted by $\Pi(\mfap)_{\mathbb{Q}}$, with $\tildeu(\mfa)$. 

\subsubsection{} We write $N=\Pi(\mfap)$ and we consider the Tate-linear variety $\operatorname{TL}(N)$ associated to $N$, defined as the fpqc quotient $N_{\mathbb{Q}}/N$. It follows from Lemma \ref{Lem:FpqcQuotient} that we may identify $\operatorname{TL}(N)$ with $Z(\mfap)$. Given $\mfb \subseteq \mfa$ inducing $\mfbp \subseteq \mfap$, we get a map
\begin{align}
    \{\Pi_i(\mfbp),\operatorname{Fil}^{\bullet} \Pi_i(\mfbp)\}_{i \in \mathbb{Z}_{\ge 0}} \to \{\Pi_i(\mfap),\operatorname{Fil}^{\bullet} \Pi_i(\mfap)\}_{i \in \mathbb{Z}_{\ge 0}}.
\end{align}
It is straightforward to see that the first is a terraced Tate unipotent subgroup of the second, as defined in [\textit{ibid.}, Definition 3.2.10]. If we write $N'=\Pi(\mfbp)$ and $N=\Pi(\mfap)$, then we get an inclusion $N' \to N$ of Tate unipotent groups which is co-torsion free in the sense that $N'=N \cap N'_{\mathbb{Q}}$, see [\textit{ibid.}, Lemma 3.2.11]. This induces an inclusion of formal Lie varieties
\begin{align}
    \operatorname{TL}(N') \subseteq \operatorname{TL}(N)
\end{align}
which in our notation is $Z(\mfbp) \subseteq Z(\mfap)$.

\subsubsection{} If we consider $\mbx(\mfap)$ equipped with its Lie bracket, then it is a Tate unipotent Lie $\zp$-algebra in the sense of [\textit{ibid.}, Definition 3.2.13.(a)]. It follows that $\tildex(\mfa)$ is a Tate unipotent Lie $\qp$-algebra in the sense of [\textit{ibid.}, Definition 3.2.13.(b)]. We may moreover identify $\tildeu(\mfa)$ with the $\tildex(\mfa)$ equipped with the group structure coming from the BCH formula. 

\begin{proof}[Proof of Theorem \ref{Thm:Rigidity}]
    Under the assumptions of Theorem \ref{Thm:Rigidity}, it follows from [\textit{ibid.}, Theorem 5.1] that there is an inclusion $N' \subseteq \Pi(\mfap)$ of Tate unipotent groups which is co-torsion free, such that $Z=\operatorname{TL}(N')$. This induces an inclusion $N'_{\mathbb{Q}} \subseteq \tildeu(\mfa)$ of unipotent groups which corresponds to an inclusion of Tate unipotent Lie $\qp$-algebras
    \begin{align}
        \mathfrak{Lie} N'_{\mathbb{Q}} \subseteq \tildex(\mfa),
    \end{align}
where $\mathfrak{Lie}$ denotes the $\qp$-Lie algebra sheaf associated to a unipotent group.
    By [\textit{ibid.}, Lemma 3.2.19, Lemma 3.3.4], there is an $F$-stable Lie subalgebra $\mfb_{Z} \subseteq \mfa$ such that $ \mathfrak{Lie}  N'_{\mathbb{Q}}=\tildex(\mfb)$. This implies that $N'_{\mathbb{Q}}=\tildeu(\mfb_{Z})$ and  $N'=\Pi(\mfb_{Z}^{+})$, thus $Z=\operatorname{TL}(N')=Z(\mfb_{Z}^{+})$ as desired.
\end{proof}

\section{Proof of the main theorem and some variants} \label{Sec:MainTheorems}
\subsection{Preliminaries on Hecke operators}\label{Sec:PreliminariesHeckeOperators}
Let $\gx$ be a Shimura datum of Hodge type with reflex field $\mathsf{E}$ and let $p>2$ be a prime such that $G=\g_{\qp}$ is quasi-split and split over an unramified extension. Let $U_p \subseteq G(\qp)$ be a hyperspecial subgroup and let $U^{p} \subseteq \gafp$ be a sufficiently small compact open subgroup. Let $\mathbf{Sh}_{G,U}$ be the Shimura variety of level $U=U^{p} U_p$ over $\mathsf{E}$ and for a prime $v | p$ of $\mathsf{E}$ let $E=\mathsf{E}_v$ and let $\mathscr{S}_{G,U}/\mathcal{O}_{E}$ be the canonical integral model of $\mathbf{Sh}_{G,U}$ constructed in \cite{KisinModels}. 

Let $\shgu$ be the base change to $\ovfp$ of this integral canonical model for some choice of map $\mathcal{O}_{E,v} \to \ovfp$ and let 
\begin{align}
    \shginf \coloneqq \varprojlim_{K^p \subseteq \gafp}\mathrm{Sh}_{\g,K^pU_p},
\end{align}
which is equipped with an action of $\gafp$. Note that the map
\begin{align}
    \pi:\shginf \to \shgu
\end{align}
is a pro-\'etale $U^p$-torsor. \smallskip

Let $\gsc \to \gder$ be the simply-connected cover of the derived subgroup of $\g$; we will often identify groups like $\gsc(\afp)$ and $\gsc(\ql)$ with their images in $\gafp$ and $\g(\ql)$. Note that $\gsc(\afp)$ acts on $\shginf$ via the natural map $\gsc(\afp) \to \gafp$. 

Let $Z \subseteq \shgu$ be a locally closed subvariety and let $\tilde{Z}$ be the inverse image of $Z$ under $\pi$. We say that $Z$ is \emph{stable under the prime-to-$p$
Hecke operators}, or that $Z$ is \emph{$\gafp$-stable}, if $\tilde{Z}$ is $\gafp$-stable. Similarly we say that $Z$ is \emph{stable under the reduced prime-to-$p$ Hecke operators}, or that $Z$ is \emph{$\gsc(\afp)$-stable}, if $\tilde{Z}$ is $\gsc(\afp)$-stable. For $\ell \not=p$ we say that $Z$ is \emph{$\g(\ql)$-stable} if $\tilde{Z}$ is $\g(\ql)$-stable.

The \emph{prime-to-$p$ Hecke orbit} of a point $x \in \shg(\ovfp)$ is defined to be the image in $\shg(\ovfp)$ of $\gafp \cdot \tilde{x}$, for any choice of lift of $\tilde{x} \to \shginf(\ovfp)$. This does not depend on the choice of $\tilde{x}$ since it can be identified with the image in $\shg(\ovfp)$ of the $\gafp$-orbit of $\pi^{-1}(x)$. We define the \emph{reduced prime-to-$p$ Hecke orbit} of a point $x \in \shg(\ovfp)$ to be the image in $\shg(\ovfp)$ of the $\gsc(\afp)$ orbit of $\pi^{-1}(x)$. For $\ell \not=p$ we define the $\ell$-adic Hecke orbit or $\g(\ql)$-Hecke orbit of a point $x$ to be the image in $\shg(\ovfp)$ of the $\g(\ql)$ orbit of $\pi^{-1}(x)$.

\begin{Rem}
For $g \in \gafp$ and $U^p \subseteq \gafp$ there is a finite \'etale correspondence
\begin{equation}
    \begin{tikzcd}
    & \operatorname{Sh}_{\g, U_p(U^p \cap g U^p g^{-1})} \arrow{dl}[swap]{p_1} \arrow{dr}{p_2} \\
    \operatorname{Sh}_{\g,U^pU_p} & & \operatorname{Sh}_{\g, U_pg U^p g^{-1}} \arrow[r, "g"] & \operatorname{Sh}_{\g,U^pU_p}.
    \end{tikzcd}
\end{equation}
and the Hecke operator attached to $g$ is $g \circ p_2 \circ p_1^{-1}$. A locally closed subvariety $Z \subseteq \operatorname{Sh}_{\g,U^pU_p}$ is stable under the Hecke operator attached to $g$ if and only if $\tilde{Z}$ is stable under the action of $g$ considered as an element of $\gafp$.
\end{Rem}

\subsection{Local stabiliser principle} \label{Sec:LocalStabiliser}
Choose a Hodge embedding $\gx \to \gvx$ as in Section \ref{Sec:CentralLeavesHodgeType}. In particular, there is a self-dual $\mathbb{Z}_{(p)}$-lattice $V_{(p)}$ such that $K_p$ is the stabiliser in $G(\qp)$ of $V_p\coloneqq V_{(p)} \otimes_{ \mathbb{Z}_{(p)}} \zp$. Then for every sufficiently small compact open subgroup $U^p \subseteq \gafp$, we can find $\mathcal{U}^p \subseteq \g_V(\afp)$ and a closed immersion
\begin{align}
    \shg \xhookrightarrow{} \shgv
\end{align}
Fix a point $x \in \shg(\ovfp)$ such that $Y=A_x[p^{\infty}]$ is completely slope divisible. We write $\llbracket b \rrbracket \coloneqq  \llbracket b_x \rrbracket$ for the $\mathcal{G}(\zpbreve)$-$\sigma$-conjugacy class of elements of $G(\qpbreve)$ associated to $x$, and let $\cb \subseteq \shgb$ be the associated central leaf. Then we have seen that the profinite group $\aut_G(Y)(\ovfp)$ acts on $\defsus(Y)$.

For $x \in \shg(\ovfp)$ we let $I_x$ be the algebraic group over $\mathbb{Q}$ consisting of tensor-preserving self-quasi-isogenies of the abelian variety $A_x$ introduced in \cite[Section 2.1.2]{KisinPoints}. By definition it is a closed subgroup of the algebraic group $\operatorname{Aut}_{x}$ over $\mathbb{Q}$, whose $R$-points are given by
\begin{align}
    \operatorname{Aut}_{x}(R)=\left(\operatorname{End}_{\ovfp}(A_x) \otimes_{\mathbb{Z}} R \right)^{\times}.
\end{align}
        We let $I_x(\mathbb{Z}_{(p)}) \subseteq I_x(\mathbb{Q})$ be the intersection of $I_x(\mathbb{Q})$ with $\left(\operatorname{End}_{\ovfp}(A_x) \otimes_{\mathbb{Z}} \mathbb{Z}_{(p)} \right)^{\times}$. Then for a lift $\tilde{x} \in \shginf(\ovfp)$ of $x$, the stabiliser of $\tilde{x}$ in $\gafp$ is equal to $I_x(\mathbb{Z}_{(p)}) \subseteq I_x(\mathbb{Q}) \subseteq \gafp$, by \cite[Lemma 6.1.3]{OrdinaryHO}. The following result is \cite[Proposition 6.1.1]{OrdinaryHO}, see also \cite[Theorem 9.5]{ChaiOortNotes} for the Siegel case. Recall that $\cb^{/x}$ admits a closed immersion into $\defsus(Y)$.
\begin{Prop}[Local stabiliser principle] \label{Prop:LocalStabiliser}
If $Z \subseteq \cb$ is a $\gafp$-stable reduced closed subset containing $x$, then $Z^{/x} \subseteq \cb^{/x}$ is stable under the action of $I_x(\mathbb{Z}_{(p)}) \subseteq \aut_G(Y)(\ovfp)$ on $\defsus(Y)$.
\end{Prop}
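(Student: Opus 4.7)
The plan is to show that for any $\gamma \in I_x'(\mathbb{Z}_{(p)})$ the action of $\gamma_p$ on $C^{/x}$ coming from the inclusion $I_x'(\mathbb{Z}_{(p)}) \subseteq \mathbf{Aut}_G(Y)(\ovfp)$ coincides with a prime-to-$p$ Hecke operator attached to an element of $G^{\mathrm{sc}}(\afp)$ which fixes $x$. Once we have this identification, the hypothesis that $Z$ is stable under $G^{\mathrm{sc}}(\afp)$ and contains $x$ will force $Z^{/x}$ to be stable under $\gamma_p$, proving the proposition.

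First I would use Kisin's identification
\begin{align}
\mathscr{I}_x \simeq I_x(\mathbb{Q}) \backslash G(\afp) \times X_{\mu}(b)(\ovfp) / U^p,
\end{align}
under which the point $(1,y)$ maps to $x$. For $\gamma \in I_x'(\mathbb{Z}_{(p)}) \subseteq I_x(\mathbb{Q})$ the triviality of the diagonal action of $I_x(\mathbb{Q})$ on the double coset yields, at infinite prime-to-$p$ level, the equality of $(1,y)$ with $(\gamma_{\afp}, \gamma_p \cdot y)$. Equivalently, right multiplication by $\gamma_{\afp}^{-1} \in I_x'(\afp) \subseteq G^{\mathrm{sc}}(\afp)$ and the action of $\gamma_p \in \mathbf{Aut}_G(Y)(\ovfp) \subseteq J_b(\mathbb{Q}_p)$ on $X_{\mu}(b)$ agree on the chosen lift $\tilde{x}$ of $x$ to the infinite level cover $\varprojlim_{K^p} \operatorname{Sh}_{G,K^p}$. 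In particular, the prime-to-$p$ Hecke correspondence attached to $\gamma_{\afp}^{-1}$ fixes the point $x$ at finite level, hence induces an automorphism $h_\gamma$ of the complete local ring $\widehat{\mathcal{O}}_{\shg,x}$, which preserves $C^{/x}$ because the prime-to-$p$ Hecke operators preserve the central leaf.

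Next I would verify that $h_\gamma$ coincides with the action of $\gamma_p$ on $C^{/x}$ coming from the $\mathbf{Aut}_G(\tilde{Y})$-action on $\igCS$ and the identification $C^{/x} \simeq \mathbf{Def}_{\mathrm{sus}}(Y,G)$ of Lemma \ref{Lem:ClosedImmersionDeformation}. This is essentially a compatibility of two interpretations of the same modification of the universal object: the prime-to-$p$ Hecke operator $\gamma_{\afp}^{-1}$ at the formal completion replaces the universal abelian scheme by a prime-to-$p$ isogenous one via $\gamma^{-1}$, and under this modification the induced isomorphism of $p$-divisible groups with $G$-structure is precisely the automorphism $\gamma_p$ of $Y$. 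Concretely, both operations act on the $\mathbf{Aut}_G(Y)$-torsor pulled back from $\igCS$ by the same element, and hence induce the same automorphism of the deformation space $\mathbf{Def}_{\mathrm{sus}}(Y,G)$.

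The main obstacle I foresee is the last compatibility: one must carefully match the $G(\afp)$-action on the pro-étale tower with the $\mathbf{Aut}_G(\tilde{Y})$-action on the Igusa tower at the formal completion of a point. The cleanest route is to pass to the infinite prime-to-$p$ level and to the perfection $\igCS \to C^{\mathrm{perf}}$ (where $\igCS^{/\tilde{y}}$ is the formal completion we use in Lemma \ref{Lem:FormalTorsor}), and to use the description of the isogeny class together with Proposition \ref{Prop:TorsorHodgeType} to track the two actions of $\gamma$ simultaneously; their agreement then becomes a tautology from the fact that they arise from a single element of $I_x'(\mathbb{Q})$ acting diagonally.
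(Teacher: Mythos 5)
Your proposal is correct and amounts to unpacking the argument the paper merely points to: the paper's proof cites Theorem~9.5 of \cite{ChaiOortNotes} (the Siegel case) and observes that the same reasoning goes through, with the key point being that prime-to-$p$ Hecke correspondences are finite \'etale and hence induce isomorphisms on complete local rings. Your detailed account — using the diagonal action of $\gamma \in I_x'(\mathbb{Z}_{(p)})$ in Kisin's uniformisation of $\mathscr{I}_x$ to identify the action of $\gamma_p$ on $C^{/x}$ with the prime-to-$p$ Hecke correspondence attached to $\gamma_{\afp}$ fixing $x$, then invoking $G^{\mathrm{sc}}(\afp)$-stability of $Z$ — is exactly that argument.
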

\begin{Rem} \label{Rem:ReducedLocalStabiliser}
The same proof shows that for any $\gsc(\afp)$-stable reduced closed subset $Z \subseteq \cb$ containing $x$, the subscheme $Z^{/x} \subseteq \cb^{/x}$ is stable under the action of $I_x(\mathbb{Z}_{(p)}) \cap \gsc(\afp)$ on $\defsus(Y)$.
\end{Rem}

\subsection{Proof of Theorem \ref{Thm:MainThmHO}}
We keep the notation as in Section \ref{Sec:PreliminariesHeckeOperators} and Section \ref{Sec:LocalStabiliser}. Let $\gad=\g_1 \times \cdots \times \g_n$ be the decomposition of $\gad$ into a product of $\mathbb{Q}$-simple groups and write $G_i=\g_{i} \otimes \qp$ for $i=1, \cdots, n$. Recall that for a reductive group $\mathbb{G}$ over $\qp$ we denote by $B(\mathbb{G})$ the set of $\sigma$-conjugacy classes in $\mathbb{G}(\qpbreve)$.
\begin{Def}[Definition 5.3.2 of \cite{KretShin}] \label{Def:QNonBasic}
An element $[b] \in B(G_{\qp})$ is called $\mathbb{Q}$\emph{-non-basic} if the image $[b_i]$ of $[b]$ in $B(G_{i, \qp})$ is non-basic for all $i$.
\end{Def}
Let $\cb \subseteq \shgb$ be a central leaf defined as in Section \ref{Sec:CentralLeavesHodgeType} and let $\nu_b$ be the Newton cocharacter of $b$ for some $b \in [b]$, see \cite[Section 1.1.2]{KMPS} for the definition of the Newton cocharacter. Let $P_{\nu_b} \subseteq G_{\qpbreve}$ be the associated parabolic subgroup with unipotent radical $U_{\nu_b}$.
\begin{Thm} \label{Thm:MainTheorem2}
Let $Z \subseteq \cb$ be a $\gafp$-stable closed subvariety. If $[b]$ is $\mathbb{Q}$-non-basic, then $Z=\cb$.
\end{Thm}
Theorem \ref{Thm:MainTheorem2} clearly implies Theorem \ref{Thm:MainThmHO}. The discussion in \cite[Section 1.6]{KretShin} implies that the theorem is also true when $[b]$ is $\mathbb{Q}$-basic, that is, when $[b_i]$ is basic for all $i$. In this case the central leaves are finite and the claim is that the prime-to-$p$ Hecke operators act transitively on them. We expect Theorem \ref{Thm:MainTheorem2} to be true for arbitrary $[b]$, but we do not know how to prove the discrete part.
\begin{proof}
We reduce immediately to the case that $Z$ is the Zariski closure of the $\gafp$-orbit of a point. It follows from \cite[Lemma 3.1.2]{OrdinaryHO} that such a $Z$ is itself $\gafp$-stable.

By \cite[Theorem C]{KretShin}, which states that under our assumptions a $\gafp$-stable subvariety $Z \subseteq \cb$ intersects each connected component of $\cb$ non-trivially, it suffices to show that $Z$ is equidimensional of the same dimension as $\cb$. 

It follows from \cite[Proposition 2.4.5]{KimCentralLeaves} that there is a central leaf $\cbp \subseteq \shgb$ such that the universal $p$-divisible group over $\cb$ is completely slope divisible. Since $\cbp$ and $\cb$ share a $\gafp$-equivariant finite \'etale cover, it suffices to prove this equidimensionality for $\cbp$ and therefore we will assume without loss of generality that the universal $p$-divisible group over $\cb$ is completely slope divisible.

By \cite[Lemma 3.1.1]{OrdinaryHO}, the smooth locus $Z^{\mathrm{sm}}$ of $Z$ is also $\gafp$-stable. It is moreover explained in \cite[Section 3.3]{OrdinaryHO} that the abelian variety up to prime-to-$p$ isogeny over $\shgv$ induces an (overconvergent) $F$-isocrystal $\mathcal{M}$ over $\shg$. 

The assumption that $[b]$ is $\mathbb{Q}$-non-basic allows us to invoke \cite[Corollary 3.3.5]{OrdinaryHO}\footnote{The statement of 
\cite[Corollary 3.3.5]{OrdinaryHO} contains the assumption that \cite[Hypothesis 2.3.1]{OrdinaryHO} holds. This is true for us because $K_p$ is hyperspecial, see \cite[Lemma 2.3.2]{OrdinaryHO}.}, which tells us that the unipotent radical of the monodromy group of $\mathcal{M}$ over $Z^{\mathrm{sm}}$ is isomorphic the unipotent radical of $P_{\nu_b}$. Theorem \ref{Thm:MainThmMonodromy} then tells us that for $x \in Z^{\mathrm{sm}}(\ovfp)$ the monodromy of the isocrystal $\mathcal{M}$ over $\spec \widehat{\mathcal{O}}_{Z,x}$ is equal to $U_{\nu_b}$. 

The assumption that the universal $p$-divisible group over $\cb$ is completely slope divisible tells us that $\cb^{/x} \subseteq \defsus(Y)$ for $Y=A_x[p^{\infty}]$. Theorem \ref{Thm:MonodromyBoundedAbove} tells us that $Z^{/x}$ is not contained in $Z(\mfbp)$ for any $F$-stable Lie algebra $\mfb \subsetneq \mfa=\mathbb{D}(\mathcal{H}_Y^{G,\circ})[\tfrac 1p] = \operatorname{Lie} U_{\nu_b}$.

Proposition \ref{Prop:LocalStabiliser} tells us that $Z^{/x}$ is stable under the action of
\begin{align}
    I_x'(\mathbb{Z}_{(p)}) \subseteq \aut_G(Y)(\ovfp).
\end{align}
By continuity it is also stable under its closure $\Gamma \subseteq \aut_G(Y)(\ovfp)$. It follows as in the proof of \cite[Corollary 6.1.6]{OrdinaryHO} that $\Gamma$ acts strongly non-trivially on $\cb^{/x}=Z(\mfap)$. Therefore Theorem \ref{Thm:Rigidity} tells us that $Z^{/x}=Z(\mfbp)$ for some $F$-stable Lie algebra $\mfb \subseteq \mfa$\footnote{To be precise, Theorem \ref{Thm:Rigidity} states that the inverse image of $Z^{/x}$ in $Z(p^2 \mfap)$ is of the form $Z(p^2 \mfbp)$ for some $F$-stable Lie algebra $\mfb \subseteq \mfa$; this implies that $Z=Z(\mfbp)$.}, and the previous paragraph tells us that $\mfa=\mfb$. In other words, $Z^{/x}=\cb^{/x}$ for all points $x \in Z^{\mathrm{sm}}(\ovfp)$. Since $Z^{\mathrm{sm}} \subseteq Z$ is dense because $Z$ is reduced, it follows that $Z$ is equidimensional of the same dimension as $\cb$, and therefore we are done. 
\end{proof}
\subsection{Isogeny classes are dense in Newton strata}
Let $\gx$ be as above, let $x \in \shgb(\ovfp)$ and let $\mathscr{I}_x \subseteq \shgb(\ovfp)$ be the isogeny class of $x$ in the sense of \cite{KisinPoints}. Then $\mathscr{I}_x \subseteq \shgb$ for some $[b]$.
\begin{Thm} \label{Thm:IsogenyClassesDenseNewtonStrata}
If $[b]$ is $\mathbb{Q}$-non-basic, then $\mathscr{I}_x$ is dense in $\shgb$.
\end{Thm}
\begin{proof}
Write $W$ for the closure of $\mathscr{I}_x$ in $\shgb$, note that it is $\gafp$-stable since $\mathscr{I}_x$ is, see \cite[Lemma 3.1.2]{OrdinaryHO}. The isogeny class $\mathscr{I}_x \subseteq \shgb(\ovfp)$ intersects every central leaf $\cbp \subseteq \shgb$ nontrivially, by the Rapoport--Zink uniformisation of isogeny classes (which follows from the main result of \cite{KisinPoints}, see [\textit{ibid}, Section 1.4]). Thus $W$ intersects every central leaf $\cbp \subseteq \shgb$ in a $\gafp$-stable \emph{non-empty} closed subset $W_C$. Theorem \ref{Thm:MainTheorem2} now tells us that $W_C=C$ and since $\shgb$ is the (set-theoretic) union of all the central leaves it contains, it follows that $W=\shgb$.
\end{proof}
\subsection{Orthogonal Shimura varieties} \label{Sec:BraggYang} A conjecture of Bragg--Yang, see \cite[Conjecture 8.2]{BraggYang}, predicts that prime-to-$p$ Hecke orbits are Zariski dense in certain Newton strata of certain \textit{orthogonal} Shimura varieties. These are the Shimura varieties for the group $\operatorname{SO}(M)$ where $M$ is a quadratic space over $\mathbb{Q}$ with signature $(2,m-2)$; they are of abelian type by \cite[Appendix B]{Milne}. 

In general one does \emph{not} expect that prime-to-$p$ Hecke orbits are Zariski dense in Newton strata, but when the Shimura datum is fully Hodge--Newton decomposable at $p$, then $\mathbb{Q}$-non-basic Newton strata are equal to central leaves by \cite[Theorem E.(2)]{ShenYuZhang}. By \cite[Theorem D]{GoertzHeNie}, the orthogonal Shimura varieties in question are indeed fully Hodge--Newton decomposable at $p$.

It follows from the results of \cite{KretShin} and the proof of \cite[Theorem 6.0.7]{vHXiao} that
\begin{align}
    \pi_0(\shgb) \to \pi_0(\shg)
\end{align}
is a bijection for $\mathbb{Q}$-non-basic $[b]$ for Shimura varieties of Hodge type. Since Newton strata behave well with respect to the dévissage from Hodge type to abelian type, this result also holds for Shimura varieties of abelian type (see \cite[Section 5.5]{ShenZhang}). Thus \cite[Conjecture 8.2]{BraggYang} comes down to showing that the Zariski closure of prime-to-$p$ Hecke orbits have the correct dimensions, and this can be reduced to the Hodge type case and then to Theorem \ref{Thm:MainTheorem2} as in the proof of \cite[Corollary 6.4.1]{OrdinaryHO}. 
\begin{Rem}
This line of reasoning shows more generally that the Hecke orbit conjecture holds for fully Hodge--Newton decomposable Shimura varieties of abelian type, at primes $p>2$ of hyperspecial good reduction, for central leaves in Newton strata corresponding to $\mathbb{Q}$-non-basic $[b]$.
\end{Rem}

\subsection{\texorpdfstring{$\ell$}{l}-power Hecke orbits}
In this section we study the Zariski closures of $\ell$-adic Hecke orbits of points for primes $\ell \not=p$. Since the $\ell$-adic Hecke operators do not, generally, act transitively on $\pi_0(\shg)$, all we can hope to prove is that $\ell$-adic Hecke orbits are dense in a union of connected components of a central leaf. We work with primes $\ell$ such that $\g_{\ql}$ is split reductive. 
\begin{Thm} \label{Thm:LadicHeckeOrbits} 
If $\shg$ is proper and $\g_{\ql}$ is split reductive, then any $\g(\ql)$-stable reduced closed subscheme $Z \subseteq \cb$ is a union of connected components of $C$.
\end{Thm}
We start by proving a lemma, cf. \cite[Lemma 3.3.2]{ZhouMotivic}. 
\begin{Lem} \label{Lem:FiniteOrbitBasic}
Let $\ell$ be a prime such that $\g_{\ql}$ is split reductive. If $Z \subseteq \shg$ is a finite scheme that is $\g(\ql)$-stable, then $Z$ is contained in the basic locus of $\shg$.
\end{Lem}
\begin{proof}
Let $\tilde{x} \in \shginf(\ovfp)$ with image $x \in Z(\ovfp)$. Let $I_x(\mathbb{Z}_{(p)}) \subseteq \gafp$ be the group of tensor-preserving automorphisms as in Section \ref{Sec:LocalStabiliser}. Let $U_{\ell}$ be the image of $U^{p}$ under $\gafp \to \g(\ql)$ and identify $I_x(\mathbb{Z}_{(p)})$ with its image under $\gafp \to \g(\ql)$. Then the $\ell$-adic Hecke orbit of $x$ can be written as 
\begin{align}
    I_x(\mathbb{Z}_{(p)}) \backslash \g(\ql)/U_{\ell},
\end{align}
which is finite by assumption. Since the closure of $I_x(\mathbb{Z}_{(p)})$ has finite index in $I_{x}(\ql)$, it follows that
\begin{align}
    I_x(\ql) \backslash \g(\ql)
\end{align}
is compact. Since $I_{x,\ql}$ is connected it follows from of \cite[Propositions 8.4, Proposition 9.3]{BorelTits} that it contains a maximal trigonalizable subgroup of $\g_{\ql}$. Since $\g_{\ql}$ is split, $I_{x,\ql}$ must contain a Borel subgroup of $\g_{\ql}$, and because it is reductive it follows that $I_{x,\ql}=\g_{\ql}$. It is well known that this only happens when $x$ is contained in the basic locus.
\end{proof}
\begin{Lem} \label{Lem:LAdicPrimeToPAdic}
Assume that $\shg$ is proper. If a reduced closed subscheme $Z \subseteq \shgb$ is stable under the action of $\g(\ql)$ for some $\ell \not=p$ such that $\gsc \otimes \ql$ is split reductive, then $Z$ is stable under the action of $\gsc(\afp)$.
\end{Lem}
\begin{proof}
The proof is almost exactly the same as the proof of \cite[Proposition 4.6]{ChaiElladicmonodromy}. Nevertheless, we will give a complete proof for the benefit of the reader.

\textbf{Step 1.} A standard argument (see e.g. \cite[The proof of Proposition 3.3.1]{ZhouMotivic}) using the quasi-affineness of the Ekedahl--Oort stratification (\cite[Corollary I.2.6]{GoldringKoskivirta}) and the properness of $\shg$ shows that the Zariski closure $\overline{Z}$ of $Z$ in $\shg$ contains a point $x \in \shg(\ovfp)$ with finite $\ell$-power Hecke orbit. In fact, this argument shows that for any irreducible component $V$ of $Z$ the closure $\overline{V}$ contains a point with a finite $\ell$-power orbit. 

It follows from Lemma \ref{Lem:FiniteOrbitBasic} that $x$ is contained in the basic locus of $\shg$, and moreover that $I_x$ is an inner form of $\g$ (see \cite[Corollary 5.2.11]{HeZhouZhu}). Thus the isomorphism $I_x \otimes \afp \to \g \otimes \afp$ induces an isomorphism $I_x^{\mathrm{sc}} \otimes \afp \to \gsc \otimes \afp$. Strong approximation away from $\infty, \ell$ (see \cite[Theorem 7.8]{PR}), using the fact that $\g_{\ql}$ is split, tells us that the image of
\begin{align}
    I_x^{\mathrm{sc}}(\mathbb{Z}_{(p)}) \to \gsc(\mathbb{A}_f^{p,\ell})
\end{align}
is dense. Since the inclusion of the $\ell$-adic Hecke orbit of $x$ inside the prime-to-$p$ Hecke orbit of $x$ can be identified with
\begin{align}
    I_x(\mathbb{Z}_{(p)}) \backslash \g(\ql)U^p/U^p \subseteq I_x(\mathbb{Z}_{(p)}) \backslash \gafp/U^p,
\end{align}
we see that the $\ell$-adic Hecke orbit of $x$ is $\gsc(\afp)$-stable. Moreover the local stabiliser principle, see Remark \ref{Rem:ReducedLocalStabiliser}, tells us that $$\overline{Z}^{/x} \subseteq \cb^{/x}$$ is kept stable under the action of
\begin{align}
    I_x(\mathbb{Z}_{(p)}) \cap \gsc(\afp) \subseteq \aut_G(Y)(\ovfp)
\end{align}
and by continuity  it is also kept stable by its closure. 

\textbf{Step 2.} Take a $\lambda$-adic Hecke operator $g_{\lambda} \in \gsc(\mathbb{Q}_{\lambda})$ for $\lambda \not=p,\ell$ and let $W$ be the image of $\overline{Z}$ under $g_{\lambda}$. Since every irreducible component of $\overline{Z}$ contains a point $x$ with finite $\ell$-power Hecke orbit, it follows that every irreducible component of $W$ contains an element in the $g_{\lambda}$-orbit of such an $x$. Since $\overline{Z}$ contains the $\gsc(\afp)$ orbit of $x$, it follows that every irreducible component $W_i$ of $W$ intersects $\overline{Z}$ in a point $y_i$ with finite $\ell$-power Hecke orbit. 

The reduced prime-to-$p$ Hecke orbit of $y_i$ has the form
\begin{align}
    I_{y_i}^{\mathrm{sc}}(\mathbb{Z}_{(p)}) \backslash \gsc(\mathbb{A}_f^{p}) U^{p} / U^{p}.
\end{align}
By strong approximation (\cite[Theorem 7.8]{PR}) away from $\ell$ for $I_{y_i}^{\mathrm{sc}}$, using the fact that $\g_{\ql}$ is split, we can choose $\delta \in I_{y_i}^{\mathrm{sc}}(\mathbb{Q})$ which lands in $U^{p,\ell,\lambda}$ and such that there is an element $g_{\ell} \in \gsc(\ql)$ such that $\delta \cdot g_{\ell} = g_{\lambda}^{-1}$ in $g_{\lambda}$ in $I_{y_i}(\mathbb{Z}_{(p)}) \backslash \gsc(\mathbb{A}_f^{p}) U^{p} / U^{p}$. 

We see that $g_{\lambda} \circ g_{\ell}$ fixes $y_i$, and since $\overline{Z}$ is $g_{\ell}$-stable, the image of $\overline{Z}$ under $g_{\lambda} \circ g_{\ell}$ is equal to $W$. Now we consider the closed subschemes
\begin{align}
    W_i^{/y_i}, \overline{Z}^{/y_i} \subseteq \shg^{/y_i}.
\end{align}
The subscheme $W_i^{/y_i}$ is the image of $\overline{Z}^{/y_i}$ under the Hecke operator $g_{\lambda} \circ g_{\ell}$. Since $\overline{Z}^{/y_i}$ is stable under the action of the closure of $I_x(\mathbb{Z}_{(p)})$ and since $g_{\lambda} \circ g_{\ell}$ is contained in that closure by construction, it follows that
\begin{align}
    W_i^{/y_i} \subseteq \overline{Z}^{/y_i}.
\end{align}
From this we deduce that $W_i \subseteq \overline{Z}$. Thus every irreducible component $W_i$ of $W$ is contained in $\overline{Z}$, and we conclude that $\overline{Z}$ is stable under the action of $g_{\lambda}$. Since $\lambda$ and $g_{\lambda}$ were arbitrary, it follows that $\overline{Z}$ is stable under the action of $\gsc(\afp)$.

We know that $Z$ is the intersection of $\overline{Z}$ with $\shgb$ and as the intersection of two $\gsc(\afp)$-stable subschemes it must itself be $\gsc(\afp)$-stable.
\end{proof}
\begin{proof}[Proof of Theorem \ref{Thm:LadicHeckeOrbits}]
If we let $\mathcal{M}$ be the isocrystal attached to the universal abelian variety up to prime-to-$p$ isogeny over $Z$ and if we let $x \in Z$ be a smooth point, then arguing as in the proof of Theorem \ref{Thm:MainTheorem2}, we can combine \cite[Corollary 3.3.5]{OrdinaryHO} with Theorem \ref{Thm:MainThmMonodromy} to deduce that the monodromy of the isocrystal $\mathcal{M}$ over $\spec \widehat{\mathcal{O}}_{Z,x}$ is isomorphic to $U_{\nu_b}$. \smallskip

Proposition \ref{Prop:LocalStabiliser} (see Remark \ref{Rem:ReducedLocalStabiliser}) tells us that $Z^{/x}$ is stable under the action of
\begin{align}
    \left(I_x(\mathbb{Z}_{(p)}) \cap \gsc(\afp) \right) \subseteq \aut_G(Y)(\ovfp).
\end{align}
By continuity, it is also stable under the closure $\Gamma \subseteq \aut_G(Y)(\ovfp)$. As in the proof of \cite[Corollary 6.1.6]{OrdinaryHO}, it follows that $\Gamma$ acts strongly non-trivially on $\cb^{/x}=Z(\mfap)$. The same argument as in the proof of Theorem \ref{Thm:MainTheorem2} allows us to conclude that $Z$ is a union of connected components of $C$. \end{proof}

\subsection{Further questions of Chai--Oort} Let $Z \subseteq \cgsp$ be an irreducible smooth closed subvariety and let $x \in Z(\ovfp)$. We call $Z$ \emph{strongly Tate-linear at $x$} if $Z^{/x} \subseteq \cgsp^{/x}$ is a strongly Tate-linear subvariety. 
\begin{Question}
Suppose that $Z$ is strongly Tate-linear at some closed point $z_0 \in Z(\ovfp)$. Is $Z$ then strongly Tate-linear at all closed points $z \in Z(\ovfp)$?
\end{Question}
It follows from Theorem \ref{Thm:MainThmMonodromy} that the monodromy group of $\mathcal{M}$ over $\spec \widehat{\mathcal{O}}_{Z,x}$ does not depend on $x$. Now the validity of Conjecture \ref{Conj:LocalMonodromySTL} would imply that the question above has an affirmative answer. 
\begin{Question} \label{Q:Lifting}
Suppose that $Z$ is strongly Tate-linear at some closed point $z_0 \in Z(\ovfp)$. Is $Z$ an irreducible component of a central leaf in the mod $p$ reduction of a Shimura variety of Hodge type?
\end{Question}
The stronger assertion that $Z$ must itself be an irreducible component of a Shimura variety of Hodge type is false in general, because only finitely many central leaves in a given Newton stratum contain the mod $p$ reductions of special points by \cite[Theorem 1.3]{KisinLamShankarSrinivasan}. If $\cgsp$ is the ordinary locus and $Z$ is proper, then results towards this stronger assertion are proved in work of Moonen \cite{MoonenLifting}.
\subsection{Results at ramified primes and parahoric level} The statements of our theorems make sense for central leaves in special fibres of the Kisin--Pappas \cite{KisinPappas} integral models of Shimura varieties of parahoric level at tamely ramified primes $p>2$. 

\subsubsection{} If $G$ is unramified over $\qp$ and the level is parahoric, then the Hecke orbit conjecture for central leaves at parahoric level follows immediately from Theorem \ref{Thm:MainTheorem2}. The main observation is that the forgetful map
\begin{align}
    C' \to C, 
\end{align}
where $C'$ is a central leaf at Iwahori level and $C$ is a central leaf at hyperspecial level, is equivariant for the prime-to-$p$ Hecke operators and induces a bijection on $\pi_0$. This last statement can be proven using the surjectivity\footnote{\label{FN:RZ} This surjectivity is axiom 4c of the He--Rapoport axioms, see \cite{ShenYuZhang}, and follows from Rapoport--Zink uniformisation at parahoric level, which is \cite[Theorem 2]{vH}.} of $C' \to C$, and the explicit description of connected components of Igusa varieties in \cites{vHXiao, KretShin}. 

Rapoport--Zink uniformisation of isogeny classes at parahoric level\footref{FN:RZ} implies as before that isogeny classes are dense in the Newton strata containing them.

\subsubsection{} If $G$ is a ramified group over $\qp$, then it is not always true that $\gafp$ acts transitively on $\pi_0(\shg)$ (see \cite{OkiComponents} for explicit counterexamples), and therefore it is not necessarily true that $\gafp$ acts transitively on $\pi_0(C)$ either because $\pi_0(C) \to \pi_0(\shg)$ is surjective. Nevertheless, we expect that the continuous part of the Hecke orbit conjecture is true for ramified groups. In fact, we suspect that the strategy adopted in this paper can be made to work for ramified groups.



\DeclareRobustCommand{\VAN}[3]{#3}
\bibliographystyle{amsalpha}
\bibliography{references}

@article {GoertzHeNie,
    AUTHOR = {G\"{o}rtz, Ulrich and He, Xuhua and Nie, Sian},
     TITLE = {Fully {H}odge--{N}ewton {D}ecomposable {S}himura {V}arieties},
   JOURNAL = {Peking Math. J.},
  FJOURNAL = {Peking Mathematical Journal},
    VOLUME = {2},
      YEAR = {2019},
    NUMBER = {2},
     PAGES = {99--154},
      ISSN = {2096-6075},
   MRCLASS = {11G18 (14G35 20G25)},
  MRNUMBER = {4060001},
       DOI = {10.1007/s42543-019-00013-2},
       URL = {https://doi.org/10.1007/s42543-019-00013-2},
}

@article {Z,
    AUTHOR = {Zhou, Rong},
     TITLE = {Mod {$p$} isogeny classes on {S}himura varieties with
              parahoric level structure},
   JOURNAL = {Duke Math. J.},
  FJOURNAL = {Duke Mathematical Journal},
    VOLUME = {169},
      YEAR = {2020},
    NUMBER = {15},
     PAGES = {2937--3031},
      ISSN = {0012-7094},
   MRCLASS = {11G18 (14G35)},
  MRNUMBER = {4158671},
       DOI = {10.1215/00127094-2020-0021},
       URL = {https://doi.org/10.1215/00127094-2020-0021},
}

@article {ShenYuZhang,
    AUTHOR = {Shen, Xu and Yu, Chia-Fu and Zhang, Chao},
     TITLE = {E{KOR} strata for {S}himura varieties with parahoric level
              structure},
   JOURNAL = {Duke Math. J.},
  FJOURNAL = {Duke Mathematical Journal},
    VOLUME = {170},
      YEAR = {2021},
    NUMBER = {14},
     PAGES = {3111--3236},
      ISSN = {0012-7094},
   MRCLASS = {14G35 (11G18)},
  MRNUMBER = {4319228},
       DOI = {10.1215/00127094-2021-0047},
       URL = {https://doi-org.stanford.idm.oclc.org/10.1215/00127094-2021-0047},
}

@article {RapoportRichartz,
    AUTHOR = {Rapoport, M. and Richartz, M.},
     TITLE = {On the classification and specialization of {$F$}-isocrystals
              with additional structure},
   JOURNAL = {Compositio Math.},
  FJOURNAL = {Compositio Mathematica},
    VOLUME = {103},
      YEAR = {1996},
    NUMBER = {2},
     PAGES = {153--181},
      ISSN = {0010-437X},
   MRCLASS = {14F30 (22E50)},
  MRNUMBER = {1411570},
MRREVIEWER = {Abdellah Mokrane},
       URL = {http://www.numdam.org/item?id=CM_1996__103_2_153_0},
}

@article {KisinPoints,
    AUTHOR = {Kisin, Mark},
     TITLE = {{${\rm mod}\,p$} points on {S}himura varieties of abelian
              type},
   JOURNAL = {J. Amer. Math. Soc.},
  FJOURNAL = {Journal of the American Mathematical Society},
    VOLUME = {30},
      YEAR = {2017},
    NUMBER = {3},
     PAGES = {819--914},
      ISSN = {0894-0347},
   MRCLASS = {11G18 (11G10 14G35)},
  MRNUMBER = {3630089},
MRREVIEWER = {Mihran Papikian},
       DOI = {10.1090/jams/867},
       URL = {https://doi.org/10.1090/jams/867},
}

@article {KisinPappas,
    AUTHOR = {Kisin, M. and Pappas, G.},
     TITLE = {Integral models of {S}himura varieties with parahoric level
              structure},
   JOURNAL = {Publ. Math. Inst. Hautes \'{E}tudes Sci.},
  FJOURNAL = {Publications Math\'{e}matiques. Institut de Hautes \'{E}tudes
              Scientifiques},
    VOLUME = {128},
      YEAR = {2018},
     PAGES = {121--218},
      ISSN = {0073-8301},
   MRCLASS = {11G18},
  MRNUMBER = {3905466},
MRREVIEWER = {Martin Orr},
       DOI = {10.1007/s10240-018-0100-0},
       URL = {https://doi.org/10.1007/s10240-018-0100-0},
}

@article {KretShin,
    AUTHOR = {Kret, Arno and Shin, Sug Woo},
     TITLE = {{$H^0$} of {I}gusa varieties via automorphic forms},
   JOURNAL = {J. \'Ec. polytech. Math.},
  FJOURNAL = {Journal de l'\'Ecole polytechnique. Math\'ematiques},
    VOLUME = {10},
      YEAR = {2023},
     PAGES = {1299--1390},
      ISSN = {2429-7100,2270-518X},
   MRCLASS = {11G18 (11F70 14G35)},
  MRNUMBER = {4664656},
       DOI = {10.5802/jep.246},
       URL = {https://doi.org/10.5802/jep.246},
}

@ARTICLE{vHXiao,
       author = {{\VAN{Hoften}{}{}} van Hoften, Pol and Xiao, Luciena Xiao},
        title = "{Monodromy and Irreducibility of Igusa Varieties}",
      journal = {American journal of Mathematics, to appear},
     keywords = {Mathematics - Number Theory, Mathematics - Algebraic Geometry, 11G18, 14G35},
        note = {available at \url{https://arxiv.org/abs/2102.09870}},
}

@incollection {Milne,
    AUTHOR = {Milne, J. S.},
     TITLE = {Introduction to {S}himura varieties},
 BOOKTITLE = {Harmonic analysis, the trace formula, and {S}himura varieties},
    SERIES = {Clay Math. Proc.},
    VOLUME = {4},
     PAGES = {265--378},
 PUBLISHER = {Amer. Math. Soc., Providence, RI},
      YEAR = {2005},
   MRCLASS = {11G18 (14G35)},
  MRNUMBER = {2192012},
MRREVIEWER = {Ulrich G\"{o}rtz},
}

@book{SerreLie,
  title="{Lie algebras and Lie groups: 1964 lectures given at Harvard University}",
  author={Serre, Jean-Pierre},
  year={2009},
  publisher={Springer}
}

@book {PR,
    AUTHOR = {Platonov, Vladimir and Rapinchuk, Andrei},
     TITLE = {Algebraic groups and number theory},
    SERIES = {Pure and Applied Mathematics},
    VOLUME = {139},
      NOTE = {Translated from the 1991 Russian original by Rachel Rowen},
 PUBLISHER = {Academic Press, Inc., Boston, MA},
      YEAR = {1994},
     PAGES = {xii+614},
      ISBN = {0-12-558180-7},
   MRCLASS = {11E57 (11-02 20Gxx)},
  MRNUMBER = {1278263},
}

@article {HamacherKim,
    AUTHOR = {Hamacher, Paul and Kim, Wansu},
     TITLE = {{$l$}-adic \'{e}tale cohomology of {S}himura varieties of {H}odge
              type with non-trivial coefficients},
   JOURNAL = {Math. Ann.},
  FJOURNAL = {Mathematische Annalen},
    VOLUME = {375},
      YEAR = {2019},
    NUMBER = {3-4},
     PAGES = {973--1044},
      ISSN = {0025-5831},
   MRCLASS = {14G35 (14F20)},
  MRNUMBER = {4023369},
       DOI = {10.1007/s00208-019-01815-6},
       URL = {https://doi.org/10.1007/s00208-019-01815-6},
}

@article{KMPS,
author = {Mark Kisin and Keerthi Madapusi Pera and Sug Woo Shin},
title = {{Honda–Tate theory for Shimura varieties}},
volume = {171},
journal = {Duke Mathematical Journal},
number = {7},
publisher = {Duke University Press},
pages = {1559 -- 1614},
keywords = {abelian varieties, p-divisible groups, Shimura varieties},
year = {2022},
doi = {10.1215/00127094-2021-0063},
URL = {https://doi.org/10.1215/00127094-2021-0063}
}

@misc{stacks-project,
  author       = {The {Stacks project authors}},
  title        = {The Stacks project},
  note = {\url{https://stacks.math.columbia.edu}},
  year         = {2022},
}

@article {GoldringKoskivirta,
    AUTHOR = {Goldring, Wushi and Koskivirta, Jean-Stefan},
     TITLE = {Strata {H}asse invariants, {H}ecke algebras and {G}alois
              representations},
   JOURNAL = {Invent. Math.},
  FJOURNAL = {Inventiones Mathematicae},
    VOLUME = {217},
      YEAR = {2019},
    NUMBER = {3},
     PAGES = {887--984},
      ISSN = {0020-9910},
   MRCLASS = {14G35 (11F70 11F80 14D24)},
  MRNUMBER = {3989256},
MRREVIEWER = {John L. Boxall},
       DOI = {10.1007/s00222-019-00882-5},
       URL = {https://doi.org/10.1007/s00222-019-00882-5},
}

@ARTICLE{LXiao,
       author = {{Xiao Xiao}, Luciena},
        title = "{On The Hecke Orbit Conjecture for PEL Type Shimura Varieties}",
      journal = {arXiv e-prints},
     keywords = {Mathematics - Number Theory, Mathematics - Algebraic Geometry},
         year = 2020,
        month = jun,
archivePrefix = {arXiv},
       eprint = {2006.06859},
 primaryClass = {math.NT},
}

@article {KisinModels,
    AUTHOR = {Kisin, Mark},
     TITLE = {Integral models for {S}himura varieties of abelian type},
   JOURNAL = {J. Amer. Math. Soc.},
  FJOURNAL = {Journal of the American Mathematical Society},
    VOLUME = {23},
      YEAR = {2010},
    NUMBER = {4},
     PAGES = {967--1012},
      ISSN = {0894-0347},
   MRCLASS = {11G18 (14G35)},
  MRNUMBER = {2669706},
MRREVIEWER = {Jeffrey D. Achter},
       DOI = {10.1090/S0894-0347-10-00667-3},
       URL = {https://doi.org/10.1090/S0894-0347-10-00667-3},
}

@article {ShenZhang,
    AUTHOR = {Shen, Xu and Zhang, Chao},
     TITLE = {Stratifications in good reductions of {S}himura varieties of
              abelian type},
   JOURNAL = {Asian J. Math.},
  FJOURNAL = {Asian Journal of Mathematics},
    VOLUME = {26},
      YEAR = {2022},
    NUMBER = {2},
     PAGES = {167--226},
      ISSN = {1093-6106,1945-0036},
   MRCLASS = {14G35 (11G18)},
  MRNUMBER = {4557079},
MRREVIEWER = {Yifan\ Yang},
}

@article {ChaiOort,
    AUTHOR = {Chai, Ching-Li and Oort, Frans},
     TITLE = {Monodromy and irreducibility of leaves},
   JOURNAL = {Ann. of Math. (2)},
  FJOURNAL = {Annals of Mathematics. Second Series},
    VOLUME = {173},
      YEAR = {2011},
    NUMBER = {3},
     PAGES = {1359--1396},
      ISSN = {0003-486X},
   MRCLASS = {14K10 (11G10 14G17 14L05)},
  MRNUMBER = {2800716},
MRREVIEWER = {\c{S}. A. Basarab},
       DOI = {10.4007/annals.2011.173.3.3},
       URL = {https://doi.org/10.4007/annals.2011.173.3.3},
}

@article {MantovanPEL,
    AUTHOR = {Mantovan, Elena},
     TITLE = {On the cohomology of certain {PEL}-type {S}himura varieties},
   JOURNAL = {Duke Math. J.},
  FJOURNAL = {Duke Mathematical Journal},
    VOLUME = {129},
      YEAR = {2005},
    NUMBER = {3},
     PAGES = {573--610},
      ISSN = {0012-7094},
   MRCLASS = {11G18 (11F70 14G35 14L05)},
  MRNUMBER = {2169874},
MRREVIEWER = {Ulrich G\"{o}rtz},
       DOI = {10.1215/S0012-7094-05-12935-0},
       URL = {https://doi.org/10.1215/S0012-7094-05-12935-0},
}

@article {KimCentralLeaves,
    AUTHOR = {Kim, Wansu},
     TITLE = {On central leaves of {H}odge-type {S}himura varieties with
              parahoric level structure},
   JOURNAL = {Math. Z.},
  FJOURNAL = {Mathematische Zeitschrift},
    VOLUME = {291},
      YEAR = {2019},
    NUMBER = {1-2},
     PAGES = {329--363},
      ISSN = {0025-5874},
   MRCLASS = {14G35 (14L05)},
  MRNUMBER = {3936073},
       DOI = {10.1007/s00209-018-2086-1},
       URL = {https://doi.org/10.1007/s00209-018-2086-1},
}

@article {Hamacher,
    AUTHOR = {Hamacher, Paul},
     TITLE = {The product structure of {N}ewton strata in the good reduction
              of {S}himura varieties of {H}odge type},
   JOURNAL = {J. Algebraic Geom.},
  FJOURNAL = {Journal of Algebraic Geometry},
    VOLUME = {28},
      YEAR = {2019},
    NUMBER = {4},
     PAGES = {721--749},
      ISSN = {1056-3911},
   MRCLASS = {14G35},
  MRNUMBER = {3994311},
MRREVIEWER = {Pietro Mercuri},
       DOI = {10.1090/jag/732},
       URL = {https://doi.org/10.1090/jag/732},
}

@ARTICLE{xu2020normalization,
       author = {{Xu}, Yujie},
        title = "{Normalization in integral models of Shimura varieties of Hodge type}",
      journal = {arXiv e-prints},
     keywords = {Mathematics - Number Theory, Mathematics - Algebraic Geometry, 11G18, 11G15, 14G35, 14K22},
         year = 2020,
        month = jul,
archivePrefix = {arXiv},
       eprint = {2007.01275},
 primaryClass = {math.NT},
}

@ARTICLE{HeZhouZhu,
       author = {{He}, Xuhua and {Zhou}, Rong and {Zhu}, Yihang},
        title = "{Stabilizers of irreducible components of affine Deligne--Lusztig varieties}",
      journal = {arXiv e-prints},
     keywords = {Mathematics - Algebraic Geometry, Mathematics - Number Theory, 14G35},
         year = 2021,
        month = sep,
        pages = {arXiv:2109.02594},
archivePrefix = {arXiv},
 primaryClass = {math.AG},
}

@article {CaraianiScholze,
    AUTHOR = {Caraiani, Ana and Scholze, Peter},
     TITLE = {On the generic part of the cohomology of compact unitary
              {S}himura varieties},
   JOURNAL = {Ann. of Math. (2)},
  FJOURNAL = {Annals of Mathematics. Second Series},
    VOLUME = {186},
      YEAR = {2017},
    NUMBER = {3},
     PAGES = {649--766},
      ISSN = {0003-486X},
   MRCLASS = {11F75 (11G18 11R23 14G35)},
  MRNUMBER = {3702677},
MRREVIEWER = {Nguy\cftil{e}n Qu\^{o}c Th\'{a}ng},
       DOI = {10.4007/annals.2017.186.3.1},
       URL = {https://doi.org/10.4007/annals.2017.186.3.1},
}

@ARTICLE{FarguesScholze,
       author = {{Fargues}, Laurent and {Scholze}, Peter},
        title = "{Geometrization of the local Langlands correspondence}",
      journal = {arXiv e-prints},
     keywords = {Mathematics - Representation Theory, Mathematics - Algebraic Geometry, Mathematics - Number Theory, 11S37, 14D24, 22E57, 11F77, 11F85, 14G45},
         year = 2021,
        month = feb,
archivePrefix = {arXiv},
       eprint = {2102.13459},
 primaryClass = {math.RT},
}

@article {vH,
    AUTHOR = {{\VAN{Hoften}{}{}} van Hoften, Pol},
     TITLE = {Mod {$p$} points on {S}himura varieties of parahoric level},
   JOURNAL = {Forum Math. Pi},
  FJOURNAL = {Forum of Mathematics. Pi},
    VOLUME = {12},
      YEAR = {2024},
     PAGES = {Paper No. e20, 67},
      ISSN = {2050-5086},
   MRCLASS = {11G18 (14G35)},
  MRNUMBER = {4818941},
       DOI = {10.1017/fmp.2024.22},
       URL = {https://doi.org/10.1017/fmp.2024.22},
}

@incollection {ChaiConjecture,
    AUTHOR = {Chai, Ching-Li},
     TITLE = {Hecke orbits as {S}himura varieties in positive
              characteristic},
 BOOKTITLE = {International {C}ongress of {M}athematicians. {V}ol. {II}},
     PAGES = {295--312},
 PUBLISHER = {Eur. Math. Soc., Z\"{u}rich},
      YEAR = {2006},
   MRCLASS = {14G35 (11G10 11G18)},
  MRNUMBER = {2275599},
MRREVIEWER = {Fabrizio Andreatta},
}

@article {ChaiElladicmonodromy,
    AUTHOR = {Chai, Ching-Li},
     TITLE = {Monodromy of {H}ecke-invariant subvarieties},
   JOURNAL = {Pure Appl. Math. Q.},
  FJOURNAL = {Pure and Applied Mathematics Quarterly},
    VOLUME = {1},
      YEAR = {2005},
    NUMBER = {2, Special Issue: In memory of Armand Borel. Part 1},
     PAGES = {291--303},
      ISSN = {1558-8599},
   MRCLASS = {11G15 (11G10 14G32 14K10)},
  MRNUMBER = {2194726},
MRREVIEWER = {Marco A. Garuti},
       DOI = {10.4310/PAMQ.2005.v1.n2.a4},
       URL = {https://doi.org/10.4310/PAMQ.2005.v1.n2.a4},
}

@article {BorelTits,
    AUTHOR = {Borel, Armand and Tits, Jacques},
     TITLE = {Groupes r\'{e}ductifs},
   JOURNAL = {Inst. Hautes \'{E}tudes Sci. Publ. Math.},
  FJOURNAL = {Institut des Hautes \'{E}tudes Scientifiques. Publications
              Math\'{e}matiques},
    NUMBER = {27},
      YEAR = {1965},
     PAGES = {55--150},
      ISSN = {0073-8301},
   MRCLASS = {14.50 (20.75)},
  MRNUMBER = {207712},
MRREVIEWER = {F. D. Veldkamp},
       URL = {http://www.numdam.org/item?id=PMIHES_1965__27__55_0},
}

@article {d2020monodromy,
    AUTHOR = {D'Addezio, Marco},
     TITLE = {The monodromy groups of lisse sheaves and overconvergent
              {$F$}-isocrystals},
   JOURNAL = {Selecta Math. (N.S.)},
  FJOURNAL = {Selecta Mathematica. New Series},
    VOLUME = {26},
      YEAR = {2020},
    NUMBER = {3},
      ISSN = {1022-1824},
   MRCLASS = {14F30 (11F80)},
       DOI = {10.1007/s00029-020-00569-3},
       URL = {https://doi.org/10.1007/s00029-020-00569-3},
}

@ARTICLE{AddezioII,
       author = {D'Addezio, Marco},
     TITLE = {Parabolicity conjecture of {$F$}-isocrystals},
 JOURNAL = {Ann. of Math},
 VOLUME = {198},
ISSUE = {2},
      YEAR = {2023},
        pages = {619--656},
       DOI = {10.4007/annals.2023.198.2.4},
  MRNUMBER = {4635301},
}

@article {ChaiRigidity,
    AUTHOR = {Chai, Ching-Li},
     TITLE = {A rigidity result for {$p$}-divisible formal groups},
   JOURNAL = {Asian J. Math.},
  FJOURNAL = {Asian Journal of Mathematics},
    VOLUME = {12},
      YEAR = {2008},
    NUMBER = {2},
     PAGES = {193--202},
      ISSN = {1093-6106},
   MRCLASS = {14L05 (14G35 14K10)},
  MRNUMBER = {2439259},
MRREVIEWER = {Eva Viehmann},
       DOI = {10.4310/AJM.2008.v12.n2.a3},
       URL = {https://doi.org/10.4310/AJM.2008.v12.n2.a3},
}

@article {CrewIsocrystals,
    AUTHOR = {Crew, Richard},
     TITLE = {{$F$}-isocrystals and their monodromy groups},
   JOURNAL = {Ann. Sci. \'{E}cole Norm. Sup. (4)},
  FJOURNAL = {Annales Scientifiques de l'\'{E}cole Normale Sup\'{e}rieure. Quatri\`eme
              S\'{e}rie},
    VOLUME = {25},
      YEAR = {1992},
    NUMBER = {4},
     PAGES = {429--464},
      ISSN = {0012-9593},
   MRCLASS = {14F30},
  MRNUMBER = {1186910},
MRREVIEWER = {Michel Gros},
       URL = {http://www.numdam.org/item?id=ASENS_1992_4_25_4_429_0},
}

@article {ChaiOrdinarySiegel,
    AUTHOR = {Chai, Ching-Li},
     TITLE = {Every ordinary symplectic isogeny class in positive
              characteristic is dense in the moduli},
   JOURNAL = {Invent. Math.},
  FJOURNAL = {Inventiones Mathematicae},
    VOLUME = {121},
      YEAR = {1995},
    NUMBER = {3},
     PAGES = {439--479},
      ISSN = {0020-9910},
   MRCLASS = {11G10 (11G25 14G20 14K02 14K10 14K15)},
  MRNUMBER = {1353306},
MRREVIEWER = {F. Oort},
       DOI = {10.1007/BF01884309},
       URL = {https://doi.org/10.1007/BF01884309},
}

@Misc{ChaiOrdinary,
Author = {Chai, Ching-Li},
Title = {Families of ordinary abelian varieties:
canonical coordinates, $p$-adic monodromy,
{T}ate-linear subvarieties and Hecke orbits},
note={preprint available at \url{https://www2.math.upenn.edu/~chai/papers_pdf/fam_ord_av.pdf}},
Year = {2003},
}

@article {ShankarZhou,
    AUTHOR = {Shankar, Ananth N. and Zhou, Rong},
     TITLE = {Serre-{T}ate theory for {S}himura varieties of {H}odge type},
   JOURNAL = {Math. Z.},
  FJOURNAL = {Mathematische Zeitschrift},
    VOLUME = {297},
      YEAR = {2021},
    NUMBER = {3-4},
     PAGES = {1249--1271},
      ISSN = {0025-5874},
   MRCLASS = {14G35},
  MRNUMBER = {4229601},
       DOI = {10.1007/s00209-020-02556-y},
       URL = {https://doi.org/10.1007/s00209-020-02556-y},
}

@article {OortZink,
    AUTHOR = {Oort, Frans and Zink, Thomas},
     TITLE = {Families of {$p$}-divisible groups with constant {N}ewton
              polygon},
   JOURNAL = {Doc. Math.},
  FJOURNAL = {Documenta Mathematica},
    VOLUME = {7},
      YEAR = {2002},
     PAGES = {183--201},
      ISSN = {1431-0635},
   MRCLASS = {14L05 (14F30)},
  MRNUMBER = {1938119},
MRREVIEWER = {Oliver B\"{u}ltel},
}

@ARTICLE{ExtensionOfScalars,
       author = {{Stalder}, Nicolas},
        title = "{Scalar Extension of Abelian and Tannakian Categories}",
      journal = {arXiv e-prints},
     keywords = {Mathematics - Category Theory, Mathematics - Algebraic Geometry, 18E10, 14F42},
         year = 2008,
        month = jun,
          eid = {arXiv:0806.0308},
        pages = {arXiv:0806.0308},
          doi = {10.48550/arXiv.0806.0308},
archivePrefix = {arXiv},
       eprint = {0806.0308},
 primaryClass = {math.CT},
       adsurl = {https://ui.adsabs.harvard.edu/abs/2008arXiv0806.0308S},
   
}

@ARTICLE{DrinfeldPrismatization,
       author = {Drinfeld, Vladimir},
        title = "{Prismatization}",
      journal = {arXiv e-prints},
     keywords = {Mathematics - Algebraic Geometry, Mathematics - Number Theory, Mathematics - Representation Theory, 14F30},
         year = 2020,
        month = may,
          eid = {arXiv:2005.04746},
        pages = {arXiv:2005.04746},
          doi = {10.48550/arXiv.2005.04746},
archivePrefix = {arXiv},
       eprint = {2005.04746},
 primaryClass = {math.AG},
       adsurl = {https://ui.adsabs.harvard.edu/abs/2020arXiv200504746D},
      adsnote = {Provided by the SAO/NASA Astrophysics Data System}
}

@incollection{DrinfeldCrystals, 
      author="Drinfeld, Vladimir",
      title="A Stacky Approach to Crystals", 
      booktitle="Dialogues Between Physics and Mathematics: C. {N}. {Y}ang at 100",
      year="2022",
      publisher="Springer International Publishing",
      address="Cham",
      pages="19--47",
      isbn="978-3-031-17523-7",
      doi="10.1007/978-3-031-17523-7_2",
      url="https://doi.org/10.1007/978-3-031-17523-7_2"
}

@inproceedings {ChaiFoliation,
    AUTHOR = {Chai, Ching-Li},
     TITLE = {Sustained {$p$}-divisible groups and a foliation on moduli
              spaces of abelian varieties},
 BOOKTITLE = {Proceedings of the {I}nternational {C}onsortium of {C}hinese
              {M}athematicians 2018},
     PAGES = {3--30},
 PUBLISHER = {Int. Press, Boston, MA},
      YEAR = {[2020] \copyright 2020},
   MRCLASS = {14G35},
  MRNUMBER = {4251135},
}

@article {ScholzeWeinstein,
    AUTHOR = {Scholze, Peter and Weinstein, Jared},
     TITLE = {Moduli of {$p$}-divisible groups},
   JOURNAL = {Camb. J. Math.},
  FJOURNAL = {Cambridge Journal of Mathematics},
    VOLUME = {1},
      YEAR = {2013},
    NUMBER = {2},
     PAGES = {145--237},
      ISSN = {2168-0930},
   MRCLASS = {14L05 (11G25 14C30 14D20)},
  MRNUMBER = {3272049},
MRREVIEWER = {Rui Miguel Saramago},
       DOI = {10.4310/CJM.2013.v1.n2.a1},
       URL = {https://doi.org/10.4310/CJM.2013.v1.n2.a1},
}

@article{DeJongHomomorphisms,
  title={Homomorphisms of Barsotti-Tate groups and crystals in positive characteristic},
  author={{de Jong}, Aise Johan},
  journal={Inventiones mathematicae},
  year={1998},
  volume={134},
  pages={301--333},
  url={https://api.semanticscholar.org/CorpusID:189819619}
}

@article {DeJongDieudonne,
    AUTHOR = {{de Jong}, Aise Johan},
     TITLE = {Crystalline {D}ieudonn\'{e} module theory via formal and rigid
              geometry},
   JOURNAL = {Inst. Hautes \'{E}tudes Sci. Publ. Math.},
  FJOURNAL = {Institut des Hautes \'{E}tudes Scientifiques. Publications
              Math\'{e}matiques},
    NUMBER = {82},
      YEAR = {1995},
     PAGES = {5--96 (1996)},
      ISSN = {0073-8301},
   MRCLASS = {14L05 (14F30)},
  MRNUMBER = {1383213},
MRREVIEWER = {Pierre Berthelot},
       URL = {http://www.numdam.org/item?id=PMIHES_1995__82__5_0},
}

@incollection{Deligne1982,
author="Deligne, P.
and Milne, J. S.",
title="Tannakian Categories",
bookTitle="Hodge Cycles, Motives, and Shimura Varieties",
year="1982",
publisher="Springer Berlin Heidelberg",
address="Berlin, Heidelberg",
pages="101--228",
isbn="978-3-540-38955-2",
doi="10.1007/978-3-540-38955-2_4",
url="https://doi.org/10.1007/978-3-540-38955-2_4"
}

@article {AnschutzLeBras,
    AUTHOR = {Ansch\"{u}tz, Johannes and Le Bras, Arthur-C\'{e}sar},
     TITLE = {Prismatic {D}ieudonn\'{e} theory},
   JOURNAL = {Forum Math. Pi},
  FJOURNAL = {Forum of Mathematics. Pi},
    VOLUME = {11},
      YEAR = {2023},
     PAGES = {Paper No. e2, 92},
      ISSN = {2050-5086},
   MRCLASS = {14F30 (14L05)},
  MRNUMBER = {4530092},
       DOI = {10.1017/fmp.2022.22},
       URL = {https://doi.org/10.1017/fmp.2022.22},
}

@article {BhattScholzePrismaticCrystals,
    AUTHOR = {Bhatt, Bhargav and Scholze, Peter},
     TITLE = {Prismatic {$F$}-crystals and crystalline {G}alois
              representations},
   JOURNAL = {Camb. J. Math.},
  FJOURNAL = {Cambridge Journal of Mathematics},
    VOLUME = {11},
      YEAR = {2023},
    NUMBER = {2},
     PAGES = {507--562},
      ISSN = {2168-0930,2168-0949},
   MRCLASS = {14F30 (11F80 14G45)},
  MRNUMBER = {4600546},
MRREVIEWER = {Michael\ M.\ Schein},
}

@ARTICLE{BhattLurieII,
       author = {{Bhatt}, Bhargav and {Lurie}, Jacob},
        title = "{The prismatization of $p$-adic formal schemes}",
      journal = {arXiv e-prints},
     keywords = {Mathematics - Algebraic Geometry, Mathematics - Number Theory},
         year = 2022,
        month = jan,
archivePrefix = {arXiv},
       eprint = {2201.06124},
 primaryClass = {math.AG},
}

@article {MoonenSerreTate,
    AUTHOR = {Moonen, Ben},
     TITLE = {Serre-{T}ate theory for moduli spaces of {PEL} type},
   JOURNAL = {Ann. Sci. \'{E}cole Norm. Sup. (4)},
  FJOURNAL = {Annales Scientifiques de l'\'{E}cole Normale Sup\'{e}rieure. Quatri\`eme
              S\'{e}rie},
    VOLUME = {37},
      YEAR = {2004},
    NUMBER = {2},
     PAGES = {223--269},
      ISSN = {0012-9593},
   MRCLASS = {11G18 (14G35)},
  MRNUMBER = {2061781},
MRREVIEWER = {Ulrich G\"{o}rtz},
       DOI = {10.1016/j.ansens.2003.04.004},
       URL = {https://doi.org/10.1016/j.ansens.2003.04.004},
}

@incollection {BerthelotMessing,
    AUTHOR = {Berthelot, Pierre and Messing, William},
     TITLE = {Th\'{e}orie de {D}ieudonn\'{e} cristalline. {I}},
 BOOKTITLE = {Journ\'{e}es de {G}\'{e}om\'{e}trie {A}lg\'{e}brique de {R}ennes ({R}ennes,
              1978), {V}ol. {I}},
    SERIES = {Ast\'{e}risque},
    VOLUME = {63},
     PAGES = {17--37},
 PUBLISHER = {Soc. Math. France, Paris},
      YEAR = {1979},
   MRCLASS = {14L05 (14F30)},
  MRNUMBER = {563458},
MRREVIEWER = {Kenneth A. Ribet},
}

@book {Matsumura,
    AUTHOR = {Matsumura, Hideyuki},
     TITLE = {Commutative algebra},
 PUBLISHER = {W. A. Benjamin, Inc., New York},
      YEAR = {1970},
     PAGES = {xii+262 pp. paperbound},
   MRCLASS = {13.00},
  MRNUMBER = {0266911},
MRREVIEWER = {P. Samuel},
}

@article{UniversalExtensions,
    author = {D’Addezio, Marco and Esnault, Hélène},
    title = "{On the Universal Extensions in Tannakian Categories}",
    journal = {International Mathematics Research Notices},
    year = {2022},
    volume = {2022},
    issue = {18},
    pages = {14008-14033},
    abstract = "{We use the notion of universal extension in a linear abelian category to study extensions of variations of mixed Hodge structure and convergent and overconvergent isocrystals. The results we obtain apply, for example, to prove the exactness of some homotopy sequences for these categories and to study \\$F\\$-able isocrystals.}",
    issn = {1073-7928},
    doi = {10.1093/imrn/rnab107},
    url = {https://doi.org/10.1093/imrn/rnab107},
    eprint = {https://academic.oup.com/imrn/advance-article-pdf/doi/10.1093/imrn/rnab107/38377494/rnab107.pdf},
}

@article {OrdinaryHO,
    AUTHOR = {{\VAN{Hoften}{}{}} van Hoften, Pol    },
     TITLE = {On the ordinary {H}ecke orbit conjecture},
   JOURNAL = {Algebra Number Theory},
  FJOURNAL = {Algebra \& Number Theory},
    VOLUME = {18},
      YEAR = {2024},
    NUMBER = {5},
     PAGES = {847--898},
      ISSN = {1937-0652,1944-7833},
   MRCLASS = {11G18 (14G35)},
  MRNUMBER = {4732752},
       DOI = {10.2140/ant.2024.18.847},
       URL = {https://doi.org/10.2140/ant.2024.18.847},
}

@article{ChaiOortRigidityBiextentions,
AUTHOR = {Chai, Ching-Li and Oort, Frans},
TITLE = {Rigidity for biextensions of formal groups},
Year  = {2017},
Version ={4d},
Note  = {Available at \url{https://www2.math.upenn.edu/~chai/papers_pdf/rig_biext_v4d.pdf}},
}

@article{ChaiOortRigidity,
AUTHOR = {Chai, Ching-Li and Oort, Frans},
TITLE = {Tate-linear formal varieties and orbital rigidity},
Year  = {2022},
Version ={1a},
Note  = {Available at \url{https://www2.math.upenn.edu/~chai/papers_pdf/TL_rig_v1a.pdf}},
}

@article{ChaiTwoSlopes,
AUTHOR = {Chai, Ching-Li},
TITLE = {Canonical coordinates on leaves of p-divisible groups:
The two-slope case},
Year  = {2005},
Note  = {Available at \url{https://www2.math.upenn.edu/~chai/papers_pdf/2slope_v14.pdf}},
}

@article {ZhouMotivic,
    AUTHOR = {Zhou, Rong},
     TITLE = {Motivic cohomology of quaternionic {S}himura varieties and
              level raising},
   JOURNAL = {Ann. Sci. \'Ec. Norm. Sup\'er. (4)},
  FJOURNAL = {Annales Scientifiques de l'\'Ecole Normale Sup\'erieure.
              Quatri\`eme S\'erie},
    VOLUME = {56},
      YEAR = {2023},
    NUMBER = {4},
     PAGES = {1231--1297},
      ISSN = {0012-9593,1873-2151},
   MRCLASS = {11G18 (11F70 14G35)},
  MRNUMBER = {4650152},
}

@Misc{ShankarEtranges,
   author={Shankar, Ananth N.},
   title={The {H}ecke-orbit conjecture for “mod\`eles \'etranges”},
   note={preprint},
   Year = {2016},
   }

@article {MaulikShankarTang,
    AUTHOR = {Maulik, Davesh and Shankar, Ananth N. and Tang, Yunqing},
     TITLE = {Picard ranks of {K}3 surfaces over function fields and the
              {H}ecke orbit conjecture},
   JOURNAL = {Invent. Math.},
  FJOURNAL = {Inventiones Mathematicae},
    VOLUME = {228},
      YEAR = {2022},
    NUMBER = {3},
     PAGES = {1075--1143},
      ISSN = {0020-9910},
   MRCLASS = {14J28 (11G18 11G25 11G35 14G35)},
  MRNUMBER = {4419630},
       DOI = {10.1007/s00222-022-01097-x},
       URL = {https://doi-org.stanford.idm.oclc.org/10.1007/s00222-022-01097-x},
}

@incollection {OortQuestionsAG,
    AUTHOR = {Oort, Frans},
     TITLE = {Appendix 1: {S}ome questions in algebraic geometry},
 BOOKTITLE = {Open problems in arithmetic algebraic geometry},
    SERIES = {Adv. Lect. Math. (ALM)},
    VOLUME = {46},
     PAGES = {263--283},
      NOTE = {Reprint of 1995 original},
 PUBLISHER = {Int. Press, Somerville, MA},
      YEAR = {[2019] \copyright 2019},
   MRCLASS = {14Kxx (11Gxx)},
  MRNUMBER = {3971187},
}

@article {OortFoliation,
    AUTHOR = {Oort, Frans},
     TITLE = {Foliations in moduli spaces of abelian varieties},
   JOURNAL = {J. Amer. Math. Soc.},
  FJOURNAL = {Journal of the American Mathematical Society},
    VOLUME = {17},
      YEAR = {2004},
    NUMBER = {2},
     PAGES = {267--296},
      ISSN = {0894-0347},
   MRCLASS = {14K10 (14L05)},
  MRNUMBER = {2051612},
MRREVIEWER = {G. K. Sankaran},
       DOI = {10.1090/S0894-0347-04-00449-7},
       URL = {https://doi.org/10.1090/S0894-0347-04-00449-7},
}

@misc{ChaiOortBook,
Author = {Chai, Ching-Li and Oort, Frans},
TITLE = {Hecke orbits},
note  = {{B}ook in preparation},
Year  = {2024},
}

@misc{KedlayaIsoshtukas,
Author = {Kedlaya, Kiran S.},
TITLE = {{$F$}-isocrystals as isoshtukas with no legs},
note  = {{A}rticle in preparation},
Year  = {2024},
URL = {https://kskedlaya.org/papers/isoshtukas.pdf},
}

@article {OkiComponents,
    AUTHOR = {Oki, Yasuhiro},
     TITLE = {On the connected components of {S}himura varieties for {CM}
              unitary groups in odd variables},
   JOURNAL = {J. Number Theory},
  FJOURNAL = {Journal of Number Theory},
    VOLUME = {245},
      YEAR = {2023},
     PAGES = {1--64},
      ISSN = {0022-314X,1096-1658},
   MRCLASS = {11G18 (14G35 20G35)},
  MRNUMBER = {4513870},
MRREVIEWER = {Alan\ Koch},
       DOI = {10.1016/j.jnt.2022.07.016},
       URL = {https://doi.org/10.1016/j.jnt.2022.07.016},
}

@article {KedlayaDrinfeld,
    AUTHOR = {Drinfeld, Vladimir and Kedlaya, Kiran},
     TITLE = "{Slopes of indecomposable $F$-isocrystals}",
   JOURNAL = {Pure Appl. Math. Quart.},
  FJOURNAL = {Pure and Applied Mathematics Quarterly},
    VOLUME = {13},
      YEAR = {2017},
     PAGES = {131--192},
}

@ARTICLE{KisinLamShankarSrinivasan,
       author = {{Kisin}, Mark and {Lam}, Yeuk Hay Joshua and {Shankar}, Ananth N. and {Srinivasan}, Padmavathi},
        title = "{Finiteness of reductions of Hecke orbits}",
      journal = {arXiv e-prints},
     keywords = {Mathematics - Number Theory, Primary 11G15, 14K10, 14K22},
         year = 2021,
        month = sep,
archivePrefix = {arXiv},
       eprint = {2109.05147},
 primaryClass = {math.NT},
}

@article {MoonenLifting,
    AUTHOR = {Moonen, Ben},
     TITLE = {Linearity properties of {S}himura varieties. {II}},
   JOURNAL = {Compositio Math.},
  FJOURNAL = {Compositio Mathematica},
    VOLUME = {114},
      YEAR = {1998},
    NUMBER = {1},
     PAGES = {3--35},
      ISSN = {0010-437X},
   MRCLASS = {14G35 (11G18)},
  MRNUMBER = {1648527},
MRREVIEWER = {Conjeeveram S. Rajan},
       DOI = {10.1023/A:1000411631772},
       URL = {https://doi-org.stanford.idm.oclc.org/10.1023/A:1000411631772},
}

@incollection {KatzSlopeFiltrations,
    AUTHOR = {Katz, Nicholas M.},
     TITLE = {Slope filtration of {$F$}-crystals},
 BOOKTITLE = {Journ\'{e}es de {G}\'{e}om\'{e}trie {A}lg\'{e}brique de {R}ennes ({R}ennes,
              1978), {V}ol. {I}},
    SERIES = {Ast\'{e}risque},
    VOLUME = {63},
     PAGES = {113--163},
 PUBLISHER = {Soc. Math. France, Paris},
      YEAR = {1979},
   MRCLASS = {14F30 (14G20 14L05)},
  MRNUMBER = {563463},
MRREVIEWER = {Noriko Yui},
}

@article {Wedhorn,
    AUTHOR = {Wedhorn, Torsten},
     TITLE = {On {T}annakian duality over valuation rings},
   JOURNAL = {J. Algebra},
  FJOURNAL = {Journal of Algebra},
    VOLUME = {282},
      YEAR = {2004},
    NUMBER = {2},
     PAGES = {575--609},
      ISSN = {0021-8693},
   MRCLASS = {18D10 (16W30 18E15 20G35)},
  MRNUMBER = {2101076},
MRREVIEWER = {Ulrich G\"{o}rtz},
       DOI = {10.1016/j.jalgebra.2004.07.024},
       URL = {https://doi-org.stanford.idm.oclc.org/10.1016/j.jalgebra.2004.07.024},
}

@article {deJongOortPurity,
    AUTHOR = {{de Jong}, Aise Johan and {Oort}, Frans},
     TITLE = {Purity of the stratification by {N}ewton polygons},
   JOURNAL = {J. Amer. Math. Soc.},
  FJOURNAL = {Journal of the American Mathematical Society},
    VOLUME = {13},
      YEAR = {2000},
    NUMBER = {1},
     PAGES = {209--241},
      ISSN = {0894-0347},
   MRCLASS = {14L05 (14B05)},
  MRNUMBER = {1703336},
MRREVIEWER = {Nobuo Tsuzuki},
       DOI = {10.1090/S0894-0347-99-00322-7},
       URL = {https://doi-org.stanford.idm.oclc.org/10.1090/S0894-0347-99-00322-7},
}

@article {ExteriorPowers,
    AUTHOR = {Hedayatzadeh, S. Mohammad Hadi},
     TITLE = {Exterior powers of {$\pi$}-divisible modules over fields},
   JOURNAL = {J. Number Theory},
  FJOURNAL = {Journal of Number Theory},
    VOLUME = {138},
      YEAR = {2014},
     PAGES = {119--174},
      ISSN = {0022-314X},
   MRCLASS = {11S85},
  MRNUMBER = {3168925},
MRREVIEWER = {Th\cfac{o}ng Nguy\cftil{e}n-Quang-\Dbar \cftil{o}},
       DOI = {10.1016/j.jnt.2013.10.023},
       URL = {https://doi.org/10.1016/j.jnt.2013.10.023},
}

@article {MadapusiPeraKim,
    AUTHOR = {Kim, Wansu and Madapusi Pera, Keerthi},
     TITLE = {2-adic integral canonical models},
   JOURNAL = {Forum Math. Sigma},
  FJOURNAL = {Forum of Mathematics. Sigma},
    VOLUME = {4},
      YEAR = {2016},
     PAGES = {Paper No. e28, 34},
   MRCLASS = {14G35 (11G18)},
  MRNUMBER = {3569319},
MRREVIEWER = {Su-ion Ih},
       DOI = {10.1017/fms.2016.23},
       URL = {https://doi.org/10.1017/fms.2016.23},
}

@article {HongHodgeNewton,
    AUTHOR = {Hong, Serin},
     TITLE = {On the {H}odge-{N}ewton filtration for {$p$}-divisible groups
              of {H}odge type},
   JOURNAL = {Math. Z.},
  FJOURNAL = {Mathematische Zeitschrift},
    VOLUME = {291},
      YEAR = {2019},
    NUMBER = {1-2},
     PAGES = {473--497},
      ISSN = {0025-5874},
   MRCLASS = {14F30 (11G18 14G35)},
  MRNUMBER = {3936079},
MRREVIEWER = {Oliver Gregory},
       DOI = {10.1007/s00209-018-2092-3},
       URL = {https://doi.org/10.1007/s00209-018-2092-3},
}

@article {MathewDescent,
    AUTHOR = {Mathew, Akhil},
     TITLE = {Faithfully flat descent of almost perfect complexes in rigid
              geometry},
   JOURNAL = {J. Pure Appl. Algebra},
  FJOURNAL = {Journal of Pure and Applied Algebra},
    VOLUME = {226},
      YEAR = {2022},
    NUMBER = {5},
     PAGES = {Paper No. 106938, 31},
      ISSN = {0022-4049,1873-1376},
   MRCLASS = {18F20 (14G22 18G99 18N40 18N60)},
  MRNUMBER = {4332074},
MRREVIEWER = {Zoran\ \v Skoda},
       DOI = {10.1016/j.jpaa.2021.106938},
       URL = {https://doi.org/10.1016/j.jpaa.2021.106938},
}

@incollection {ChaiOortNotes,
    AUTHOR = {Chai, Ching-Li and Oort, Frans},
     TITLE = {Moduli of abelian varieties and {$p$}-divisible groups},
 BOOKTITLE = {Arithmetic geometry},
    SERIES = {Clay Math. Proc.},
    VOLUME = {8},
     PAGES = {441--536},
 PUBLISHER = {Amer. Math. Soc., Providence, RI},
      YEAR = {2009},
   MRCLASS = {14L05 (14K10 14L15)},
  MRNUMBER = {2498069},
MRREVIEWER = {Eva Viehmann},
}

@article {BraggYang,
    AUTHOR = {Bragg, Daniel and Yang, Ziquan},
     TITLE = {Twisted derived equivalences and isogenies between {K}3
              surfaces in positive characteristic},
   JOURNAL = {Algebra Number Theory},
  FJOURNAL = {Algebra \& Number Theory},
    VOLUME = {17},
      YEAR = {2023},
    NUMBER = {5},
     PAGES = {1069--1126},
      ISSN = {1937-0652,1944-7833},
   MRCLASS = {14J28 (11G25 14F08 14G17 14G35)},
  MRNUMBER = {4585353},
MRREVIEWER = {Jimmy\ Dillies},
       DOI = {10.2140/ant.2023.17.1069},
       URL = {https://doi.org/10.2140/ant.2023.17.1069},
}

@article{Fung,
Author = {Fung, Yin Nam Dalton},
Title  = "{Hypersymmetric Points on Shimura Varieties of Hodge Type}",
Year   = {2022},
journal={Ph.D. thesis, Boston College },
Note   = {available at \url{https://dlib.bc.edu/islandora/object/bc-ir:109363}},
}

@article {YuChaiOort,
    AUTHOR = {Yu, Chia-Fu and Chai, Ching-Li and Oort, Frans},
     TITLE = {Stratifying {L}ie strata of {H}ilbert modular varieties},
   JOURNAL = {Taiwanese J. Math.},
  FJOURNAL = {Taiwanese Journal of Mathematics},
    VOLUME = {24},
      YEAR = {2020},
    NUMBER = {6},
     PAGES = {1307--1352},
      ISSN = {1027-5487},
   MRCLASS = {14G35 (14K10)},
  MRNUMBER = {4176876},
MRREVIEWER = {Min Ho Lee},
       DOI = {10.11650/tjm/200305},
       URL = {https://doi-org.stanford.idm.oclc.org/10.11650/tjm/200305},
}

@ARTICLE{EGA,
    AUTHOR = "Dieudonn{\'e}, Jean and Grothendieck, Alexander",
    TITLE = "\'{E}l\'ements de g\'eom\'etrie alg\'ebrique : {I}. {L}e langage des sch\'emas",
    JOURNAL = "Inst. Hautes \'Etudes Sci. Publ. Math.",
    VOLUME = "4",
PAGES = {5--228},
    YEAR = "1960"
}

@book {Waterhouse,
    AUTHOR = {Waterhouse, William C.},
     TITLE = {Introduction to affine group schemes},
    SERIES = {Graduate Texts in Mathematics},
    VOLUME = {66},
 PUBLISHER = {Springer-Verlag, New York-Berlin},
      YEAR = {1979},
     PAGES = {xi+164},
      ISBN = {0-387-90421-2},
   MRCLASS = {14-01 (14Lxx 20G99)},
  MRNUMBER = {547117},
MRREVIEWER = {M.\ Kh.\ Gizatullin},
}

@book {Messing,
    AUTHOR = {Messing, William},
     TITLE = {The crystals associated to {B}arsotti-{T}ate groups: with
              applications to abelian schemes},
    SERIES = {Lecture Notes in Mathematics},
    VOLUME = {Vol. 264},
 PUBLISHER = {Springer-Verlag, Berlin-New York},
      YEAR = {1972},
     PAGES = {iii+190},
   MRCLASS = {14L05 (14B20 14D15 14F30 14K10)},
  MRNUMBER = {347836},
MRREVIEWER = {F.\ Oort},
}

@book {DemazureGabriel,
    AUTHOR = {Demazure, Michel and Gabriel, Pierre},
     TITLE = {Groupes alg\'ebriques. {T}ome {I}: {G}\'eom\'etrie
              alg\'ebrique, g\'en\'eralit\'es, groupes commutatifs},
      NOTE = {Avec un appendice {\it Corps de classes local} par Michiel
              Hazewinkel},
 PUBLISHER = {Masson \& Cie, \'Editeurs, Paris; North-Holland Publishing
              Co., Amsterdam},
      YEAR = {1970},
     PAGES = {xxvi+700},
   MRCLASS = {14L15 (20G35)},
  MRNUMBER = {302656},
MRREVIEWER = {J.-E.\ Bertin},
}

@misc{LetterChaiToMoonen,
    Author = {Chai, Ching-Li},
    Title = "{Letter to Ben Moonen}",
    Date = {2021},
    Note = {October 2021},
}

@misc{ZhangThesis,
       author = {{Zhang}, Mingjia},
        title = "{A PEL-type Igusa Stack and the $p$-adic Geometry of {S}himura Varieties}",
      journal = {arXiv e-prints},
     keywords = {Mathematics - Number Theory, Mathematics - Algebraic Geometry},
         year = 2023,
        month = sep,
          doi = {10.48550/arXiv.2309.05152},
archivePrefix = {arXiv},
       eprint = {2309.05152},
 primaryClass = {math.NT},
 howpublished = {\href{https://arxiv.org/abs/2309.05152}{arxiv 2309.05152}},
}
\end{document}